\documentclass[a4paper,fleqn]{cas-dc}

\usepackage[english]{babel}
\usepackage[utf8]{inputenc}
\usepackage[T1]{fontenc}

\usepackage[nopatch=footnote]{microtype}
\usepackage{setspace}
\usepackage{blindtext}
\usepackage{siunitx}
\usepackage{comment}

\usepackage{titlecaps}
\Addlcwords{or the if a an of for and to -off off in not by via}
\usepackage[caption=false]{subfig}
\usepackage{graphicx}
\usepackage{color}
\usepackage{xcolor}
\usepackage{epsfig}
\graphicspath{{Images/}}
\usepackage{tikz}
\usepackage{pgfplots,pgfplotstable}
\usetikzlibrary{patterns,fit,matrix}
\usepackage{tikzscale}
\usepackage{scalerel}
\usetikzlibrary{arrows,
	patterns,
	plotmarks,
	svg.path,
	shapes.multipart}
\usepgfplotslibrary{fillbetween}
\pgfplotsset{compat=newest,
	compat/bar nodes=1.8,
	every axis/.append style={
		label style={font=\Large},
		tick label style={font=\large} 
	}
}
\tikzstyle{int}=[draw, fill=black!10, minimum size=5em,thick]
\tikzstyle{init} = [pin edge={to-,thick,black}]
\usepackage{booktabs}

\usepackage{array}

\usepackage{enumitem}
\usepackage{setspace}

\usepackage{amsthm,amsmath,amssymb,amsfonts}
\usepackage{relsize}
\usepackage{nicefrac}
\usepackage{bbm,bm}
\usepackage{mathtools}
\usepackage[mathscr]{eucal}
\usepackage[short,c2]{optidef}
\usepackage[authoryear,sort]{natbib}

\usepackage{url}
\usepackage[colorlinks]{hyperref}
\usepackage[capitalize,nameinlink]{cleveref}

\newcommand{\myParagraph}[1]{{\bf #1.}}

\newcommand{\eg}{\emph{e.g.,}\xspace}
\newcommand{\ie}{\emph{i.e.,}\xspace}

\newcommand{\Real}[1]{ { {\mathbb R}^{#1} } }
\newcommand{\Realp}[1]{ { {\mathbb R}^{#1}_+ } }
\newcommand{\Complex}[1]{ { {\mathbb C}^{#1} } }

\DeclareMathOperator*{\argmin}{arg\,min}

\newcommand{\tr}[1]{\mathrm{Tr}\left(#1\right)}

\newcommand{\e}{\mathrm{e}}

\newcommand{\norm}[2][]{\left\lVert#2\right\rVert_{#1}}

\renewcommand{\dim}[1]{\mathrm{dim}(#1)}
\newcommand{\col}[1]{\mathrm{col}(#1)}
\renewcommand{\ker}[1]{\mathrm{ker}(#1)}
\newcommand{\rk}[1]{\mathrm{rk}\left(#1\right)}
\renewcommand{\null}[1]{\mathrm{null}(#1)}

\newtheorem{theorem}{Theorem}

\newtheorem{prop}{Proposition}
\newtheorem{definition}{Definition}
\newtheorem{prob}{Problem}
\newtheorem{ass}{Assumption}

\newcommand{\ls}{\left[}
\newcommand{\rs}{\right]}
\newcommand{\lb}{\left\lbrace}
\newcommand{\rb}{\right\rbrace}
\newcommand{\lV}{\left\Vert}
\newcommand{\rV}{\right\Vert}
\newcommand{\lv}{\left\vert}
\newcommand{\rv}{\right\vert}
\newcommand{\bb}[1]{\mathbb{#1}}
\newcommand{\bl}[1]{\boldsymbol{#1}}
\newcommand{\cl}[1]{\mathcal{#1}}

\newcommand{\bx}{\bl{x}}
\newcommand{\bu}{\bl{u}}
\newcommand{\by}{\bl{y}}
\newcommand{\bA}{\bl{A}}
\newcommand{\bB}{\bl{B}}
\newcommand{\bC}{\bl{C}}
\newcommand{\bI}{\bl{I}}

\newcommand{\cO}{\cl{O}}
\newcommand{\cC}{\cl{C}}
\newcommand{\x}[2][]{\bx_{#1}(#2)}
\newcommand{\xdot}[2][]{\dot{\bx}_{#1}(#2)}

\renewcommand{\u}[2][]{\bu_{#1}(#2)}

\newcommand{\y}[2][]{\by_{#1}(#2)}

\newcommand{\submatCol}[2]{#1_{#2}}

\addto\captionsenglish{}
\addto\captionsenglish{}

\newcommand{\blue}[1]{{\color{blue}#1}}

\newcommand{\linkToPdf}[1]{\href{#1}{\blue{(pdf)}}}
\newcommand{\linkToPpt}[1]{\href{#1}{\blue{(ppt)}}}
\newcommand{\linkToCode}[1]{\href{#1}{\blue{(code)}}}
\newcommand{\linkToWeb}[1]{\href{#1}{\blue{(web)}}}
\newcommand{\linkToVideo}[1]{\href{#1}{\blue{(video)}}}
\newcommand{\linkToMedia}[1]{\href{#1}{\blue{(media)}}}
\newcommand{\award}[1]{\xspace} % omit awards
\addto\extrasenglish{}
\addto\extrasenglish{}
\addto\extrasenglish{}

\Crefname{prop}{Proposition}{Propositions}
\Crefname{ass}{Assumption}{Assumptions}
\Crefname{prob}{Problem}{Problems}

\newcommand{\bV}{\bl{V}}

\newcommand{\be}{\bl{e}}

\newcommand{\bv}{\bl{v}}

\newcommand{\supp}[1]{\mathrm{supp}(#1)}
\newcommand{\NP}{\mathrm{NP}}
\newcommand{\BPTIME}{\mathrm{BPTIME}}

\begin{document}
	\let\WriteBookmarks\relax
	\def\floatpagepagefraction{1}
	\def\textpagefraction{.001}
	\shortauthors{L. Ballotta et~al.}
	
	\title [mode = title]{Sparsity in control problems: Recent trends and open challenges}
	
	%%%%%%%% Begin frontmatter author block %%%%%%%%
	\author[1]{Luca~Ballotta}[%
	auid=000,
	bioid=1,
	orcid=0000-0002-6521-7142]
	\cormark[1]
	\fnmark[1]
	\ead{luca.ballotta@unipd.it}
	
	\affiliation[1]{organization={Department of Information Engineering, University of Padova},
		addressline={Via Gradenigo 6/b},
		city={Padova},
		citysep={},
		postcode={35131},
		country={Italy}}
	
	\author[2]{Geethu~Joseph}[%
	auid=000,
	bioid=1,
	orcid=0000-0002-5289-5403]
	\fnmark[1]
	\ead{g.joseph@tudelft.nl}
	
	\affiliation[2]{organization={Faculty of Electrical Engineering Mathematics and Computer Science, Delft University of Technology},
		addressline={Mekelweg 4},
		city={Delft},
		citysep={},
		postcode={2628 CD},
		country={Netherlands}}

    \author[3]{Waseem~Abbas}[%
	auid=000,
	bioid=1,
	orcid=0000-0002-9013-1463]
	\ead{waseem.abbas@utdallas.edu}
	
	\affiliation[3]{organization={Department of Systems Engineering, University of Texas at Dallas},
		addressline={800 W. Campbell Road},
		city={Richardson},
		citysep={},
		postcode={75080},
		country={USA}}
		
	\author[4]{Sophie~M.~Fosson}[%
	auid=000,
	bioid=1,
	orcid=0000-0003-2316-7404]
	\ead{sophie.fosson@polito.it}
	
	\affiliation[4]{organization={Department of Control and Computer Engineering (DAUIN), Politecnico di Torino},
		addressline={Corso Duca degli Abruzzi 24},
		city={Torino},
		citysep={},
		postcode={10129},
		country={Italy}}
		
	\author[5]{Chandra~R.~Murthy}[%
	auid=000,
	bioid=1,
	orcid=0000-0003-4901-9434]
	\ead{cmurthy@iisc.ac.in}
	
	\affiliation[5]{organization={Department of Electrical Communication Engineering, Indian Institute of Science},
		addressline={CV Raman Road},
		city={Bengaluru},
		citysep={},
		postcode={560012},
		country={India}}
		
	\author[6]{Masaaki~Nagahara}[%
	auid=000,
	bioid=1,
	orcid=0000-0002-8992-2495]
	\ead{nagam@hiroshima-u.ac.jp}
	
	\affiliation[6]{organization={School of Informatics and Data Science, Hiroshima University},
		addressline={1-4-1 Kagamiyama},
		city={Higashi Hiroshima},
		citysep={},
		postcode={739-8527},
		country={Japan}}
		
	\author[7]{Alex~Olshevsky}[%
	auid=000,
	bioid=1,
	orcid=0000-0002-5852-9789]
	\ead{alexols@bu.edu}
	
	\affiliation[7]{organization={Department of Electrical and Computer Engineering, Boston University},
		addressline={8 St. Mary St},
		city={Boston},
		citysep={},
		postcode={02215},
		country={USA}}
		
	\author[8]{Kyle~Poe}[%
	auid=000,
	bioid=1,
	orcid=0000-0002-1588-0691]
	\ead{kpoe@sas.upenn.edu}
	
	\affiliation[8]{organization={Applied Mathematics and Computational Science Graduate Group, University of Pennsylvania},
		addressline={3330 Walnut Street},
		city={Philadelphia},
		citysep={},
		postcode={19104},
		country={USA}}
		
	\author[9]{Ren\'e~Vidal}[%
	auid=000,
	bioid=1,
	orcid=0000-0003-1838-0761]
	\ead{vidalr@engineering.upenn.edu}
    
	\affiliation[9]{organization={Departments of Electrical and Systems Engineering and Radiology, University of Pennsylvania},
		addressline={3330 Walnut Street},
		city={Philadelphia},
		citysep={},
		postcode={19104},
		country={USA}}
	
	\cortext[cor1]{Corresponding author}
	\fntext[fn1]{Equal contribution.}
	
	%%%%%%%% End frontmatter author block %%%%%%%%
	
	\begin{abstract}
		As in all fields of engineering, control engineers constantly face a dichotomy between achieving high performance and the limited availability of resources for sensing, actuation, and control. This tension often translates into formalizing design problems with sparsity constraints; that is, the design involves decision variables whose number, spatial reach, temporal spread, or other physical expressions are much smaller than in the ideal scenario of unconstrained optimization of a task-related performance metric. Even though this fundamental limitation underlines a plethora of methodological approaches and application domains, the control literature is fragmented into several lines of work tailored to specific scenarios. This survey fills the gap by bringing together several lines of work on handling sparsity in relevant control problems such as actuator scheduling, minimal-energy controllability, maximum hands-off control, and online convex optimization, and weaving a red thread through theoretical foundations and algorithms. Beyond establishing a coherent, structured narrative of problems and solutions for handling sparsity in control scenarios, we discuss open areas and relevant directions for future research, motivated by the growing complexity of control systems and their relations with key domains such as pervasive computing, distributed learning, cyberphysical security, and ecological footprint.
	\end{abstract}
	
	\begin{keywords}
		Sparse control
		\sep
		Resource allocation
		\sep
		Compressed sensing
		\sep
        Combinatorial optimization
        \sep
        Greedy algorithm
        \sep
		Minimal controllability
		\sep
		Structural controllability
		\sep
		Invertibility with sparse inputs
		\sep
		Maximum hands-off control
		\sep
		$s$-sparse control
		\sep
		Sparse attacks
		\sep
		Sparse actuator scheduling
		\sep
		Sparse online optimization
	\end{keywords}
	
	\maketitle
	
	%!TEX root = ../main_survey_sparsity_control.tex

\section{Introduction}
\label{sec:introduction}

Control system designers always face a fundamental tradeoff between desired control performance and limited system resources.
These limitations are unavoidable and due to a wide range of factors such as economic budget, energy requirements,
limited communication bandwidth or computational hardware. Consequently, resource constraints are pervasive across control applications and often determine which sensing, actuation, communication, and computation strategies are practically feasible.

Representative scenarios in this realm are multifold.
In large-scale plants,
the number and type of devices that can be deployed are limited by economic, space, or energy budgets~\citep{vandeWal2001automatica-selectionIO,DeLorenzo1990jgcd-SensorActuatorSelection,VanderVelde1984jgcd-placementControlComponentsFailures}.
Distributed controllers for large-scale systems similarly must reduce the complexity of the cyber layer, i.e., the number of links in the cyber layer, to make implementation and maintenance practically feasible~\citep{Wu2016tps-InputOutputAnalysisDecentralized,Farhat2025tsg-multiAreaArchitecture,Singh2014tps-decentralizedStateEstimation}.
Mobile and robotic systems operate under limited compute and stringent latency constraints, particularly for real-time perception and learning-based inference~\citep{Carlone2019tro-attention,Chinchali2021ar-offloadingCloudRobotics,Falanga2019ral-perceptionLatencySenseAvoid,Shisher23jsait-learningCommunicationsCoDesign,Pant2021tcst-anytimeComputeControl}.

Similar constraints arise in other domains. In social networks, optimally selecting influencers in virtual platforms to quickly spread urgent news items is challenged by the sheer size of the networks~\citep{Joseph2021sjco-erdosRenyi}.
In biological systems, drug testing faces complex interactions among several mutants and drugs while limiting intervention to a few drugs or small doses~\citep{Klein2012nature-hivTheraphy,Rajapakse2012plos-systemsTheoryBiology,Jonsson2014acc-engineeringTherapies}.
Wireless networked control systems operate under limited bandwidth and low power of transmitters, which constrain the timely transmission of sensing and actuation data~\citep {heemels2010networked,Branz2022tcst-Drive-by-Wi-Fi,Rossi2023automatica-multiRobotTrajectoryTracking,Gatsis2021tac-latencyReliability}.
Likewise, coordination of massive Internet-of-Things networks is subject to network constraints such as limited channel capacity and bandwidth, 
such that only a few devices can simultaneously transmit information,
\ie they must be scheduled in time and/or frequency domains~\citep{Chinchali2018aaai-trafficScheduling,Damigos2024jirs-commAwareControl}.

The gap between system-level resources needed to achieve desired performance and those actually available for design and control can often be translated to sparsity constraints imposed on optimization or design variables.
Here, sparsity refers to solutions in which only a small subset of the available components is selected or activated, while the remaining components are set to zero.
Such structural constraints emerge, for example, in sensor or actuator placement with a limited budget~\citep{Tzoumas2021tac-lqgCodesign,Manohar22tac-OptimalSensor,Tzoumas2016acc-sensorPlacementKalman,Joshi2009tsp-sensorSelectionConvex},
selection of controlled states in large-scale systems~\citep{Guo2021tac-actuatorPlacementGreedy,Olshevsky2014tcns-minimalControllability,Pequito2017automatica-robustMinimalControllability,Clark2012cdc-leaderSelection},
design and recovery of control inputs which are known to be zero in several components~\citep{Joseph2024now-sparseActuatorControl,sefatiLinearSystemsSparse2015,11045688},
or estimation and optimization problems with sparse solution vectors~\citep{sho16,zin13}.

Most of these problems can be formulated as the minimization of a task-specific cost function subject to \emph{sparsity constraints},
which bound the $\ell_0$-norm of the decision variable.
Hence,
one prototypical design problem kept as a reference throughout this survey is
\begin{align}\label{eq:main-problem}
	\begin{split}
		\bl{x}^*\in\min_{\bl{x}} &\quad f(\bl{x})\\
		\textnormal{subject to} &\quad g(\bl{x})\le0\\
		&\quad h(\bl{x})=0\\
		&\quad \norm[0]{\bl{x}}\le B
	\end{split}
\end{align}
where $\bl{x}$ is the design variable,
function $f$ measures a control task-specific performance objective,
functions $g$ and $h$ express problem-dependent constraints,
such as state dynamics,
and the $\ell_0$ pseudo-norm $\norm[0]{\bl{x}}$ counts the number of nonzero elements of $\bl{x}$.
The third constraint of problem~\eqref{eq:main-problem} limits the number of active components to at most $B$. When $B$ is substantially smaller than the dimension of $\bl{x}$, the solution $\bl{x}^*$ is \emph{sparse}.

An alternative is to formulate \emph{minimal design} problems, which 
penalize the $\ell_0$ pseudo-norm of the decision variable in order to obtain the sparsest solution which satisfies the design constraints:
\begin{align}\label{eq:main-problem-1}
	\begin{split}
		\bl{x}^*\in\min_{\bl{x}} &\quad \norm[0]{\bl{x}}\\
		\textnormal{subject to} &\quad g(\bl{x})\le0\\
		&\quad h(\bl{x})=0.
	\end{split}
\end{align}
Clearly,
other formulations are also possible, \eg which minimize a combination of $\norm[0]{\bx}$ and a performance metric $f(\bx)$, possibly without additional equality of inequality constraint.
However, problems~\eqref{eq:main-problem}--\eqref{eq:main-problem-1} define a broad scope of problems arising in control, which motivate the present survey and are introduced in \cref{sec:survey-objectives}.

It is important to distinguish these formulations from the algorithms used to solve them. Optimization problems involving the $\ell_0$ pseudo-norm are combinatorial and, usually, computationally intractable.
Therefore, solutions approaches relax the $\ell_0$ pseudo-norm to a more tractable object,
typically the $\ell_1$-norm,
or adopt low-complexity heuristics, such as greedy search.
As such,
problems~\eqref{eq:main-problem}--\eqref{eq:main-problem-1} should be mainly intended as formal guidelines rather than optimization programs solved by design procedures.
This distinction and relevant solution approaches will be appropriately discussed for each topic.

\subsection{Objectives of this survey}
\label{sec:survey-objectives}

The goal of this survey is to bring together and shed light on representative works in the literature that address instances of the optimization problems in~\eqref {eq:main-problem} and~\eqref{eq:main-problem-1} for control and estimation problems.
While many such problems are expressed as, or motivated by, those formulations,
we find that the literature is fragmented into many independent streams, which seldom refer to each other.
This bears the risk of missing important connections between problems that differ in their formulations and practical motivations but
share fundamental structural properties. 
As a result, formal models and solution approaches developed in one realm may be beneficial to others.

We choose to focus on the following topics:
1) minimal controllability and structural controllability,
where \emph{minimal} refers to the selection of a few state variables to directly control;
2) estimation of sparse attacks to sensor measurements in cyberphysical systems;
3) maximum hands-off control,
which minimizes the time during which control inputs are nonzero;
4) $s$-sparse control,
which imposes a hard constraint on the number of active inputs at each point in time;
5) system invertibility and input recovery with sparse control inputs;
6) sparse actuator scheduling; and
7) convex online optimization with sparse optimization variables.
We remark that this compendium is the result of a choice,
needed to trade a heterogeneous overview of sparsity-related problems in control for maintaining a coherent and sufficiently focused survey paper.
While other important topics exist,
the ones surveyed by this paper are not in the mainstream,
yet tackle important problems which, we believe, deserve attention by the control and optimization community.

\subsection{Related surveys and main differences}
Specific problems underpinning formulations~\eqref{eq:main-problem} or~\eqref{eq:main-problem-1} or their $\ell_1$-norm relaxations have been addressed by other review papers.
Here, we mention some which are connected with the present survey,
yet focus on other problems or do not put emphasis on formal sparsity aspects.

Compressed sensing (CS) is a prime in sparsity-related problems,
motivated by the recovery of sparse signals in signal processing applications~\citep{5954192}.
The present survey focuses on specific variants of the standard CS problem which arise in estimation and control with sparse inputs,
resulting in highly structured formulations.

The survey by \citep{Jovanovic2016ejc-ControllerArchitectures} reviewed works on the design of feedback controllers for large-scale systems,
where emphasis is put on the structure of feedback gain matrices.
In particular,
a stream of works such as \citep{Matni2016tac-regularizationForDesign,Furieri2020tcns-sparsityInvariance,Lin13tac-sparseFeedback} design controllers by either imposing a sparse support of the feedback gain matrix or by adding a regularization term that promotes sparse gains.
A related topic is system level synthesis,
which imposes sparsity constraints such as communication links directly on the closed-loop map \citep{Anderson2019arc-sls}. In contrast, the present survey focuses on the sparsity of control inputs, interventions, and related design variables rather than the sparsity in the feedback-controller architecture.

Another connected topic is scheduling,
wherein sparsity emerges through the need to select a few concurrent tasks or resources,
such as transmissions in wireless sensor networks.
Optimal scheduling has received a great deal of attention in computer science,
control,
and communication;
representative survey papers are~\citep{6165686} on scheduling algorithms in wireless networks,
\citep{9446528} on task scheduling in IoT,
\citep{10856890} on transmission scheduling in federated learning,
\citep{wang2006survey} on energy-aware scheduling of sensor duty cycles,
\citep{gunes2014survey} on control and communication scheduling co-design,
and \citep{6164261,Erickson2022,bambagini2016energy} on job scheduling in real-time systems.
On the other hand,
the present survey is concerned with optimal actuator scheduling that is directly coupled to control-theoretic cost functions (see \cref{sec:actuator-scheduling}),
which has not been addressed much yet.

Structural and network controllability,
which are strictly tied to notions of sparsity, such as the minimal number of nodes required to control a system,
are surveyed in \citep{park2025network,Baggio2022ar-energyAwareControllability,xiang2019advances}.
While these surveys focus on characteristic features that relate controllability to graph-theoretic properties,
we delve specifically into characterizing how (structural) controllability relates to \emph{minimal} sets of control nodes,
and overview algorithms that attempt to achieve frugal control design in large networks.
Also,
we only marginally touch upon sensor and actuator selection~\citep{8361090,10716737}, which are mentioned in the contexts of actuator scheduling and minimal controllability.

\subsection{Organization}
\label{sec:survey-organization}

This survey is organized as follows.
\cref{sec:notation} presents the main notation and definitions used throughout.
\cref{sec:minimal-controllability,sec:minimal-structural-controllability,sec:sparse-attacks-cps,sec:invertibility-sparse-inputs} focus on \emph{static} problems,
wherein the design solution or property investigated does not change over time.
In particular,
\cref{sec:minimal-controllability} addresses the problems of ensuring \emph{minimal} controllability and reachability, by selecting the fewest control variables;
\cref{sec:minimal-structural-controllability} focuses on the related minimal structured controllability problem,
wherein the open-loop dynamics are known only up to the presence or absence of dynamical coupling among state variables;
\cref{sec:invertibility-sparse-inputs} surveys results on system invertibility in the presence of sparse control inputs and on how such sparse inputs can be estimated through specialized algorithms;
and \cref{sec:sparse-attacks-cps} overviews works on the estimation of sparse attack vectors which corrupt sensory data,
focusing on the special case of constant attacks.
\cref{sec:maximum-hands-off-control,sec:s-sparse-control,sec:actuator-scheduling} deal with sparse control design problems,
in which the resulting control or scheduling decisions vary over time but are computed in advance.
Specifically,
\cref{sec:maximum-hands-off-control} presents maximum hands-off control,
a framework for input design which minimizes the time support of control inputs.
\cref{sec:s-sparse-control} summarizes work on controllability and input design when inputs are $s$-sparse,
\ie only a few input channels can be simultaneously used at each time instant; and
\cref{sec:actuator-scheduling} considers the alternative viewpoint when,
rather than directly computing sparse control inputs,
a sparse schedule of actuators has to be designed.
\cref{sec:sparse-online-optimization} addresses a class of time-varying problems with a time-varying solution,
namely, convex online optimization with sparse decision variables, which are continuously updated.
For each topic,
we provide high-level and application-oriented motivations,
formally introduce the design and/or optimization problems considered,
summarize and discuss relevant and recent results,
and draw on the limitations of the existing literature, along with promising directions for future research efforts.
We conclude with a high-level overview of technical connections and future directions in \cref{sec:conclusion}.
	%!TEX root = ../main_survey_sparsity_control.tex

\section{Notation and preliminaries}
\label{sec:notation}
This section introduces the notation and basic definitions used throughout the survey. We first summarize the required linear-algebraic notation, followed by notation for dynamical systems and graphs.
\subsection{Linear algebra}
The $n$-dimensional spaces of real, positive real, and complex numbers with $n>1$ are respectively denoted by $\Real{n}$, $\Realp{n}$, and $\Complex{n}$; superscript $n=1$ is omitted.
Scalar quantities which are not interpreted as vector coordinates or matrix elements are typed in normal font, e.g., $a\in\Real{}$.
Vectors and matrices are boldface and respectively lowercase and uppercase, \eg $\bl{a}\in\Real{n}$ and $\bl{A}\in\Real{n\times m}$.
The $i$th coordinate of vector $\bl{a}$ is $\bl{a}_i\in\Real{}$ and the $(i,j)$th element of matrix $\bl{A}$ is $[\bl{A}]_{ij}\in\Real{}$.
Finite sets are denoted by calligraphic letters, \eg $\cl{A}=\{1,2,3\}$, and $|\cl{A}|$ denotes the number of elements in $\cl{A}$, \ie its cardinality.
The first $n\ge1$ positive integers are denoted by $[n]\doteq\{1,2,\dots,n\}$ and $[0]\doteq\emptyset$.

For a vector $\bl{a}$, notations $\norm[2]{\bl{a}}$, $\norm[1]{\bl{a}}$, and $\norm[0]{\bl{a}}$ respectively denote its Euclidean norm (or $\ell_2$-norm),
its $\ell_1$-norm, and its $\ell_0$ pseudo-norm (namely, the number of nonzero elements of $\bl{a}$).
We may write $\norm{\bl{a}}$ in place of $\norm[2]{\bl{a}}$ for the sake of simplicity.
The same symbols are used for induced norms of matrices.
The support $\supp{\bl{a}}\subseteq[n]$ of vector $\bl{a}\in\Real{n}$ is the subset of indices $i\in[n]$ with nonzero elements $\bl{a}_i\neq0$.
In \cref{sec:maximum-hands-off-control},
without risk of ambiguity, we use the same notation to measure the continuous domain on which a real function is nonzero.

The submatrix of matrix $\bl{A}\in\Real{n\times m}$ obtained by selecting its columns with indices in set $\cl{S}$ is denoted by $\submatCol{\bl{A}}{\cl{S}}\in\Real{n\times |\cl{S}|}$.
The dimension of a subspace $\bb{V}\subseteq\Real{n}$ is $\dim{\bb{V}}$.
The column space of matrix $\bl{A}$ is $\col{\bl{A}}$,
its rank is $\rk{\bl{A}}\doteq\dim{\col{\bl{A}}}$,
its kernel is $\ker{\bl{A}}$,
and its nullity is $\null{\bl{A}}\doteq\dim{\ker{\bl{A}}}$.

For any positive integer $n$, we use $I_{n}\in\Real{n\times n}$ to denote the identity matrix.
We may omit the subscript and write $I$ when no ambiguity is possible.

\subsection{Dynamical systems}
We denote the state of a dynamical system at time $t$ by $\x{t}$, the input by $\u{t}$, and the output by $\y{t}$.
We are mainly concerned with linear state-space models evolving over discrete or continuous time,
with the exception of \cref{sec:maximum-hands-off-control}, which deals with nonlinear systems whose specific notation is deferred thereto.
We write linear models as
\begin{align}\label{eq:system-dt}
    \begin{split}
        \x{t+1}&=\bA\x{t}+\bB\u{t}\\
        \y{t}&=\bC\x{t}
    \end{split}
\end{align}
under discrete time $t\in\bb{N}_0$ and
\begin{align}
    \begin{split}\label{eq:system-ct}
        \xdot{t}&=\bA\x{t}+\bB\u{t}\\
        \y{t}&=\bC\x{t}
    \end{split}
\end{align}
under continuous time $t\in\Realp{}$, with state variable $\bx\in\Real{n}$, input $\bu\in\Real{m}$, output $\by\in\Real{p}$, state (or system) matrix $\bA\in\Real{n\times n}$, input matrix $ \bB\in\Real{n\times m}$, and output matrix $\bC\in\Real{p\times n}$.
We compactly denote system~\eqref{eq:system-dt} or~\eqref{eq:system-ct} by the tuple $(\bA,\bB,\bC)$ or, whenever the output (resp., input) is not relevant, by the ordered pair $(\bA,\bB)$ (resp., $(\bA,\bC)$).
For the discrete-time system~\eqref{eq:system-dt},
the controllability and observability matrices in $\tau\ge1$ steps are respectively
\begin{equation}\label{eq:ctrb-matrix}
	\cl{C}_\tau(\bA,\bB)=\begin{bmatrix}
		\bA^{\tau-1}\bl{B} & \cdots & \bA\bl{B} & \bl{B}
	\end{bmatrix}
\end{equation}
and
\begin{equation}\label{eq:obs-matrix}
	\cl{O}_\tau(\bA,\bC) = \begin{bmatrix}
		\bl{C} \\
		\bl{C}\bA \\
		\vdots \\
		\bl{C}\bA^{\tau-1}
	\end{bmatrix}.
\end{equation}
If $\tau=n$, we write $\cl{C}(\bA,\bB)=\cl{C}_n(\bA,\bB)$ and $\cl{O}(\bA,\bC)=\cl{O}_n(\bA,\bC)$ for the sake of simplicity; these respectively define the controllability and observability matrices of the continuous-time system~\eqref{eq:system-ct}, too.
Specific notation, \eg highlighting dependency on design parameters, is punctually introduced whenever necessary.

\subsection{Graphs}
A directed graph is denoted by $G=(\cl{V},\cl{E})$ where $\cl{V}\subset\bb{N}$ denotes the set of nodes and $\cl{E}\subseteq \cl{V}\times \cl{V}$ denotes the set of directed edges.
An edge from node $v_i$ to $v_j$ is denoted by $e_{ij} = (v_i,v_j)$.
The out-neighborhood of $v_i$ is $\cl{N}_i^{out} = \{v_j\in \cl{V}: e_{ij}\in \cl{E}\}$, and similarly, the in-neighborhood of $v_i$ is $\cl{N}_i^{in} = \{v_j:  e_{ji}\in \cl{E}\}$.
The distance $d(v_i,v_j)$ is the length of the shortest directed path in $G$ from $v_i$ to $v_j$,
\ie the shortest sequence of nodes $(v_i,v_{i_1},\dots,v_{i_k},v_j)$ such that $(v_i,v_{i_1})\in\cl{E}$, $(v_{i_k},v_j)\in\cl{E}$, and $(v_{i_\ell},v_{i_{\ell+1}})\in\cl{E}$ for all $\ell=1,\dots,k-1$.
We use the convention $d(v_i,v_j)=\infty$ if no directed path exists from $v_i$ to $v_j$.

In undirected graphs, the unordered pair $(v_i,v_j)\in\cl{E}$ denotes an undirected, or bilateral, edge between $v_i$ and $v_j$ and we define only the neighborhood $\cl{N}_i$ which coincides with in- and out-neighborhoods.
Clearly, $d(v_i,v_j)=d(v_j,v_i)$ in undirected graphs.
	%!TEX root = ../main_survey_sparsity_control.tex

\section{Minimal controllability and reachability}
\label{sec:minimal-controllability}
This section reviews sparsity-based approaches to actuator placement that seek to achieve controllability or reachability with a minimal number of actuated state variables.

\subsection{Motivation}
\label{sec:minimal-ctrl-motivation}

Many systems of interest, biological circuits, power networks, multi-agent formations, and brain connectomes, involve a large number of interacting state variables, only a few of which can be directly influenced by external inputs.
This raises a family of design questions: given the dynamics of a linear system, which state variables should we actuate?
How much energy is then needed?
Are there topologies where \emph{no} choice of actuators yields controllability?

Consider the linear systems in~\eqref{eq:system-dt}--\eqref{eq:system-ct}. 
If we are free to choose $\bB$, answering such questions naturally yields the goal of finding one instantiation of $\bB$ that is as \emph{sparse} as possible (\ie it directly actuates a few state variables) while still achieving a desired control-theoretic property.

Two natural versions of this question lead to sharply different computational landscapes.
The \emph{minimal controllability problem} introduced by~\cite{Olshevsky2014tcns-minimalControllability} asks for the sparsest matrix~$\bB$ which makes the system fully controllable, meaning every state can be reached from every other.
On the other hand,
the \emph{minimal reachability problem} studied by~\cite{jadbabaie2019} asks for the fewest actuated nodes enabling a single state transfer,
\ie driving the system from a given initial condition~$\bx_{\mathrm{in}}$ to a target state~$\bx_{\mathrm{f}}$.

One might expect reachability to be the easier problem, since it asks for strictly less than controllability.
As we will soon see, the actual situation is the reverse.
Minimal controllability admits simple greedy polynomial-time algorithms whose $O(\log n)$ approximation ratio matches the NP-hard lower bound.
Minimal reachability, by contrast, is essentially inapproximable: no polynomial-time algorithm can come within an exponentially worse factor of $2^{\Omega(\log^{1-\delta}n)}$ of the optimum under standard complexity assumptions, even when some error in the target state is tolerated.

The source of this gap is algebraic.
Controllability has a classical characterization, the Popov--Belevitch--Hautus (PBH) test, that converts the problem into a combinatorial covering condition on eigenvectors.
This covering structure is exactly the kind of thing greedy algorithms handle well.
Reachability lacks this structure: it reduces to asking whether a specific vector lies in the column span, which inherits the hardness of a problem in statistics called \emph{variable selection}.

\paragraph*{Outline:} We survey the minimal controllability and reachability problems in \cref{sec:min-ctrl,sec:min-reach}, respectively,
presenting fundamental complexity results and tractable algorithms.
Then, we broaden the scope in \cref{sec:variations} to examine control energy and minimal controllability of networked dynamics, as well as graph-theoretic obstructions thereto.

\subsubsection*{Difference with actuator selection}
It is important to distinguish the minimal controllability and reachability problems reviewed in this section from the generic actuator selection problem.
The latter is typically formulated as follows:
given a set of $N$ available actuators, indexed as $\{1,\dots,N\}$, and a budget $k<N$,
choose a subset of actuators $\mathcal{S}\subset\{1,\dots,N\}$ of size at most $k$, i.e., $|\cl{S}|\le k$, such that the resulting system minimizes a control-theoretic objective.
The latter is typically chosen as a function of the controllability Gramian,
which quantifies average or worst-case control energy across the reachable space~\citep{Guo2021tac-actuatorPlacementGreedy,pasqualetti2014,Summers2016tcns-submodularity,Baggio2022ar-energyAwareControllability}
or directly as the cost function of $\cl{H}_2$-optimal control, LQR, or LQG control design problems~\citep{Taha2017acc-actuatorSelectionCPS,Argha19ijc-optimalActuatorSensorSelection,Tzoumas2021tac-lqgCodesign,Manohar22tac-OptimalSensor}.
While these budget-informed formulations are widely studied,
they assume that matrix $\bB$ is given, although possibly unknown, as assumed in recent works on data-driven selection~\citep{Fotiadis2024tac-dataDrivenActuatorAllocation,Ye2024tac-onlineActuatorSelection}, and one needs to select at most $k$ columns,
rather than designing all of $\bB$.
In addition,
the different nature of budget-aware, rather than \emph{minimal}, actuator selection results in different solution approaches and analysis,
such as focusing on suboptimality gaps rather than feasibility,
hence do not put emphasis on the sparsity and minimal allocation aspects.
In particular,
the budget $k$ is assumed large enough to solve the optimization problem of interest.
We briefly discuss recent work bridging these domains in \cref{sec:minimal-actuator-selection}.

% ============================================================
\subsection{Minimal Controllability}
\label{sec:min-ctrl}
% ============================================================

We begin by making the problem precise, and then describe the two main results: an NP-hardness lower bound and a matching greedy upper bound.

\subsubsection{Problem formulation}
\label{sec:min-ctrl-formulation}

Given $\bA\in\Real{n\times n}$, we want to find an input vector $\bl{b}\in\Real{n}$ with as few nonzero elements as possible such that the single-input system $\xdot{t}=\bA\x{t}+\bl{b}\,u(t)$ is controllable.
A nonzero entry $\bl{b}_i$ means that the $i$th state variable is directly affected by the input; minimizing the number of nonzero elements means actuating as few variables as possible.

There is also a multi-input variant, where we seek a diagonal matrix $\bB$ with few nonzero diagonal elements so that the resulting system $(\bA,\bB)$ in~\eqref{eq:system-dt} or~\eqref{eq:system-ct} is controllable.
When $\bA$ has distinct eigenvalues, the two variants turn out to be equivalent~\cite[Lemma~4]{Olshevsky2014tcns-minimalControllability}: any solution to one can be converted to a solution of the other with the same number of nonzero elements.
We focus on the single-input version for clarity.

We write $\cl{B}$ for the set of all vectors $\bl{b}$ that make $(\bA,\bl{b})$ controllable.
The problem, then, is to find the sparsest input vector $\bl{b}$ that makes the resulting system controllable,
which we formally write as follows.
\begin{prob}[Minimal controllability]\label{prob:minimal-ctrl}
	Find the sparsest vector $\bl{b}$ which ensures $\rk{\cl{C}(\bA,\bl{b})}=n$,
	or equivalently solves
	\begin{align}\label{eq:minimal-ctrl}
		\min_{\bl{b}\in\cl{B}} &\quad\norm[0]{\bl{b}}.
	\end{align}
\end{prob}

\subsubsection{The PBH test and the connection to covering}
\label{sec:pbh}

The key tool is the Popov--Belevitch--Hautus (PBH) test, a classical result in linear systems theory.
It says that the pair $(\bA,\bl{b})$ is controllable if and only if no left-eigenvector of $\bA$ is orthogonal to $\bl{b}$.
That is, for every vector $\bv \neq 0$ satisfying $\bv^\top\bA=\lambda\bv^\top$ for some eigenvalue $\lambda$, we need $\bv^\top\bl{b}\neq 0$.

When $\bA$ has distinct eigenvalues, there are exactly $n$ left-eigenvectors (up to scaling), and we can state the condition entry-by-entry: the support of $\bl{b}$ (the set of indices where $\bl{b}$ is nonzero) must \emph{intersect} the support of every left-eigenvector.
In other words, every eigenvector must ``see'' at least one actuated node.

This is precisely a \emph{hitting set} problem.
A hitting set for a collection of sets is a single set that intersects each of them; here, the collection consists of the supports of the eigenvectors, and we want the smallest hitting set.
This connection, minimal controllability reduces to hitting set on eigenvector supports, is the central structural insight of~\citep{Olshevsky2014tcns-minimalControllability}, and it drives both the hardness result and the approximation algorithm.

\subsubsection{Computational complexity}
\label{sec:min-ctrl-hardness}

The minimum hitting set problem is well known to be NP-hard, and moreover NP-hard to approximate within a factor of $c\log n$ for a constant $c>0$~\citep{moshkovitz2012}.
Since minimal controllability reduces to a hitting set problem, one might hope to transfer this hardness directly.
The difficulty is that the reduction goes in the wrong direction: we need to show that \emph{every} hitting set instance can be encoded as a controllability instance, not the other way around.
The eigenvector supports of a matrix are not arbitrary: they come from an invertible matrix of eigenvectors, which imposes structure.
The challenge is to show that this structure does not make the problem easier.

The construction in~\cite{Olshevsky2014tcns-minimalControllability} builds a matrix $\bA$ whose eigenvectors are the rows of a carefully designed invertible matrix $\bV$.
The rows of $\bV$ encode the sets of a collection $\cl{C}$ of $p$ subsets of $[m]$, while the eigenvalues are set to $1,2,\dots,n$, with $n=m+p+1$, by defining $\bA=\bV^{-1}\,\mathrm{diag}(1,\dots,n)\,\bV$.
Since $\bV$ is strictly diagonally dominant, it is invertible, and the construction runs in polynomial time.

Any vector $\bl{b}$ making $\bA$ controllable must hit the support of every row of $\bV$, which forces $\supp{\bl{b}}\cap[m]$ to be a hitting set for $\cl{C}$.
Conversely, any hitting set of size $k$ yields a $(k{+}1)$-sparse controllability vector.
The $c\log n$ inapproximability of hitting set transfers.

\begin{theorem}[Informal; {\citealp[Theorem~1]{Olshevsky2014tcns-minimalControllability}}]\label{thm:ctrl-hard}
  There is a constant $c>0$ such that no polynomial-time algorithm can approximate the sparsest controllability vector within a factor of $c\log n$, unless $\mathrm{P}=\NP$.
  This holds even when $\bA$ is symmetric.
\end{theorem}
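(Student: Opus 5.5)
The proof is an approximation-preserving reduction from minimum hitting set, whose $c\log n$ inapproximability is given by~\citep{moshkovitz2012}. Fix a collection $\cl{C}=\{\cl{C}_1,\dots,\cl{C}_p\}$ of subsets of $[m]$ and set $n=m+p+1$. The goal is to build in polynomial time a matrix $\bV\in\Real{n\times n}$ with the following properties:
\begin{itemize}
\item its first $m$ rows are $\be_1^\top,\dots,\be_m^\top$ (or suitable perturbations);
\item for $j\in[p]$, row $m+j$ is supported on $\cl{C}_j\cup\{m+j\}$, with the diagonal entry dominating, say $[\bV]_{m+j,m+j}=|\cl{C}_j|+1$ and ones on $\cl{C}_j$;
\item the last row is dense and strictly dominated by its diagonal.
\end{itemize}
Strict diagonal dominance (rowwise) gives invertibility. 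We then set $\bA=\bV^{-1}\mathrm{diag}(1,\dots,n)\bV$. The rows of $\bV$ are then exactly the left eigenvectors of $\bA$, and since the eigenvalues are distinct they are unique up to scaling. By the PBH test of \cref{sec:pbh}, $(\bA,\bl{b})$ is controllable if and only if $\bv_i^\top\bl{b}\neq0$ for every row $\bv_i$ of $\bV$.

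\textbf{Soundness direction.} If $(\bA,\bl{b})$ is controllable, then every row support must meet $\supp{\bl{b}}$. This does \emph{not} immediately force $\supp{\bl{b}}\cap[m]$ to hit each $\cl{C}_j$, because $\bl{b}$ may use the extra coordinate $m+j$. I would handle this by a replacement argument. For each $j$ whose set is hit only via $m+j$, swap $m+j$ for any element of $\cl{C}_j$. This does not increase the count, so we obtain a hitting set of size at most $\norm[0]{\bl{b}}$.

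Alternatively, one can redesign $\bV$ so that the identity rows sit on the extra coordinates. In that case the rows $\be_{m+j}$ would force $\bl{b}$ to be nonzero there, costing a fixed $p$; this is bad for approximation. So I prefer the swap argument, possibly with the $[m]$ identity rows dropped if they are unnecessary. This step needs care to get exactly $\mathrm{OPT}_{\rm ctrl}\le \mathrm{OPT}_{\rm HS}+1$ and $\mathrm{OPT}_{\rm HS}\le\mathrm{OPT}_{\rm ctrl}$.

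\textbf{Completeness direction.} Given a hitting set $\cl{H}$ of size $k$, we need a vector supported on $\cl{H}$ plus possibly the index $n$ whose inner products with \emph{all} rows are nonzero, not merely whose support intersects them. The main obstacle is avoiding cancellations. I would choose the nonzero values generically, e.g.\ powers of a large base or random integers, so that no row's inner product vanishes. Each row imposes one polynomial nonvanishing condition, so a random choice from a polynomially large range works. Derandomizing is possible with distinct powers $2^{i\cdot \text{poly}}$, since the entries of $\bV$ are small integers; this keeps the construction polynomial time. The dense last row is hit by the extra coordinate $n$, which explains the $+1$.

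\textbf{Transferring hardness, and the symmetric case.} An algorithm with ratio $c\log n$ for controllability then yields ratio about $c'\log m$ for hitting set. The additive $+1$ is absorbed when $\mathrm{OPT}$ is at least a constant, and instances with small $\mathrm{OPT}$ are solved by brute force. Since $n=m+p+1$ is polynomial in the instance size, $\log n=\Theta(\log m)$ after adjusting the constant, which contradicts~\citep{moshkovitz2012} unless $\mathrm{P}=\NP$.

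For symmetric $\bA$, I would instead need an orthogonal $\bV$ with prescribed support pattern. I would try to replace the dominant-diagonal $\bV$ by the orthogonalization (Gram--Schmidt in a suitable order) of such a matrix, or use a Householder-type construction. I would then argue that the relevant supports are preserved generically, which is the part I am least sure of.
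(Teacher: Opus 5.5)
Your overall route is the paper's: reduce from hitting set, set $\bA=\bV^{-1}\mathrm{diag}(1,\dots,n)\bV$ with $\bV$ strictly diagonally dominant and $n=m+p+1$, and use PBH so that controllability means every row of $\bV$ has nonzero inner product with $\bl{b}$. However, the $\bV$ you specify does not work.

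Its first $m$ rows are $\be_1^\top,\dots,\be_m^\top$. PBH on these rows forces $\bl{b}_i\neq0$ for every $i\in[m]$, which is exactly the defect you point out yourself for rows $\be_{m+j}^\top$. Consequently every controllability vector has at least $m$ nonzero entries, and your completeness vector supported on $\cl{H}\cup\{n\}$ violates PBH on each row $\be_i^\top$ with $i\notin\cl{H}$. Dropping these rows is not an option, because $\bV$ must remain square and invertible.

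The idea hidden behind ``suitable perturbations'' is that all $m+1$ non-set rows must be routed through the single extra coordinate $n$. For example, use rows $2\be_i^\top+\be_n^\top$ for $i\in[m]$, which are still strictly diagonally dominant, and a nonnegative last row. The $+1$ pays for all of these rows, not only for the dense last row. With this change your swap argument is correct and yields $\mathrm{OPT}_{\mathrm{HS}}\le\mathrm{OPT}_{\mathrm{ctrl}}\le\mathrm{OPT}_{\mathrm{HS}}+1$. Moreover, since $\bV$ is then entrywise nonnegative, taking $\bl{b}$ positive on $\cl{H}\cup\{n\}$ makes every inner product strictly positive, so no genericity or derandomization is needed.

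The symmetric case is a genuine gap. There the left eigenvectors must be mutually orthogonal, and with the support pattern you prescribe this is impossible. Two rows whose supports meet in exactly one coordinate have a nonzero inner product for every choice of nonzero entries. Examples are two set rows with $|\cl{C}_j\cap\cl{C}_{j'}|=1$, or two of the rows above that share only coordinate $n$.

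Hence Gram--Schmidt or Householder steps inside $\Real{n}$ necessarily change supports. Each orthogonalized row inherits coordinates from the earlier rows it overlaps. A single coordinate such as $n$ or $m+j$ then starts hitting many set rows, and soundness collapses. The generic outcome is fill-in, not support preservation.

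The missing idea, which is the paper's route, is to enlarge the ambient dimension. Pad $\bV$ with additional columns whose entries cancel the pairwise inner products (for instance, a fresh coordinate for each overlapping pair of rows), leaving the original columns, and hence the encoded hitting-set structure, untouched. Then complete to an orthogonal eigenbasis and take $\bA$ symmetric with distinct eigenvalues. Finally, show that the minimal controllability number of the padded instance is within a constant factor of the original. That comparison is where the work lies: you must control both how much the padding coordinates can help $\bl{b}$ and how cheaply the completing eigenvectors can be hit. A constant-factor loss is harmless, because the padded dimension is polynomial in $n$, so $c\log n$ inapproximability survives with a smaller constant.
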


Extending the result to symmetric $\bA$ requires additional work: the eigenvectors of a symmetric matrix are orthogonal, so one cannot freely prescribe their supports.
The argument in~\cite{Olshevsky2014tcns-minimalControllability} handles this by padding $\bV$ with additional columns to enforce orthogonality, then showing that the minimum controllability number of the padded matrix is within a constant factor of the original.

While \cite{Olshevsky2014tcns-minimalControllability} proves NP-hardness,
the paper by~\cite{Pequito2017automatica-robustMinimalControllability} improves the results by establishing a formal equivalence between the minimal controllability problem~\eqref{eq:minimal-ctrl} and the minimal set cover problem,
which is known to be NP-complete and matches the inapproximability result in \cref{thm:ctrl-hard}.
Such an equivalence yields that almost all numerical realizations of $\bl{b}$ that solve the corresponding minimal set cover problem are solutions of~\eqref{eq:minimal-ctrl}, due to the fact that the subset of remaining realizations has zero measure~\cite[Theorem~1]{Pequito2017automatica-robustMinimalControllability}.

\subsubsection{A greedy algorithm that matches the lower bound}
\label{sec:min-ctrl-greedy}

The hardness result is tight: a simple greedy heuristic achieves an $O(\log n)$ approximation, matching the lower bound up to the constant.
This is indeed confirmed by the NP-completeness of the minimal set cover problem.

\begin{theorem}[Informal; {\citealp[Theorem~2]{Olshevsky2014tcns-minimalControllability}}]\label{thm:ctrl-approx}
  There is a deterministic polynomial-time algorithm that returns a controllability vector whose sparsity is at most $O(\log n)$ times optimal.
\end{theorem}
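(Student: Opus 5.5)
The plan is to reduce minimal controllability to a covering problem and then run the classical greedy set-cover heuristic. The delicate point is that $\bA$ need not have distinct eigenvalues, so the clean hitting-set formulation of \cref{sec:pbh} is not directly available. The greedy will therefore act on a submodular-type potential defined by the rank of the controllability matrix, or equivalently by the eigenspaces not yet ``seen'' by the input. The first step is to show that the \emph{support} $\cl{S}$ is all that matters generically. For a fixed support $\cl{S}\subseteq[n]$, pick $\bl{b}$ with $\supp{\bl{b}}=\cl{S}$ and random nonzero entries. With probability one, or deterministically by a polynomial-time derandomized choice of entries from a finite grid (Schwartz--Zippel style), $\bl{b}$ is orthogonal to no left-eigenvector $\bv$ of $\bA$ whose support meets $\cl{S}$.

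With repeated eigenvalues, the PBH test for a single input also requires each eigenvalue to be geometrically simple. If some eigenvalue has geometric multiplicity at least two, then no single-input vector works, $\cl{B}=\emptyset$, and this can be detected in polynomial time. Otherwise each eigenvalue $\lambda$ has a unique left-eigenvector $\bv_\lambda$ up to scaling. Controllability then holds iff $\supp{\bl{b}}$ hits every $\supp{\bv_\lambda}$ and the entries avoid the finitely many hyperplanes $\bv_\lambda^\top\bl{b}=0$. Hence the optimum of~\eqref{eq:minimal-ctrl} equals the minimum hitting set of the family $\{\supp{\bv_\lambda}\}_\lambda$, which has at most $n$ sets over ground set $[n]$.

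The algorithm is the standard greedy for hitting set: repeatedly add the index lying in the largest number of not-yet-hit supports. The classical analysis gives a cover of size at most $H_n\cdot\mathrm{OPT}\le(1+\ln n)\,\mathrm{OPT}$, hence $O(\log n)$. Finally, assign numerical values to the chosen entries so that each $\bv_\lambda^\top\bl{b}\neq0$. I would do this sequentially: a value for entry $i$ is bad only if it makes some currently nonzero inner product vanish, and each inner product excludes at most one value. Choosing entries from $\{1,\dots,n+1\}$ in order therefore always leaves a good value, and a final PBH or rank check confirms the result.

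The main obstacle is computational rather than combinatorial. The eigenvectors and their supports must be determined exactly in polynomial time, yet eigenvalues may be irrational or complex. Rather than computing them explicitly, I would use the characterization that a support family is hit iff $\rk{\cl{C}(\bA,\bl{b})}=n$ for generic $\bl{b}$ on that support. I would then run the greedy on the potential $\phi(\cl{S})=$ generic rank of $\cl{C}(\bA,\bl{b}_{\cl{S}})$, evaluated in exact rational arithmetic with random entries. Here $\phi(\cl{S})$ equals the total algebraic multiplicity of the eigenvalues whose supports are hit, which is a weighted coverage function. Greedy on a weighted coverage function still yields the $O(\log n)$ guarantee, with weights bounded by $n$. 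The step I would check most carefully is the exact identification of generic rank with the covered multiplicity when eigenvalues are not semisimple.
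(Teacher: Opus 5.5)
\textbf{Gap 1: the rank formula is wrong for defective eigenvalues.} Your reduction for cyclic $\bA$ is correct, and so is your final algorithm, which is greedy on the rank of $\cl{C}(\bA,\bl{b})$ as in the paper. But the identification you flag at the end is false, and your approximation guarantee is derived from it. When an eigenvalue is defective, an index can raise the rank \emph{without} hitting the support of the corresponding left eigenvector. For $\bA=\begin{bmatrix}0&1\\0&0\end{bmatrix}$ the only left eigenvector is $\be_2$. Yet $\bA\be_1=\bl{0}$ gives $\rk{\cl{C}(\bA,\be_1)}=1$, while your formula predicts $0$. Your formula is exact only when all eigenvalues are simple. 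In general, greedy on the true rank rewards partial progress along Jordan chains, which your weighted-coverage bookkeeping does not describe.

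\textbf{The missing lemma.} This is the paper's key lemma. For each eigenvalue $\lambda$ with its single Jordan block of size $m_\lambda$, take a left chain $\bl{w}_{\lambda,1}=\bv_\lambda,\dots,\bl{w}_{\lambda,m_\lambda}$ with $\bl{w}_{\lambda,k}^\top(\bA-\lambda I)=\bl{w}_{\lambda,k-1}^\top$. Then $\rk{\cl{C}(\bA,\bl{b})}=\sum_\lambda(m_\lambda-k_\lambda+1)$, where $k_\lambda$ is the least $k$ with $\bl{w}_{\lambda,k}^\top\bl{b}\neq0$ (the term is $0$ if there is none). So the rank counts the chain vectors $\bl{w}_{\lambda,j}$ that are \emph{covered}, meaning $\bl{b}$ is non-orthogonal to some $\bl{w}_{\lambda,k}$ with $k\le j$.

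\textbf{How the lemma repairs your argument.} The generic rank on support $\cl{S}$ is $\phi(\cl{S})=|\bigcup_{i\in\cl{S}}T_i|$, where $T_i=\{\bl{w}_{\lambda,j}: i\in\supp{\bl{w}_{\lambda,k}} \text{ for some } k\le j\}$. This is an unweighted coverage function on $n$ elements, and $\phi(\cl{S})=n$ iff $\cl{S}$ hits every $\supp{\bv_\lambda}$. Your set-cover argument then goes through: an optimal $\cl{S}^\star$ covers all uncovered elements, so some single index covers at least a $1/|\cl{S}^\star|$ fraction of them. This gives at most $|\cl{S}^\star|(1+\ln n)$ greedy steps.

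\textbf{Gap 2: the algorithm as written is not deterministic.} A Schwartz--Zippel-type grid argument, or evaluating $\phi$ with random entries, is a randomized test. There is no known general deterministic polynomial-time replacement for multivariate identity testing. What works, and is what the paper does with its search over $2n+1$ candidate values, is to keep the entries already chosen fixed and search only over the new one. Suppose the current $\bl{b}$ attains $\phi(\cl{S})$, and take a candidate index $i$. For each $\lambda$, the quantity deciding its new contribution is $\bl{w}_{\lambda,k}^\top(\bl{b}+c\be_i)$, which is affine in $c$, so each $\lambda$ forbids at most one value of $c$. Hence $\max_{c\in\{1,\dots,n+1\}}\rk{\cl{C}(\bA,\bl{b}+c\be_i)}=\phi(\cl{S}\cup\{i\})$, and the invariant is preserved. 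This is your own sequential value-assignment idea, run on exactly computable ranks rather than on eigenvectors. It also removes the need for the exact eigenvector supports you were worried about.
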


The algorithm is natural.
Start with $\bl{b}=\bl{0}$.
At each step, try setting each zero entry of $\bl{b}$ to a nonzero value, and pick the index that produces the largest rank increase in the controllability matrix $\cl{C}(\bA,\bl{b})$.
Repeat until the controllability matrix has full rank.

The analysis hinges on a combinatorial interpretation of the rank of $\cl{C}(\bA,\bl{b})$.
Using the Jordan form of $\bA$, one can show that the rank equals the number of generalized left-eigenvectors that are ``covered'' by $\bl{b}$, where a generalized eigenvector $\bl{t}$ is covered if $\bl{b}$ has a nonzero inner product with $\bl{t}$ or with some higher-order generalized eigenvector in the same Jordan chain~\cite[Lemma~8]{Olshevsky2014tcns-minimalControllability}.
Controllability corresponds to covering all $n$ generalized eigenvectors.

With this interpretation, the greedy argument follows the textbook analysis for set cover.
If $\bl{b}^\star$ is an optimal solution, then at every step there must exist some index that covers at least a $1/|\bl{b}^\star|$ fraction of the remaining uncovered eigenvectors (otherwise $\bl{b}^\star$ itself could not cover them all).
The greedy choice does at least this well, so the number of uncovered eigenvectors shrinks by a factor of at least $(1-1/|\bl{b}^\star|)$ per step.
After $O(|\bl{b}^\star|\log n)$ steps, everything is covered, and the system is controllable.

A technical subtlety is that the randomized version (setting the chosen entry to a Gaussian random number) is easy to analyze.
The deterministic version~\citep{Olshevsky2014tcns-minimalControllability} replaces the random draw with a search over $2n+1$ candidate values, which suffices because the relevant rank conditions are polynomial in the entry value.

The worst-case $\Theta(\log n)$ ratio is pessimistic in practice.
On directed Erd\H{o}s--R\'enyi random graphs $G(n,2\log n/n)$ with up to $n=100$ nodes, the greedy algorithm finds a controllability vector with at most two nonzero elements in every instance tested, and a single entry suffices in over 99\% of cases~\citep{Olshevsky2014tcns-minimalControllability}.
This aligns with broader literature on sensor and actuator selection showing that the greedy algorithm is practically effective in the presence of submodular or weakly submodular objectives, such as functions of the controllability Gramian matrix~\citep{Guo2021tac-actuatorPlacementGreedy,Baggio2022ar-energyAwareControllability,Summers2016tcns-submodularity,Ballotta2024lcss-actuatorSchedulingGuarantee,Tzoumas2021tac-lqgCodesign}.

\subsubsection{Robust version}
Problem~\eqref{eq:minimal-ctrl} assumes that selected actuators faithfully execute control actions.
However, faults (either accidental or maliciously induced) may occur in practice and cause targeted actuators to be unreliable.
\cite{Pequito2017automatica-robustMinimalControllability} addresses the robust version of the problem where a fixed number of actuators $s<n$ may be faulty,
formally resulting in removing the corresponding columns from the designed input matrix~$\bB$:
\begin{align}\label{eq:minimal-ctrl-robust}
	\begin{split}
		\min_{\bB\in\Real{n\times(s+1)n}} \quad & \norm[0]{\bB}\\
		\mbox{subject to} 					\quad & \left(\bA,\bB_{[n]\setminus\cl{A}}\right) \mbox{ is controllable}\\
											\quad & \cl{A}\subset[n], \ |\cl{A}|\le s.
	\end{split}
\end{align}
Here, the full input matrix $\bB$ has $(s+1)n$ columns to avoid degenerate cases which prevent feasibility,
such as all $s$ faults affecting the same eigenvalue of $\bA$.
Note that the special case $s=0$ reduces problem~\eqref{eq:minimal-ctrl-robust} to the version of the nominal problem~\eqref{eq:minimal-ctrl} where a diagonal input matrix $\bB$ is considered instead of the input vector $\bB$.
The paper by~\citep{Pequito2017automatica-robustMinimalControllability} establishes for this robust minimal selection analogous results of the original minimal controllability problem: in particular, it is polynomially equivalent to a set multicover problem, an extension of the standard set cover problem whereby each element has to be covered $s+1$ times instead of just once.
This equivalence implies NP-completeness of problem~\eqref{eq:minimal-ctrl-robust} and enables the use of greedy selection methods with $O(\log n)$ optimality gap,
analogously to the nominal problem~\eqref{eq:minimal-ctrl}.

\subsection{Minimal actuator selection}
\label{sec:minimal-actuator-selection}

While \cref{prob:minimal-ctrl} assumes that matrix $\bB$ can be arbitrarily chosen,
an as much relevant problem is to select actuators from a given set, or equivalently,
to select columns from a given input matrix.
As discussed in \cref{sec:minimal-ctrl-motivation},
existing approaches to solve this problem focus on minimization of a cost function, \eg control energy,
given a budget which is assumed being sufficient for controllability, if relevant at all.
\cite{Ballotta2026tac-minimalSelection} bridge the actuator selection and minimal controllability problems in the following sense:
given a matrix $\bB$, which is not required to be diagonal, 
select the fewest actuators (columns of $\bB$) such that the resulting system $(\bA,\bB_{\mathcal{S}})$ is controllable,
$\bB_{\cl{S}}$ denotes the selected input channels.
Their results essentially extend \cite{Olshevsky2014tcns-minimalControllability} and \cite{Pequito2017automatica-robustMinimalControllability} by proving that the problem can be cast to an integer linear program (ILP) which selects the fewest columns of $\bB$ spanning the eigenspaces of $\bA$.
Further,
under technical conditions that, intuitively, quantify the redundancy of columns of $\bB$ w.r.t. the dynamics in $\bA$,
such an ILP is a minimal set multicover problem,
establishing NP-completeness of minimal actuator selection.
This holds even if $\bA$ has high-dimensional eigenspaces, reducing to an NP-completeness result analogous to the one in \cite{Pequito2017automatica-robustMinimalControllability} if $\bA$ has distinct eigenvalues,
whereby the set multicover problem reduces to a set cover problem.

% ============================================================
\subsection{Minimal Reachability}
\label{sec:min-reach}
% ============================================================

We now turn to the seemingly easier problem of steering a system to a single target state, and find that it is computationally much harder.

\subsubsection{Problem formulation}
\label{sec:min-reach-formulation}

Given initial and final times $t_0<t_1$, an initial state $\bx_{\mathrm{in}}$, a target state $\bx_{\mathrm{f}}$, and the continuous-time system~\eqref{eq:system-ct}, the minimal reachability problem asks for the smallest set $\cl{S}\subseteq[n]$ of state variables to actuate so that some input $\bu(\cdot)$ drives the system from $\x{t_0}=\bx_{\mathrm{in}}$ to $\x{t_1}=\bx_{\mathrm{f}}$, where only the actuated nodes receive input.
\begin{prob}[Minimal reachability]
	\label{prob:minimal-reach}
	Find a sparsest diagonal matrix $\bB$ such that
	\begin{align}
		\begin{split}
			\min_{\bB\in\Real{n\times n}} & \quad \norm[0]{\bB}\\
			\mbox{subject to } 	&\quad \dot{\bx}(t) = \bA\x{t} +\bB \u{t}\\
			& \quad \exists \u{t}: \x{t_0} = \bx_{\mathrm{in}}, \x{t_1} = \bx_{\mathrm{f}}.
		\end{split}
	\end{align}
\end{prob}

By a standard result in linear systems theory, a state $\bx_{\mathrm{f}}$ is reachable from $\bx_{\mathrm{in}}$ if and only if the vector $\bx_{\mathrm{f}}-e^{\bA(t_1-t_0)}\bx_{\mathrm{in}}$ lies in the column span of the controllability matrix $\cl{C}(\bA,\bB)$ built from the actuated columns $\bB$.
As so,
\cref{prob:minimal-reach} reduces to choosing the fewest columns of a matrix so that a given vector lies in their span.

\subsubsection{Why greedy methods fail}
\label{sec:submod-fail}

A natural approach is to define the squared distance from the target vector to the reachable subspace as a set function of the actuated nodes, and then argue that this function has diminishing returns (supermodularity), which would guarantee good greedy performance.

This approach does not work.
The squared distance to a subspace is neither supermodular nor submodular in general.
A three-dimensional counterexample from~\cite{jadbabaie2019} makes this concrete.

As it turns out, this is not just a failure of proof-technique.
The lack of submodularity reflects a genuine computational barrier, as the next result shows.

\begin{theorem}[Informal; {\citealp[Theorem~1]{jadbabaie2019}}]\label{thm:reach-hard}
  For any $\delta>0$, under standard complexity assumptions, no polynomial-time algorithm can approximate minimal reachability within a factor of $2^{\Omega(\log^{1-\delta}n)}$.
  This holds even when the initial state is zero, the input matrix is the identity, and some error in the target state is tolerated.
\end{theorem}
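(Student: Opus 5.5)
The plan is to reduce from the approximate sparse variable selection problem, whose hardness is known: given $\bl{M}\in\Real{n\times n}$ and $\bl{y}\in\Real{n}$, find the sparsest $\bl{z}$ with $\norm{\bl{M}\bl{z}-\bl{y}}\le\epsilon$. Foster, Karloff and Thaler (building on Arora et al.\ and label cover) show this cannot be approximated within $2^{\log^{1-\delta}n}$ unless $\NP\subseteq\BPTIME(n^{\mathrm{poly}\log n})$. This is presumably what the statement means by ``standard complexity assumptions''. The reduction exploits the remark above that, with $\bx_{\mathrm{in}}=0$ and $\bB$ diagonal, \cref{prob:minimal-reach} asks for the fewest actuated coordinates $\cl{S}$ such that $\bx_{\mathrm{f}}\in\col{\cl{C}(\bA,\bI_{\cl{S}})}$, i.e., $\bx_{\mathrm{f}}$ lies in the span of $\{\bA^k\be_i: i\in\cl{S},\,k<n\}$.

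The key step is to encode an arbitrary column-selection instance $(\bl{M},\bl{y})$ as a dynamics matrix. Each column $\bl{m}_i$ should then equal the subspace generated by actuating node $i$, and actuating node $i$ should contribute nothing else useful. I would use an augmented, nilpotent layered construction. Take state $\bx=(\bx^{(1)},\bx^{(2)})\in\Real{n}\times\Real{n}$ and $\bA=\begin{bmatrix}0&0\\ \bl{M}&0\end{bmatrix}$. Actuating coordinate $i$ of the first block then yields the span of $\{\be_i,\ \bl{m}_i\}$ (placed in the second block), and $\bA^2=0$ stops the chain. The target is $\bx_{\mathrm{f}}=(0,\bl{y})$.

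With this, a $k$-sparse $\bl{z}$ with $\norm{\bl{M}\bl{z}-\bl{y}}\le\epsilon$ gives a $k$-node actuation reaching $\bx_{\mathrm{f}}$ within $\epsilon$, since the $\be_i$ directions are simply left unused. Conversely, from any actuated set we must extract a selection of columns of comparable size. Actuating a first-block node $i$ contributes $\bl{m}_i$ to the second block. Actuating a second-block node $j$ directly contributes $\be_j$ to the second block, which would trivialize the instance. To block this cheating, I would replicate the first block: use $L=\mathrm{poly}(n)$ copies of the selection layer feeding the same second layer. Each column $\bl{m}_i$ is scaled up in every copy, and the tolerance is set so that direct actuation of the second layer needs as many nodes as supp$(\bl{y})$ anyway. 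A cleaner alternative is to restrict to instances where $\bl{y}$ is far from every sparse combination of standard basis vectors; the hard instances of Foster et al.\ can be padded to have this property.

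The final step is the gap accounting. Any reachable solution of size $s$ maps to a variable-selection solution of size at most $s$ with the same error after rescaling. Since the blow-up in dimension is polynomial, the factor $2^{\log^{1-\delta}n}$ survives with $\delta$ slightly adjusted. In the exact case, continuous-time reachability over $[t_0,t_1]$ equals the span of the controllability matrix, so the timing plays no role.

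I expect the main obstacle to be the cheating issue: making sure that actuating ``output'' coordinates, or exploiting the $\be_i$ directions, cannot beat the encoded optimum by more than a constant factor while error is tolerated. Achieving this while keeping the input matrix equal to the identity (all nodes available), as the statement requires, is the hard part. The error tolerance also interacts with the scaling used to penalize shortcuts. I would first try the scaling-and-replication gadget, and verify robustness with a norm bound on projections.
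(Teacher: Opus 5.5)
\paragraph{Gap in the cheating step.} The step you flag as the main obstacle is not resolved, and the gadget you propose for it would fail. The cheap shortcut lives in the \emph{output} block. There $\bA\be_j=0$, so actuating coordinate $j$ adds $\be_j$ to the reachable span at unit cost. Making $L$ copies of the selection layer that all feed the \emph{same} output block leaves this shortcut untouched. Scaling the columns $\bl{m}_i$ also changes nothing, because reachability is a span condition with unconstrained input coefficients.

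The real danger is not pure cheating but mixed solutions. A set with $s_1$ selection nodes and $s_2$ output nodes reaches $(0,\bl{y})$ within tolerance iff $\bl{y}$ is approximated by $s_1$ columns of $\bl{M}$ plus $s_2$ standard basis vectors. So you have only reduced from variable selection for the structured matrix $[\bl{M}\;\bI]$, and its hardness does not follow from that of $\bl{M}$. Your padding alternative only excludes approximating $\bl{y}$ by basis vectors alone, not this combination, and the padding claim is asserted without proof. Consequently, the claim in your gap accounting, that every reachability solution of size $s$ yields a variable-selection solution of size at most $s$, is unsupported.

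\paragraph{The missing idea.} This is what the paper's construction does: replicate the \emph{output} layer, not the selection layer.
\begin{enumerate}
\item Place $d$ vertically stacked copies of $\bl{M}$ in the off-diagonal block, keeping $\bA^2=0$. Actuating selection node $i$ then adds the single vector $(\bl{m}_i,\dots,\bl{m}_i)$, with one coefficient shared by all copies.
\item Take the target to be $d$ copies of $\bl{y}$, with the tolerance not inflated by $d$. Completeness is unaffected when the YES instances admit exact sparse solutions, as in the gap form of the variable-selection hardness.
\item Choose $d$ polynomial but larger than $k$ times the targeted ratio. This is possible because $2^{\log^{1-\delta}N}$ is subpolynomial in the new dimension $N$.
\item Any solution with fewer than $d$ nodes leaves, by pigeonhole, some copy with no directly actuated coordinate. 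Restricted to that copy, the reached vector is $\bl{M}\bl{z}$ with $\bl{z}$ supported on the actuated selection nodes, and its error is at most the total tolerance. This gives a variable-selection solution with at most $|\cl{S}|$ columns.
\item Solutions with at least $d$ nodes already exceed the gap threshold.
\end{enumerate}
Since the dimension grows only polynomially, the $2^{\Omega(\log^{1-\delta}n)}$ gap survives with $\delta$ adjusted.
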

\begin{proof}[Proof sketch]
	The reduction is from the \emph{variable selection problem}: given a matrix $\bl{U}$ and a target vector $\bl{z}$, find the sparsest vector $\bl{y}$ with $\|\bl{U}\bl{y}-\bl{z}\|\le\Delta$.
	This problem is inapproximable within $2^{\Omega(\log^{1-\delta}l)}$ under the hypothesis $\NP\not\subseteq\BPTIME(n^{\mathrm{polylog}\,n})$; see~\citep{foster2015}.
	
	The key construction builds a nilpotent system matrix ($\bA^2=\bl{0}$) by placing $d$ vertically stacked copies of $\bl{U}$ in the upper-right block.
	Because $\bA^2=\bl{0}$, actuating node $k$ expands the reachability space by $\mathrm{span}\{\be_k, \bA\be_k\}$.
	The stacking ensures that columns of $\bl{U}$ dominate the span expansion, while basis-vector contributions are negligible.
	If the actuated set $\cl{S}$ is small (fewer than $d$ nodes), a pigeonhole argument forces some block of rows to lie entirely in the span of $|\cl{S}|$ columns of $\bl{U}$, contradicting the variable selection lower bound.
\end{proof}

This result is dramatically worse than the $O(\log n)$ approximation available for controllability.
To put it in perspective: the trivial algorithm that actuates every node gives a factor-$n$ approximation, and since $n=2^{\log n}$, the theorem says we cannot improve this to $2^{O(\log^{1-\delta}n)}$ for any $\delta>0$.
The slightest improvement over the trivial strategy is computationally intractable.

% ============================================================
\subsection{Energy, graphs, and brain networks}
\label{sec:variations}
% ============================================================

The results in Sections~\ref{sec:min-ctrl} and~\ref{sec:min-reach} treat controllability and reachability as binary properties.
In practice, a system that is technically controllable may require absurdly large inputs.
This section surveys results that move beyond the binary question and ask \emph{how hard} control is and \emph{what structural features} of a network determine the answer.

We begin with the role of network diameter in determining control energy (Section~\ref{sec:diameter}), then turn to brain networks where input placement and the longest control chain govern energetic cost (Section~\ref{sec:brain}).
We then shift to ``is it possible at all?'' and examine graph-theoretic obstructions to controllability under Laplacian dynamics (Sections~\ref{sec:ctrl-classes} and~\ref{sec:strong-unctrl}).

\subsubsection{Diameter and control energy}
\label{sec:diameter}

The binary controllability question ignores a crucial practical issue: the \emph{energy} $\|\bu\|^2$ required to execute a state transfer.
Even when a network is controllable from a single node, the minimum-energy input may grow so large as to be physically meaningless.
\citep{bianchin2015} connect this energy cost to the network's diameter.

Consider a discrete-time system $\bx(t{+}1)=\bA\bx(t)+\bB_{\cl{K}}\bu(t)$ on a weighted undirected graph $G=(\cl{V},\cl{E})$,
namely matrix $\bA$ coincides with the adjacency matrix of $G$,
where $\cl{K}\subseteq\cl{V}$ is the set of control nodes, \ie those which are directly controlled by individual coordinates of the input vector $\bu$, in other words,
$\bB_{\cl{K}}$ is composed of canonical vectors with $1$'s corresponding to the nodes in $\cl{K}$.
The worst-case energy to reach any unit-norm target state from the origin in $T$ steps is $\lambda_{\min}(\bl{W}_{\cl{K},T})^{-1}$, where $\bl{W}_{\cl{K},T}$ is the controllability Gramian.
A matrix $\bA$ is called \emph{DSS} (diagonally similar to symmetric) if some diagonal conjugation $\bl{D}\bA\bl{D}^{-1}$ is symmetric; acyclic graphs always have this property.
The next result connects network diameter with the minimal control energy.
The diameter of $G$ is defined as $\mathrm{diam}(G) \doteq \max_{v_i,v_j}d(v_i,v_j)$, namely the largest distance between any two nodes.

\begin{theorem}[Informal; {\cite[Theorems~3.1 and~3.2]{bianchin2015}}]
\label{thm:diameter}
If $\bA$ is DSS, then
\begin{equation}
	\lambda_{\min}(\bl{W}_{\cl{K},T}) \le \dfrac{\mu^{2\left(\left\lceil \frac{n_\mu}{|\cl{K}|}\right\rceil - 1\right)}}{(1 - \mu^2)}\left(\dfrac{\bA_{\max}}{\bA_{\min}}\right)^{2\,\mathrm{diam}(G)}
\end{equation}
where $\bA_{\max}$ and $\bA_{\min}$ are respectively the largest and smallest elements of $\bA$ and $n_\mu$ counts the eigenvalues of $\bA$ with absolute value at most~$\mu$.
\end{theorem}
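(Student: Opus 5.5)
The bound combines two ingredients. The factor $\mu^{2(\lceil n_\mu/|\cl{K}|\rceil-1)}/(1-\mu^2)$ is a Krylov-subspace/eigenvalue-decay estimate. The factor $(\bA_{\max}/\bA_{\min})^{2\,\mathrm{diam}(G)}$ is the price of undoing the diagonal similarity that makes $\bA$ symmetric.

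I would first reduce to the symmetric case. Since $\bA$ is DSS, write $\bl{S}=\bl{D}\bA\bl{D}^{-1}$ with $\bl{S}$ symmetric and $\bl{D}$ diagonal positive. The Gramian is $\bl{W}_{\cl{K},T}=\sum_{t=0}^{T-1}\bA^t\bB_{\cl{K}}\bB_{\cl{K}}^\top(\bA^\top)^t$. Because $\bB_{\cl{K}}$ consists of canonical vectors, $\bl{D}\bB_{\cl{K}}=\bB_{\cl{K}}\bl{D}_{\cl{K}}$, and we get
\[
\bl{D}\bl{W}_{\cl{K},T}\bl{D}=\sum_t \bl{S}^t\bB_{\cl{K}}\bl{D}_{\cl{K}}^2\bB_{\cl{K}}^\top\bl{S}^t .
\]
Rayleigh-quotient comparisons then give
\[
\lambda_{\min}(\bl{W}_{\cl{K},T})\le \kappa(\bl{D})^2\,\lambda_{\min}(\widetilde{\bl{W}}_{\cl{K},T}),
\]
where $\widetilde{\bl{W}}$ is the Gramian of $(\bl{S},\bB_{\cl{K}})$ and $\kappa(\bl{D})=\max_i d_i/\min_i d_i$, up to the normalization of the $\bl{D}_{\cl{K}}$ weights. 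To bound $\kappa(\bl{D})$, use the similarity relation $d_i[\bA]_{ij}/d_j=d_j[\bA]_{ji}/d_i$, which gives $(d_i/d_j)^2=[\bA]_{ji}/[\bA]_{ij}\le \bA_{\max}/\bA_{\min}$ along every edge. Chaining along a shortest path of length at most $\mathrm{diam}(G)$ yields $\kappa(\bl{D})^2\le(\bA_{\max}/\bA_{\min})^{\mathrm{diam}(G)}$. Squaring once more, with care for whether the energy relation involves $\bl{D}$ or $\bl{D}^2$, produces the exponent $2\,\mathrm{diam}(G)$. I would set this exponent bookkeeping carefully against the stated bound.

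Next I would bound $\lambda_{\min}(\widetilde{\bl{W}})$ for symmetric $\bl{S}$. Write $\bl{S}=\sum_i\lambda_i\bv_i\bv_i^\top$, and let $\bl{V}_\mu$ span the $n_\mu$ eigenvectors with $|\lambda_i|\le\mu$. Restricted to $\bl{V}_\mu$, the compressed system $\bl{S}_\mu=\bl{V}_\mu^\top\bl{S}\bl{V}_\mu$ has input matrix $\bl{V}_\mu^\top\bB_{\cl{K}}$ with $|\cl{K}|$ columns. Its Krylov subspace after $k$ steps has dimension at most $k|\cl{K}|$. So with $k=\lceil n_\mu/|\cl{K}|\rceil-1$ we have $k|\cl{K}|<n_\mu$, and some unit vector $\bl{w}$ in range$(\bl{V}_\mu)$ is orthogonal to $\bl{S}^t\bB_{\cl{K}}$ for all $t<k$. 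Then
\[
\bl{w}^\top\widetilde{\bl{W}}\bl{w}=\sum_{t\ge k}\norm{\bB_{\cl{K}}^\top\bl{S}^t\bl{w}}^2\le\sum_{t\ge k}\mu^{2t}=\frac{\mu^{2k}}{1-\mu^2},
\]
using $\norm{\bl{S}^t\bl{w}}\le\mu^t$ and $\norm{\bB_{\cl{K}}}\le1$. This gives the first factor, for any $T$ including $T\to\infty$.

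Finally I would combine the two bounds, taking $\mu<1$ implicitly for the geometric series. The main obstacle is the first step: relating the minimal eigenvalue of the original Gramian to the symmetric one without losing more than the stated power of $\bA_{\max}/\bA_{\min}$. In particular, the comparison vector $\bl{D}^{-1}\bl{w}$ must be renormalized, and the weights $\bl{D}_{\cl{K}}$ absorbed. I would first try normalizing $\bl{D}$ so that its entries on $\cl{K}$ equal $1$, or its minimal entry equals $1$, and then tracking the two condition-number factors that result.
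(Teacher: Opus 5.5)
Your proposal is correct and matches the two-step structure implied by the cited Theorems~3.1 and~3.2 (the survey itself gives no proof): a diagonal-similarity comparison reduces $\lambda_{\min}(\bl{W}_{\cl{K},T})$ to the symmetric truncation-plus-geometric-tail bound, and a path-chaining estimate then bounds $\kappa(\bl{D})$ by the diameter. The bookkeeping you flagged resolves without loss: with $\bl{S}=\bl{D}\bA\bl{D}^{-1}$ the right test vector is $\bl{D}\bl{w}$ (not $\bl{D}^{-1}\bl{w}$), which gives $\lambda_{\min}(\bl{W}_{\cl{K},T})\le\bigl(\max_{k\in\cl{K}}d_k^2/\min_i d_i^2\bigr)\lambda_{\min}(\widetilde{\bl{W}}_{\cl{K},T})\le\kappa(\bl{D})^2\lambda_{\min}(\widetilde{\bl{W}}_{\cl{K},T})$ with no normalization issue because the ratio is scale-invariant; and since your chaining already yields $\kappa(\bl{D})^2\le(\bA_{\max}/\bA_{\min})^{\mathrm{diam}(G)}$ with $\bA_{\max}/\bA_{\min}\ge 1$, the stated exponent $2\,\mathrm{diam}(G)$ follows a fortiori with no further squaring (implicitly assuming $\mu<1$, $n_\mu\ge 1$, a connected $G$, and positive weights).
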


The key consequence is an approximate dichotomy.
When some conditions are satisfied (including that the weights are bounded, the network is DSS, a constant fraction of eigenvalues lies below some modulus $\mu<1$, and diameter grows sublinearly in $n$) the system is \emph{difficult to control}: the energy grows exponentially with $n$ for fixed control nodes.
Networks whose diameter grows linearly in $n$ (chains, rings) \emph{can} be easy to control, provided the edge weights are sufficiently anisotropic.
A ring with appropriately asymmetric weights can be controlled from a single node with energy bounded independently of $n$~\citep{bianchin2015}, a striking counterpoint to the intuition that complex networks are hard to control from a few nodes~\citep{pasqualetti2014}.

\subsubsection{Input placement and the longest control chain}
\label{sec:brain}

\cite{darbandi2024} brings a complementary perspective by studying how input node \emph{placement} affects control energy in brain networks derived from diffusion magnetic resonance imaging (MRI), modeled as 90-node graphs (AAL atlas).
They use the \emph{longest control chain} (LCC).
Then, $\mathrm{LCC} = \max_{v\in \cl{V}}\min_{w\in \cl{K}} d(v,w)$, a purely structural quantity shown to correlate strongly with control energy~\citep{chen2016,klickstein2018}.

A notable fact is that every brain network studied is structurally controllable from a single region after removing isolated nodes, but energy varies by orders of magnitude depending on which region is chosen.
Hub regions (superior frontal gyrus, precuneus) achieving LCC${}=3$ yield the lowest energy; primary regions in the auditory and visual cortex with LCC${}=5$ require far more~\citep{darbandi2024}.

To reduce LCC by adding inputs, a greedy hub-first strategy is inefficient: brain hubs form a ``rich club'' of densely interconnected regions, so adding more hubs does not efficiently cover peripheral nodes.
The minimum-cardinality input sets achieving LCC${}=1$ mix low-, medium-, and high-degree regions, comprising roughly 10\% of nodes, yet achieve control energy comparable to driving all 90 regions~\citep{darbandi2024}.

\subsubsection{Graph controllability under Laplacian dynamics}
\label{sec:ctrl-classes}

We now shift from ``how hard?'' to ``is it possible at all?'' and specialize to Laplacian dynamics on undirected graphs:
$\dot{\bx}=-\bl{L}\bx + \bl{b}\,u$,
where $\bl{L}=\bl{D}-\bA$ is the graph Laplacian matrix,
$\bl{D}$ is the weighted degree matrix,
and $\bl{b}\in\{0,1\}^n$ indicates which nodes receive a common broadcast signal $u\in\Real{}$.

Since $\bl{1}$ is always an eigenvector of $\bl{L}$, choosing $\bl{b}=\bl{1}$ always fails controllability when $n>1$.
\cite{aguilar2015} classify graphs by their controllability behavior across \emph{all} binary control vectors: a connected graph is \emph{essentially controllable} if $(\bl{L},\bl{b})$ is controllable for every $\bl{b}\in\{0,1\}^n\setminus\{\bl{0},\bl{1}\}$; \emph{completely uncontrollable} if $(\bl{L},\bl{b})$ is uncontrollable for every $\bl{b}$; and \emph{conditionally controllable} otherwise.
A useful structural observation is that controllability is invariant under binary complements: $(\bl{L},\bl{b})$ is controllable iff $(\bl{L},\bl{1}-\bl{b})$ is~\citep{aguilar2015}.

Remarkably, every essentially controllable graph must be asymmetric: if an automorphism transposes vertices $i,j$, then $\bl{b}=\be_i+\be_j$ is leader-symmetric and $(\bl{L},\bl{b})$ is uncontrollable.
For these graphs, the minimal controllability problem~\eqref{eq:minimal-ctrl} is trivial: any single standard basis vector is a solution.

More surprising is that asymmetry alone does not guarantee controllability for \emph{some} $\bl{b}$.
For instance,
the block graphs of asymmetric Steiner triple systems provide an infinite family of connected, asymmetric, completely uncontrollable graphs~\citep{aguilar2015}.
These are strongly regular (three distinct Laplacian eigenvalues), forcing uncontrollability by the PBH test.
For small $n$, complete uncontrollability requires a repeated eigenvalue ($n\le 7$), but starting at $n=8$, there exist completely uncontrollable graphs with \emph{all eigenvalues distinct}, the discrete geometry of $\{0,1\}^n$ can conspire against controllability even without spectral degeneracy.

For conditionally controllable graphs, a graph-theoretic condition identifies specific ``bad'' vectors.
A binary vector $\bl{b}$ is $(\alpha,\beta)$-homogeneous if every controlled node in $\cl{K}$ (leader) has exactly $\alpha$ non-controlled neighbors (followers) and every follower has $\beta$ leader neighbors.
For such $\bl{b}$, $\bl{L}^2\bl{b}=(\alpha+\beta)\bl{L}\bl{b}$, so $\dim{\mathrm{span}(\cup_{k\in\bb{N}}\bl{L}^k\bl{b})}=2$ and system $(\bl{L},\bl{b})$ is not controllable if $n\ge3$~\citep{aguilar2015}.

\subsubsection{Strong uncontrollability}
\label{sec:strong-unctrl}

\cite{aguilar2020} gives a structural mechanism forcing complete uncontrollability even with a simple Laplacian spectrum.
Two ingredients suffice.
The first is a maximal set of \emph{twin vertices}: vertices $u,v$ with $\cl{N}(u)\setminus\{v\}=\cl{N}(v)\setminus\{u\}$.
Each twin pair contributes an eigenvector $\be_u-\be_v$ orthogonal to any $\bl{b}$ with $b_u=b_v$, killing controllability for roughly half of all binary vectors.
A graph with a simple Laplacian spectrum can have at most $\lfloor(n{-}1)/2\rfloor$ twin pairs; a graph achieving this bound is called a \emph{twin graph}.

The second ingredient is a particular \emph{almost equitable partition} of the quotient graph induced by the twin partition.
This partition produces additional eigenvectors orthogonal to precisely the binary vectors that the aforementioned twin eigenvectors miss.

\begin{theorem}[Informal; {\cite[Theorems~IV.1--IV.2]{aguilar2020}}]
\label{thm:strong-unctrl}
If $G$ is a twin graph whose quotient graph $G_{\pi^*}$ admits an AEP of a specific form (grouping twin cells while isolating non-twin vertices), then $G$ is strongly uncontrollable.
\end{theorem}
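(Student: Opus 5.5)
By the PBH test, $(\bl{L},\bl{b})$ is uncontrollable exactly when some eigenvector of $\bl{L}$ is orthogonal to $\bl{b}$. The plan is therefore a covering argument over $\{0,1\}^n$. We exhibit a family of eigenvectors of $\bl{L}$ such that every binary vector $\bl{b}$ is orthogonal to at least one of them. Strong uncontrollability means complete uncontrollability with a simple Laplacian spectrum; the simple spectrum is part of the definition of a twin graph. So the only thing left to prove is that no $\bl{b}$ yields a controllable pair.

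\textbf{Step 1: twin eigenvectors.} Let $\{u_1,v_1\},\dots,\{u_r,v_r\}$ be the twin pairs, with $r=\lfloor (n-1)/2\rfloor$. For a twin pair $(u,v)$, rows $u$ and $v$ of $\bl{L}$ agree outside the coordinates $u,v$, and the degrees are equal. Hence $\bl{L}(\be_u-\be_v)=\lambda(\be_u-\be_v)$, with $\lambda=d_u$ or $d_u+1$ according to whether $u$ and $v$ are adjacent. This eigenvector is orthogonal to $\bl{b}$ whenever $\bl{b}_u=\bl{b}_v$. The only binary vectors not yet killed are those with $\bl{b}_{u_i}\neq\bl{b}_{v_i}$ for every $i$. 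On such a vector each twin cell carries exactly one leader, so the sum of $\bl{b}$ over each twin cell equals $1$.

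\textbf{Step 2: lifting quotient eigenvectors.} Let $\pi^*$ be the twin partition, with cells the twin pairs together with singletons. Take the AEP of the quotient graph $G_{\pi^*}$ assumed in Theorem~\ref{thm:strong-unctrl}. The standard lifting fact for almost equitable partitions says that a quotient Laplacian eigenvector $\bl{w}$ lifts to an eigenvector $\bl{P}\bl{w}$ of $\bl{L}$, where $\bl{P}$ is the characteristic matrix; we must verify it carries over to a partition of the quotient. We would first check that lifting twice (AEP cells to twin cells to vertices) gives an eigenvector of $\bl{L}$ that is constant on each twin cell. The check uses that within a twin cell the neighborhoods coincide, so $\pi^*$ is itself almost equitable. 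Then $\bl{b}^\top\bl{P}\bl{w}$ depends only on the cell sums of $\bl{b}$. For the surviving vectors these sums equal $1$ on twin cells and $\bl{b}_x\in\{0,1\}$ on the isolated non-twin vertices.

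\textbf{Step 3 (main obstacle).} We must show that the specific AEP form provides, for every assignment of the non-twin values, a lifted eigenvector $\bl{P}\bl{w}$ with $\bl{b}^\top\bl{P}\bl{w}=0$. Grouping twin cells isolates the non-twin vertices; since $r=\lfloor(n-1)/2\rfloor$, there are only one or two of them. The obstacle is finding quotient eigenvectors orthogonal to the cell-size pattern. Concretely, for the twin-cell part of each such vector, the entries weighted by cell multiplicities must cancel. We would first try eigenvectors of the AEP quotient that vanish on the isolated vertices and sum to zero over the grouped twin cells. Such a vector is orthogonal to $\bl{b}$ regardless of $\bl{b}_x$. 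It exists as soon as a group contains at least two twin cells and the quotient eigenvector equation admits a nontrivial solution supported there. This is exactly what the prescribed AEP form should guarantee, and it is where the detailed hypothesis of the theorem enters.

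Combining Steps 1–3 with PBH, every $\bl{b}$ is orthogonal to some eigenvector. Together with the simple spectrum, this gives strong uncontrollability.
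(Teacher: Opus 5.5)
Your Steps 1 and 2 are sound and match the mechanism the paper describes: twin eigenvectors $\be_u-\be_v$ dispose of every $\bl{b}$ with $\bl{b}_u=\bl{b}_v$, and the AEP must dispose of the rest. The gap is Step 3, which is the whole content of the theorem; you leave it as an expectation about what ``the prescribed AEP form should guarantee''. Moreover, the family you search in does not work in general. A lift $\bl{P}\bl{w}$ of an eigenvector of the AEP quotient is constant on every AEP cell. If its eigenvalue is nonzero, then $\bl{1}^\top\bl{P}\bl{w}=0$, because $\bl{1}^\top\bl{L}=0$. For a surviving $\bl{b}$, with exactly one leader per twin pair, this gives $\bl{b}^\top\bl{P}\bl{w}=\sum_x(\bl{b}_x-\tfrac12)\bl{w}_x$, summed over the isolated non-twin vertices $x$. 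So you would need quotient eigenvectors vanishing at the isolated vertices. That depends on how those vertices are wired to the groups and is not implied by the AEP hypothesis. The other reading of your sketch, a solution supported on a single group and summing to zero there, is identically zero for a vector that is constant on that group.

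The missing idea is to take the eigenvector from the orthogonal complement of the AEP cell space, not from its lifts. Let $T$ be the space of vectors constant on each twin pair. Let $W\subseteq T$ be spanned by the indicators $\bl{1}_{D_1},\dots,\bl{1}_{D_k}$ of the pulled-back groups (unions of twin pairs) and by $\be_x$ for each non-twin $x$. Both subspaces are $\bl{L}$-invariant, since they are ranges of characteristic matrices of almost equitable partitions of $G$. Because $\bl{L}$ is symmetric, $T\cap W^\perp$ is $\bl{L}$-invariant, of dimension $r-k$.

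Your remark that some group must contain at least two twin cells is exactly the needed nontriviality. It gives $k<r$, hence an eigenvector $\bl{q}\in T\cap W^\perp$. This eigenvector satisfies $\bl{q}_x=0$ at every non-twin $x$ and $\bl{1}_{D_j}^\top\bl{q}=0$ for every $j$. Hence, for every $\bl{b}$ with $\bl{b}_{u_i}\neq\bl{b}_{v_i}$ for all $i$,
\[
\bl{b}^\top\bl{q}=\sum_{i=1}^{r}\bl{q}_{u_i}=\frac{1}{2}\sum_{j=1}^{k}\bl{1}_{D_j}^\top\bl{q}=0 .
\]
One fixed eigenvector thus kills all surviving vectors simultaneously, whatever the values $\bl{b}_x$. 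Step 3 therefore needs no case analysis over the non-twin vertices.
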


Exhaustive enumeration shows no strongly uncontrollable graph exists for $n\le 7$; the phenomenon first appears at $n=8$ (10 graphs) and grows rapidly (1749 at $n=12$).
Interestingly, strong uncontrollability under the adjacency matrix appears absent for $n\le 10$~\citep{aguilar2020}.

As a byproduct, twin graphs show that even with simple eigenvalues, $\lfloor(n{-}1)/2\rfloor$ nodes may need to be controlled, a much stronger lower bound on the minimal controllability problem than the spectral multiplicity bound.

\subsection{Limitations and outlook}

The complexity of minimal controllability problems~\eqref{eq:minimal-ctrl} and~\eqref{eq:minimal-ctrl-robust},
while formally supported by their equivalence with the set multicover problem,
has so far limited the study to diagonal input matrices with freely assignable elements.
More study is required to extend properties and efficient algorithms to more general actuator selection problems;
more general frameworks such as nonlinear dynamics, codesign of actuators and control, and energy minimization require further theoretical investigation.
The same holds true for the minimal reachability problem.
Control energy is quite well understood for dynamics on graphs, where energy-aware metrics of the controllability Gramian relate to graph quantities, such as the diameter, which have a clear physical interpretation.
However, several results specific to network systems, such as the Laplacian dynamics,
do not extend to general linear time-invariant systems,
making it harder to precisely quantify control energy and controllability properties.
The formal link between controllability of linear systems and graph-theoretic properties of structured systems (which are discussed in \cref{sec:minimal-structural-controllability} and essentially represent dynamics where only the presence or absence of couplings between any two state variables is known) suggests that more results may be ported from controllability of network dynamics to more general linear systems without a physical graph interpretation.
	%!TEX root = ../main_survey_sparsity_control.tex
%, , , --
\section{Minimal structural controllability}
\label{sec:minimal-structural-controllability}
While \cref{sec:minimal-controllability} considers actuator selection for a fixed numerical realization of the system matrix, this section considers uncertainty in the numerical coupling strengths through structural controllability. We first introduce weak and strong structural controllability and formulate minimal leader-selection problems, and then review two main graph-theoretic approaches, zero forcing and distance-based methods.

\subsection{Motivation}
In many large-scale systems, the exact coupling strengths between nodes may be uncertain, time-varying, or only approximately known. This is common in biological networks, social and opinion dynamics, infrastructure systems, and multi-agent formations, where the presence or absence of interactions is often far easier to determine than their precise magnitudes. In these settings, controllability certificates based on a fixed, specific numerical realization may be fragile: a conclusion may change if edge weights are perturbed, estimated incorrectly, or happen to take special values that produce algebraic cancellations.
These considerations may render some design approaches reviewed in \cref{sec:minimal-controllability} poorly robust and inadequate to counteract changes of nominal parameters of the actual system at hand, or estimation errors thereof.

Structured system theory provides a way to study controllability under this type of uncertainty.
Instead of assuming that all elements of the system matrix $\bA$ are exactly known,
which is leveraged in \cref{sec:minimal-controllability} to design actuator sets of minimal cardinality,
a \textit{structured} model retains only the zero/nonzero pattern of the dynamics: zeros represent absent interactions, while nonzero elements encode known dynamic couplings and are numerically considered free parameters. The resulting question is no longer whether one particular numerical system is controllable, but whether controllability can be inferred from the interconnection pattern itself~\citep{ramos2022overview,menara2018structural,shali2021properties}.
Such interconnection patterns straightforwardly translate to graph representations whereby nodes correspond to state variables and edges to nonzero elements of the system matrix, respectively,
as established by~\cite{Lin1974tac-structuralControllability,Lin1977tac-minimalStructureControllability} who first discusses controllability of structured systems, \ie \emph{structural controllability}.
This viewpoint is especially attractive because algebraic controllability questions translate to graph-theoretic problems involving, \eg paths, matching, color-change rules, distances, and leader locations,
allowing us to exploit results in graph theory to perform control analysis and design.
Networked systems,
where a number of plants/agents are interconnected through control feedback or open-loop physical couplings,
make that equivalence particularly tangible since an interconnection pattern is physically present due to open-loop dynamical coupling, communication, or sensing.\footnote{The graph of the interconnection pattern and the system's network graph always coincide if each node holds a scalar state variable.}

Two major notions of structural controllability arise within this framework. \textit{Weak structural controllability (WSC)} asks whether there exists at least one admissible assignment of the free parameters making the system controllable~\citep{Lin1974tac-structuralControllability,dion2001generic,shields2003structural}. Equivalently, controllability holds for almost all choices of the free parameters whenever the structured system is generically controllable. This notion is powerful because it identifies whether the graph structure is capable of supporting controllability for typical parameter values. However, it does not rule out unusual parameter choices that render the system uncontrollable.

The second notion, \textit{strong structural controllability (SSC)}, requires controllability for every admissible assignment of the nonzero parameters~\citep{mayeda1979strong,reinschke1992strong,olesky1993qualitative,JARCZYK2010131,jarczyk2011strong,jia2020unifying,jia2019strong}.
This is a stronger and more conservative requirement, but it provides a robustness guarantee that is often desirable under uncertain or homogeneous coupling strengths. For example, in consensus-type networks or unweighted undirected graphs, repeated weights and symmetries can produce uncontrollable modes even when a generic (asymmetric) realization of the same sparsity pattern would be controllable. Similarly, if an adversary can manipulate coupling values to exploit the gap between generic and strong controllability, weak structural controllability alone may provide insufficient protection. SSC also provides a meaningful bound in the leader selection problem: the minimum number of leaders required for SSC gives an upper bound on the minimum input set needed to guarantee controllability across all realizations, while weak structural controllability provides the corresponding lower bound. Together, the two notions bracket the leader selection problem from both sides, making SSC a central quantity in the design of controllable structured systems.

From the perspective of sparse control, these ideas lead naturally to minimal input or leader-selection problems~\citep{yashashwi2019minimizing,joseph2024minimal,park2025network,xiang2019advances}.
Given a (structured) system's graph, one seeks a small set of nodes to actuate so that the resulting structured system is SSC, or, when full controllability cannot be achieved with a given leader set, one seeks to quantify how much of the state space is guaranteed to be controllable for all admissible parameter choices. 

\paragraph*{Outline:}
We present the formal setup and definitions in \cref{sec:minimal-structural-controllability-setup}. Then, we present two main graph-theoretic approaches to assess structural controllability and impose it by selecting nodes: zero forcing in \cref{sec:minimal-structural-controllability-zero-forcing} and distance-based characterizations in \cref{sec:minimal-structural-controllability-graph-distances}.

\subsection{Setup}
\label{sec:minimal-structural-controllability-setup}

Consider a directed graph $G=(\cl{V},\cl{E})$ where $|\cl{V}|=n$. Each node $v_i$ is associated with a scalar state $x_i$, and the system state is denoted by $\boldsymbol{x} = [x_1,\ldots,x_n]^\top \in \mathbb{R}^n$. We consider linear dynamics of the form \eqref{eq:system-ct}, where the system matrix $\boldsymbol{A}\in\mathbb{R}^{n\times n}$ is constrained by the graph structure, and the input matrix $\boldsymbol{B}$ specifies the nodes directly actuated by external inputs. The graph $G$ defines a \textit{qualitative class} of matrices, denoted by $\mathcal{A}(G)$, consisting of all matrices whose zero/nonzero off-diagonal pattern agrees with the edge set of $G$ \citep{monshizadeh2014zero,trefois2015zero}. Specifically, we define the \textit{qualitative class of $G$} as:
\begin{equation}
	\mathcal{A}(G)
	=
	\left\{
	\boldsymbol{A}\in\mathbb{R}^{n\times n}
	: [\bA]_{ij}\neq 0
	\Longleftrightarrow
	(v_j,v_i)\in \cl{E} \; \forall\, i\ne j,
	\right\}.
	\label{eq:qualitative_class}
\end{equation}
The diagonal elements of $\boldsymbol{A}$ are usually allowed to take arbitrary (nonzero) values.
Thus, each matrix in $\mathcal{A}(G)$ represents one admissible numerical realization of the same structured system.
An example with $n=5$ is illustrated in \cref{fig:structured_G}.

\begin{figure}
	\centering
	\includegraphics[width=1\linewidth]{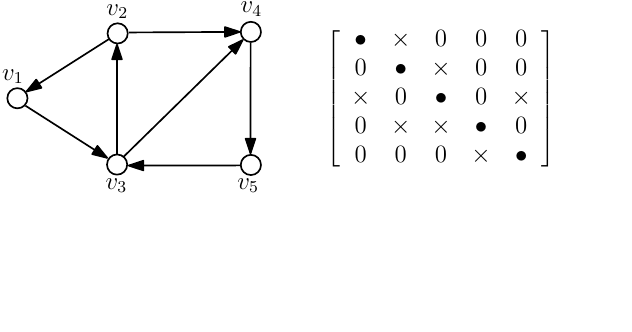}
	\caption{A graph $G = (\cl{V},\cl{E})$ and the corresponding structure satisfied by any $A \in \mathcal{A}(G)$. The elements marked as $\times$ are non-zero, and the elements on the diagonal can take any value.}
	\label{fig:structured_G}
\end{figure}

Let $\cl{V}_\ell = \{v_{\ell_1},\ldots,v_{\ell_m}\}\subseteq \cl{V}$ denote the set of input nodes, or leaders.
Similar to the setting in \cref{sec:minimal-controllability},
we consider the case where each input variable acts on one leader node. The corresponding input matrix is $\boldsymbol{B}_{\cl{V}_\ell} = 
\begin{bmatrix}
	\boldsymbol{e}_{\ell_1} &
	\boldsymbol{e}_{\ell_2} &
	\cdots &
	\boldsymbol{e}_{\ell_m}
\end{bmatrix}$, 
where $\boldsymbol{e}_{\ell}$ is the standard basis vector associated with node $v_{\ell}$. For a fixed realization $\boldsymbol{A}\in\mathcal{A}(G)$, controllability is characterized by the rank of the controllability matrix $\mathcal{C}(\boldsymbol{A},\boldsymbol{B}_{\cl{V}_\ell})$. The pair $(\boldsymbol{A},\boldsymbol{B}_{\cl{V}_\ell})$ is controllable if and only if rk$(\mathcal{C}(\boldsymbol{A},\cl{V}_\ell))=n$.

The graph $G$ with leader set $\cl{V}_\ell$ is \textit{strongly structurally controllable} if the pair $(\boldsymbol{A},\boldsymbol{B}_{\cl{V}_\ell})$ is controllable for every admissible realization $\boldsymbol{A}\in\mathcal{A}(G)$. Equivalently, the leader set $\cl{V}_\ell$ guarantees controllability independently of the numerical values assigned to the nonzero couplings. To quantify partial guarantees, one defines the \textit{dimension of the strong structurally controllable subspace} as the worst-case rank of the controllability matrix over all admissible realizations:
\begin{equation}
	\gamma(G,\cl{V}_\ell)
	=
	\min_{\boldsymbol{A}\in\mathcal{A}(G)}
	\text{rk}(\mathcal{C}(\boldsymbol{A},\cl{V}_\ell)).
	\label{eq:sscs_dimension}
\end{equation}
Thus, $\gamma(G,\cl{V}_\ell)$ measures the minimum number of controllable directions guaranteed by the graph and the leader set, regardless of the admissible edge weights. In particular, system $G$ is strongly structurally controllable from $\cl{V}_\ell$ if and only if $\gamma(G,\cl{V}_\ell)=n$.

\subsubsection*{Problem statements}
\label{sec:Problems_SSC}
The above formulation leads to two related sparse-control questions. 
The first is a certification problem, which can be stated as follows.
\begin{prob}[Strong structurally controllable subspace]
	\label{prob:sscs_lower_bound}
	Given a graph $G$ and a leader set $\cl{V}_\ell$, determine $\gamma(G,\cl{V}_\ell)$, or at least obtain strong graph-theoretic lower bounds of the form 
	\begin{equation}
		f(G,\cl{V}_\ell) \leq \gamma(G,\cl{V}_\ell).
		\label{eq:sscs_lower_bound}
	\end{equation}
\end{prob}
Since exact computation of $\gamma(G,\cl{V}_\ell)$ is generally difficult, much of the literature focuses on \cref{prob:sscs_lower_bound} and seeks ways to efficiently compute lower bounds based on graph distances, zero forcing, path covers, and matching structures.

The second is a minimal leader-selection problem,
formalized as follows.
\begin{prob}[Minimal leader selection for SSC]
	\label{prob:min_leader}
	Choose the sparsest set of leaders that guarantees strong structural controllability. In its complete-controllability form, this problem can be written as
	\begin{align}
		\label{eq:min_leader_full_ssc}
		\begin{split}
			\min_{\cl{V}_\ell\subseteq \cl{V}} &\qquad |\cl{V}_\ell|\\
			\text{subject to} &\qquad \gamma(G,\cl{V}_\ell)=n.
		\end{split}
	\end{align}
\end{prob}

As a more general formulation of \cref{prob:min_leader}, 
such as when SSC is too costly to achieve,
one may ask for the smallest leader set guaranteeing a
prescribed level $k\leq n$, \ie
\begin{align}
	\label{eq:min_leader_partial_ssc}
	\begin{split}
		\min_{\cl{V}_\ell\subseteq \cl{V}} &\qquad |\cl{V}_\ell|\\
		\text{subject to} &\qquad \gamma(G,\cl{V}_\ell)\geq k.
	\end{split}
\end{align}
The special case $k=n$ corresponds to full SSC, while smaller values of $k$ are useful when complete  controllability is unnecessary or too costly to achieve. The main graph-theoretic approaches reviewed in the next subsection provide complementary ways to address these two problems. 

We next present complementary graph-theoretic approaches to \cref{prob:sscs_lower_bound,prob:min_leader}: zero forcing and distance-based PMI sequences. Each provides an efficiently computable lower bound on the SSCS dimension $\gamma(G,\cl{V}_\ell)$, and each has a regime in which it dominates the others. 

\subsection{Zero Forcing and SSC}
\label{sec:minimal-structural-controllability-zero-forcing}

Zero forcing provides an elegant graph-theoretic characterization of SSC~\citep{monshizadeh2014zero,trefois2015zero,fallat2007minimum,mousavi2017structural,jia2020unifying,monshizadeh2015strong,yaziciouglu2022strong,van2019strong}. Its appeal is that it replaces a rank condition over an entire qualitative class of matrices with a simple coloring process on the underlying graph described below.

Let ${G}$ be a directed graph in which each node is colored either \emph{black} or \emph{white}. Starting from the initial set of black nodes and all other nodes as white, the zero-forcing process repeatedly applies the following rule: \emph{if a black node has exactly one white out-neighbor, then that out-neighbor is colored black (``forced'').} The set of black nodes obtained when no further color changes are possible is called the \textit{derived set}, denoted by $\mathrm{dset}(G, \cl{V}_\ell)$. If this derived set is the entire vertex set, then the initial set of black nodes is called a \emph{zero forcing set} (ZFS). The minimum cardinality of a zero forcing set is the zero forcing number of the graph. This coloring process is illustrated in \cref{fig:ZFS}.

\begin{figure}
	\centering
	\includegraphics[width=1\linewidth]{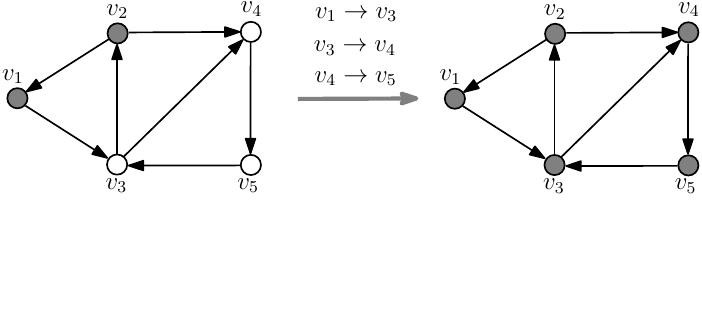}
	\caption{$\cl{V}_\ell = \{v_1,v_2\}$ is a ZFS of given $G$. Initially, $v_3$ is the only white out-neighbor of $v_1$, so $v_1$ ``forces'' $v_3$ ($v_1\rightarrow v_3)$, changing its color to black. After repeated application of color rule, all the nodes are colored black, \ie $\mathrm{dset}(G, \cl{V}_\ell)$.}
	\label{fig:ZFS}
\end{figure}

The central result connecting zero forcing to strong structural controllability states that, for the qualitative class associated with a directed graph, zero forcing exactly characterizes leader sets that guarantee controllability for all admissible edge-weight realizations.

\begin{theorem}[{\citealp[Theorem~IV.4]{monshizadeh2014zero}}]
	\label{thm:zfs_ssc}
	Let $G=(\cl{V},\cl{E})$ be a directed graph, $\mathcal{A}(G)$ be the
	qualitative class induced by $G$, and $\cl{V}_\ell\subseteq \cl{V}$ be a set of leaders with the corresponding input matrix
	$\boldsymbol{B}_{\cl{V}_\ell}$. Then the following statements are equivalent:
	\begin{enumerate}
		\item The pair $(\boldsymbol{A},\boldsymbol{B}_{\cl{V}_\ell})$ is
		controllable for every $\boldsymbol{A}\in\mathcal{A}(G)$.
		
		\item The graph $G$ is SSC with a leader set $\cl{V}_\ell$.
		
		\item The leader set $\cl{V}_\ell$ is a zero forcing set of $G$.
	\end{enumerate}
	Consequently, the minimum number of leader (input) nodes needed to make $G$ SSC is the zero forcing number of $G$.
\end{theorem}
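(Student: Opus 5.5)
We would prove the equivalence of (1) and (3) directly; (1)$\Leftrightarrow$(2) is just the definition of SSC. The natural tool is the PBH test from \cref{sec:pbh}, in its general form. The pair $(\bA,\bB_{\cl{V}_\ell})$ is uncontrollable if and only if there exist $\lambda\in\Complex{}$ and $\bv\neq 0$ with $\bv^\top(\bA-\lambda I)=0$ and $\bv^\top\bB_{\cl{V}_\ell}=0$. The second condition simply says $\bv_{\ell}=0$ for every leader $v_\ell\in\cl{V}_\ell$.

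Since the diagonal of $\bA\in\mathcal{A}(G)$ is free, $\bA-\lambda I$ is again a matrix whose off-diagonal pattern is dictated by $G$. We may therefore absorb $\lambda$ and reason about left null vectors of matrices in the class. Write the condition $\bv^\top \bA' = 0$ column by column, where $\bA'=\bA-\lambda I$. For each node $j$ we get $\sum_i \bv_i[\bA']_{ij}=0$, and the off-diagonal nonzeros in column $j$ correspond to edges between $v_j$ and its neighbours. The orientation must be chosen so that it matches the forcing rule with out-neighbours; this needs care given the convention $[\bA]_{ij}\neq0\Leftrightarrow(v_j,v_i)\in\cl{E}$.

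\textbf{Sufficiency (ZFS $\Rightarrow$ SSC).} Suppose $\cl{V}_\ell$ is a zero forcing set and $\bv$ is a PBH witness. The entries of $\bv$ vanish on the black (leader) nodes. If a black node $u$ has exactly one white out-neighbour $w$, the scalar equation attached to $u$ involves only $\bv_u$, the zero entries $\bv_k$ on black neighbours, and $\bv_w$ multiplied by a nonzero coefficient. This equation then forces $\bv_w=0$. By induction along the forcing sequence, $\bv$ vanishes on $\mathrm{dset}(G,\cl{V}_\ell)=\cl{V}$, so $\bv=0$, which is a contradiction. This works uniformly for all $\bA$ and $\lambda$.

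\textbf{Necessity (SSC $\Rightarrow$ ZFS).} Suppose the derived set $\cl{D}=\mathrm{dset}(G,\cl{V}_\ell)\neq\cl{V}$. We must construct a single realization $\bA\in\mathcal{A}(G)$ and a vector $\bv$ supported on $\cl{V}\setminus\cl{D}$ with $\bv^\top\bA=0$, which takes $\lambda=0$ after shifting the diagonal.

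By maximality of $\cl{D}$, every black node has either zero or at least two white out-neighbours. The plan is to set $\bv_w=1$ on all white nodes and then choose weights as follows:
\begin{enumerate}
\item For each black node with at least two white neighbours, pick weights on its edges to white nodes that sum to zero. Nonzero weights with zero sum exist precisely because there are at least two such edges.
\item For each white node, the equation involves the diagonal entry, which is free. Choose it to cancel the contributions from the other white nodes, setting any remaining edge weights arbitrarily nonzero.
\end{enumerate}
The resulting $\bv$ is a nonzero left null vector that vanishes on the leaders, so PBH gives uncontrollability, contradicting (1).

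\textbf{Main obstacle.} The delicate step is this necessity construction. It requires a single realization that satisfies all the equations simultaneously, with the off-diagonal entries nonzero where required. I would handle it by fixing all edges not touching white nodes arbitrarily, then assigning the zero-sum weights per black node, and finally using the free diagonal entries to fix the white nodes' equations. The point to check is that these choices do not interact, since each edge weight appears in only one column equation.

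The final claim, that the minimum number of leaders equals the zero forcing number, then follows immediately by minimizing over $\cl{V}_\ell$.
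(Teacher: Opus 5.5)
Your proposal is correct and follows the standard argument behind the cited result; the survey itself only quotes the theorem without proof. That argument is PBH with left eigenvectors, zeros propagating along forcing chains for sufficiency, and for necessity a realization whose left null vector is the indicator of the complement of the derived set, with zero-sum weights on black-to-white edges and the free diagonal balancing the white columns. The orientation you flag checks out: under $[\bA]_{ij}\neq0\Leftrightarrow(v_j,v_i)\in\cl{E}$, the $u$th column of $\bv^\top(\bA-\lambda I)$ involves only $\bv_u$ and the entries of $\bv$ on out-neighbours of $u$. Complex $\lambda$ is also harmless, because the sufficiency step never uses the diagonal entries.
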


\cref{thm:zfs_ssc} gives a clean combinatorial interpretation of the sparsest leader selection: instead of checking controllability over all matrices in the qualitative class, one checks whether the selected leaders can force all vertices through the zero-forcing rule. Thus, results on zero forcing numbers immediately yield a corresponding result on the minimum number of leaders needed for strong structural controllability. For instance, the complete characterization of the minimum ZFS for various graph families directly identifies the sparsest leader sets needed to render those systems strongly structurally controllable.

In scenarios where the selected leader set $\cl{V}_\ell$ does not form a ZFS, $G$ is not strongly structurally controllable. However, the zero forcing framework still provides a deterministic lower bound on the dimension of the strong structurally controllable subspace~\citep{monshizadeh2014zero}.

\begin{theorem}
	\label{thm:dset_lower_bound}
	For any directed graph $G=(\cl{V},\cl{E})$ and any leader set $\cl{V}_\ell\subseteq \cl{V}$,
	\[
	|\mathrm{dset}(G, \cl{V}_\ell)| \le \gamma(G,\cl{V}_\ell).
	\]
\end{theorem}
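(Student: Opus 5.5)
Fix an arbitrary realization $\bA\in\mathcal{A}(G)$ and prove $\rk{\mathcal{C}(\bA,\bB_{\cl{V}_\ell})}\ge|\mathrm{dset}(G,\cl{V}_\ell)|$; minimizing over $\mathcal{A}(G)$ then gives the claim. Since $\col{\mathcal{C}(\bA,\bB_{\cl{V}_\ell})}$ is the smallest $\bA$-invariant subspace containing $\col{\bB_{\cl{V}_\ell}}$, it suffices to work in the dual. The rank deficiency equals $\dim$ of the space $\mathcal{W}$ of vectors $\bl{w}$ with $\bl{w}^\top\bA^k\bB_{\cl{V}_\ell}=0$ for all $k$. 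Equivalently, $\mathcal{W}$ is the largest $\bA^\top$-invariant subspace annihilating the leader coordinates. So the goal becomes $\dim\mathcal{W}\le n-|\mathrm{dset}(G,\cl{V}_\ell)|$.

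The key step is to show that every $\bl{w}\in\mathcal{W}$ vanishes on every coordinate in $\mathrm{dset}(G,\cl{V}_\ell)$. Then $\mathcal{W}$ lies in the coordinate subspace supported on the white nodes, giving the dimension bound. I would prove this by induction along the forcing sequence. Initially, $\bl{w}_{\ell}=0$ for all leaders $\ell$ because $\bl{w}^\top\bB_{\cl{V}_\ell}=0$. For the inductive step, suppose all black nodes currently satisfy $\bl{w}_j=0$ for every $\bl{w}\in\mathcal{W}$, and a black node $v_j$ forces its unique white out-neighbor $v_i$. Because $\mathcal{W}$ is $\bA^\top$-invariant, $\bA^\top\bl{w}\in\mathcal{W}$, so $(\bA^\top\bl{w})_j=0$ by the hypothesis. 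Expanding gives $\sum_i [\bA]_{ij}\bl{w}_i=0$. The nonzero off-diagonal entries $[\bA]_{ij}$ occur exactly for out-neighbors $v_i$ of $v_j$, by \eqref{eq:qualitative_class}. The diagonal term vanishes because $\bl{w}_j=0$, whatever the diagonal entry is. The black out-neighbors contribute zero. Only the single white out-neighbor $v_i$ remains, with $[\bA]_{ij}\neq0$, which forces $\bl{w}_i=0$.

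The main point of care is the index convention. I must check that the forcing rule's "out-neighbor" matches the row/column pattern in \eqref{eq:qualitative_class}, so that the relevant equation comes from $\bA^\top$ acting on the left null vectors. If the convention were reversed, the argument would need the PBH-style statement on the other side, but the structure is identical. A second subtlety is that the inductive hypothesis must hold for all of $\mathcal{W}$ simultaneously, not for a single $\bl{w}$. This is what allows applying it to $\bA^\top\bl{w}$, and it is why I work with the invariant subspace $\mathcal{W}$ rather than with individual eigenvectors. Arbitrary diagonal entries cause no trouble, since they only multiply the already-zero $\bl{w}_j$.

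Concluding, the induction gives $\mathcal{W}\subseteq\{\bl{w}:\bl{w}_i=0\ \forall v_i\in\mathrm{dset}(G,\cl{V}_\ell)\}$. Hence $\dim\mathcal{W}\le n-|\mathrm{dset}(G,\cl{V}_\ell)|$, and therefore $\rk{\mathcal{C}(\bA,\bB_{\cl{V}_\ell})}=n-\dim\mathcal{W}\ge|\mathrm{dset}(G,\cl{V}_\ell)|$ for every $\bA\in\mathcal{A}(G)$. Taking the minimum in \eqref{eq:sscs_dimension} gives the theorem. As a consistency check, the case $\mathrm{dset}=\cl{V}$ recovers the sufficiency direction of \cref{thm:zfs_ssc}.
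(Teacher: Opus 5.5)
Your proof is correct. With the paper's convention $[\bA]_{ij}\neq0\iff(v_j,v_i)\in\cl{E}$, the forcing step is exactly the relation $(\bA^\top\bl{w})_j=\sum_k[\bA]_{kj}\bl{w}_k=0$. Carrying the inductive hypothesis over the whole $\bA^\top$-invariant subspace $\mathcal{W}$, rather than over individual left eigenvectors as in the usual PBH argument, is what turns the controllability statement into a dimension bound.

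The survey gives no proof of its own and only attributes the result to \cite{monshizadeh2014zero}, so there is no argument to compare against. The same proof also runs in primal form: show by induction that $\be_i\in\col{\mathcal{C}(\bA,\bB_{\cl{V}_\ell})}$ for every $v_i\in\mathrm{dset}(G,\cl{V}_\ell)$, using invariance and the expansion of $\bA\be_j$. That is the orthogonal-complement restatement of your claim, and it avoids passing to $\mathcal{W}$.
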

The bound becomes exact when $\cl{V}_\ell$ is a ZFS of $G$, in which case $\gamma(G,\cl{V}_\ell)= |V|$.

In some models, the admissible system matrices carry additional structure beyond the zero/nonzero pattern encoded by the qualitative class. A prominent example is the case of undirected graphs, which can be identified with directed graphs having symmetric edge sets, \ie $(i,j)\in \cl{E} \iff (j,i)\in \cl{E}$. In this case, a natural model is the symmetric subclass of $\mathcal{A}(G)$, where reciprocal edge weights are constrained to be equal, \ie $[\bA]_{ij}=[\bA]_{ji}$.
\begin{equation}
	\label{eq:symmetric_subclass}
	\mathcal{A}_s({G})
	\;=\;
	\{\, {\bA} \in \mathcal{A}({G}) : {\bA}^{\top} = {\bA} \,\}
	\;\subseteq\; \mathcal{A}({G}).
\end{equation}

This subclass contains the widely studied weighted Laplacian and adjacency matrices. The ZFS condition of Theorem~\ref{thm:zfs_ssc} is automatically \emph{sufficient} for controllability over any subclass of~$\mathcal{A}({G})$. The nontrivial point is necessity: for the symmetric subclass, \cite{monshizadeh2014zero} showed that zero forcing remains an exact characterization.
\begin{theorem}[{\citealp[Theorem~IV.8]{monshizadeh2014zero}}]\label{thm:zfs_ssc_undirected}
	Let ${G}=({V},{E})$ be an undirected graph and ${V}_\ell\subseteq{V}$ a set of leaders with corresponding input matrix $\boldsymbol{B}_{{V}_\ell}$. The following are
	equivalent:
	\begin{enumerate}
		\item The pair $(\boldsymbol{A},\boldsymbol{B}_{{V}_\ell})$ is controllable for every $\boldsymbol{A}\in\mathcal{A}_s({G})$;
		\item The pair $(\boldsymbol{A},\boldsymbol{B}_{\cl{V}_\ell})$ is
		controllable for every $\boldsymbol{A}\in\mathcal{A}(G)$; that is,
		$G$ is SSC from $\cl{V}_\ell$ over the full qualitative class.
		\item The leader set $\cl{V}_\ell$ is a zero forcing set of $G$.
	\end{enumerate}
\end{theorem}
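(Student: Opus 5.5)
The plan is to prove the cycle of implications (3)$\Rightarrow$(2)$\Rightarrow$(1)$\Rightarrow$(3). The implication (3)$\Rightarrow$(2) is exactly the direction "zero forcing set implies SSC" of \cref{thm:zfs_ssc}, applied to the directed graph with symmetric edge set that represents the undirected $G$. The implication (2)$\Rightarrow$(1) is immediate, because $\mathcal{A}_s(G)\subseteq\mathcal{A}(G)$ by \eqref{eq:symmetric_subclass}. All the work is therefore in (1)$\Rightarrow$(3), which I would prove by contraposition.

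Suppose $\cl{V}_\ell$ is not a zero forcing set. Let $\cl{D}=\mathrm{dset}(G,\cl{V}_\ell)$ be the derived set and $\cl{W}=\cl{V}\setminus\cl{D}\neq\emptyset$ the set of white nodes. Because the forcing process has stalled, every black node $i\in\cl{D}$ has either zero or at least two neighbors in $\cl{W}$. I will build $\bA\in\mathcal{A}_s(G)$ and a vector $\bv\neq0$ such that $\bA\bv=0$ and $\bv$ vanishes on $\cl{D}$. Since $\cl{V}_\ell\subseteq\cl{D}$, this gives $\bv^\top\bB_{\cl{V}_\ell}=0$. Since $\bA$ is symmetric, $\bv$ is also a left eigenvector for the eigenvalue $0$, so the PBH test shows that $(\bA,\bB_{\cl{V}_\ell})$ is uncontrollable, contradicting (1).

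The construction is as follows. Set $\bv_j=1$ for $j\in\cl{W}$ and $\bv_j=0$ for $j\in\cl{D}$. Give weight $1$ to every edge with both endpoints in $\cl{D}$ and to every edge with both endpoints in $\cl{W}$. For the black--white edges, treat each black node $i$ with $k\ge2$ white neighbors separately: assign weights $1,\dots,1,-(k-1)$ to its $k$ edges into $\cl{W}$. Every one of these weights is nonzero, and they sum to zero.

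Each black--white edge has exactly one black endpoint, so these assignments never conflict. The symmetric entries $[\bA]_{ij}=[\bA]_{ji}$ are therefore well defined, and the off-diagonal pattern matches $G$.

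Now check $\bA\bv=0$ row by row.
\begin{itemize}
\item For a black row $i$, the entry is $(\bA\bv)_i=\sum_{j\in\cl{W}\cap\cl{N}_i}[\bA]_{ij}$. This is either an empty sum or the zero-sum assignment above, so it vanishes.
\item For a white row $i$, the entry is $[\bA]_{ii}+\sum_{j\in\cl{W}\cap\cl{N}_i}[\bA]_{ij}$. Setting the free diagonal entry to $[\bA]_{ii}=-\sum_{j\in\cl{W}\cap\cl{N}_i}[\bA]_{ij}$ makes it zero.
\end{itemize}
The diagonal entries of black nodes can be chosen arbitrarily.

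The step I expect to need the most care is ensuring that the construction stays inside the admissible class. The argument above relies on the diagonal being unconstrained, including allowed to be zero, as stated after \eqref{eq:qualitative_class}. If diagonal entries were instead required to be nonzero, I would replace the eigenvalue $0$ by a generic shift $\lambda$. I would then take $\bA'=\bA+\lambda I$ with $\lambda$ chosen so that no diagonal entry of $\bA'$ vanishes. This leaves the off-diagonal pattern unchanged and gives $\bA'\bv=\lambda\bv$ with the same $\bv$, so the PBH argument goes through unchanged.
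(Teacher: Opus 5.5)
Your proof is correct and follows essentially the same route as the paper's remark and the cited result: sufficiency comes from \cref{thm:zfs_ssc} through the inclusion $\mathcal{A}_s(G)\subseteq\mathcal{A}(G)$. Necessity comes from a symmetric realization built from the stalled derived set whose kernel contains the indicator vector of the white vertices, so the PBH test fails; your per-black-node zero-sum weights, Laplacian-type diagonal on the white block, and diagonal-shift remark supply all the needed details.
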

This equivalence should be interpreted carefully when additional constraints are imposed on the symmetric subclass. For example, one may further require all nonzero off-diagonal elements to have the same sign, as in unsigned adjacency- or Laplacian-type dynamics. For such more restrictive subclasses, the ZFS condition remains sufficient. However, it is no longer necessary in general: a leader set may render the Laplacian dynamics controllable even when it is not a zero forcing set of the underlying graph.

\subsubsection*{ZFS Computation} Given a graph ${G}$ and a candidate leader set $\cl{V}_\ell$, verifying if $\cl{V}_\ell$ is a zero forcing set can be done efficiently by directly simulating the color-change rule. However, computing a \emph{minimal} ZFS is NP-hard in general~\citep{aazami2008hardness,trefois2015zero}.
Among exact methods, the wavefront algorithm by \cite{brimkov2019computational} is one of the most effective in practice.
Other competitive exact approaches based on integer programming, satisfiability (SAT) formulations, and branch-and-bound methods have also been proposed, with performance depending strongly on the structure and size of the graph~\citep{brimkov2019computational,brimkov2021improved,agra2019computational,row2011zero}.
Although effective on small to moderate-sized instances, the worst-case complexity of all these methods remains exponential, limiting their scalability to large-scale systems.
This computational difficulty has motivated the use of scalable heuristics.
A common greedy strategy iteratively adds the node that maximizes the size of the derived set, often followed by a redundancy-removal step~ \citep{brimkov2019computational,cameron2026approximation}.
Such methods tend to perform well empirically, but they can be arbitrarily far from optimal on adversarial instances.
For special graph classes, however, efficient exact algorithms are available; for example, minimum ZFS can be computed in $O(n)$ time for trees~\citep{abbas2023zero}.
More recently, a game-theoretic formulation as a potential game~\citep{abbas2023zero} and graph-learning-based methods~\citep{ahmad2023graph} have been proposed for computing small ZFSs in large graphs.
In parallel, several works have studied connections between the zero forcing number and other graph parameters, including vertex cover number, spectral radius, girth, degree, matchings, and path covers~ \citep{barioli2010zero,gentner2016extremal,fast2018effects,kalinowski2019zero,liang2022extremal,cameron2026approximation}).
These connections are useful both for deriving bounds on zero forcing number and for designing efficient leader-selection schemes in structured system classes.

%, , --
\subsection{Graph Distances and SSC}
\label{sec:minimal-structural-controllability-graph-distances}

Graph distances provide another useful lens for studying structural controllability in graphs~ \citep{yaziciouglu2016graph,van2017distance,zhang2013upper,yaziciouglu2020strong}).
Distance-based methods use shortest-path information from the leaders to the remaining nodes. The basic intuition is the following: if different nodes have sufficiently distinct distance profiles with respect to the leaders, then the effect of leader inputs on those nodes appears at different powers of the system matrix, producing linearly independent columns in the controllability matrix. Turning this intuition into a quantitative bound on $\gamma(G,\cl{V}_\ell)$ requires three ingredients: a notion of distance, a subclass of realizations on which distance information faithfully reflects the graph, and a combinatorial object called a pseudo-monotonically increasing (PMI) sequence.

\subsubsection*{Distance and distance-information preserving subclass}
Recall that the distance $d(v_i,v_j)$ is the length of the shortest directed path from $v_i$ to $v_j$.
Under our convention that $[\boldsymbol{A}]_{ij}\neq 0$ if and only if there is an edge from $v_j$ to $v_i$, the $(i,j)$th element of $\boldsymbol{A}^k$ is the weighted sum of all directed walks of length $k$ from $v_j$ to $v_i$. In particular, if
$k<d(v_j,v_i)$, then no such walk exists and $[\boldsymbol{A}^k]_{ij}=0$ for every realization $\boldsymbol{A}\in\mathcal{A}(G)$. This observation is the basis for distance-based controllability
bounds.

The distance-based approach is most naturally stated for a subclass of the qualitative class in which shortest-path information is faithfully preserved by the powers of $\boldsymbol{A}$~\citep{yaziciouglu2016graph,van2017distance}.
This subclass consists of
\emph{distance-information preserving matrices} and is defined as
\begin{multline*}
	\mathcal{A}_d(G) = \bigl\{\, \boldsymbol{A}\in\mathcal{A}(G) :
	[\boldsymbol{A}^{d(v_j,v_i)}]_{ij}\neq 0 \\
	\text{for every pair } v_i,v_j\in V
	\text{ with } d(v_j,v_i)<\infty \,\bigr\}.
\end{multline*}
Equivalently, for every pair of nodes connected by at least one directed path, the total weighted contribution of all shortest directed paths from $v_j$ to $v_i$ does not vanish. Thus, $\mathcal{A}_d(G)$ rules out degenerate weight assignments in which different shortest paths cancel each other in the corresponding element of $\boldsymbol{A}^{d(v_j,v_i)}$.

Although $\mathcal{A}_d(G)$ is more restrictive than $\mathcal{A}(G)$, it is still an important and rich subclass. For
example, standard weighted Laplacian- and adjacency-type matrices are contained in $\mathcal{A}_d(G)$. More generally, any
$\boldsymbol{A}\in\mathcal{A}(G)$ whose off-diagonal nonzero elements all have the same sign belongs to $\mathcal{A}_d(G)$.
However,
such a uniform sign condition is sufficient but not necessary for membership in $\mathcal{A}_d(G)$~\citep{yaziciouglu2022strong}.
Indeed, there are graph classes for which every admissible realization in $\mathcal{A}(G)$ is distance-information preserving. For example, $\mathcal{A}_d(G)=\mathcal{A}(G)$ when $G$ is geodetic, \ie when there is a unique shortest path between any two reachable nodes.

\subsubsection*{Distance-to-leaders vectors and PMI sequences} Now consider a leader set $\cl{V}_\ell = \{\ell_1, \ldots, \ell_m\} \subseteq \cl{V} $. For each node $v_i\in V$, define its \emph{distance-to-leaders (DL)} vector 
$\boldsymbol{D}_i$ as
\[
D_i
=
\begin{bmatrix}
	d(\ell_1,v_i) &
	d(\ell_2,v_i) &
	\cdots &
	d(\ell_m,v_i)
\end{bmatrix}^{\top}.
\]
Thus, the $j$th element of $D_i$ is the distance from leader $\ell_j$ to node $v_i$. The key combinatorial object in the distance-based method is a \textit{pseudo-monotonically increasing sequence}, or \textit{PMI sequence}, of distance-to-leaders vectors.

A sequence $(\boldsymbol{D}_{i_1}, \boldsymbol{D}_{i_2}, \ldots, \boldsymbol{D}_{i_K})$ of distinct DL vectors is
PMI if for each $k \in \{1, \ldots, K\}$ there exists a coordinate $\pi(k) \in \{1, \ldots, m\}$ such that 
\begin{equation}\label{eq:pmi-rule}
	[\boldsymbol{D}_{i_k}]_{\pi(k)} \;<\; [\boldsymbol{D}_{i_l}]_{\pi(k)}
	\qquad \text{for every } l > k.
\end{equation}
We say $\boldsymbol{D}_{i_k}$ \emph{satisfies the PMI property at coordinate} $\pi(k)$.

In words, a sequence of distance-to-leaders vectors is called PMI if, for each vector in the sequence, there is at least one coordinate in which that vector is strictly smaller than all vectors that appear later in the sequence. Equivalently, each vector must have some leader with respect to which it is closer than every subsequent node in the sequence. An example of DL vectors is illustrated in Figure~\ref{fig:PMI}, where a PMI sequence of length six can be constructed as,

\begin{equation}
	\mathcal{D} = \left\lbrace
	\left[\begin{array}{cccc} 3\\ \mathbf{0}
	\end{array}\right],
	\left[\begin{array}{cccc} \mathbf{0}\\4
	\end{array}\right] ,
	\left[\begin{array}{cccc} \mathbf{1}\\4
	\end{array}\right] ,
	\left[\begin{array}{cccc} 2\\\mathbf{1}
	\end{array}\right] ,
	\left[\begin{array}{cccc} \mathbf{3}\\2
	\end{array}\right] ,
	\left[\begin{array}{cccc} 4\\3
	\end{array}\right]\right\rbrace.
\end{equation}
Here, the coordinates $\pi(k)$ satisfying the PMI property in \eqref{eq:pmi-rule} are highlighted in bold font.

\begin{figure}
	\centering
	\includegraphics[width=0.85\linewidth]{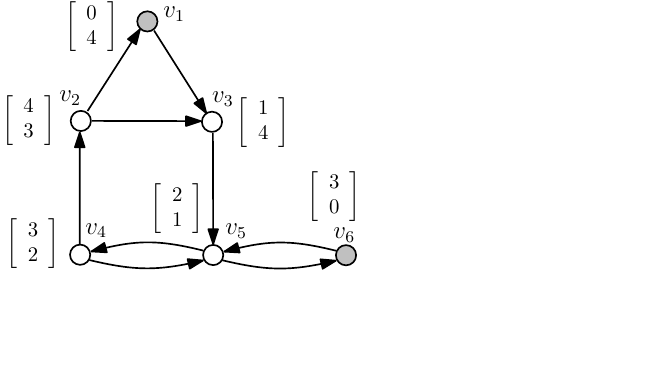}
	\caption{A graph with two leaders, $\cl{V}_\ell = \{v_1,v_6\}$, and the corresponding distance-to-leader (DL) vectors.}
	\label{fig:PMI}
\end{figure}

Now, let $\delta({G}, {V}_\ell)$ denote the length of the longest PMI sequence of DL vectors with the leader set $\cl{V}_\ell$. The main result is the following.

\begin{theorem}[{\citealp[Theorem~3.2]{yaziciouglu2016graph}}]
	\label{thm:distance-bound}
	Let ${G} = ({V}, E)$ be a directed graph and ${V}_\ell \subseteq {V}$ be a leader set with the corresponding input matrix $B_{\cl{V}_\ell}$, then
	\begin{equation}
		\delta(G, \cl{V}_\ell)
		\;\leq\;
		\min_{\boldsymbol{A} \in \mathcal{A}_d({G})}
		\mathrm{rk}\bigl(\mathcal{C}(\boldsymbol{A}, \boldsymbol{B}_{\cl{V}_\ell})\bigr)
	\end{equation}
	where $\mathcal{C}(\boldsymbol{A}, \boldsymbol{B}_{\cl{V}_\ell})$ is the controllability matrix of system $(\boldsymbol{A}, \boldsymbol{B}_{\cl{V}_\ell})$.
\end{theorem}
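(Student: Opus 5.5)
Fix an arbitrary $\bA\in\mathcal{A}_d(G)$ and a PMI sequence $(\bl{D}_{i_1},\dots,\bl{D}_{i_K})$ of maximal length $K=\delta(G,\cl{V}_\ell)$. The plan is to exhibit a $K\times K$ submatrix of a matrix whose rank equals $\rk{\cl{C}(\bA,\bB_{\cl{V}_\ell})}$ and show that it is nonsingular. Since $\bA$ is arbitrary, the minimum over $\mathcal{A}_d(G)$ is then at least $K$.

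\textbf{Setup.} For each $k$, let $\pi(k)$ be the coordinate at which $\bl{D}_{i_k}$ has the PMI property, and set $d_k=[\bl{D}_{i_k}]_{\pi(k)}=d(\ell_{\pi(k)},v_{i_k})$. This distance is finite: it is strictly smaller than the corresponding entries of the later vectors, and the last vector in the sequence can be handled by choosing $\pi(K)$ with a finite entry, which exists whenever $v_{i_K}$ is reachable from some leader. Consider the column $\bl{c}_k=\bA^{d_k}\be_{\ell_{\pi(k)}}$. Its $i$th entry is $[\bA^{d_k}]_{i,\ell_{\pi(k)}}$. The columns $\bl{c}_k$ lie in the column space of the Krylov matrix $[\bB,\bA\bB,\bA^2\bB,\dots]$, whose rank equals $\rk{\cl{C}(\bA,\bB_{\cl{V}_\ell})}$ by Cayley--Hamilton (powers beyond $n-1$ add nothing). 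Moreover, $d_k\le n-1$ because it is a shortest-path length, so each $\bl{c}_k$ is in fact a column of $\cl{C}$ itself.

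\textbf{Triangular structure.} Form the $K\times K$ matrix $\bl{M}$ with $[\bl{M}]_{lk}=[\bl{c}_k]_{i_l}=[\bA^{d_k}]_{i_l,\ell_{\pi(k)}}$. The diagonal entries are $[\bA^{d(\ell_{\pi(k)},v_{i_k})}]_{i_k,\ell_{\pi(k)}}$, which are nonzero precisely by the definition of $\mathcal{A}_d(G)$. For $l>k$, the PMI property gives $d(\ell_{\pi(k)},v_{i_l})>d_k$. As observed before the statement, $[\bA^{s}]_{ij}=0$ whenever $s<d(v_j,v_i)$, since no walk of length $s$ exists. Hence $[\bl{M}]_{lk}=0$ for all $l>k$, so $\bl{M}$ is upper triangular with nonzero diagonal and is therefore nonsingular. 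The columns $\bl{c}_1,\dots,\bl{c}_K$ are then linearly independent, and $\rk{\cl{C}(\bA,\bB_{\cl{V}_\ell})}\ge K$.

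\textbf{Main obstacle.} The diagonal and triangularity steps are routine once set up. The delicate part is the bookkeeping around infinite distances: the last element of a PMI sequence, or any element whose PMI coordinate is $\infty$, must still be assigned a finite coordinate. The first thing to try is to argue that a maximal PMI sequence can be taken to consist only of nodes reachable from the leaders. Unreachable nodes have all-$\infty$ DL vectors, and such a vector can satisfy the PMI property only in last position, and only trivially. So either exclude it from the sequence, or adopt the convention that a PMI coordinate must be finite, which matches the intended definition. With this convention, each $d_k$ is finite and the argument above goes through.
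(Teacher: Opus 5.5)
Your proof is correct and follows the standard argument behind the cited result (the survey itself only states the theorem): the columns $\bA^{d_k}\be_{\ell_{\pi(k)}}$ of the controllability matrix, restricted to the rows $i_1,\dots,i_K$, form an upper triangular matrix whose diagonal is nonzero by the definition of $\mathcal{A}_d(G)$ and whose entries below the diagonal vanish by the PMI property. Your handling of infinite distances is also right, but the finite-coordinate convention is a genuine extra hypothesis rather than a without-loss-of-generality reduction: under the literal definition, a single leader together with one isolated node gives $\delta=2$ while the controllability matrix has rank $1$.
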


\paragraph{Comparison with the zero-forcing-based bound.} The distance-based bound complements the zero-forcing-based bound in a structurally meaningful way. The zero-forcing bound, $|\mathrm{dset}(G, \cl{V}_\ell)|$, applies over the full qualitative class $\mathcal{A}(G)$ and is especially effective for certifying complete SSC: if the leader set is a zero forcing set, the derived set is the entire graph and SSC follows. The distance-based bound $\delta(G, \cl{V}_\ell)$ applies over the more restrictive class $\mathcal{A}_d(G)$, but it often gives sharper quantitative information when the leader set is not a zero forcing set. In general, neither bound uniformly dominates the other. There exist graph families on which $\delta(G, \cl{V}_\ell)$ is arbitrarily larger than $|\mathrm{dset}(G, \cl{V}_\ell)|$, and others on which $|\mathrm{dset}(G, \cl{V}_\ell)|$ is arbitrarily larger than $\delta(G, \cl{V}_\ell)$~\citep{yaziciouglu2022strong}.
The choice of the more informative bound is therefore graph-dependent. The distance-based bound is typically tighter when the leader set fails to propagate the zero-forcing coloring. When every leader has multiple follower out-neighbors, the zero-forcing process cannot advance from the leaders and $|\mathrm{dset}(G, \cl{V}_\ell)| = |\cl{V}_\ell|$, while $\delta(G, \cl{V}_\ell)$ is strictly larger as soon as some follower has finite distance to a leader. The following result makes this precise.

\begin{theorem}[{\citealp[Theorem~5]{yaziciouglu2022strong}}]\label{thm}
	\label{thm:distance_dominates_zfs}
	Consider any directed graph $G=(\cl{V},\cl{E})$ with a leader set $\cl{V}_\ell \subseteq \cl{V}$. If each leader has at least two followers as out-neighbors, then $\delta(G, \cl{V}_\ell) > |\mathrm{dset}(G, \cl{V}_\ell)|$.
\end{theorem}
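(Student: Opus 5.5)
The plan is to show that, under the hypothesis, the zero-forcing process stalls immediately, so $|\mathrm{dset}(G,\cl{V}_\ell)| = |\cl{V}_\ell| = m$. We then exhibit an explicit PMI sequence of length at least $m+1$, which gives $\delta(G,\cl{V}_\ell)\ge m+1$.

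\emph{Step 1 (derived set).} Initially the black set is $\cl{V}_\ell$. A node can force only if it is black and has exactly one white out-neighbor, so initially only leaders can force. By hypothesis each leader has at least two followers as out-neighbors, and all followers are white. Hence no leader has exactly one white out-neighbor, and no color change ever occurs. Therefore $\mathrm{dset}(G,\cl{V}_\ell)=\cl{V}_\ell$ and $|\mathrm{dset}(G,\cl{V}_\ell)|=m$.

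\emph{Step 2 (a PMI sequence of length $m+1$).}
\begin{itemize}
\item For each leader $\ell_j$, the DL vector $\boldsymbol{D}_{\ell_j}$ has $[\boldsymbol{D}_{\ell_j}]_j=0$. Every non-leader has strictly positive distance to every leader, and any other leader $\ell_k\neq\ell_j$ also has $d(\ell_j,\ell_k)>0$.
\item Hence we may order the leader vectors $\boldsymbol{D}_{\ell_1},\dots,\boldsymbol{D}_{\ell_m}$ first, using coordinate $\pi(k)=k$. Each leader vector is then strictly smaller in its own coordinate than every subsequent leader vector and every follower vector.
\item Next pick a follower $v$ that is an out-neighbor of some leader; one exists by hypothesis. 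Append $\boldsymbol{D}_v$ as the last element of the sequence. A last element imposes no PMI condition, since there is no $l>k$.
\item Distinctness must be checked. The leader vectors are pairwise distinct, because $\boldsymbol{D}_{\ell_j}$ is the only one with a $0$ in coordinate $j$. The vector $\boldsymbol{D}_v$ has no zero entries, so it differs from every leader vector.
\end{itemize}
This yields a PMI sequence of length $m+1$, so $\delta(G,\cl{V}_\ell)\ge m+1>m$.

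\emph{Main subtlety.} The one point needing care is the PMI condition $[\boldsymbol{D}_{\ell_k}]_k<[\boldsymbol{D}_{u}]_k$ for all later entries $u$, in the presence of infinite distances. Under the convention $d=\infty$ for unreachable nodes, $0<\infty$ holds, so the strict inequality is preserved. The only genuine requirement on the follower is that it be distinct from the leaders, which it is.

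The argument only uses $m+1$. If a sharper statement is desired, one could extend the sequence greedily by distance. That is unnecessary for the strict inequality claimed in \cref{thm:distance_dominates_zfs}, so I would not pursue it.
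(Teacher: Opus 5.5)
Your proof is correct and follows the reasoning the paper gives. The zero-forcing process cannot start, so $|\mathrm{dset}(G,\cl{V}_\ell)|=|\cl{V}_\ell|$. The leaders' DL vectors followed by the DL vector of a follower at finite distance from a leader then form a PMI sequence one element longer.

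One point in your closing remark needs correcting, though the proof itself is unaffected. It is not enough for the appended follower to be merely distinct from the leaders. The paper ties the strict gain to a follower at \emph{finite} distance from some leader. An all-$\infty$ DL vector corresponds to a zero row of the controllability matrix, so appending it would not certify any extra rank in the sense of \cref{thm:distance-bound}. Your choice of an out-neighbor of a leader, at distance $1$, already satisfies this requirement.
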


A related phenomenon holds for most single-leader graphs in which every node is reachable from the leader: the distance-based bound is strictly larger than the zero-forcing-based bound whenever it does not already certify complete SSC. This regime, scattered leaders in graphs with multiple out-neighbors near the leaders, is common in random graphs and large-scale physical networks.

These observations motivate combining the zero-forcing and distance-based bounds. Since augmenting the leader set with the nodes of the derived set does not change the controllable subspace, one can apply the distance-based construction to $\mathrm{dset}(G, \cl{V}_\ell)$ rather than to $\cl{V}_\ell$ itself, yielding the \emph{combined bound} $\delta(G, \mathrm{dset}(G, \cl{V}_\ell))$. This combined bound is always at least as large as both $\delta(G, \cl{V}_\ell)$ and $\zeta(G, \cl{V}_\ell)$, and is strictly larger on nontrivial families of graphs~\citep{yaziciouglu2022strong}. 

\paragraph{Computation.} From a computational perspective, evaluating $\delta(G, \cl{V}_\ell)$ is more demanding than computing the zero-forcing derived set. Once all leader-to-node distances have been determined, the longest PMI sequence can be computed using a dynamic-programming algorithm that runs in $O(m(n\log n + n^m))$ time~\citep{shabbir2022computation}, where $m=|\cl{V}_\ell|$ and $n=|V|$. The $n^m$ factor renders exact computation intractable as the number of leaders grows. To address this, a greedy approximation algorithm produces a near-optimal PMI sequence in $O(mn\log n)$ time~\citep{shabbir2022computation}.
Since every feasible PMI sequence yields a valid lower bound, such approximations remain useful for computing bounds on the dimension of the controllable subspace in large-scale systems.

\subsection{Limitations and outlook}
Beyond zero forcing and distance-based methods, strong structural controllability has also been studied via constrained matchings in the bipartite representation of the system.
In this approach, the zero/nonzero pattern of the system matrix is represented as a bipartite graph, and SSC is characterized by the existence of \emph{constrained matching}~\citep{chapman2013strong,trefois2015zero,jia2020unifying,park2025network,liu2011controllability,chapman2015strong,ZHANG2022110316}.
This perspective is closely related to zero forcing, but it is useful because it connects SSC to classical matching theory and provides algorithmic procedures for checking whether a given leader set guarantees SSC. In particular, constrained-matching conditions yield necessary and sufficient tests for SSC and can be used to construct leader sets, although finding a minimal-cardinality leader set remains computationally hard. Related extensions include generalized zero forcing and other variants of color-change rules, which refine the zero-forcing framework to handle additional graph classes and matrix subclasses~\citep{mousavi2017structural,mousavi2016controllability}.

Despite these advances, several limitations remain. First, most existing graph-theoretic tools either certify complete SSC or, more generally, provide lower bounds on the SSCS dimension; exact characterization of this dimension for general directed graphs remains difficult. Second, the minimal leader-selection problem for SSC is computationally challenging, and scalable algorithms with rigorous approximation guarantees are still limited. Third, the qualitative-class model treats each edge weight as an arbitrary nonzero parameter, whereas many networked systems impose additional structure such as weight constraints, sign constraints, bounded uncertainties, or correlations among parameters~\citep{mousavi2019strong,guan2023edge,she2018controllability,she2020characterizing,guan2018controllability,montanari2023target,qu2025controllability,commault2019functional}.
Existing SSC conditions may become conservative or incomplete when such additional information is available. Finally, most results focus on static graphs, while many applications involve networks whose topology changes over time or is subject to failures, attacks, or design modifications~\citep{lu2022strong,guan2019target,mousavi2020strong}.

These limitations suggest several directions for future work. One direction is to develop unified certificates that combine zero forcing, graph distances, constrained matchings, and decomposition-based methods to obtain tighter bounds on the guaranteed controllable dimension. Another direction is the co-design of controllability and network robustness~\citep{abbas2021edge,mousavi2017robust,abbas2020tradeoff,pasqualetti2018fragility,pasqualetti2020fragility}.

Adding edges can improve connectivity and robustness, but it can also disrupt zero-forcing propagation or weaken distance-based structures that certify SSC; understanding this tradeoff is important for designing systems that are both structurally robust and strongly structurally controllable.
A related direction is resilient SSC: selecting leaders so that controllability is preserved despite a prescribed number of edge or node perturbations~\citep{artime2024robustness,popli2019selective,mousavi2017robust,mousavi2020strong,ahmad2023controllability,patel2024distributed}.
Recent connections with leaky forcing and related zero-forcing variants suggest a promising graph-theoretic route for studying such perturbation-resilient minimal leader selection~\citep{Abbas25tcns-resilientControllabilityNetworks,ALAMEDA202232,alameda2}.
Finally, even for the standard SSC problem, scalable approximation algorithms with provable guarantees are scarce.
Bridging this gap through better polynomial-time approximations, learning-based algorithms, or structural reductions to tractable subclasses would unlock SSC analysis for the much larger graphs now arising in cyber-physical and biological systems.
    %!TEX root = ../main_survey_sparsity_control.tex

\newcommand{\bcl}[1]{\bm{\mathcal{#1}}}   % bold calligraphic, for block matrices

\newcommand{\bD}{\bl{D}}
\newcommand{\bGamma}{\bl{\Gamma}}
\newcommand{\bcO}{\bcl{O}}                  % observability block matrix
\newcommand{\bcC}{\bcl{C}}                  % controllability block matrix
\newcommand{\cS}{\cl{S}}                    % support set (plain calligraphic)
\newcommand{\cK}{\cl{K}}                    % support schedule entry
\newcommand{\cV}{\cl{V}}                    % weakly unobservable arrangement
\newcommand{\cT}{\cl{T}}                    % strongly reachable arrangement
\newcommand{\Deltas}{\Delta_s}
\newcommand{\Deltatwos}{\Delta_{2s}}

\section{Invertibility with sparse inputs}
\label{sec:invertibility-sparse-inputs}

While many relevant problems in sparse control concern the design of sparse inputs, there exist just as many instances wherein these inputs are unknown and need to be determined from system outputs over time. This is the problem of input recovery, the feasibility of which is governed by the system property of \textit{invertibility}\footnote{Consistent with modern parlance, we use invertibility as shorthand for left invertibility, as opposed to right invertibility, which concerns the existence of an input trace producing a reference sequence as an output, sometimes called \textit{functional reproducibility}.}. In this section, we discuss the invertibility of linear systems with sparse inputs and survey algorithmic approaches to sparse input recovery.

\subsection{Motivation}
Invertibility in general MIMO linear systems has been an active area of research since the 60s \citep{brockettReproducibilityMultivariableSystems1965, youla1966p, sainInvertibilityLinearTimeinvariant1969}. Much of this work arose out of a need to reconcile the transfer function perspective with the emerging time-domain, state-space oriented paradigm, which was becoming popularized at the time. One of the most important conceptual advances made in this seminal work was the introduction of the concept of \textit{inherent integration} or \textit{delay}, respectively, for the continuous and discrete time cases~\citep{willskyInvertibilityLinearSystems1974}. Recent work has established strong links between the inherent delay of a system and its structural properties~\citep{garinGenericDelayLLeft2022}. This property, in essence, established that it was only possible in trivial cases to fully recover inputs from a system output without first waiting for the effects of that input to become ``sufficiently distinguishable'' from that of others.

While establishing the theoretical basis for their existence, this early work made no claim on the stability of such inverses. Follow-up work by~\cite{moylanStableInversionLinear1977} presented a provably stable inverse construction for a given system, provided one exists. The same work would prove conditions on a stronger property introduced in contemporary work by~\citep{bengtsson1974theory}, namely, invertibility with an unknown initial state, later called \emph{input reconstructability}. This latter property is closely related to notions of strong detectability and strong observability, which mediate the existence of unknown input observers~\citep{molinariStrongControllabilityObservability1976,hautusStrongDetectabilityObservers1983}. These works motivate the observation that invertibility in and of itself is insufficient for input reconstruction based solely on the outputs in virtually all practical system instances. It was only much more recently that the actual necessary and sufficient conditions for the reconstruction of general inputs from the output of a linear system were laid bare~\citep{bejaranoUnknownInputState2009}. A key development in these works was the recognition of the \textit{weakly unobservable subspace}, wherein an adversarial input may render some states unobservable, as an important system invariant mediating strong observability and detectability~\citep{molinariStrongControllabilityObservability1976}. In geometric system theory, this observation has been practically leveraged to develop systematic frameworks for unknown input observer design and disturbance decoupling~\citep{tranningerUnknownInput2023}.

More recently, there has been increased interest in addressing the problems of invertibility and input reconstruction in linear systems with additional structure, such as linear time-varying systems~\citep{tranningerStrongDetectabilityObservers2022, ansariDeadbeatInputReconstruction2019a}, switching systems~\citep{tanwaniRobustInvertibilitySwitched2011, bejaranoPartialUnknownInput2011}, and systems with sparse inputs. 
The first work concerning sparse input reconstruction in linear systems was carried out by~\cite{sefatiLinearSystemsSparse2015}, after several preliminary publications concerning the reconstruction of sparse states~\citep{vaswaniKalmanFilteredCompressed2008, wakinObservabilityLinearSystems2010, charlesSparsityPenaltiesDynamical2011}.
The work demonstrated the ability of $\ell_1$-norm minimization to successfully reconstruct (estimate) sparse inputs and an unknown initial state provided that an incoherence condition is satisfied on a particular matrix of Markov parameters. Follow-up works would establish robustness guarantees for this estimator~\citep{kafashanRelatingObservabilityCompressed2016b} and necessary and sufficient conditions for joint recovery under a generalized piecewise sparsity model~\citep{poeNecessarySufficientConditions2023}. This estimator would subsequently be generalized to the noisy setting and to Bayesian variants incorporating variational sparsity priors~\citep{11215643}.
In spite of this substantial progress on practical approaches to offline input recovery, it is only quite recently that more general theoretical questions of \emph{invertibility} and \emph{input reconstructability} have been engaged with~\citep{poeInvertibilityDiscreteTimeLinear2024}.

\paragraph{Outline:}
In \cref{sec:invertibility-classical-invertibility},
We recall the concept of invertibility and its geometric and algebraic characterizations. Then, we extend them to theoretical sparse invertibility in \cref{sec:sparse_invertibility_rev} and subsequently describe optimization-based paradigm for input recovery in \cref{sec:sparse-invertibility-estimation}.

\subsection{Setup}
\label{sec:inv-setup}

Consider the discrete-time linear system~\eqref{eq:system-dt} augmented with a feedthrough term,
\begin{align}
	\label{eq:inv-system}
	\begin{split}
		\x{t+1} &= \bA\x{t} + \bB\u{t},\\
		\y{t} &= \bC\x{t} + \bD\u{t},
	\end{split}
\end{align}
with $\bA\in\Real{n\times n}$, $\bB\in\Real{n\times m}$, $\bC\in\Real{p\times n}$,
$\bD\in\Real{p\times m}$, compactly denoted by
$\Sigma=(\bA,\bB,\bC,\bD)$.
As in \cref{sec:s-sparse-control} we impose
piecewise sparsity on the input,
\ie $\norm[0]{\u{t}} \le s$ for every $t$.
The \emph{support schedule} $\cS_t $ of an $s$-sparse input is $\cS_t \doteq \supp{\u{t}}$ and satisfies $\cS_t\in\Deltas$ where we define $\Deltas \doteq \lb \cS\subseteq[m] : \lv\cS\rv\le s \rb$.
We denote the $K$-fold Cartesian product of $\Deltas$, where the latter set is repeated $K$ times, by $\Deltas^K\doteq\Deltas\times\dots\times\Deltas$.
Note that $\mathcal{S}_t$ is permitted to vary with $t$, leading to time-varying sparsity patterns analogously to the general $s$-sparse control setting.

\myParagraph{Block matrices}
To any horizon $K>0$, we associate the finite-response matrix
\begin{equation}
	\bGamma_K =
	\begin{bmatrix}
		\bD & & & \\
		\bC\bB & \bD & & \\
		\vdots & & \ddots & \\
		\bC\bA^{K-2}\bB & \cdots & \bC\bB & \bD
	\end{bmatrix}.
\end{equation}

For a support schedule
$\cS=(\cS_0,\dots,\cS_{K-1})\in\Deltas^{K}$,
we write $\bGamma_\cS$ and $\cC_\cS$
for the finite-response and controllability matrices obtained by retaining, in each block column $t$, only the columns
indexed by $\cS_t$:
\begin{gather}
	\cl{C}_\cS=\begin{bmatrix}
		\bA^{\tau-1}\bl{B}_{\cl{S}_0} & \cdots & \bA\bl{B}_{\cl{S}_{K-2}} & \bl{B}_{\cl{S}_{K-1}}
	\end{bmatrix} \label{eq:ctrl-schedule}\\
	\bGamma_{\cl{S}} =
	\begin{bmatrix}
		\bD & & & \\
		\bC\bB_{\cl{S}_0} & \bD & & \\
		\vdots & & \ddots & \\
		\bC\bA^{K-2}\bB_{\cl{S}_0} & \cdots & \bC\bB_{\cl{S}_{K-1}} & \bD
	\end{bmatrix}.
\end{gather}

% ,  Definition + classical baseline , 

\subsection{Classical invertibility}
\label{sec:invertibility-classical-invertibility}

We first recall the basic definition and characterizations of invertibility without sparsity constraints.
The importance of these characterizations is the following fact: \textit{a system cannot classically be inverted if its inputs outnumber its outputs}.
In words,
any system where the input space is naturally modeled as being higher-dimensional than its output space is thus subject to scrutiny, and incompatible with classical invertibility. 

Stacking dynamics~\eqref{eq:inv-system} over an infinite horizon, the zero-state response
$\bu\mapsto\by$ is the linear map sending a input sequence to its output.
System invertibility is defined as the injectivity of this map on the admissible input class.
\begin{definition}[Invertibility]
	\label{def:inv}
	System $\Sigma$ is \emph{invertible} if, for all input
	sequences $\bu,\bu'$, equality of
	zero-state responses implies $\bu(t)=\bu'(t)$ for all $t \ge 0$.
\end{definition}

Relevant to the study of invertibility is the characterization of states which are \textit{weakly unobservable}, in the sense that, under a suitable choice of input, it is impossible to uniquely determine that state based on the output of the system.

\begin{definition}[Weakly unobservable state]
	If there exists an initial condition $\bx(0)\in \mathbb{R}^n$ and an input sequence $\u{t}$ such that the output $\y{t}$ is identically zero, then we say $\bx(0)$ is \emph{weakly unobservable}.
	The set of all such points is $\bm{\mathcal{V}}^*$.
\end{definition}

The standard conditions for invertibility according to \cref{def:inv} are as follows.

\begin{theorem}[Classical invertibility]
	\label{thm:classical-inv}
	For $\Sigma$, the following conditions are equivalent:
	\begin{enumerate}
		\item $\Sigma$ is invertible;
		\item $\ker{\bm{D}} \cap \bm{B}^{-1} \cV^\ast=\{0\}$~\citep{basile1973new};
		\item There exists $K$ with $\rk{\bGamma_K}-\rk{\bGamma_{K-1}}=m$~\citep{sainInvertibilityLinearTimeinvariant1969};
		\item There exists $z\in\Complex{}$ at which the Rosenbrock pencil $\ls\begin{matrix}\bA-z\bI & \bB\\ \bC & \bD\end{matrix}\rs$
		has rank $n+m$~\citep{moylanStableInversionLinear1977}.
	\end{enumerate}
\end{theorem}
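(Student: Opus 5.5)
I would organize the argument as a cycle of equivalences, using condition 1 as the hub. The zero-state response over a horizon $K$ is exactly $\bu_{0:K-1}\mapsto\bGamma_K\bu_{0:K-1}$. Because $\bGamma_K$ is block lower triangular, invertibility in the sense of \cref{def:inv} is equivalent to the following statement: for some $K$, the first input $\u{0}$ is uniquely determined by the first $K$ outputs. Time invariance then lets the same argument recover $\u{1},\u{2},\dots$ in turn.

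\textbf{Step 1: 1$\Leftrightarrow$3.} The key object is $\rk{\bGamma_K}-\rk{\bGamma_{K-1}}$, the rank of $\bGamma_K$ minus the rank of its lower-right $(K-1)$-block principal submatrix. I would show that this difference equals the dimension of the projection, onto the first block coordinate, of the row space of $\bGamma_K$. Equivalently, $m$ minus this difference is the dimension of the set of $\u{0}$ for which some continuation $\u{1},\dots,\u{K-1}$ produces $\by(0)=\dots=\by(K-1)=0$. This set is nonincreasing in $K$ and eventually stabilizes. Hence the difference equals $m$ for some $K$ exactly when no nonzero $\u{0}$ can be masked forever, which is injectivity of the response.

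\textbf{Step 2: 1$\Leftrightarrow$2.} Suppose a nonzero input $\bu$ has zero-state output $\by\equiv0$. Let $t_0$ be the first time with $\u{t_0}\neq0$. Then $\x{t_0}=0$, $\bD\u{t_0}=0$, and $\x{t_0+1}=\bB\u{t_0}$ is weakly unobservable, because the tail of $\bu$ keeps the output at zero. Therefore $\u{t_0}\in\ker{\bD}\cap\bB^{-1}\cV^\ast$.

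Conversely, take a nonzero $\bu_0$ in this intersection. Since $\bB\bu_0\in\cV^\ast$, there is an input from the state $\bB\bu_0$ that holds the output at zero. Prepending $\bu_0$ to that input gives a nonzero input with zero output. Here I would use the standard fact that $\cV^\ast$ is the largest subspace with $(\bA+\bB\bl{F})\cV^\ast\subseteq\cV^\ast$ and $(\bC+\bD\bl{F})\cV^\ast=0$; this guarantees that a zero-output continuation exists for all $t\ge 0$.

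\textbf{Step 3: 1$\Leftrightarrow$4.} Take the $z$-transform of the zero-state system. This gives $\by(z)=\bl{G}(z)\bu(z)$ with $\bl{G}(z)=\bC(z\bI-\bA)^{-1}\bB+\bD$. Injectivity on causal sequences is equivalent to $\bl{G}$ having full column normal rank $m$ over the field of rational functions; a nonzero rational kernel vector can be scaled to a polynomial one, which is a causal input with zero output.

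The Schur complement identity
\[
\rk{\ls\begin{matrix}\bA-z\bI & \bB\\ \bC & \bD\end{matrix}\rs}=n+\rk{\bl{G}(z)},
\]
valid whenever $z$ is not an eigenvalue of $\bA$, then gives the equivalence. Full normal rank holds at all but finitely many $z$, so it holds at some $z$. Conversely, rank $n+m$ at a single $z$ forces the normal rank of the pencil to be $n+m$, hence normal rank $m$ for $\bl{G}$.

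The main obstacle I expect is Step 1. One must show rigorously that the rank increment reaches $m$ if and only if the masking subspace of first inputs vanishes, and that this subspace stabilizes within at most $n+1$ steps, so that a finite $K$ exists. I would handle this with a nested-subspace argument on the states reachable from the masking inputs, which live in $\Real{n}$, so the chain must stabilize within $n$ steps.
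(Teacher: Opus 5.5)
The paper gives no proof of this theorem. It quotes it as classical and attributes (2), (3), (4) to Basile--Marro, Sain--Massey and Moylan, each proved in its own framework. Your hub argument through condition 1 is therefore a genuinely different, self-contained route. Steps 2 and 3 are essentially right. Step 1 contains one false claim and one unjustified ``hence''; both can be repaired.

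\textbf{The false claim.} The rank increment is \emph{not} the dimension of the projection of the row space of $\bGamma_K$ onto the first block. That projection is the orthogonal complement in $\Real{m}$ of $\{\bu(0):(\bu(0),0,\dots,0)\in\ker{\bGamma_K}\}$, so it only detects masking by the zero continuation. For example, take $n=1$, $\bA=0$, $\bB=[1\;\,0]$, $\bC=1$, $\bD=[1\;\,1]$. Then $\bGamma_2=\left[\begin{smallmatrix}1&1&0&0\\1&0&1&1\end{smallmatrix}\right]$, whose row space projects onto all of $\Real{2}$, while $\rk{\bGamma_2}-\rk{\bGamma_1}=1$. The correct object is the intersection of the row space with the first-block coordinate subspace. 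Your ``equivalently'' sentence is nevertheless true, and it is all you actually use. Write $\bGamma_K=[\bl{F}_1\ \bl{F}_2]$, with $\bl{F}_1$ the first block column, so that $\rk{\bl{F}_2}=\rk{\bGamma_{K-1}}$. Then the increment is $m-\mathrm{dim}\,\cl{M}_K$, where $\cl{M}_K=\{\bu(0):\bl{F}_1\bu(0)\in\col{\bl{F}_2}\}$ is the set of first inputs that can be masked for $K$ steps.

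\textbf{The unjustified ``hence''.} As you suspect, the direction $1\Rightarrow3$ hinges on this step. Stabilization of $\cl{M}_K$ alone does not deliver it: elements of the limit are maskable over every finite horizon, not yet forever. Moreover, this chain in $\Real{m}$ can be constant for many steps and then drop, so it gives no bound either. For $\by(t)=\bu(t-n)$, realized by $n$ delays, $\cl{M}_K=\Real{}$ for $K\le n$ and $\cl{M}_{n+1}=\{0\}$.

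The precise form of your nested-subspace idea works on states. Let $\cV_j$ be the set of states from which some input keeps $j$ consecutive outputs at zero. Then $\cV_0=\Real{n}$ and $\cV_{j+1}=\{\bx:\exists\bu,\ \bC\bx+\bD\bu=0,\ \bA\bx+\bB\bu\in\cV_j\}$. Each set comes from its predecessor by the same monotone set map, so one repetition makes the chain constant. Dimensions therefore drop strictly until the first repetition, and the chain is constant from $j=n$ on. Its limit is a fixed point of the map, so zero output can be maintained forever from it; that is, $\cV_n=\cV^\ast$.

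Since $\cl{M}_K=\ker{\bD}\cap\bB^{-1}\cV_{K-1}$, you get $\cl{M}_K\supseteq\cl{M}_{n+1}=\ker{\bD}\cap\bB^{-1}\cV^\ast$, with equality for $K\ge n+1$. This proves $3\Leftrightarrow2$ directly and closes the cycle with your Step 2. It also recovers the Sain--Massey bound $K\le n+1$.

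\textbf{Minor points.} In Step 3, read transforms as formal power series in $z^{-1}$, so that arbitrary input sequences are covered; a nonzero $m\times m$ minor of $\bl{G}$ stays nonzero over formal Laurent series. Also clear denominators so that the kernel vector is polynomial in $z^{-1}$, since a polynomial in $z$ is an anticausal sequence. In Step 2 the feedback characterization of $\cV^\ast$ is unnecessary: by the paper's definition, $\cV^\ast$ is exactly the set of states admitting a zero-output continuation.

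\textbf{Comparison of routes.} The cited literature carries more than the statement itself: monotone rank increments and the delay bound (Sain--Massey), the geometric machinery for computing $\cV^\ast$ (Basile--Marro), and stable inverses (Moylan). Your argument replaces all of this with one elementary mechanism: reduce to the first nonzero input by time invariance, then study the subspace of maskable first inputs. That mechanism is what the paper's $s$-sparse tests generalize, with schedules in $\Delta_{2s}$ and $\cV^\ast_{2s}$. By contrast, the dimension count on the chain $\cV_j$, which is where $n$ enters, has no direct analogue once these sets are unions of subspaces. This is consistent with the paper's remark that a tractable bound on the sparse inherent delay is open.
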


Throughout, we refer to conditions (2), (3), and (4) in \cref{thm:classical-inv} respectively as the \textit{geometric}, \textit{rank-based}, and \textit{spectral} characterizations of invertibility.
Each of them conveys a subtly different perspective on how properties of the system manifest to realize invertibility,
which will be discussed in the next section.

\subsection{Sparse invertibility}
\label{sec:sparse_invertibility_rev}

In this section, we present generalizations of key classical characterizations of invertibility in \cref{thm:classical-inv} to the sparse input context. 
The most relevant insight is that, in the spirit of the intellectual tradition of sparse recovery, the barrier to classical invertibility, \ie the presence of more inputs than outputs, may be surpassed provided that the sparsity of these inputs is exploited.

Before summarizing the main results,
we specialize the definitions of invertibility and weakly unobservable states to $s$-sparse inputs.

\begin{definition}[$s$-sparse invertibility]
	\label{def:s-sparse-inv}
	System $\Sigma$ is \emph{$s$-sparse invertible} if, for all input
	sequences $\bu,\bu'$ with $\cS_t,\cS'_t\in\Deltas$ for all $t$, equality of
	zero-state responses implies $\bu(t)=\bu'(t)$ for all $t \ge 0$.
\end{definition}

\begin{definition}[Weakly $s$-sparse unobservable state]
	If there exists an initial condition $\bx(0)\in \mathbb{R}^n$ and an input sequence $\u{t}$ with $\cl{S}_t\in\Delta_s$ for all $t$ such that the output $\y{t}$ is identically zero, then we say $\bx(0)$ is \emph{weakly $s$-sparse unobservable}.
	The set of all such points is $\bm{\mathcal{V}}_s^*$.
\end{definition}

We next proceed by recapping the characterizations of $s$-sparse invertibility that specialize the three conditions in \cref{thm:classical-inv}.
For each of these,
we first give a succinct interpretation in the general non-sparse regime, and then state the corresponding $s$-sparse specialization.

\subsubsection{Geometric characterization}
The geometric characterization of invertibility is essentially described as follows: any information about the inputs that is invisible to the direct feedthrough must not enter the weakly unobservable set. If it does, then that information will not be uniquely recoverable, even by observing the entire output trace of the system. Therefore, if $\bm{D} = 0$ and $\mathrm{im}(\bm{B}) = \mathbb{R}^n$, as might be the case when inputs are sparse and $\bm{B}$ is wide, generic invertibility is only possible when $\bm{\mathcal{V}}^* =\{0\}$ and $\ker{\bm{B}} = \{0\}$. Indeed, having $\bm{\mathcal{V}}^* = \{0\}$ is equivalent to the existence of a finite-time deadbeat unknown input observer~\citep{ansariDeadbeatUnknowninputState2019a}, and the requirement that $\bm{B}$ is nonsingular is, for unstructured inputs, the minimal condition for full information about the input to be present in the state. On the other hand, from the sparse recovery literature, it is known that $\bm{B}$ can be injective on sparse inputs under much less stringent conditions~\citep{foucartMathematicalIntroductionCompressive2013}. The benefits of sparsity translate cleanly to the dynamical setting by way of the fact that $\bm{\mathcal{V}}_{2s}^* \subseteq \bm{\mathcal{V}}^*$, leading to the following relaxed conditions on invertibility.
\begin{theorem}[\citealp{poeInvertibilityDiscreteTimeLinear2024}]
	System $\Sigma$ is $s$-sparse invertible if and only if 
	\begin{equation}\label{eq:invertibility_geometric}
		\ker{\bm{D}_S} \cap \bm{B}_{S}^{-1} \bm{\mathcal{V}}_{2s}^* = \{0\} \quad \forall S \in \Delta_{2s}.
	\end{equation}
\end{theorem}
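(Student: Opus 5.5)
The plan is to reduce $s$-sparse invertibility to a statement about differences of inputs. By linearity of the zero-state response, two $s$-sparse inputs $\bu,\bu'$ produce the same output if and only if $\bl{w}\doteq\bu-\bu'$ produces the zero output from $\x{0}=0$. At each time, $\supp{\bl{w}(t)}\subseteq\cS_t\cup\cS'_t$, which lies in $\Deltatwos$. Conversely, every input $\bl{w}$ with supports in $\Deltatwos$ can be split, coordinatewise at each $t$, into a difference of two $s$-sparse inputs.

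Hence $\Sigma$ is $s$-sparse invertible if and only if the only input with $\supp{\bl{w}(t)}\in\Deltatwos$ for all $t$ and identically zero output from zero initial state is $\bl{w}\equiv0$. I will prove this equivalence first as a lemma. It is the sparse analogue of the classical fact underlying \cref{thm:classical-inv}.

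Next I connect this lemma to \eqref{eq:invertibility_geometric}.

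\emph{Necessity.} Suppose some $S\in\Deltatwos$ and $\bl{v}\neq0$ supported on $S$ satisfy $\bD_S\bl{v}_S=0$ and $\bB_S\bl{v}_S\in\bm{\mathcal{V}}_{2s}^*$. Apply $\bl{w}(0)=\bl{v}$. Then $\y{0}=\bD\bl{v}=0$ and $\x{1}=\bB\bl{v}\in\bm{\mathcal{V}}_{2s}^*$. By definition of $\bm{\mathcal{V}}_{2s}^*$, some $2s$-sparse continuation keeps the output identically zero from $\x{1}$. By time invariance, this continuation can be applied from $t=1$ onward. Concatenating gives a nonzero $2s$-sparse input with zero output, which contradicts the lemma.

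\emph{Sufficiency.} Suppose $\bl{w}\neq0$ is $2s$-sparse with zero output from $\x{0}=0$. Let $t_0$ be the first time with $\bl{w}(t_0)\neq0$. Then $\x{t_0}=0$ and $\y{t_0}=\bD\bl{w}(t_0)=0$. Moreover, $\x{t_0+1}=\bB\bl{w}(t_0)$, and from this state the remaining $2s$-sparse input produces identically zero output. So $\bB\bl{w}(t_0)\in\bm{\mathcal{V}}_{2s}^*$. Taking $S=\supp{\bl{w}(t_0)}\in\Deltatwos$ gives a nonzero vector in $\ker{\bD_S}\cap\bB_S^{-1}\bm{\mathcal{V}}_{2s}^*$, which violates \eqref{eq:invertibility_geometric}.

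The main obstacle is that $\bm{\mathcal{V}}_{2s}^*$ is not a subspace but a union of subspaces, one per support schedule. So I must make sure that the preimage $\bB_S^{-1}\bm{\mathcal{V}}_{2s}^*$ is read as a set preimage, and that the intersection condition means only the zero vector lies in it. Both directions above use only membership and never linear structure, so this works. The one remaining point to handle carefully is that $\bm{\mathcal{V}}_{2s}^*$ is well defined under time shifts. I would verify this explicitly from the definition, using time invariance of \eqref{eq:inv-system} and the fact that the constraint $\cS_t\in\Deltatwos$ is imposed pointwise in $t$.
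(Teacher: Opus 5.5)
Your proof is correct and complete. The difference lemma (pointwise $\supp{\bl{w}(t)}\subseteq\cS_t\cup\cS_t'\in\Deltatwos$, and every vector with support in $\Deltatwos$ splits as a difference of two $s$-sparse vectors) reduces $s$-sparse invertibility to the absence of nonzero inputs with supports in $\Deltatwos$ and identically zero output from $\x{0}=0$. The first-nonzero-time decomposition then gives sufficiency, and one-step concatenation via time invariance gives necessity, for \eqref{eq:invertibility_geometric}. As you note, only membership in $\bm{\mathcal{V}}_{2s}^*$ and its invariance under time shifts are used, so it is harmless that $\bm{\mathcal{V}}_{2s}^*$ is only a union of subspaces. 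Reading $\bB_S^{-1}$ as a set preimage also covers the degenerate case $\bB_S\bl{v}_S=0$ automatically, since $0\in\bm{\mathcal{V}}_{2s}^*$.

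The survey gives no proof of its own. It defers to \citealp{poeInvertibilityDiscreteTimeLinear2024} and only remarks that $\bm{\mathcal{V}}_{2s}^*\subseteq\bm{\mathcal{V}}^*$. That inclusion plays no role in the equivalence you proved. What it provides is the comparison with condition (2) of \cref{thm:classical-inv}: zero-padding a nonzero element of $\ker{\bD_S}\cap\bB_S^{-1}\bm{\mathcal{V}}_{2s}^*$ yields a nonzero element of $\ker{\bD}\cap\bB^{-1}\bm{\mathcal{V}}^*$. Hence classical invertibility implies \eqref{eq:invertibility_geometric}, so the sparse condition is a genuine relaxation. Your route is the direct sparse transcription of the classical discrete-time argument for that condition. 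Finer structure of $\bm{\mathcal{V}}_{2s}^*$ would only matter if one wanted to compute or verify the condition, not for its validity.
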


\subsubsection{Rank-based characterization}
Building on the rank-based characterization of \citep{sainInvertibilityLinearTimeinvariant1969}, we proceed with a characterization of invertibility based on the rank of the finite-response matrix $\bGamma_K$:

\begin{theorem}[\citealp{poeInvertibilityDiscreteTimeLinear2024}]
	$\Sigma$ is $s$-sparse invertible \textit{with delay $K$} if, defining $\bar{\cS} = (\cS_1, \cdots, \cS_{K})$, it holds
	\begin{equation}\label{eq:invertibility_rankbased}
		\rk{\bGamma_{\cS}} - \rk{\bGamma_{\bar{\cS}}} = |\mathcal{S}_0|
	\end{equation}
	for every $\cS:=(\cS_0, \cdots, \cS_{K}) \in \Delta_{2s}^{K+1}$.
\end{theorem}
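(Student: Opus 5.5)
The plan is to reduce $s$-sparse invertibility to a statement about differences of inputs, and then read recovery of the first input off the rank condition \eqref{eq:invertibility_rankbased}, applied recursively in time. First, $\Sigma$ is $s$-sparse invertible if and only if no nonzero input $\bl{w}$ with $\supp{\bl{w}(t)}\in\Deltatwos$ for all $t$ produces identically zero zero-state output.

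Indeed, the zero-state response is linear, so two $s$-sparse inputs $\bu,\bu'$ share an output exactly when $\bl{w}=\bu-\bu'$ has zero output. Moreover $\supp{\bl{w}(t)}\subseteq\cS_t\cup\cS'_t\in\Deltatwos$. Conversely, any $2s$-sparse $\bl{w}$ can be split at each time into two $s$-sparse pieces, $\bl{w}=\bu-\bu'$. It therefore suffices to show that \eqref{eq:invertibility_rankbased} forces every $2s$-sparse input with zero output to vanish.

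Fix such a $\bl{w}$ and a schedule $\cS=(\cS_0,\dots,\cS_K)\in\Deltatwos^{K+1}$ with $\supp{\bl{w}(t)}\subseteq\cS_t$ for $t\le K$. Stack $\bl{w}_{\cS}=(\bl{w}_{\cS_0}(0),\dots,\bl{w}_{\cS_K}(K))$, so that the first $K+1$ outputs vanish, i.e. $\bGamma_{\cS}\bl{w}_{\cS}=\bl{0}$.

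The key step is to interpret \eqref{eq:invertibility_rankbased} as a column-space statement. Write $\bGamma_{\cS}=[\bl{M}_0\;\bl{N}]$, where $\bl{M}_0$ is the first block column, with $|\cS_0|$ columns. The last $K$ block columns, after dropping the first block row, form exactly $\bGamma_{\bar{\cS}}$; in $\bGamma_{\cS}$ they are stacked below a zero block row, so $\rk{\bl{N}}=\rk{\bGamma_{\bar\cS}}$. The identity $\rk{\bGamma_{\cS}}-\rk{\bl{N}}=|\cS_0|$ then says two things: the columns of $\bl{M}_0$ are linearly independent, and $\col{\bl{M}_0}\cap\col{\bl{N}}=\{0\}$.

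From $\bl{M}_0\bl{w}_{\cS_0}(0)+\bl{N}(\cdot)=\bl{0}$ we get $\bl{M}_0\bl{w}_{\cS_0}(0)\in\col{\bl{M}_0}\cap\col{\bl{N}}=\{0\}$. Independence of the columns of $\bl{M}_0$ then gives $\bl{w}(0)=\bl{0}$.

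Next comes the time shift. Since $\bl{w}(0)=0$ and the initial state is zero, the state at time $1$ is zero. The shifted input $\bl{w}(\cdot+1)$ is again $2s$-sparse with zero output. Applying the same argument with a new schedule starting at time $1$ gives $\bl{w}(1)=0$, and by induction $\bl{w}\equiv0$. This proves sufficiency, which is all that the statement, phrased with ``if'', requires. The delay-$K$ qualifier is covered by the same argument: $\bl{w}(0)$ is determined by the outputs $\y{0},\dots,\y{K}$ alone.

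The main obstacle is the block bookkeeping. One has to check that the trailing $K$ block columns of $\bGamma_{\cS}$ really reproduce $\bGamma_{\bar\cS}$, whose supports are indexed $\cS_1,\dots,\cS_K$, with the correct alignment of the Markov parameters $\bC\bA^{j}\bB_{\cS_t}$. One must also make sure the feedthrough blocks are restricted to the active supports, i.e. $\bD_{\cS_t}$ rather than $\bD$. I would handle this first by writing out $\bGamma_{\cS}$ explicitly for $K=1,2$ and then stating the general indexing.
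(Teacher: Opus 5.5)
Your proof is correct. The rank identity is exactly equivalent to two facts: the first block column of $\bl{\Gamma}_{\cl{S}}$ has full column rank, and its column space meets that of the trailing block columns (a copy of $\bl{\Gamma}_{\bar{\cl{S}}}$ beneath a zero block row) only at $\bl{0}$; this forces $\bl{w}(0)=\bl{0}$ from $\bl{y}(0),\dots,\bl{y}(K)$, and time-invariance then closes the induction. The survey gives no proof of its own, since it essentially takes the rank identity as the definition of delay-$K$ sparse invertibility and only notes the implication you establish; your argument is the standard Sain--Massey-style argument behind the cited result, and you correctly repair the indexing of $\bl{\Gamma}_{\cl{S}}$, with diagonal blocks $\bl{D}_{\cl{S}_t}$.
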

A consequence of this definition is that, if the system is $s$-sparse invertible with delay $K$, the system is $s$-sparse invertible. The smallest such $K$, if it exists, is defined to be the \emph{inherent $s$-sparse delay} and presents as the least delay $d_s$ of any sparse input recovery algorithm. The central result of \citep{poeInvertibilityDiscreteTimeLinear2024} is that, for any $s$-sparse invertible $\Sigma$, there exists a finite such $K$, in other words, any $s$-sparse invertible system is invertible \textit{with finite delay}. Noteworthy in this context is the fact that, while this delay is bounded by the dimension of the state space $n$ for classically invertible systems, tractably bounding the inherent delay of a linear system with sparse inputs remains open.
This observation is in contrast with $s$-sparse controllability theory discussed in \cref{sec:s-sparse-control} whereby upper and lower bounds on the $s$-sparse controllability index are available,
as given in~\eqref{eq:sparsecontrolindex}.

\subsubsection{Spectral characterization and invertibility under periodic sparsity}

As with the PBH-style tests for observability and controllability, Moylan's Rosenbrock matrix-based characterization of invertibility is widely appreciated for its elegance and simplicity in portraying the relevant system properties that determine invertibility. The key structural feature enabling this simplification is the time symmetry of the system in question, the same property enabling a variety of spectral characterizations of systems. Despite the obstruction of the general time asymmetry of the sparsity in piecewise inputs, it is nevertheless possible to extend the spectral characterization of invertibility to a large class of sparse inputs.

In the following, we say an input sequence $\bu(t)$ is \textit{$\tau$-periodically $s$-sparse} if it is $s$-sparse and, in addition, $|\mathcal{S}_t \cup \mathcal{S}_{t'}| \le s$ whenever $(t-t')\,\mathrm{mod} \,\tau = 0$.
In words, even though the actual input values may not be periodic, its support schedule $\cl{S}_t$ is. To get a sense of the relevance of this class of inputs, observe that standard non-sparse inputs are $\tau$-periodically $m$-sparse for any $\tau$ because the support schedule always includes all input channels.
In addition,
$2\tau$-periodically $s$-sparse inputs are automatically $\tau$-periodically $2s$-sparse. The following result establishes a spectral characterization of invertibility under such a periodic sparsity assumption.

\begin{theorem}[\citealp{poeInvertibilityDiscreteTimeLinear2024}]
	System $\Sigma$ is invertible over the class of $\tau$-periodically $s$-sparse inputs if and only if, for any $\mathcal{S} \in \Delta_{2s}^\tau$, there exists $z \in \mathbb{C}$ such that 
	\begin{equation}\label{eq:invertibility_spectral}
		\mathrm{ker}\begin{bmatrix}
			\bm{A}^\tau - z \bm{I} & \bm{\mathcal{C}}_{\mathcal{S}} \\
			\bm{\mathcal{O}}_\tau(\bA,\bC) & \bm{\Gamma}_{\mathcal{S}}
		\end{bmatrix} = \{0\}.
	\end{equation}
\end{theorem}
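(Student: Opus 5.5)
The plan is to reduce the statement to the classical spectral test, item~(4) of \cref{thm:classical-inv}, by \emph{lifting} $\Sigma$ over one period $\tau$. For a fixed support schedule $\cS=(\cS_0,\dots,\cS_{\tau-1})\in\Delta_{2s}^\tau$, define the lifted state $\bl{\xi}_k\doteq\x{k\tau}$ and stack the restricted inputs and outputs over one period:
\[
\bl{U}_k\doteq\bigl(\bu_{\cS_0}(k\tau),\dots,\bu_{\cS_{\tau-1}}(k\tau+\tau-1)\bigr),\qquad
\bl{Y}_k\doteq\bigl(\y{k\tau},\dots,\y{k\tau+\tau-1}\bigr).
\]
Iterating~\eqref{eq:inv-system} over one period gives the LTI system
\[
\bl{\xi}_{k+1}=\bA^\tau\bl{\xi}_k+\cC_\cS\bl{U}_k,\qquad
\bl{Y}_k=\cO_\tau(\bA,\bC)\,\bl{\xi}_k+\bGamma_\cS\bl{U}_k,
\]
with $\cC_\cS$ and $\bGamma_\cS$ as in~\eqref{eq:ctrl-schedule} and the following display (horizon $\tau$). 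From zero initial state, the zero-state response of $\Sigma$ to an input supported on the periodic schedule $\cS$ vanishes identically if and only if the zero-state response of this lifted system to $(\bl{U}_k)_k$ vanishes identically.

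Next I would show that invertibility over $\tau$-periodically $s$-sparse inputs is equivalent to the following: for every $\cS\in\Delta_{2s}^\tau$, the lifted system above is classically invertible in the sense of \cref{def:inv}. By linearity, $\bu$ and $\bu'$ have equal responses if and only if $\bu-\bu'$ has zero response, so it suffices to relate differences to periodic schedules.
\begin{enumerate}
\item If $\bu,\bu'$ are $\tau$-periodically $s$-sparse, then for each phase $r\in\{0,\dots,\tau-1\}$ the union of all supports at times $t\equiv r \pmod{\tau}$ has size at most $s$ for each input. Hence $\bu-\bu'$ is supported, at every time, on the periodic schedule given by the union of these phase supports. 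This schedule lies in $\Delta_{2s}^\tau$, so one direction follows.
\item Conversely, take any input supported on a fixed $\cS\in\Delta_{2s}^\tau$, repeated periodically. Split each $\cS_r$ once and for all into two sets of size at most $s$, and assign the corresponding coordinates to $\bu$ and $-\bu'$ respectively. Both resulting inputs are $\tau$-periodically $s$-sparse, and their difference is the given input.
\end{enumerate}
Therefore a nonzero input with zero response exists on some $\cS\in\Delta_{2s}^\tau$ exactly when invertibility over the class fails.

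Finally, I would apply item~(4) of \cref{thm:classical-inv} to each lifted system. It has $n$ states and $\sum_r|\cS_r|$ inputs, which equals the number of columns of $\cC_\cS$ and $\bGamma_\cS$. The Rosenbrock pencil then has rank $n+\sum_r|\cS_r|$ for some $z\in\Complex{}$ if and only if its kernel is trivial, which is exactly~\eqref{eq:invertibility_spectral}.

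The main obstacle I expect is the bookkeeping in the equivalence between the original and lifted systems. Two points need care. First, every admissible input, or difference of inputs, must correspond to an arbitrary lifted input sequence for some fixed $\cS$, and not to a restricted one. This is where the periodic-support assumption is essential: without it, no single time-invariant lifted system captures all differences. Second, injectivity of the zero-state map must transfer in both directions, for all $t\ge0$ rather than only at block boundaries. This holds because the stacked $\bl{U}_k$ contains every input sample. A minor check is that restricting to columns $\cS_r$ introduces no spurious kernel: the lifted input is exactly the vector of the retained coordinates, so there is no degeneracy beyond what the pencil encodes.
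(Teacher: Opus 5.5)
Your lifting argument is the right one and, with one clarification, it is complete. The survey gives no proof of its own; it only cites \citep{poeInvertibilityDiscreteTimeLinear2024}. However, the matrix in \eqref{eq:invertibility_spectral} is exactly the Rosenbrock matrix of your lifted system $(\bA^\tau,\cl{C}_{\cl{S}},\cO_\tau(\bA,\bC),\bl{\Gamma}_{\cl{S}})$. So the intended route is evidently yours: lift over one period, apply item~(4) of \cref{thm:classical-inv}, and pass between differences of admissible inputs and inputs supported on a fixed schedule in $\Delta_{2s}^\tau$. Your bookkeeping is also correct: schedule-supported inputs correspond bijectively to arbitrary lifted inputs, the full output trace vanishes in both systems simultaneously, and the column count of the pencil is right.

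The clarification concerns the first item of your reduction. There you use that, for each phase $r$, the union of the supports over all times $t\equiv r \pmod{\tau}$ has at most $s$ elements. The definition as literally written only bounds pairwise unions, $|\cl{S}_t\cup\cl{S}_{t'}|\le s$, and this does not control the union over a whole residue class. For $s=2$, supports $\{1\},\{2\},\{3\}$ at times $0,\tau,2\tau$ satisfy every pairwise bound. Under that reading the difference of two admissible inputs need not lie on any schedule in $\Delta_{2s}^\tau$, and the ``if'' direction is in fact false. Your second item, the ``only if'' direction, is fine under either reading.

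For a counterexample, take $\tau=1$, $s=2$, $\bA=\bl{0}$, $\bC=I_4$ and $m=6$, so that $\y{t}=\bl{D}\u{t}+\bB\u{t-1}$. Impose $\bl{d}_2=\bl{d}_1$, $\bl{d}_4=\bl{d}_3+\bl{b}_1-\bl{b}_2$, $\bl{d}_6=\bl{d}_5+\bl{b}_3-\bl{b}_4$ and $\bl{b}_6=\bl{b}_5$ on the columns of $\bl{D}$ and $\bB$, with all remaining data generic.
\begin{enumerate}
\item Here the pencil has trivial kernel exactly when $z\bl{D}_{\cl{S}}+\bB_{\cl{S}}$ is injective. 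One checks that $\det(z\bl{D}_{\cl{S}}+\bB_{\cl{S}})\not\equiv 0$ for every four-element $\cl{S}$, so the spectral condition holds for all $\cl{S}\in\Delta_4$.
\item Yet the $1$-sparse inputs $(\be_1,\be_3,\be_5,\bl{0},\dots)$ and $(\be_2,\be_4,\be_6,\bl{0},\dots)$ satisfy the pairwise bound and produce identical outputs.
\end{enumerate}

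You should therefore state explicitly that $\tau$-periodic $s$-sparsity means the support is contained in a $\tau$-periodic schedule of sets of size at most $s$, i.e., $|\bigcup_{k\ge0}\cl{S}_{r+k\tau}|\le s$ for every $r$. This is clearly the intended meaning: the survey glosses the definition by saying the support schedule itself is periodic, and says that for $\tau=1$ the support is constant. Under this reading your first item, and hence the whole proof, goes through.
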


Observe that, when $\tau=1$, condition~\eqref{eq:invertibility_spectral} reduces to a statement about invertibility when the support of $\bu(t)$ is constant in terms of the standard Rosenbrock matrix,
and can be reduced to the classical condition (4) in \cref{thm:classical-inv}. In \citep{poeInvertibilityDiscreteTimeLinear2024}, it is further shown that properties of the weakly $2s$-sparse unobservable set may be used to establish a finite upper bound for $\tau$ to check, using a slightly generalized version of the block matrix in~\eqref{eq:invertibility_spectral}. 

\subsection{Estimation and reconstruction of sparse inputs}
\label{sec:sparse-invertibility-estimation}

In the previous section, we discussed \textit{algebraic} characterizations of sparse invertibility. In this section, we review approaches to sparse input estimation with an unknown initial state in the finite-horizon noisy setting that have been explored in recent works by~\citep{sefatiLinearSystemsSparse2015, kafashanRelatingObservabilityCompressed2016b, poeNecessarySufficientConditions2023, chakrabortyBayesianLearningBasedKalman2024, 11215643}, along with their main ideas and associated formal guarantees. For the sake of simplicity, we consider an isotropic and time-invariant noise model, and refer the reader to \citep{11215643} for the general case. We adopt a Bayesian perspective throughout. 
Consider the model
\begin{align}\label{eq:system-noisy}
	\begin{split}
		\bx(t+1) &= \bm{A}\bx(t) + \bm{B}\bu(t) + \sigma\bm{w}(t)\\
		\bm{y}(t) &= \bm{C}\bx(t) + \bm{D}\bu(t) + \mu\bm{v}(t)
	\end{split}
\end{align}
where noise vectors $\bm{w}(t), \bm{v}(t)$ have i.i.d. standard Gaussian coordinates and $\bu(t) \sim \mathcal{D}_{\bm{U}}(\bm{\gamma}(t))$, a sparse model distribution with parameter $\bm{\gamma}(t)$. The maximum a posteriori (MAP) estimator of $\bx(t), \bu(t)$ given $\bm{y}(t)$ may be expressed as
\begin{subequations} \label{eq:MAP_est}
	\begin{equation}
		(\hat{\bx}(t), \hat{\bu}(t))_{t=0}^{K-1} \in \underset{\bx, \bu}{\arg\min}\,\ell(\bx, \bu; \bm{y}, \bm{\gamma})
	\end{equation}
	where 
	\begin{align}
		\begin{split}
		\hspace{-9mm}\ell(\bx, \bu; \bm{y}, \bm{\gamma})&= \sum_{t=0}^{K-1} \frac{1}{2\sigma^2}\|\bm{y}(t) - \bm{C}\bx(t) - \bm{D} \bu(t) \|_2^2 \\
		&+ \sum_{t=0}^{K-1} \frac{1}{2\mu^2}\|\bx(t+1) - \bm{A}\bx(t) - \bm{B}\bu(t)\|_2^2 \\
		&- \sum_{t=0}^{K-1} \ln p_{\bm{U}}(\bu(t) ; \bm{\gamma}(t))
	\end{split}
	\end{align}
\end{subequations}
where $p_{\bm{U}}$ is the density associated with $\mathcal{D}_{\bm{U}}$.

\subsubsection{Noiseless case with Laplace prior}
Here we consider the special case of~\eqref{eq:system-noisy} with $\sigma, \mu \to 0$ and $\mathcal{D}_{\bm{U}}$ i.i.d. Laplace. In this case, the estimator \eqref{eq:MAP_est} converges to a solution of the constrained problem
\begin{align}
	\hspace{-6mm}\begin{split}
		\min_{\bx, \bu}  & \quad \sum_{t=0}^{K-1} \norm[1]{\bu(t)}\\
		\text{subject to}  & \quad \begin{cases}
			\bx(t+1) \hspace{-2mm}&= \bm{A}\bx(t) + \bm{B}\bu(t) \\
			\bm{y}(t) \hspace{-2mm}&= \bm{C} \bx(t) + \bm{D} \bu(t)
		\end{cases} \quad \forall t\ge0.
	\end{split}
\end{align}
Defining $\bm{Y}_t = [\bm{y}(0)^\top, \cdots , \bm{y}(t)^\top]^\top \in \mathbb{R}^{tp}$ and $\bm{U}_t = [\bu(0)^\top, \cdots , \bu(t)^\top]^\top \in \mathbb{R}^{tm}$, this coincides with
\begin{align}\label{eq:estimation-problem}
	\hspace{-8mm}\begin{split}
		\min_{\bx, \bm{U}}  & \quad \|\bm{U}_{K-1}\|_1\\
		\text{subject to}  & \quad  \begin{cases}
			\bx(t+1) \hspace{-2mm}&= \bm{A}^t \bx(0) + \bm{\mathcal{C}}_t \bm{U}_{t-1}\quad \forall t\ge0\\
			\bm{Y}_{K-1} \hspace{-2mm}&= \bm{\mathcal{O}}_K \bx(0) + \bm{\Gamma}_K \bm{U}_K.
		\end{cases} 
	\end{split}
\end{align}
Since the $\bx(t)$ are free, we may neglect the first constraint and consider only $\bx(0)$ and $ \bm{U}_K$ for the purposes of optimization.
Problem~\eqref{eq:estimation-problem} is convex, hence it can be solved using any commercial convex optimization solver or an appropriate proximal splitting method such as ADMM~\citep{boydDistributedOptimizationStatistical2010}.

\begin{theorem}[{\citealp{poeNecessarySufficientConditions2023}}]
	Denote $\mathcal{U} := \{(\bu(t))_{t=0}^{K-1}: \|\bu(t)\|_0 \le s\}$. The following are equivalent:
	\begin{itemize}
		\item For every $s$-sparse true input sequence $\bu \in \mathcal{U}$, the MAP estimate satisfies $\hat{\bu}(t) = \bu(t)$ and $\bx(t) = \hat{\bx}(t)$;
		\item For every $\mathcal{S} := (\mathcal{S}_0, \cdots, \mathcal{S}_K) \in \Delta_{2s}^{K+1}$, and every $\bm{h} \in \ker{(\bm{I} - \bm{\mathcal{O}}_K\bm{\mathcal{O}}_K^\dagger)\bm{\Gamma}_K} \setminus \{0\}$, $\|\bm{h}_{\mathcal{S}} \|_1 < \|\bm{h}_{\mathcal{S}^c}\|_1$.
	\end{itemize}
\end{theorem}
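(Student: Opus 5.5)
This is a null-space-property argument for $\ell_1$ recovery in which the unknown initial state is eliminated by projection. Throughout, I read $\bGamma_K$ with $K$ block columns, indexed $t=0,\dots,K-1$, consistent with $\bm{U}_{K-1}$.

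\textbf{Step 1: eliminate $\bx(0)$.} In the noiseless limit the data are $\bm{Y}=\bcO_K\bx(0)+\bGamma_K\bm{U}$. Let $\bl{P}\doteq \bI-\bcO_K\bcO_K^\dagger$ be the orthogonal projector onto $\col{\bcO_K}^\perp$. A pair $(\bx(0),\bm{U})$ is feasible iff $\bl{P}\bm{Y}=\bl{P}\bGamma_K\bm{U}$ and $\bx(0)$ solves the residual equation $\bcO_K\bx(0)=\bm{Y}-\bGamma_K\bm{U}$. The objective does not depend on $\bx(0)$, so the problem reduces to
\begin{equation*}
\min_{\bm{U}} \ \norm[1]{\bm{U}} \quad \text{subject to} \quad \bl{P}\bGamma_K\bm{U}=\bl{P}\bm{Y}.
\end{equation*}
This is a standard basis-pursuit problem with sensing matrix $\bl{P}\bGamma_K$, whose feasible set is $\bm{U}+\ker{\bl{P}\bGamma_K}$. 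Once $\hat{\bm{U}}=\bm{U}$ is known, $\hat{\bx}(0)$ is determined uniquely only if $\bcO_K$ has trivial kernel. I would therefore treat observability of $(\bA,\bC)$ over the horizon as implicit in the state-recovery claim, and note that the statement about $\bx$ is otherwise to be read modulo $\ker{\bcO_K}$.

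\textbf{Step 2: sufficiency.} Fix a true $s$-sparse sequence $\bm{U}$ with support schedule $\cS=(\cS_t)_t$; the condition is invoked only for schedules drawn from $\Delta_{2s}$. Any other feasible point is $\bm{U}+\bm{h}$ with $\bm{h}\in\ker{\bl{P}\bGamma_K}\setminus\{0\}$. Using $\bm{U}_{\cS^c}=0$ and the triangle inequality,
\begin{equation*}
\norm[1]{\bm{U}+\bm{h}}\ge \norm[1]{\bm{U}}-\norm[1]{\bm{h}_\cS}+\norm[1]{\bm{h}_{\cS^c}}>\norm[1]{\bm{U}},
\end{equation*}
since $\Delta_s\subseteq\Delta_{2s}$ and so the hypothesis $\norm[1]{\bm{h}_\cS}<\norm[1]{\bm{h}_{\cS^c}}$ applies. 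Hence $\bm{U}$ is the unique minimizer, and $\bx$ is recovered as in Step 1.

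\textbf{Step 3: necessity.} Suppose some $\bm{h}\in\ker{\bl{P}\bGamma_K}\setminus\{0\}$ and some schedule $\cS$ violate the inequality, i.e.\ $\norm[1]{\bm{h}_\cS}\ge\norm[1]{\bm{h}_{\cS^c}}$. Take the true input $\bm{U}=\bm{h}_\cS$. The point $-\bm{h}_{\cS^c}=\bm{U}-\bm{h}$ is also feasible, and its $\ell_1$-norm is no larger than that of $\bm{U}$. So either exact recovery fails or the minimizer is not unique, contradicting the first bullet.

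\textbf{Main obstacle.} The delicate point is that $\bm{h}_\cS$ must itself be an admissible true input, i.e.\ $s$-sparse at every time, while $\cS$ ranges over $\Delta_{2s}$ schedules. I would first try to split each $\cS_t$ into two parts $\cS_t^{(1)},\cS_t^{(2)}\in\Delta_s$ and set $\bm{U}=\bm{h}_{\cS^{(1)}}$, so that the competitor $\bm{U}-\bm{h}$ has $\bm{h}_{\cS^{(2)}}$ plus an off-support remainder; this is the standard $2s$ formulation of a null-space property. Whether a violation at level $2s$ truly transfers to a recovery failure at level $s$ through such a split is the step I would need to check most carefully against the paper's convention. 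If it does not, I would instead prove the equivalence with $\Delta_s$ replacing $\Delta_{2s}$. I would also verify the indexing of $\cS$ (with $K$ versus $K+1$ blocks) against the dimensions of $\bGamma_K$.
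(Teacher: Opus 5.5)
Your route is the one the paper points to: eliminate $\bx(0)$ by projecting onto $\col{\bm{\mathcal{O}}_K}^\perp$, then apply a null space property over the family of piecewise-sparse support schedules. Steps~1 and~2 are correct.

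\textbf{The gap in Step~3.} The gap is the step you flagged, and it cannot be closed. With $\Delta_{2s}$ the necessity direction is false, so no splitting of $\cS_t$ into two $\Delta_s$ pieces can turn a violation into a recovery failure. Take $n=1$, $\bA=1$, $\bB=\bl{0}$, $m=p=3$, $s=1$, $\bC=\be_3$ and
\begin{equation*}
\bD=\begin{bmatrix}1&-1&0\\0&1&-1\\0&0&0\end{bmatrix}.
\end{equation*}
Then $\bGamma_K$ is block diagonal with $K$ copies of $\bD$, and $\bm{\mathcal{O}}_K$ has full column rank. Moreover $\bm{\mathcal{O}}_K^\top\bGamma_K=\bl{0}$, so $(\bI-\bm{\mathcal{O}}_K\bm{\mathcal{O}}_K^\dagger)\bGamma_K=\bGamma_K$. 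Its kernel consists of the vectors $\bm{h}$ whose $t$th block is $c_t(1,1,1)^\top$.

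For $\bm{h}\neq 0$ and any $\cS\in\Delta_1^K$ you get $\|\bm{h}_\cS\|_1\le\sum_t|c_t|<2\sum_t|c_t|\le\|\bm{h}_{\cS^c}\|_1$. So your Step~2, together with injectivity of $\bm{\mathcal{O}}_K$, gives exact recovery of every state and every $1$-sparse input sequence, and the first bullet holds. Yet take $\bm{h}$ with first block $(1,1,1)^\top$ and $\cS_0=\{1,2\}\in\Delta_2$. Then $\|\bm{h}_\cS\|_1=2>1=\|\bm{h}_{\cS^c}\|_1$, so the second bullet fails. Your split, for instance the true input $\be_1$ in block $0$, only produces a competitor $\bm{U}-\bm{h}$ of $\ell_1$-norm $2>1$.

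The provable equivalence is your fallback: replace $\Delta_{2s}^{K+1}$ by $\Delta_s^{K}$, with one block per block column of $\bGamma_K$. This family is closed under taking subsets, so $\bm{h}_\cS$ is itself an admissible true input and your Step~3 goes through verbatim. Order $2s$ belongs to spark-type uniqueness conditions, not to the $\ell_1$ null space property.

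\textbf{Observability.} Your caveat from Step~1 must likewise become part of the statement rather than a reading convention.
\begin{itemize}
\item The first bullet forces $\ker{\bm{\mathcal{O}}_K}=\{0\}$. With $\bu\equiv\bl{0}$ and $\bx(0)=\bl{0}$, every pair $(\bl{z},\bl{0})$ with $\bl{z}\in\ker{\bm{\mathcal{O}}_K}$ is an optimal solution.
\item The second bullet does not imply it. With $\bC=\bl{0}$ and $\bD$ invertible, $\bGamma_K$ is invertible and the kernel condition holds vacuously, while $\bx(0)$ is not identifiable.
\end{itemize}
The correct statement is therefore: the first bullet holds if and only if $\bm{\mathcal{O}}_K$ has full column rank and $\|\bm{h}_\cS\|_1<\|\bm{h}_{\cS^c}\|_1$ for all $\cS\in\Delta_s^K$ and all nonzero $\bm{h}\in\ker{(\bI-\bm{\mathcal{O}}_K\bm{\mathcal{O}}_K^\dagger)\bGamma_K}$.
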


The key observation in the proof of this theorem is an application of the \textit{generalized nullspace property} \citep{kabaWhatLargestSparsity2020}, which makes use of the fact that piecewise $s$-sparsity is a more restrictive setting than $Ks$-sparsity of the staked vector $\bm{U}_K$.

\subsubsection{Noisy case with Laplace prior}
The next setting we consider allows arbitrary $\sigma, \mu >0$. The MAP objective may be expressed as
\begin{equation}
	\ell(\bx, \bu; \bm{y}) =  f(\bx, \bu) + \sum_{t=0}^{K-1}  \|\bu(t)\|_1.
\end{equation}
Here, $f$ is a quadratic term measuring data fidelity, whereas the $\ell_1$-norm promotes sparsity of the inputs.
\citep{11215643} showed that,
if one considers the variable splitting
\begin{align}
	\begin{split}
		\min_{\bx, \bu, \bm{z}} &\quad f(\bx, \bu) + \sum_{t=0}^{K-1} \|\bm{z}(t)\|_1 \\
		\text{subject to} &\quad \bu = \bm{z},
	\end{split}
\end{align}
then an application of ADMM leads to iterates which can be solved by alternating between componentwise soft-thresholding on $\bm{z}$ (see \cref{sec:sparse-attacks-cps}) and a Kalman smoothing on an extended system with state variables $(\bx, \bu)$, leveraging the quadratic coupling between $\bu, \bm{t}$ induced by the ADMM splitting. This results in an \textit{$\ell_1$-regularized Recursive Kalman Smoothing (RKS)} algorithm. In contrast to a naive splitting approach suggested by direct application of ADMM to the noiseless MAP estimator above, the RKS algorithm yields linear complexity of the implementation in $K$ and consequently enjoys much greater scalability. Similar splitting techniques could in principle be applied for other proximal-friendly sparsity-inducing regularization, such as the $\ell_q$ quasi-norm. Another approach to solving \eqref{eq:MAP_est}, which \citep{11215643} readily extends to the $\ell_q$ quasi-norm case, is through sequential majorization of the non-smooth regularizer with a quadratic surrogate, yielding a majorization-minimization algorithm (Reweighted $\ell_2$-regularized RKS) analogous to the iteratively reweighted least squares formulation used in prior sparse recovery formulations~\citep{wipf2010iterative}.

\subsubsection{Noisy case with hierarchical prior}
The last family of approaches considered in \citep{chakrabortyBayesianLearningBasedKalman2024} leverages a hierarchical prior on the sparse inputs. In contrast to the Laplace prior, this permits greater flexibility of modeling and exact sparse recovery in more instances, at the cost of nonconvexity. The most performant variant assumes the prior $\bu(t) \sim \mathcal{N}(\bm{0}, \mathrm{diag}(\bm{\gamma}(t)))$, resulting in 
\begin{equation}
	-\ln p_U(\bu(t); \bm{\gamma}(t)) = \sum_{i \in [m]} \left[\log|\bm{\gamma}(t)_i| + \frac{\bu(t)_i^2}{2 \bm{\gamma}(t)_i}\right] + c
\end{equation}
where $c$ is a constant. The joint estimation problem is then iteratively solved via the expectation-maximization algorithm, SBL-RKS, in reference to the sparse Bayesian learning (SBL) framework that motivates the approach~\citep{wipf2004sparse}.
Since the conditional distribution of $(\bx,\bu)$ given $\bm{\gamma}$ and $\bm{y}$ is Gaussian, the algorithm alternates between a Kalman-smoothing step updating the augmented $(\bx, \bu)$ state and a maximization step to update $\bm{\gamma}$.

\subsection{Limitations and Outlook}\label{sec:lim_outlook_invertibility}
There are many open directions in sparse input recovery and estimation. Our discussion concerns primarily conditions under which there in principle \textit{may exist} an algorithm which performs online sparse input recovery, and we document algorithms from the literature which are suited to this task. However, at this point, minimal conditions for the existence and construction of a computationally tractable, stable inverse for systems with sparse inputs have not appeared in the literature.

In \cref{sec:sparse_invertibility_rev}, necessary and sufficient conditions for invertibility over an infinite horizon are discussed. It should be emphasized that these conditions are analogous to the spark of sparse recovery and do not furnish recovery guarantees for tractable algorithms. While \citep{poeNecessarySufficientConditions2023} has established joint recovery guarantees for the $\ell_1$-norm estimator based on observability and the generalized nullspace property and \citep{kafashanRelatingObservabilityCompressed2016b} similarly established sufficient conditions and robustness properties via the restricted isometry property, rigorous phase transition analyses analogous to those carried out in \citep{amelunxenLivingEdgePhase2014} for the convex estimators mediating input and joint recovery have yet to emerge. Future work in this area might prioritize the characterization of such phase transition phenomena.

While the problem of finite-horizon joint recovery of a state and sparse input sequence has been increasingly engaged with on a theoretical and algorithmic level, there remains a notable lack of computationally efficient online algorithms performing sparse unknown input recovery. The literature on online input recovery often exists alongside the construction of state observers, and unknown input observers have been characterized for several closely related contexts, notably switched systems~\citep{vuInvertibilitySwitchedLinear2008, tanwaniRobustInvertibilitySwitched2011}. Generalizing the notion of strong detectability and finding minimal constructions for unknown input observers in the sparse input contexts is thus an interesting and important direction for future work in this area.

Lastly, it is worth pointing out that this section of the survey concerns the problem of left invertibility as it pertains to recovering inputs from outputs. The dual right invertibility/functional reproducibility problem, tracking a prescribed output trajectory by the design of appropriate inputs over an infinite horizon, remains largely underexplored in the control literature,
with the exception of general piecewise-sparse pursuit algorithms surveyed in \cref{sec:s-sparse-control} for point-stabilization with $s$-sparse inputs.
    %!TEX root = ../main_survey_sparsity_control.tex

\section{Sparse attacks on cyber-physical systems}
\label{sec:sparse-attacks-cps}

\newcommand{\xtrue}{\Tilde{\bl{x}}}
\newcommand{\atrue}{\Tilde{\bl{a}}}
\newcommand{\ytrue}{\Tilde{\bl{y}}}
\newcommand{\soft}{\mathbb{S}}
\newcommand{\Aa}{\Tilde{\bA}}
\newcommand{\Ca}{\Tilde{\bC}}
\newcommand{\sign}{\text{sign}}
\newcommand{\su}{\mathcal{S}}
\newcommand{\fun}{\mathcal{F}}
\newtheorem{ex}{Example}

%%%% check if ok -- ok!
\newcommand{\ba}{\bl{a}}
\newcommand{\bw}{\bl{w}}
\newcommand{\bz}{\bl{z}}
\renewcommand{\a}[2][]{\ba_{#1}(#2)}
\newcommand{\w}[2][]{\bw_{#1}(#2)}

\newcommand{\z}[2][]{\bz_{#1}(#2)}
\newcommand{\zhat}[2][]{\hat{\bz}_{#1}(#2)}
\newcommand{\ahat}[2][]{\hat{\ba}_{#1}(#2)}
\newcommand{\xhat}[2][]{\hat{\bx}_{#1}(#2)}
\newcommand{\reg}{r}

The sparse invertibility framework in \cref{sec:invertibility-sparse-inputs} studies the recovery of unknown sparse inputs from their effects on a dynamical system. This framework is directly relevant to cyber-physical security, where an adversary may inject sparse unknown inputs through sensors or actuators.
In this section, we focus on the state-estimation perspective of such attacks.

\subsection{Motivation}
A cyber-physical system (CPS) consists of computing nodes that interact with the physical environment through sensors and actuators while communicating with one another.
%Although the term {\it{cyber-physical system}} was introduced in 2006, the underlying concept traces back to the theory of Cybernetics~\citep{wie48}, which studies the interplay among computation, communication, and control.
%
The CPS paradigm encompasses diverse modern applications, including wireless sensor networks, smart grids, water distribution, intelligent transportation, industrial automation, mobile robotics, and advanced medical devices~\citep{pas15}.

The distributed nature of CPSs presents advantages and challenges for safety and security. On the one hand, distributed architectures are structurally more resilient than centralized ones: if a subset of nodes becomes faulty, the overall system can often continue operating correctly by isolating the affected components.
On the other hand, their distributed deployment over wide, uncontrolled geographic areas increases their exposure to security threats. As a result, some components are more likely to be affected by attacks, faults, or unintentional human interactions, resulting in compromised sensing, actuation, or communication.
Representative examples include non-invasive physical spoofing attacks, as illustrated in \citep{sho13}, and cyberattacks such as the Stuxnet worm~\citep{lan11}. Because CPSs directly interact with physical processes, cyber-physical attacks can have severe safety consequences even when they are sparse, \ie when they affect only a small subset of computing nodes.

Considerable research has focused on improving the security of CPSs over the past decade. A prominent research direction focuses on secure state estimation (SSE) in the presence of sensor attacks, where an adversary injects false data to corrupt sensor measurements. Although such attacks can be viewed as disturbances affecting the measurements, unlike conventional noise, they are intentionally designed to be unpredictable. The only practical assumption is that the attack is {\it{sparse}}, \ie it compromises only a limited subset of sensors due to the large-scale physical distribution of CPSs and limited resources available to the attacker.

This section reviews the main methods addressing SSE in CPSs in the presence of sparse sensor attacks. Another line of research focuses on attacks targeting the actuators, which directly compromise the state dynamics rather than the measurements. Due to this intrinsic coupling between attacks and system dynamics, the literature on actuator attacks remains relatively limited. This setting is deeply connected to sparse invertibility in \cref{sec:invertibility-sparse-inputs}.

\paragraph*{Outline:}
First, we present the formal setup and attack estimation problem in \cref{sec:sparse-attack-setup}.
We discuss observability and detectability with sparse sensor attacks with fixed support in \cref{sec:sparse-attacks-time-invariant-support} and with time-invariant sparse sensor attacks in \cref{sec:sparse-attacks-time-invariant-attacks}. We discuss methods for secure online state estimation and attack detection in \cref{sec:sparse-attacks-online-observers}. We then discuss sparse actuator attacks and show how their detection and identification relate to sparse invertibility in \cref{sec:sparse-attacks-actuators}.

\subsection{Setup}\label{sec:sparse-attack-setup}

We follow the setup in \cite{faw14,sho16,fox24} and model a CPS under sparse sensor attacks as a discrete-time linear time-invariant system of the kind~\eqref{eq:system-dt}, but with the specific form
\begin{align}
	\label{eq:CPS}
	\begin{split}
		\x{t+1}&=\bA\x{t}\\
		\y{t}&=\bC\x{t}+ \a{t},
	\end{split}
\end{align}
where $\bA\in\Real{n\times n}$ and $\bC\in\Real{p\times n}$, with $p<n$; $\x{t}$ is the to-be-estimated state of the CPS; $\y{t}$ is the vector of measurements, and for the sake of simplicity, we assume that each sensor node acquires a scalar measurement.
Vector $\a{t}$ is the attack vector: if $\a[i]{t}\neq 0$, we say that the sensor node $i$ is \emph{under attack} at time $t$.
The core assumption is that $\a{t}$ is sparse, \ie few sensors are under attack at each $t$.
%For the sake of readability, we denote the support of $\a{t}$ by $\su(t)=\supp{\a{t}}$, such that $\|\a{t}\|_0=|\su(t)|$.

We remark that the presence of a known controlled input $\u{t}$, here omitted for brevity, can be incorporated without affecting the problem formulation~\citep{sho16}. 

System \eqref{eq:CPS} is a special instance of system \eqref{eq:inv-system}, with $\bB=0$ and $\bD=I_p$. Thus, its sparse input invertibility can be analyzed by using the results presented in \cref{sec:invertibility-sparse-inputs}, which enable attack identification. In contrast, the objective of this section is to jointly estimate the system state and the attacks.

The SSE problem is formulated as follows.
\begin{prob}\label{p:SSE}
	For a time horizon $\tau$ and $t\geq \tau-1$, given $\bA$, $\bC$ and 
	\begin{equation}
		\ytrue(t) = \begin{bmatrix}
			\y{t-\tau+1}\\
			\y{t-\tau+2}\\
			\vdots\\
			\y{t}
		\end{bmatrix}
		\in\Real{p\tau},
	\end{equation}
	estimate the $(\tau-1)$-delayed state $\xtrue(t)=\x{t-\tau+1}\in\Real{n}$ and
	\begin{equation}
		\atrue(t) = \begin{bmatrix}
			\a{t-\tau+1}\\
			\a{t-\tau+2}\\
			\vdots\\
			\a{t}
		\end{bmatrix}
		\in\Real{p\tau}.
	\end{equation}
\end{prob}

If $\tau=1$, we say that the estimation is {\it{online}}, as it relies only on the current measurement $\y{t}$ to estimate the current state $\x{t}$ and the current attack $\a{t}$.

%In what follows, we include the attack in an augmented state $(\atrue, \xtrue)$ and we analyze the problem in terms of  observability.

%To estimate $x(0)$, we need to estimate $a(0)$,  $a(1)$, $\dots$, $a(\tau-1)$.
%Let us denote , while $I\in\{0,1\}^{p\tau,p\tau}$ is the identity matrix.

Therefore,
system~\eqref{eq:CPS} can be written as
\begin{align}
	\label{eq:under}
	\begin{split}
		\x{t+1}&=\bA^\tau\xtrue(t)\\
		\y{t}&=\bC\x{t}+ \a{t}
	\end{split}
\end{align}
and \cref{p:SSE} corresponds to solving the system of linear equations
\begin{equation}\label{eq:p1}
	\ytrue(t) =\begin{bmatrix}\cO_{\tau}(\bA,\bC) & I_{p\tau}
	\end{bmatrix}\begin{bmatrix}
		\xtrue(t)\\
		\atrue(t)
	\end{bmatrix}
\end{equation}
with unknowns $\xtrue(t)$ and $\atrue(t)$.
The observability matrix of the attack-free system is assumed to be observable, \ie $\rk{\cO(\bA,\bC)}=n$.

Nevertheless, the system in~\eqref{eq:p1} is inherently underdetermined because it provides only $p\tau$ equations for $n+p\tau$ unknowns. Consequently, the SSE problem is not well posed. In the absence of additional prior information, neither adding new sensors (thereby increasing $p$) nor collecting measurements over a longer time horizon (thereby increasing $\tau$) resolves this issue. Indeed, each additional sensor or measurement may introduce new attack variables, so the number of unknowns grows together with the number of equations.

However, assuming attack sparsity allows us to formulate the problem as in compressed sensing (CS) theory~\citep{don06}. CS states that one can find sparse solutions of underdetermined systems from compressed linear measurements under suitable conditions on the number of measurements and one the structure of the sensing matrix~\citep{foucartMathematicalIntroductionCompressive2013} for a comprehensive discussion.
In other words, sparsity may compensate for the lack of observability.

The original CS formulation of the problem corresponds to finding the sparsest configuration that solves \eqref{eq:p1}:
\begin{align}
	\label{eq:under2}
	\begin{split}
		\min_{\bx\in\Real{n},\ba\in\Real{p\tau}} & \qquad\norm[0]{\ba}\\
		\text{subject to} &\qquad \ytrue =\begin{bmatrix}\cO_{\tau}(\bA,\bC) & I_{p\tau}
		\end{bmatrix}\begin{bmatrix}
			\bx\\
			\ba
		\end{bmatrix}.
	\end{split}
\end{align}
However, since minimizing the $\ell_0$-norm is a combinatorial, NP-hard problem, the $\ell_1$-norm is usually used, as it represents the best convex approximation of the $\ell_0$-norm~\citep{foucartMathematicalIntroductionCompressive2013}.
The corresponding problem is known as Basis Pursuit:
\begin{align}
	\label{eq:BP}
	\begin{split}
\min_{\bx\in\Real{n},\ba\in\Real{p\tau}} & \qquad\norm[1]{\ba}\\
		\text{subject to} &\qquad \ytrue =\begin{bmatrix}\cO_{\tau}(\bA,\bC) & I_{p\tau}
		\end{bmatrix}\begin{bmatrix}
			\bx\\
			\ba
		\end{bmatrix}.
	\end{split}
\end{align}
The unconstrained relaxation of Basis Pursuit~\eqref{eq:BP} is known as Least Absolute Shrinkage and Selection Operator (LASSO, \citealp{tib96}) and read as follows
\begin{equation}\label{eq:LASSO}
\argmin_{\bx\in\Real{n},~\ba\in\Real{p\tau}}\left\|\ytrue -\begin{bmatrix}\cO_{\tau}(\bA,\bC) & I_{p\tau}
	\end{bmatrix}\begin{bmatrix}
		\bx\\
		\ba
	\end{bmatrix}\right\|_2^2+\lambda\norm[1]{\ba}
\end{equation}
where $\lambda>0$ is a hyperparameter that accounts for the sparsity level.
Unlike Basis Pursuit, Lasso takes into consideration possible measurement noise.
The conditions under which Basis Pursuit and Lasso provide the correct estimation include null space property, restricted isometry property (RIP), and irrepresentable condition~\citep{foucartMathematicalIntroductionCompressive2013,has15}. 

We remark that, unlike in classical CS, only the attack variable $\ba$ is subject to sparsity-promoting terms in \eqref{eq:BP} and \eqref{eq:LASSO}, whereas no sparsity constraint or regularization is imposed on the state variable $\bx$.
The irrepresentable condition for problem \eqref{eq:LASSO} is analyzed by \cite{fox24}.

\subsection{Sparse sensor attacks with time-invariant support}
\label{sec:sparse-attacks-time-invariant-support}

As observed in \cite{faw14,sho16}, a model in which the set of attacked nodes remains unchanged over time is more realistic. Indeed, taking control of a sensor typically requires a non-negligible amount of time compared with the time scale of the CPS and of the estimation algorithm. Consequently, once an attacker has taken control of a node, it is likely to retain access throughout the estimation horizon and inject arbitrary signals over time.

Let us denote by $\su$ the time-invariant support of the attack vector, \ie for each time $t$, $\supp{\a{t}}\subseteq \su$.
\cite{faw14} proves the following fundamental limitation for SSE.
\begin{prop}[{\citealp[Proposition~2]{faw14}}]\label{prop1}
	$s$ attacks are {\it{correctable}} (\ie $\xtrue$ can be recovered in the presence of $s=|\su|$ attacks) after $\tau$ steps if and only if for all $z\in\Real{n}\setminus\{0\}$,
	\begin{equation}\label{supp}
		\left|\cup_{k=0}^{\tau-1}\mathrm{supp}\left(\bC \bA^{k}z\right)\right|>2s.
	\end{equation}
\end{prop}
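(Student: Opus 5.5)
We will prove the equivalence by reducing correctability to the injectivity of the stacked measurement map restricted to pairs (state, attack) whose attack has time-invariant support of size at most $s$. For a fixed attack support $\su$ with $|\su|\le s$, write the stacked measurements as $\ytrue=\cO_\tau(\bA,\bC)\xtrue+\atrue$. We take ``correctable'' to mean that the map $(\xtrue,\atrue)\mapsto\ytrue$ determines $\xtrue$ uniquely over all admissible attacks. Concretely: there are no two pairs $(\bx,\ba)\ne(\bx',\ba')$ with $\bx\neq\bx'$, with $\ba,\ba'$ supported on sets $\su,\su'$ of size at most $s$ (at every time step), and producing the same $\ytrue$.

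\emph{Necessity.} Suppose the support condition fails. Then there is $z\neq0$ such that the set $\cl{T}\doteq\cup_{k=0}^{\tau-1}\supp{\bC\bA^k z}$ has $|\cl{T}|\le 2s$. We split $\cl{T}=\su\cup\su'$ into two disjoint sets of size at most $s$ each. We then define the attack $\ba(k)$ as the restriction of $-\bC\bA^k z$ to the coordinates in $\su$, and $\ba'(k)$ as the restriction of $\bC\bA^k z$ to the coordinates in $\su'$. Now take the initial states $\bx=z$ with attack $\ba$, and $\bx'=0$ with attack $\ba'$. For each $k$ we have $\bC\bA^k z+\ba(k)=\ba'(k)$, because $\bC\bA^kz$ vanishes off $\cl{T}$. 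Hence both trajectories produce identical measurements over the horizon, even though $z\ne0$. Each attack has a fixed support of size at most $s$, so $\xtrue$ cannot be recovered.

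\emph{Sufficiency.} Suppose two admissible pairs $(\bx,\ba)$ and $(\bx',\ba')$, with supports $\su$ and $\su'$, yield the same $\ytrue$. Set $z=\bx-\bx'$. Subtracting the two measurement sequences gives $\bC\bA^kz=\ba'(k)-\ba(k)$ for every $k=0,\dots,\tau-1$. Each right-hand side is supported in $\su\cup\su'$, which has cardinality at most $2s$ and does not depend on $k$. Therefore $|\cup_k\supp{\bC\bA^kz}|\le 2s$, and condition~\eqref{supp} forces $z=0$. So the state is uniquely determined, and the decoder can be taken as the $\ell_0$ program~\eqref{eq:under2} restricted to time-invariant support.

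\emph{Main obstacle.} The delicate point is the necessity direction, specifically that the union over time steps is the right quantity. Because the support is required to be time-invariant, the split $\su,\su'$ must be chosen once for all $k$. We make this work by partitioning the union $\cl{T}$, rather than partitioning each individual $\supp{\bC\bA^kz}$ separately. We also need to be careful with the definition of correctability: it concerns recovery of the state only, not the attack, since the attack is then recovered as $\ytrue-\cO_\tau\xtrue$. With that definition, the two directions match exactly.
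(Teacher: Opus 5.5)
Your proof is correct and follows essentially the same argument as \cite{faw14}, which the survey cites without reproducing. Necessity partitions the union support $\bigcup_{k}\mathrm{supp}(\bC\bA^k z)$ into two sets of size at most $s$, giving two indistinguishable state--attack pairs (state $z$ versus state $0$); sufficiency differences two consistent pairs, whose attack difference has time-invariant support of size at most $2s$, and your closing remark on the support-restricted $\ell_0$ decoder covers the row-sparsity $\ell_0$ decoder used in the original.
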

Condition \eqref{supp} is deeply connected with attack sparsity. If the attacker knows $\bC$, a perfectly hidden attack corresponds to injecting a vector belonging to the range of $\bC$. However, this may not be feasible if only a few nodes are compromised.

\cite{sho16} revisit the concept of correctability by defining the $s$-sparse observability.
\begin{definition}[$s$-sparse observability, \citep{sho16}]
	The CPS \eqref{eq:CPS} is said to be $s$-sparse observable if for every set $\Gamma \subseteq [p]$ with $|\Gamma|=s$, the pair $(\bA,\bC_{[p]\setminus \Gamma})$ is observable. 
\end{definition}
In words, a system is $s$-sparse observable if it is observable after removing any subset of sensors with cardinality $s$.
We remark that $s$-sparse observability and correctability can be expressed as strong observability of a linear system with unknown input, and are equivalent to the injectivity of the map $f(\bx,\ba)=\cO_{\tau}(\bA,\bC) \bx +\ba$ with $s$ attacks and constant support.
\begin{theorem}[{\citealp[Theorem~3.2]{sho16}}]\label{thm1}
	Problem~\eqref{eq:p1} has a unique solution with at most $s$ estimated attacks and constant support 
	if and only if the CPS defined by~\eqref{eq:CPS} is $2s$-sparse observable.
\end{theorem}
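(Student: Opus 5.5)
We will prove the equivalence in two directions, after a difference-of-solutions reduction that is standard in compressed sensing. We take the horizon $\tau \ge n$, so that observability of a pair $(\bA,\bC_{\cl{K}})$ is equivalent to injectivity of the corresponding $\tau$-step observability matrix. We also read ``unique solution'' as: for every true pair $(\bx,\ba)$ whose attack has constant support of size at most $s$, no other such pair yields the same $\ytrue$.

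\textbf{Reduction.} Suppose $(\bx,\ba)$ and $(\bx',\ba')$ both solve~\eqref{eq:p1}, with supports $\su,\su'\subseteq[p]$ and $|\su|,|\su'|\le s$. Subtracting the two equations gives
\[
\cO_\tau(\bA,\bC)(\bx-\bx') = -(\ba-\ba').
\]
The difference $\ba-\ba'$ has constant support contained in $\Gamma\doteq\su\cup\su'$, and $|\Gamma|\le 2s$. Hence every block row $t$ satisfies $\bC_{[p]\setminus\Gamma}\bA^{t}(\bx-\bx')=0$, where $\bC_{[p]\setminus\Gamma}$ denotes the rows of $\bC$ indexed outside $\Gamma$ (the paper's submatrix notation is for columns, so we apply it to rows here). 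This is the key identity: it shows that $\bx-\bx'$ is unobservable for $(\bA,\bC_{[p]\setminus\Gamma})$ over $\tau$ steps.

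\textbf{Sufficiency.} Assume the CPS is $2s$-sparse observable. Removing fewer sensors preserves observability, so $(\bA,\bC_{[p]\setminus\Gamma})$ is observable whenever $|\Gamma|\le 2s$. Since $\tau\ge n$, the identity forces $\bx=\bx'$. Substituting back into~\eqref{eq:p1} then gives $\ba=\ba'$, so the solution is unique.

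\textbf{Necessity.} Suppose instead that some $\Gamma$ with $|\Gamma|=2s$ makes $(\bA,\bC_{[p]\setminus\Gamma})$ unobservable. Pick a nonzero $\bz$ in its unobservable subspace, so that $\cO_\tau(\bA,\bC)\bz$ is supported, blockwise and in every block, only on $\Gamma$. Split $\Gamma=\su\cup\su'$ into two disjoint sets of size $s$. Decompose $-\cO_\tau(\bA,\bC)\bz=\ba-\ba'$, letting $\ba$ take the entries on $\su$ and $-\ba'$ take the entries on $\su'$. Each of $\ba$ and $\ba'$ then has constant support of size at most $s$. Choosing $\bx=\bx'+\bz$ with any $\bx'$, the pairs $(\bx,\ba)$ and $(\bx',\ba')$ produce the same measurement $\ytrue$ but are distinct. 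This contradicts uniqueness.

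The main obstacle is bookkeeping rather than depth. First, we must fix the precise meaning of uniqueness, namely as a statement over all admissible attacks. Second, we must handle the case $2s>p$: there the condition is vacuous or trivially violated, and the necessity construction still works with $\su,\su'$ of size at most $s$ covering $[p]$. Finally, the argument needs $\tau\ge n$, or else observability must be read with the $\tau$-step matrix. Both directions otherwise follow directly from the block structure of $\cO_\tau$, with no earlier results needed beyond this linear algebra.
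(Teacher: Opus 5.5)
Your proof is correct and follows the standard difference-of-solutions argument for this result; the survey gives no proof of its own and defers to \cite{sho16}. For sufficiency you restrict $\cO_\tau(\bA,\bC)(\bx-\bx')$ to the sensors outside $\mathcal{S}\cup\mathcal{S}'$, and for necessity you split a bad $2s$-set into two $s$-sets. One correction to your bookkeeping remark: when $2s>p$, the literal definition (sets with $|\Gamma|=2s$) makes $2s$-sparse observability vacuously true, while your own construction shows that uniqueness fails. So the vacuous reading does not work, and the equivalence genuinely requires either $2s\le p$ (which your step \emph{removing fewer sensors preserves observability} also uses) or the definition read over all $|\Gamma|\le 2s$.
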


Both \cref{prop1} and \cref{thm1} rely on conditions whose verification requires combinatorial time. However, they highlight that it is impossible to perform SSE when more than half of the sensors are under attack.

\subsection{Sparse time-invariant sensor attacks}
\label{sec:sparse-attacks-time-invariant-attacks}

A subcase of particular interest is that of sparse time-invariant attacks, defined as
\begin{equation}\label{constant}
	\a{t}=\a{0}	\quad \forall \,t\in\mathbb{N}.
\end{equation}
In this case, not only is the support $\supp{\a{t}}$ constant, but the injected attack is constant over time, as well.
The motivation for this assumption is that,
while an attacker may be capable of injecting arbitrary signals, the attack dynamics are often slower than those of the CPS; as a result, in some applications it is reasonable to assume that the attack remains constant over a finite observation horizon.

Even though the assumption of time-invariant attacks substantially simplifies the problem by making the dynamics known, the corresponding system may not be observable, as studied by \cite{fox26}. Therefore, sparsity information is still required to achieve SSE.
Let us define
\begin{equation}\label{eq:augmented-state}
	\z{t}=\begin{pmatrix} \x{t}\\ \a{t}\end{pmatrix},~~~ \Aa = \begin{pmatrix}
		\bA & 0\\
		0 & I_p
	\end{pmatrix},~~\Ca = \begin{pmatrix}
		\bC & I_p
	\end{pmatrix}.
\end{equation}
Under attack model~\eqref{constant},
the CPS~\eqref{eq:CPS} can be written in the compact form
\begin{align}\label{system2}
	\begin{split}
		\z{t+1}&=\Aa \z{t}\\
		\y{t}&=\Ca \z{t}.
	\end{split}
\end{align}
The following result specializes Theorem~1 in \cite{dav75} to system~\eqref{system2}.
\begin{prop}\label{prop:eig1}
	Let the attack-free CPS be observable, \ie the pair $(\bA,\bC)$ is observable.
	If $\bA$ has the eigenvalue $1$, the CPS \eqref{system2} with constant attacks is not observable, \ie the pair $(\Aa,\Ca)$ is not observable.
\end{prop}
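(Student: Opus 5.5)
We prove \cref{prop:eig1} with the PBH test, written in its observability (dual) form: $(\Aa,\Ca)$ is observable if and only if no right eigenvector $\bz\neq 0$ of $\Aa$ satisfies $\Ca\bz=0$. It therefore suffices to exhibit one nonzero vector $\bz$ with $\Aa\bz=\bz$ and $\Ca\bz=0$.

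We build $\bz$ from a right eigenvector of $\bA$ for the eigenvalue $1$. Assume $1$ is an eigenvalue of $\bA$ and take $\bv\neq0$ with $\bA\bv=\bv$. Set
\[
\bz=\begin{pmatrix}\bv\\ -\bC\bv\end{pmatrix}\in\Real{n+p}.
\]
Because $\bv\neq 0$, we have $\bz\neq0$.

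Using the block structure of $\Aa$ and $\Ca$ in \eqref{eq:augmented-state}, we check the two PBH conditions directly. First,
\[
\Aa\bz=\begin{pmatrix}\bA\bv\\ -\bC\bv\end{pmatrix}=\begin{pmatrix}\bv\\ -\bC\bv\end{pmatrix}=\bz,
\]
so $\bz$ is an eigenvector of $\Aa$ for the eigenvalue $1$. Second,
\[
\Ca\bz=\bC\bv-\bC\bv=0.
\]
The PBH rank test then gives
\[
\rk{\begin{bmatrix}\Aa-I\\ \Ca\end{bmatrix}}<n+p,
\]
so $(\Aa,\Ca)$ is not observable.

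Equivalently, the initial condition $\z{0}=\bz$ produces the output $\y{t}=\bC\bA^t\bv-\bC\bv=0$ for all $t$. Its output therefore coincides with that of the zero initial condition: the state offset $\bv$ is masked by the constant attack $-\bC\bv$.

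No step is genuinely hard. The one point needing care is the choice of the attack component: it must be $-\bC\bv$, so that it cancels the constant output contributed by the fixed mode $\bv$. The hypothesis that $(\bA,\bC)$ is observable is not needed for the conclusion. It only guarantees $\bC\bv\neq0$, which means the unobservable direction genuinely mixes state and attack rather than being a trivially hidden attack-free mode.
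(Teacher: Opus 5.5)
Your proof is correct and follows essentially the paper's route. The paper presents the result as a specialization of Davison's rank condition, which here reduces to the PBH test for $(\Aa,\Ca)$ at the eigenvalue $1$, i.e., to nonsingularity of $\begin{bmatrix}\bA-I & 0\\ \bC & I_p\end{bmatrix}$; your $\bz=(\bv,-\bC\bv)$ is an explicit kernel vector of this matrix, and since its determinant is $\det(\bA-I)$, this view also yields the converse when $(\bA,\bC)$ is observable.
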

The proof can be found in \cite{fox26}.

We remark that $\bA$ has an eigenvalue equal to $1$ in several applications of interest, including stochastic matrices---used, \eg for consensus dynamics and distributed optimization in multi-agent systems---and power network applications, such as the the IEEE 14-bus example provided by~\cite{pas13}.

Just like the time-varying case, sparsity can be exploited to obtain a well-posed formulation of the estimation problem. In particular, the problem can be recast as that of estimating the sparse initial state of an LTI system. The sparse observability of such systems has been studied in \cite{josephObservabilityLinearSystem2018,jos24} using tools from CS.

\subsection{Online sparse observers}
\label{sec:sparse-attacks-online-observers}

The study of time-invariant attacks provides a natural starting point for the development of {\it{online sparse observers}}. These observers are referred to as ``online'' because they are recursive algorithms that perform SSE with $\tau=1$, that is, they use only the current measurement $\y{t}$ and introduce no estimation delay. Moreover, they are termed ``sparse'' because they exploit the sparsity of the attack vector. As will be shown, this leads to the inclusion of a nonlinear operator in the observer dynamics.

A seminal contribution to the development of state observers for CPSs under sparse attacks is by \cite{sho16}.
This work proposed an event-triggered projected Luenberger observer (ETPL), a recursive algorithm that processes a new measurement as soon as it is available and promotes sparsity through a hard-tresholding approach \citep{blu09}.
In contrast, \cite{fox24} proposed a sparse observer based on soft-thresholding techniques \citep{dau04}.
However, neither approach achieves satisfactory performance in the fully online setting $\tau=1$,
as numerically demonstrated by \cite{fox24}.

In the following, we review the main design methodologies for online sparse observers against time-invariant attacks, starting with recursive methods for Lasso and Basis Pursuit.  

Since the attack is assumed time-invariant, the estimation problem reduces to estimating the augmented initial state $\z{0}$. Accordingly, we denote by $\zhat{t}$ the estimate of $\z{0}$ at time instant $t$. Wherever formally required or to avoid confusion, we write $\xhat{t}$ and $\ahat{t}$ to denote the estimates of $\x{0}$ and $\a{0}$, respectively. Recall that the current state $\x{t}$ can be estimated as $\bA^t\xhat{t}$.
Finally, in agreement with the augmented system~\eqref{eq:augmented-state},
we denote the augmented optimization variable by
\begin{equation}
	\bl{z}=\begin{bmatrix}
		\bl{x}\\
		\bl{a}
	\end{bmatrix}.
\end{equation}
\subsubsection{Proximal operator-based algorithms}
Let the weighted $\ell_1$-norm of $\bz$ with weights $\Lambda_i$ be denoted by
\begin{equation}
	\reg(\bl{z})=\sum\limits_{i=1}^{n+q}\Lambda_i|\bl{z}_i|.
\end{equation}
In our setting, the $\ell_1$-norm must be applied only on the attack part of the augmented state, thus $\Lambda_i = 0$ for $i=1,\dots,n$, and $\Lambda_i=\lambda>0$ for $i=n+1,\dots,n+q$.

We recall that the proximal operator \citep{par13} of the $\ell_1$-norm is the soft-thresholding operator, which is defined component-wise by
\begin{equation}
	\soft_{\nu}(w) = \begin{cases}
		w-\nu\sign(w) & |w|\geq \nu\\
		0 				& \mbox{otherwise}
	\end{cases}
\end{equation}
for any scalar $w$ and parameter $\nu>0$.
%sophie 20 ago
In the following, we use the notation $\soft_{\nu}(\bl{z})$ for soft-thresholding applies only to the last $p$ components of $\bl{z}$, as the $\ell_1$ regularization is only on the attack part of the augmented state.

A first family of online sparse observers updates the estimate by solving, at each iteration, an optimization problem that minimizes the sum of
\begin{enumerate}
	\item a cost function of $\bl{z}$ that accounts for consistency with data $\y{t}$;
	\item the sparsity promoting regularizer $\reg(\bz)$;
	\item a proximal regularization term $\frac{\eta}{2} \|\bl{z}-\zhat{t}\|_2^2$, weighted by $\eta>0$, that keeps successive estimates close to one another.
\end{enumerate}

The natural cost function promoting data consistency is the least-squares function
\begin{equation}
	\fun_t(\bl{z}) =  \frac{1}{2}\|\Ca \Aa^t \bl{z}-\y{t)}\|_2^2.
\end{equation}
However, the minimization of $\fun_t(\bl{z})+\reg(\bl{z})$ is not feasible due to non-smoothness of $\reg(\bl{z})$. Two possible workarounds are to consider the linearization of $\fun_t(\bl{z})$ around $\zhat{t}$ or an alternating minimization w.r.t. $\bl{x}$ and $\bl{a}$, by exploiting that $\fun_t(\bl{z})$ is separable in $\bl{a}$.
We briefly review these two approaches next.

By linearization, we obtain the online proximal gradient descent algorithm (O-PGD):
\begin{equation}\label{opgd}
	\zhat{t+1}= \argmin\limits_{\bl{z}\in\Real{n+q}} \langle \nabla \fun_t(\zhat{t}),\bz \rangle+\reg(\bz)+ \frac{\eta}{2} \|\bl{z}-\zhat{t}\|_2^2.
\end{equation}
A generalization of this algorithm to online convex optimization is known as COMID and has been analyzed by \citep{duc10} in terms of static regret. 
We remark that the  proximal regularization can be interpreted as a second-order term in the approximation of $\fun_t(\bl{z})$ around $\zhat{t}$, as well.

The minimization problem in \eqref{opgd} is separable and has a straightforward closed-form solution, which consists of a gradient step followed by soft-thresholding:
\begin{equation}\label{opgd2}
	\begin{array}{rl}
		\zhat{t+1}=& %\argmin\limits_{\bl{z}\in\Real{n+q}} \langle \nabla \fun_t(\zhat{t}),z \rangle+\reg(z)+ \frac{\eta}{2} \|\bl{z}-\zhat{t}\|_2^2\\
		\soft_{\frac{\lambda}{\eta}}\left[ \zhat{t} - \frac{1}{\eta}\nabla \fun_t (\zhat{t}) \right]\\
		=&\soft_{\frac{\lambda}{\eta}}\left[ \zhat{t} - \frac{1}{\eta}(\Ca \Aa^t)^\top \left(\Ca \Aa^t \zhat{t}-\y{t}\right) \right].\\
	\end{array}
\end{equation}

Alternatively, by applying a proximal alternating minimization over $\bl{x}$ and $\bl{a}$, we obtain the online proximal alternating minimization algorithm (O-PAM) in the Jacobi form, which extends the results by \citep{ban18} to online problems.
Formally, the iterates of O-PAM read
\begin{equation}\label{opam}
	\begin{array}{l}
		\xhat{t+1}=\argmin\limits_{\bl{x}\in\Real{n}}~\fun_t(\bl{x},\ahat{t})+\frac{\eta}{2}\|\bl{x}-\xhat{t}\|_2^2\\
		\ahat{t+1}=\argmin\limits_{\bl{a}\in\Real{q}}~\fun_t(\xhat{t},\bl{a})+\lambda\|\bl{a}\|_1+\frac{\eta}{2}\|\bl{a}-\ahat{t}\|_2^2.
	\end{array}
\end{equation}
Both minimization problems in~\eqref{opam} can be solved in closed form, leading to the compact formulation
\begin{equation}\label{OPAMcompact}
	\zhat{t+1}=\soft_{\frac{\lambda}{1+\eta}}\left[\zhat{t}- P_t \left(\Ca \Aa^t \zhat{t}-\y{t}\right)\right]
\end{equation}
where $$P_t=\begin{bmatrix}
	\left((CA^t)^\top CA^t+\eta I_n\right)^{-1} (CA^t)^\top\\ 
	\frac{1}{1+\eta}I_q
\end{bmatrix}.$$
Therefore, the structures of O-PGD and O-PAM are very similar, based on a linear operation plus a (non-linear) soft-thresholding operation. However, in O-PAM a regularized pseudo-inverse replaces the gradient step, thereby reducing the number of iterations required for convergence, at the cost of a potentially computationally expensive matrix inversion.

A detailed analysis and comparison of these two algorithms is provided in \cite{fox26}.

\subsubsection{Bregman methods}
%The linearized Bregman converges to the minimum of an elastic-net Basis pursuit, where the cost function is the sum of $\ell_1$ norm and squared $\ell_2$ norm; see \citep{yin08,osh10}.

\cite{lor14b} studied block-linearized Bregman (BLB) methods for the recursive solution of large linear systems and prove their convergence 
to the minimum of an elastic-net Basis Pursuit, where the cost function is the sum of $\ell_1$-norm and squared $\ell_2$-norm.
If the recursion is applied to one equation at a time, BLB corresponds to the sparse Kaczmarz algorithm \citep{kac37}.

The results by \cite{lor14} can be applied to online sparse optimization \citep{lor14} and, as a consequence, to online SSE.
In contrast to O-PGD, BLB provides an unbiased estimation of state and attack.

BLB corresponds to O-PGD in \eqref{opgd} with the $\ell_1$-norm replaced by the $\ell_1$-Bregman divergence 
\begin{equation}
	\|\bl{a}\|_1-\|\ahat{t}\|_1-\langle \bl{a}-\ahat{t},\bl{p}(t) \rangle
\end{equation}
where $\bl{p}(t)$ is a subgradient of $\|\ahat{t}\|_1$.
As illustrated by \cite{yin08}, the corresponding algorithm iterate is
%\begin{split}
\begin{equation}\label{linbreg}
	\zhat{t+1}=   \zhat{t} - \frac{1}{\eta}(\Ca \Aa^t)^\top \left(\Ca \Aa^t ~\soft_{\frac{\lambda}{\eta}}\left(\zhat{t}\right)-\y{t}\right).
	%\end{split}
\end{equation}

\subsection{Sparse actuator attacks}
\label{sec:sparse-attacks-actuators}

In this section, we have focused on CPSs with sensor attacks, that is, with unknown inputs on the output equation of the system~\eqref{eq:CPS}.
However, attacks may also target the actuators, for instance, through false data injection that compromises the evolution of the system state \citep{Gallo2020tac-attacks}. From a mathematical standpoint, this corresponds to introducing an unknown input into the state equation of~\eqref{eq:CPS}.

We remark that this adversarial scenario is not  dealt with by the controllability-aware designs surveyed in \cref{sec:minimal-controllability,sec:minimal-structural-controllability} which assume the designed, or selected, actuators will seamlessly execute the actuation commands computed by control.
Noteworthy,
the robust minimal controllability problem studied by \cite{Pequito2017automatica-robustMinimalControllability} deals with broken actuators but,
while it ensures controllability even if some actuators are unavailable,
it offers no way to \emph{detect} which and when actuators become faulty.
Further, it assumes that a sufficient number of actuators can always be allocated to counteract faults,
which may be unrealistic.

The problem of detecting and identifying both sensor and actuator attacks was first considered by \cite{pas13} in the continuous-time framework. This work shows that such attacks are undetectable when they excite only the zero dynamics of the system. Moreover, the  analysis establishes a connection between the detectability and identifiability of attacks and the existence of invariant zeros.

More recently, \cite{sho20} have studied the following discrete-time model:
\begin{align}
	\label{eq:CPSact}
	\begin{split}
		\x{t+1}&=\bA\x{t}+\bB (\u{t}+\w{t})\\
		\y{t}&=\bC\x{t}+ \bD(\u{t}+\w{t})+\a{t},
	\end{split}
\end{align}
where $\u{t}\in\Real{m}$ is a control input, $\w{t}\in\Real{m}$ is an actuator/input attack and $\a{t}\in\Real{p}$ is a sensor/output attack. While $\u{t}$ is known, $\w{t}$ and $\a{t}$ are not known.

The main finding by \cite{sho20} is that the state of the CPS can be recovered if it is $(2r,2s)$-sparse strongly observable, a condition that specializes the concept of strong observability of systems with unknown inputs in the case of sparse unknown inputs. We refer the reader to \citealt[Section~3]{sho20} for formal definitions and statements.

We remark that model \eqref{eq:CPSact} corresponds to model \eqref{eq:inv-system} with the addition of a known control input. In particular, when $\u{t}=0$, model \eqref{eq:CPSact} reduces to model \eqref{eq:inv-system} with $\bD=I_{p}$. Therefore, the techniques presented in \cref{sec:invertibility-sparse-inputs} can be used to identify sparse actuator/sensor attacks, while state recovery requires the conditions established in \citealt[Theorem~8]{sho20}.

Finally, \cite{sho20} proposed the design of an algorithm for SSE in the presence of both sensor and actuator attacks, based on Satisfiability Modulo Theory.

\subsection{Limitations and outlook}
The theory of offline SSE for CPSs under sparse sensor attacks is well established, thanks to the application of CS techniques. In contrast, online SSE and the design of online sparse observers remain active areas of research.

When the attack dynamics are known, such as when the attacks are assumed time-invariant, the estimation problem remains challenging because the corresponding augmented system may still fail to be observable. In this setting, sparsity-promoting formulations based on $\ell_1$-norm regularization provide an effective framework for accurate  estimation, leading to recursive algorithms such as those based on Bregman methods.

Nevertheless, several research questions remain unresolved even with the time-invariant assumption. First, the convergence of online sparse observers is strongly influenced by the dynamics of the underlying CPS. For example, the convergence of Bregman-based methods has been rigorously established only for systems exhibiting periodic behavior~\citep{lor14b}. Second, a comprehensive theoretical analysis in the presence of measurement and process noise is still lacking. Finally, extending the available methodologies to nonlinear systems remains an outstanding research problem.

When the attacks are allowed to vary arbitrarily over time, the problem becomes fully adversarial, making it significantly more difficult to counteract the attacks without relying on multiple measurements. In this setting, the study of the stability of online sparse observers is under investigation.
Other promising research directions include learning-based methods, game-theoretic approaches, and techniques from time-varying convex optimization \citep{bas23,fox21}.
    %!TEX root = ../main_survey_sparsity_control.tex

\section{Maximum hands-off control}
\label{sec:maximum-hands-off-control}
We now move from sparse estimation to sparse control design. While sparse estimation identifies when and where an external input is active, sparse control design determines when and where the controller should intervene. This section reviews maximum hands-off control, which seeks temporal sparsity in the control input.

\subsection{Motivation}

Maximum hands-off control, proposed in \cite{NagQueNes13,Nagahara2016-MaximumHands-OffControl}, is a finite-horizon control with the minimum support length.
In other words, maximum hands-off control maximizes the time duration over which the control value is exactly at zero.
Such a control is also called sparse control or coasting control.
Maximum hands-off control is often referred to as \emph{green control} \citep{NagQueNes14a} since this fundamental property of sparsity aligns with resource conservation, energy efficiency, and environmental sustainability.

The difficulty in maximum hands-off control is the use of the $L^0$ norm, the total length of a function over which the values are nonzero. Since the function is highly non-convex, the solution of the $L^0$ optimal control cannot be easily obtained. The idea here is to relax the problem by the well-known $L^1$ optimal control, also known as fuel optimal control \citep{AthFal07}. We will show the equivalence theorem between the $L^0$ and $L^1$ optimal control problems under assumptions on the system matrices $\bl{A}$ and $\bl{B}$.
If the assumptions do not hold, the equivalence cannot be said. In fact, we can find an initial state that leads to no sparse control by the $L^1$ optimal control \citep{ChaNagQueRao16}.
For this case, a \emph{non-convex} function can be used as a surrogate of the $L^0$ norm, with which we can easily solve the non-convex optimization problem using efficient numerical algorithms \citep{Ike24,HayIkeNag24}.

\paragraph*{Outline:}
We introduce the maximum hands-off problem in \cref{sec:maximum-hands-off-control-setup}.
In \cref{sec:maximum-handsoff-relaxation},
we present its convex relaxation and then discuss when the two problems are equivalent in \cref{sec:maximum-handsoff-equivalence}.
Finally,
in \cref{sec:maximum-handsoff-nonconvex} we present a surrogate optimization approach to cope with the cases uncovered by the convex relaxation.

\subsection{Setup}
\label{sec:maximum-hands-off-control-setup}

Let us consider the continuous-time control system model from \eqref{eq:system-ct}:
\begin{equation}
  \dot{\bl{x}}(t)=\bl{A}\bl{x}(t)+\bl{B}\bl{u}(t),\quad t \geq 0,
  \label{MN:eq:plant}
\end{equation}
where $\bl{x}(t)\in\mathbb{R}^n$ is the state, $\bl{u}(t)\in\mathbb{R}^m$ is the control input,
$\bl{A}\in\mathbb{R}^{n\times n}$, and $\bl{B}\in\mathbb{R}^{n\times m}$.
%We assume the functions $\bl{f}: \mathbb{R}^n \rightarrow \mathbb{R}^n$ and $\bl{g} = [\bl{g}_1,\ldots,\bl{g}_m]: \mathbb{R}^n \rightarrow \mathbb{R}^{n\times m}$ are continuous.
%Also we assume the Jacobians $\bl{f}', \bl{g}_1',\ldots,\bl{g}_m'$ are continuous.
We consider a finite-horizon control over $[0,T]$ with fixed $T>0$.
The finite-horizon control $\{\bl{u}(t): t\in[0,T]\}$ is designed to drive the state $\bl{x}(t)$
from a given initial state 
\begin{equation}
\x{0}=\bl{x}_{\textnormal{in}}
 \label{MN:eq:initial_state}
\end{equation} 
to the origin at time $T>0$, that is,
\begin{equation}
 \x{T}=\bx_{\mathrm{f}}=\bl{0}.
 \label{MN:eq:final_state}
\end{equation}
We also consider box constraints on the control input $\u{t}=[\u[1]{t},\ldots,\u[m]{t}]^{\top}$ as follows:
\begin{equation}
 \max_i |\u[i]{t}| \leq 1 \quad \forall t \in [0,T].
 \label{MN:eq:input_constraint}
\end{equation}

We consider the set of feasible controls that simultaneously satisfy conditions~\eqref{MN:eq:initial_state}--\eqref{MN:eq:input_constraint} and denote by $\cl{U}(T,\bl{x}_{\textnormal{in}})$ the set of such feasible controls.
The set can be described as
\begin{equation}
    \cl{U}(T,\bl{x}_{\textnormal{in}}) = \left\{\bl{u}: \|\bl{u}\|_\infty \leq 1, \bl{x}_{\textnormal{in}} = - \int_0^T e^{-\bl{A}t}\bl{B}\bl{u}(t)dt\right\}.
\end{equation}
For the feasibility of the optimal control problem, a working assumption must be placed.
\begin{ass}\label{MN:ass:feasibility}
    The feasible set $\cl{U}(T,\bl{x}_{\textnormal{in}})$ is non-empty.
\end{ass}
This assumption holds if the horizon length $T$ is sufficiently long.
More precisely, the feasible set  $\cl{U}(T,\bl{x}_{\textnormal{in}})$ is non-empty if $T$ is strictly greater than
the \emph{minimum time} $T^*(\bl{x}_{\textnormal{in}})$ from $\bl{x}_{\textnormal{in}}$ to the origin by an admissible control (i.e., control $\bl{u}$ that satisfies \eqref{MN:eq:input_constraint}), which is defined by
\begin{equation}
    T^*(\bl{x}_{\textnormal{in}}) = \inf \bigl\{T \geq 0: \exists \bl{u},~ \bl{u} \in \cl{U}(T,\bl{x}_{\textnormal{in}})\bigr\}.
\end{equation}
From now on, we assume $T^*(\bl{x}_{\textnormal{in}})<\infty$ and $T>T^*(\bl{x}_{\textnormal{in}})$. Then Assumption \ref{MN:ass:feasibility} is satisfied.

Then, we formulate the problem of maximum hands-off control.
For this, define the $L^0$ pseudo-norm of function $u:[0,T]\rightarrow\mathbb{R}$ by
\begin{equation}   
 \|u\|_{0} = \mu\bigl(\supp{u}\bigr),
 \label{MN:eq:l0norm}
\end{equation}
where $\supp{u}$ is the support set of $u$ and $\mu$ is the Lebesgue measure on $\mathbb{R}$.
The $L^0$ pseudo-norm can be rewritten as
\begin{equation}
    \|u\|_0 = \int_0^T \psi^0\bigl(u(t)\bigr) dt,
\end{equation}
where $\psi^0$ is the $L^0$ kernel defined by
\begin{equation}
    \psi^0(u) = \begin{cases}
        1, & \text{if} ~~u\neq 0,\\
        0, & \text{if} ~~u=0.
    \end{cases}
    \label{MN:eq:L0 kernel}
\end{equation}

Note that this is \emph{not} the $\ell^0$ pseudo-norm of a vector.
Yet,
just like the latter,
the $L^0$-pseudo norm is not a proper norm because it does not satisfy the positive homogeneity property.
Indeed, 
for any function $u\neq 0$ and non-zero scalar $c$ such that
$|c|\neq 1$, we have
\begin{equation}
	\|c u\|_{0} = \|u\|_{0} \neq |c| \|u\|_{0}.
\end{equation}
For more details of the $L^0$ norm, we refer the reader to \cite{Nag25}.

The maximum hands-off control seeks for the control signal $\u{t}$ which has the smallest $L^0$-norm (\ie a sparsest control)
among all feasible controls in $\cl{U}(T,\bl{x}_{\textnormal{in}})$.
This control maximizes the time duration over which the control $\bl{u}(t)$ is exactly zero.
Formally,
the associated optimal control problem is as follows.
\begin{prob}[Maximum hands-off control]
\label{prob:MHOC}
Find a feasible control on $[0,T]$, $\bl{u}\in\cl{U}(T,\bl{x}_{\textnormal{in}})$ that minimizes the weighted $L^0$ norm of control:
\begin{equation}
 J_0(\bl{u}) = \sum_{i=1}^m\lambda_i \norm[0]{\bu_i},
 \label{eq:J_MHO}
\end{equation}
where $\lambda_1>0,\dots,\lambda_m>0$ are given weights.
\end{prob}

\subsection{Convex relaxation}
\label{sec:maximum-handsoff-relaxation}

The cost function \eqref{eq:J_MHO} in Problem \ref{prob:MHOC} is highly non-convex, and the solution cannot be easily obtained. Borrowing from the relaxation technique in compressed sensing, we adopt the $L^1$ approximation of Problem \ref{prob:MHOC}.
Namely, we consider the following $L^1$ optimal control problem:
\begin{prob}[$L^1$ optimal control]
\label{prob:L1 OC}
Find a feasible control on $[0,T]$, $\bl{u}\in\cl{U}(T,\bl{x}_{\textnormal{in}})$ that minimizes the weighted $L^1$ norm of control:
\begin{equation}
 J_1(\bl{u}) = \sum_{i=1}^m\lambda_i \norm[1]{\bu_i}.
 \label{eq:J_MHO_1}
\end{equation}
\end{prob}
This problem is known as \emph{fuel optimal control}, a classical optimal control problem that can be solved very easily.
A scientifically relevant question is to determine under which conditions the relaxation in Problem \ref{prob:L1 OC} is equivalent to the original Problem \ref{prob:MHOC}.

\subsection{Equivalence between $L^0$ and $L^1$ optimal control problems}
\label{sec:maximum-handsoff-equivalence}

Here we show the equivalence theorem between Problems \ref{prob:MHOC} and \ref{prob:L1 OC}.
\begin{theorem}[\citealp{Nagahara2016-MaximumHands-OffControl}]\label{MN:thm:equivalence}
     Let $\bl{B}=[\bl{b}_1,\ldots,\bl{b}_m]\in\mathbb{R}^{n\times m}$ with $\bl{b}_i\in\mathbb{R}^n$, $i=1,\ldots,m$.
     Suppose that the pairs $(\bl{A},\bl{b}_1),\ldots,(\bl{A},\bl{b}_m)$ are all controllable and $\bl{A}$ is non-singular. Suppose also that Assumption \ref{MN:ass:feasibility} holds.
     Then, the set of solutions to the relaxed Problem \ref{prob:L1 OC} is non-empty and identical to the set of solutions to the original Problem \ref{prob:MHOC}.
\end{theorem}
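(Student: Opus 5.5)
The plan is to use Pontryagin's maximum principle on the $L^1$ problem. We first show that the $L^1$ optimal controls are bang-off-bang, taking values in $\{-1,0,1\}$ almost everywhere. From that, equivalence follows by a norm comparison.

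\emph{Existence for Problem~\ref{prob:L1 OC}.} The feasible set $\cl{U}(T,\bl{x}_{\textnormal{in}})$ is nonempty by Assumption~\ref{MN:ass:feasibility}. It is convex. It is weak-$*$ compact in $L^\infty$, since it is a closed subset of the unit ball cut out by finitely many weak-$*$ continuous linear constraints. The cost $J_1$ is weak-$*$ lower semicontinuous, so a minimizer exists.

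\emph{Bang-off-bang structure.} Take any $L^1$ optimal control $\bu^*$. The maximum principle applied to the Hamiltonian $H=\sum_i\lambda_i|\bu_i|+\bl{p}^\top(\bA\bx+\bB\bu)$ gives a costate $\bl{p}(t)=e^{-\bA^\top t}\bl{p}_0$. Pointwise minimization then yields
\[
\bu_i^*(t)=-\mathrm{dez}\bigl(\bl{b}_i^\top\bl{p}(t)/\lambda_i\bigr),
\]
where $\mathrm{dez}$ denotes the dead-zone function. This function equals $\sign(w)$ when $|w|>1$, equals $0$ when $|w|<1$, and may take any value in the corresponding interval when $|w|=1$.

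The key step, and the one we expect to be the main obstacle, is to show that $|\bl{b}_i^\top e^{-\bA^\top t}\bl{p}_0|=\lambda_i$ only on a null set. We must also handle abnormality, i.e.\ rule out the case where the cost multiplier vanishes, and rule out $\bl{p}_0=0$.

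\begin{itemize}
\item If $\bl{p}_0=0$, then $\bu^*=0$ and $\bl{x}_{\textnormal{in}}=0$. This case is trivial.
\item Otherwise the function $t\mapsto \bl{b}_i^\top e^{-\bA^\top t}\bl{p}_0$ is analytic. If it were constant $\pm\lambda_i$ on a set of positive measure, it would be constant on $[0,T]$. Differentiating gives $\bl{b}_i^\top(\bA^\top)^k e^{-\bA^\top t}\bl{p}_0=0$ for all $k\ge1$. Controllability of $(\bA,\bl{b}_i)$, combined with nonsingularity of $\bA$, then forces $e^{-\bA^\top t}\bl{p}_0=0$, a contradiction. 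The nonsingularity is what lets us go from vanishing of $\bl{b}_i^\top\bA^{k}\bl{q}$ for $k\ge1$ to vanishing for $k\ge0$: set $\bl{q}=e^{-\bA^\top t}\bl{p}_0$, and note that the vectors $\bA^{\top}\bl{q}$ are annihilated by the whole controllability matrix, so $\bA^\top\bl{q}=0$ and hence $\bl{q}=0$.
\item For the abnormal case (cost multiplier $0$), $\bu^*_i(t)=-\sign(\bl{b}_i^\top\bl{p}(t))$. By the same analyticity argument the switching function is not identically zero, so this is minimum-time-like bang-bang behaviour. That requires $T=T^*(\bl{x}_{\textnormal{in}})$, which is excluded by $T>T^*$; we would cite this standard fact.
\end{itemize}

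Hence every $L^1$ optimal control satisfies $|\bu_i^*(t)|\in\{0,1\}$ a.e.

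\emph{Equivalence.} For any $\bu$ with $\norm[\infty]{\bu}\le1$ we have $\norm[1]{\bu_i}\le\norm[0]{\bu_i}$, so $J_1(\bu)\le J_0(\bu)$. Equality holds exactly when $|\bu_i|\in\{0,1\}$ a.e.

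\begin{enumerate}
\item Let $\bu^*$ be $L^1$ optimal. For any feasible $\bu$,
\[
J_0(\bu^*)=J_1(\bu^*)\le J_1(\bu)\le J_0(\bu).
\]
So $\bu^*$ is $L^0$ optimal, and the set of solutions to Problem~\ref{prob:MHOC} is nonempty.
\item Conversely, let $\bu^0$ be $L^0$ optimal. Then
\[
J_1(\bu^0)\le J_0(\bu^0)\le J_0(\bu^*)=J_1(\bu^*)\le J_1(\bu^0).
\]
All of these are equalities, so $\bu^0$ is $L^1$ optimal.
\end{enumerate}

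The two solution sets therefore coincide.
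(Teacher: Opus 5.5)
Your proof is correct and follows the same route as the paper's sketch (and the original reference). The maximum principle, together with controllability of each $(\bA,\bl{b}_i)$ and nonsingularity of $\bA$, shows that every $L^1$-optimal control is bang-off-bang, and the equivalence then follows from the pointwise bound $|u|\le\psi^0(u)$ on $[-1,1]$, with equality exactly on $\{-1,0,1\}$. One simplification: you need not exclude the abnormal multiplier via $T>T^*(\bl{x}_{\textnormal{in}})$, because a bang-bang control already takes values in $\{-1,0,1\}$ almost everywhere.
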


The stated controllability requirement is stricter than the standard controllability of the pair $(\bl{A},\bl{B})$. Specifically, if each single-input pair $(\bl{A},\bl{b}_i)$, for $i=1,\ldots,m$, is controllable, then the multi-input pair $(\bl{A},\bl{B})$ is necessarily controllable; however, the converse does not generally hold.

From Theorem \ref{MN:thm:equivalence}, we can easily obtain the solution to the original Problem \ref{prob:MHOC} by solving the easy Problem \ref{prob:L1 OC} after checking the controllability and the non-singularity.

Each solution to Problem \ref{prob:MHOC} has the property of \emph{bang-off-bang}, that is, the control takes only three values of $\pm 1$ and $0$ almost all $t\in [0,T]$.
The main idea of the proof of Theorem 1 is that if the assumptions in Theorem \ref{MN:thm:equivalence} hold, then each $L^1$ optimal control (the solution to Problem \ref{prob:L1 OC}) has the same property.
Also, the number of switches is bounded.
The following theorem summarizes these properties.
\begin{theorem}[\citealp{Nagahara2016-MaximumHands-OffControl}]
   Let $\omega$ be the largest imaginary part of the eigenvalues of the matrix $\bl{A}$.
   Every $L^0$ optimal control (solution to Problem \ref{prob:MHOC}) is bang-off-bang with no switches from $+1$ to $-1$ or $-1$ to $+1$, and the number of switches is bounded by $2nm(1+T\omega/\pi)$.
\end{theorem}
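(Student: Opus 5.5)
By \cref{MN:thm:equivalence}, and under its hypotheses ($(\bl{A},\bl{b}_i)$ controllable for every $i$, $\bl{A}$ non-singular, \cref{MN:ass:feasibility}), the set of $L^0$-optimal controls coincides with the set of $L^1$-optimal controls. So it suffices to establish all three properties for solutions of \cref{prob:L1 OC}, which is a classical convex optimal control problem. The plan is to apply Pontryagin's minimum principle to \cref{prob:L1 OC} and read off the structure of the optimal control from the switching functions.

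\emph{Step 1 (costate and pointwise minimization).} The Hamiltonian is $H=\sum_i\lambda_i|\bu_i|+\bl{p}^\top(\bl{A}\bx+\bl{B}\bu)$ for the normal case. The costate is then $\bl{p}(t)=e^{-\bl{A}^\top t}\bl{p}_0$. Minimizing $H$ over the box $\max_i|\bu_i|\le1$ decouples across coordinates. Defining the switching function $\varphi_i(t)\doteq\bl{b}_i^\top e^{-\bl{A}^\top t}\bl{p}_0$, this gives
\[
\bu_i(t)=-\mathrm{sgn}(\varphi_i(t)) \ \text{if } |\varphi_i(t)|>\lambda_i,\qquad \bu_i(t)=0 \ \text{if } |\varphi_i(t)|<\lambda_i,
\]
with $\bu_i(t)$ undetermined only on the set where $|\varphi_i(t)|=\lambda_i$. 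The abnormal case must be excluded, and $\bl{p}_0\neq\bl{0}$ ensured. In the abnormal case the minimum principle forces $\bl{p}(t)^\top\bl{B}\bu(t)$ to be minimized with $\bl{p}_0\ne0$. The same non-degeneracy argument as in Step 2 should then yield a bang-bang control. I would handle this by checking it against feasibility with $T>T^*(\bl{x}_{\textnormal{in}})$, arguing that a bang-bang extremal of this kind would make $T$ the minimal time.

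\emph{Step 2 (bang-off-bang).} Suppose that $|\varphi_i|=\lambda_i$ on a set of positive measure. Since $\varphi_i$ is real-analytic, it is then identically equal to $\pm\lambda_i$ on $[0,T]$. Differentiating gives $\bl{b}_i^\top(\bl{A}^\top)^k e^{-\bl{A}^\top t}\bl{p}_0=0$ for all $k\ge1$, that is, $\bl{q}\doteq e^{-\bl{A}^\top t}\bl{p}_0$ satisfies $\bl{q}^\top\bl{A}^k\bl{b}_i=0$ for $k=1,\dots,n$. Hence $\bl{q}^\top\bl{A}\,\cl{C}(\bl{A},\bl{b}_i)=\bl{0}$. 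Controllability of $(\bl{A},\bl{b}_i)$ and non-singularity of $\bl{A}$ then force $\bl{q}=\bl{0}$, so $\bl{p}_0=\bl{0}$, a contradiction. Therefore $\bu_i(t)\in\{-1,0,1\}$ for almost every $t$.

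\emph{Step 3 (no direct $\pm1$ switches).} The function $\varphi_i$ is continuous. To pass from $\varphi_i>\lambda_i$ to $\varphi_i<-\lambda_i$ it must cross the band $(-\lambda_i,\lambda_i)$. Because $\lambda_i>0$, it spends a positive-length interval there, on which $\bu_i=0$. Hence every transition between $+1$ and $-1$ passes through a genuine off-interval.

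\emph{Step 4 (counting switches).} Switches of $\bu_i$ occur only at zeros of $\varphi_i-\lambda_i$ and $\varphi_i+\lambda_i$ on $[0,T]$. The function $\varphi_i$ is a real exponential polynomial whose frequencies are the eigenvalues of $-\bl{A}^\top$. Each function $\varphi_i\mp\lambda_i$ is nonconstant by Step 2. I would bound its zeros by bounding the zeros of its derivative $\varphi_i'$, via Rolle's theorem. The derivative is an exponential polynomial with at most $n$ terms, counted with multiplicity, and imaginary parts bounded by $\omega$. For this I would use a P\'olya–Szeg\H{o}/Voorhoeve-type zero-counting bound of the form $(n-1)+T\omega/\pi$ zeros on an interval of length $T$. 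This gives at most about $n(1+T\omega/\pi)$ zeros for each of the two level crossings. Summing over the two levels and the $m$ channels yields the bound $2nm(1+T\omega/\pi)$.

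This counting step is the main obstacle, since the right zero-counting lemma for complex-frequency exponential polynomials has to be found or adapted, and the constants must be tracked carefully. Excluding the abnormal multiplier in Step 1 is a secondary technical point.
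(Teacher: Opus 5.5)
Your proposal is correct and follows essentially the route the survey attributes to Nagahara et al.\ (2016): the minimum principle for the $L^1$ problem gives the dead-zone law for each $\bu_i$. Analyticity, controllability of each $(\bl{A},\bl{b}_i)$ and invertibility of $\bl{A}$ then exclude $|\varphi_i|=\lambda_i$ on a set of positive measure, continuity of $\varphi_i$ forces an off-interval between $\pm1$ arcs, and a zero count for the quasi-polynomials $\varphi_i\mp\lambda_i$ bounds the switches, with the equivalence theorem transferring everything to $L^0$ solutions.

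The two points you flag are not real obstacles. For the abnormal case, your Step~2 argument (which needs only controllability) shows that an abnormal extremal has $|\bu_i|=1$ a.e., hence cost $T\sum_i\lambda_i$; this is strictly beaten by any admissible control reaching the origin by some $t'<T$ (it exists since $T>T^*(\bl{x}_{\textnormal{in}})$) followed by $\bu\equiv\bl{0}$. For the count, the classical local bound suffices: a nontrivial real quasi-polynomial of order $N$ has at most $N-1$ zeros on any interval of length less than $\pi/\omega$. Applied either after your Rolle step or directly to $\varphi_i\mp\lambda_i$ (order $n+1$), it already gives at most $n(1+T\omega/\pi)$ zeros per level.
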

The upper bound scales with the highest characteristic frequency in the plant \eqref{MN:eq:plant}. Hence, for an oscillatory plant, switching can occur frequently.

The existence of a maximum hands-off control is also of mathematical interest; see, for example, \cite{IkeNag16,Nag21}.

\subsection{Non-convex optimization for singular cases}
\label{sec:maximum-handsoff-nonconvex}

The equivalence theorem assumes that the system matrix $\bl{A}$ is non-singular. 
However, there exist important dynamical systems, such as the double integrator, for which $\bl{A}$ is singular.
For singular cases, we cannot rely on the convex relaxation.
Instead, we seek a surrogate cost function that leads to an equivalent solution to the original optimal control problem (Problem \ref{prob:MHOC}).

The surrogate function is given by
\begin{align}
    J(\bl{u}) 
    %&= J_1(\bl{u}) -  \sum_{i=1}^m \lambda_i \int_0^T \phi_i(\bl{u}_i(t))dt\nonumber\\
    &= \sum_{i=1}^m \lambda_i \int_0^T \psi_i(\bl{u}_i(t))dt,
    \label{MN:eq:Ju}
\end{align}
where $\psi_i(u)$ is the surrogate function for the $L^0$ kernel defined in \eqref{MN:eq:L0 kernel}.
We assume that the surrogate functions $\psi_i$ ($i=1,\ldots,m$) satisfy the following properties.
\begin{ass}
\label{MN:ass:psi}
For $i=1,\ldots,m$, $\psi_i$ satisfies
\begin{enumerate} 
    \item $\psi_i(u) = \psi_i(-u)$ for any $u \in [0,1]$.
    \item $\psi_i(0) = 0$, $\psi_i(1) > 0$, and $\psi_i(u) < \psi_i(1)|u|$ for any $u\in(0,1)$.
    \item $\psi_i(1)=\psi_j(1)$, for all $i,j=1,2,\ldots,m$.
\end{enumerate}
\end{ass}
With the surrogate function, we consider the following optimal control problem.
\begin{prob}[Non-convex optimal control]
\label{prob:psi}
Find a feasible control on $[0,T]$, $\bl{u}\in\cl{U}(T,\bl{x}_{\textnormal{in}})$ that minimizes the cost function $J(\bl{u})$ in \eqref{MN:eq:Ju}.
\end{prob}
For this optimal control problem, we have the following equivalence theorem.
\begin{theorem}[\citealp{Ike24}]
    Suppose that Assumptions \ref{MN:ass:feasibility} and \ref{MN:ass:psi} hold.
    Then, there exists at least one solution to Problem \ref{prob:psi},
    and the set of solutions to Problem \ref{prob:psi} is identical to the set of solutions to Problem \ref{prob:MHOC}.
\end{theorem}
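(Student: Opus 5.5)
The plan has two parts: show that Problem~\ref{prob:psi} has at least one solution, and show that every minimizer of $J$ over $\cl{U}(T,\bl{x}_{\textnormal{in}})$ is bang-off-bang. Together these give the equivalence with Problem~\ref{prob:MHOC}. The engine is a pointwise comparison between $\psi_i$ and the $L^0$ kernel $\psi^0$.

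\textbf{Step 1: a pointwise sandwich.} Set $c\doteq\psi_i(1)$, which is the same for all $i$ by item 3 of Assumption~\ref{MN:ass:psi}. By items 1--2, for every $u\in[-1,1]$,
\[
\psi_i(u)\le c\,|u|\le c\,\psi^0(u),
\]
with equality exactly when $u\in\{-1,0,1\}$. The first inequality is strict on $0<|u|<1$ by item 2. Integrating along any feasible control gives $J(\bl{u})\le c\,J_0(\bl{u})$, with equality if and only if $\bl{u}$ is bang-off-bang almost everywhere.

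\textbf{Step 2: existence of a bang-off-bang $L^0$ optimum.} Next I would show that Problem~\ref{prob:MHOC} admits a minimizer, and that every value $J(\bl{u})$ can be matched from below by a bang-off-bang control. Concretely: given any feasible $\bl{u}$, I want a feasible $\bar{\bl{u}}$ taking values in $\{-1,0,1\}$ with
\[
c\,J_0(\bar{\bl{u}})\le J(\bl{u}).
\]
The natural tool is a Lyapunov-type convexity argument. The map $\bl{u}\mapsto\int_0^T e^{-\bl{A}t}\bl{B}\bl{u}(t)\,dt$ takes values in $\Real{n}$, and I would combine it with the scalar functionals $\int_0^T|\bl{u}_i(t)|\,dt$. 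By Lyapunov's theorem on the range of vector measures (the bang-bang principle), there is a control with the same terminal constraint, the same values of $\int|\bl{u}_i|$, and each component in $\{-1,0,1\}$. This is done by splitting $|\bl{u}_i|$ into the indicator of a set with the same integrals, while keeping the sign.

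For such a $\bar{\bl{u}}$,
\[
c\,J_0(\bar{\bl{u}})=c\sum_i\lambda_i\|\bar{\bl{u}}_i\|_1=c\sum_i\lambda_i\|\bl{u}_i\|_1\ge J(\bl{u}).
\]
That is the wrong direction, so I need a sharper construction. I would instead apply Lyapunov's theorem to the moments $\int\psi_i(\bl{u}_i)$ through a relaxed, Young-measure formulation. On the convexified problem the integrand $\psi_i$ can be replaced by its convex envelope on $[-1,1]$, which is $c|u|$ when $\psi_i(u)<c|u|$. Extremal points of the relaxed feasible set are then bang-off-bang. Consequently
\[
\inf J=c\,\inf\|\cdot\|_1\text{-type value}=c\,\inf J_0,
\]
where the last equality comes from Step 1 applied to bang-off-bang controls.

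\textbf{Step 3: conclusion.} Write $V_0\doteq\min J_0$, attained by compactness plus the bang-off-bang reduction. By Step 2, $\inf J=cV_0$, and any $L^0$-optimal bang-off-bang control attains $J=cV_0$. Hence Problem~\ref{prob:psi} has a solution, and every $L^0$ optimizer solves Problem~\ref{prob:psi}. I would establish that $L^0$ optimizers are bang-off-bang by an exchange argument on $\supp{\bl{u}_i}$ using Lyapunov again.

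Conversely, let $\bl{u}$ minimize $J$, so $J(\bl{u})=cV_0\le cJ_0(\bl{u})$. If $\bl{u}$ were not bang-off-bang, the strict inequality in Step 1 together with the relaxation lower bound $J(\bl{u})\ge cV_0$ would be contradicted. More precisely, I would show that the relaxation bound is
\[
J(\bl{u})\ge c\int|\bl{u}|\ \text{restricted appropriately}\ \ge cV_0,
\]
with strictness whenever $0<|\bl{u}_i|<1$ on a set of positive measure. So $\bl{u}$ is bang-off-bang, $J(\bl{u})=cJ_0(\bl{u})=cV_0$, and $\bl{u}$ is $L^0$ optimal.

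\textbf{Main obstacle.} The hard step is the lower bound $\inf J\ge cV_0$, i.e.\ that fractional values never help. Without nonsingularity of $\bl{A}$, $L^1$ optimizers need not be bang-off-bang, so I cannot pass through Theorem~\ref{MN:thm:equivalence}. I would try to prove it directly. Given $\bl{u}$, define the set-valued ``chattering'' replacement: on the set where $0<|\bl{u}_i|<1$, replace $\bl{u}_i$ by $\mathrm{sign}(\bl{u}_i)\mathbf{1}_{E_i}$. The set $E_i$ is chosen via Lyapunov so that the vector moments $\int e^{-\bl{A}t}\bl{b}_i\,\cdot\,dt$ are preserved and $\mu(E_i)\le\int|\bl{u}_i|$. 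This yields
\[
J_0(\bar{\bl{u}})\le\sum_i\lambda_i\|\bl{u}_i\|_1,
\]
which still bounds $J$ only from above. Closing this gap, i.e.\ comparing $\int\psi_i(\bl{u}_i)$ against $\mu(E_i)$ rather than against $\int|\bl{u}_i|$, is where I expect the real work of \citep{Ike24} to lie.
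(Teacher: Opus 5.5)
\textbf{There is a genuine gap: your Step~1 comparison points the wrong way, and as the assumption is printed it cannot be repaired.} If item~2 of Assumption~\ref{MN:ass:psi} is read literally, i.e.\ $\psi_i(u)<\psi_i(1)|u|$ on $(0,1)$, the statement is false. Take $m=1$ and $\psi_1(u)=u^2$, which satisfies items 1--3. Use the integrator $\dot{x}=u$ (singular $\bl{A}$ is exactly the case of interest here), with $x_{\mathrm{in}}=1$ and $T=2$. By Cauchy--Schwarz, the unique minimizer of $\int_0^2u^2\,dt$ subject to $\int_0^2u\,dt=-1$ and $|u|\le1$ is $u\equiv-\tfrac12$, whose $L^0$ norm is $2$. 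Yet $-\mathbf{1}_{[0,1]}$ is feasible with $L^0$ norm $1$.

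The printed inequality is reversed. The minimax concave penalty given as an example is concave and not linear on $[0,1]$, so it satisfies $\psi_i(u)>\psi_i(1)|u|$ on $(0,1)$. That is the hypothesis under which the theorem holds. Your Step~2 patch inherits the same confusion. Under $\psi_i<c|u|$, the convex envelope of $\psi_i$ lies below $\psi_i$, hence strictly below $c|u|$ on $0<|u|<1$, so it cannot equal $c|u|$; it equals $c|u|$ only under the reversed inequality. The ``main obstacle'' you identify is an artifact of this sign error, not the real content of the proof.

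\textbf{How the argument closes under the intended hypothesis.} Assume $\psi_i(u)>c|u|$ on $(0,1)$, where $c=\psi_i(1)$ is common to all $i$ by item~3. Then Step~1 becomes $J(\bl{u})\ge cJ_1(\bl{u})$ for every feasible $\bl{u}$. Equality holds if and only if $\bl{u}$ is bang-off-bang a.e. (using $\lambda_i>0$), and $J(\bl{u})=cJ_1(\bl{u})=cJ_0(\bl{u})$ for bang-off-bang $\bl{u}$.

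The Lyapunov construction you discarded is then exactly the missing piece. It yields a feasible bang-off-bang $\bar{\bl{u}}$ with $\norm[0]{\bar{\bl{u}}_i}=\norm[1]{\bl{u}_i}$, so $J(\bar{\bl{u}})=cJ_1(\bl{u})\le J(\bl{u})$. Apply it to an $L^1$ minimizer $\bl{u}^1$. This minimizer exists because $\cl{U}(T,\bl{x}_{\textnormal{in}})$ is weak-$*$ compact in $L^\infty$ and $J_1$ is convex and weak-$*$ lower semicontinuous. Note that compactness cannot be applied to $J_0$ directly, since $J_0$ is not weak-$*$ lower semicontinuous.

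This gives $J_0(\bar{\bl{u}})=J_1(\bl{u}^1)\le J_1(\bl{v})\le J_0(\bl{v})$ for all feasible $\bl{v}$, hence $V\doteq\min J_0=\min J_1$. Any $L^0$ optimizer $\bl{v}$ then has $J_0(\bl{v})=J_1(\bl{v})=V$. So it is bang-off-bang, with no exchange argument needed, and $J(\bl{v})=cV$.

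Finally, every feasible $\bl{u}$ satisfies $J(\bl{u})\ge cJ_1(\bl{u})\ge cV$, so $\min J=cV$ is attained. Moreover, $J(\bl{u})=cV$ forces both inequalities to be equalities, i.e.\ $\bl{u}$ is bang-off-bang with $J_0(\bl{u})=J_1(\bl{u})=V$. This proves existence and both inclusions of the solution sets, with no Young measures required.
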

We can find a nice surrogate function in machine learning, signal processing, and statistics.
For example, the minimax concave penalty proposed in \citep{Zha10} defined below satisfies Assumption \ref{MN:ass:psi}.
\begin{equation}
    \psi_i(u) = \begin{cases}
        |u| - \frac{u^2}{2\alpha}, & \text{if}~~ |u| \leq \alpha,\\
        \frac{\alpha}{2}, & \text{otherwise.}
    \end{cases}
\end{equation}
Other functions, such as the smoothly clipped absolute deviation (SCAD) function \citep{FanLi01} and the $L^1-L^2$ ($L^1$ minus $L^2$) function \citep{Yin+15}
also satisfy Assumption \ref{MN:ass:psi}.
\begin{figure}
	\centering
	\includegraphics[width=\linewidth]{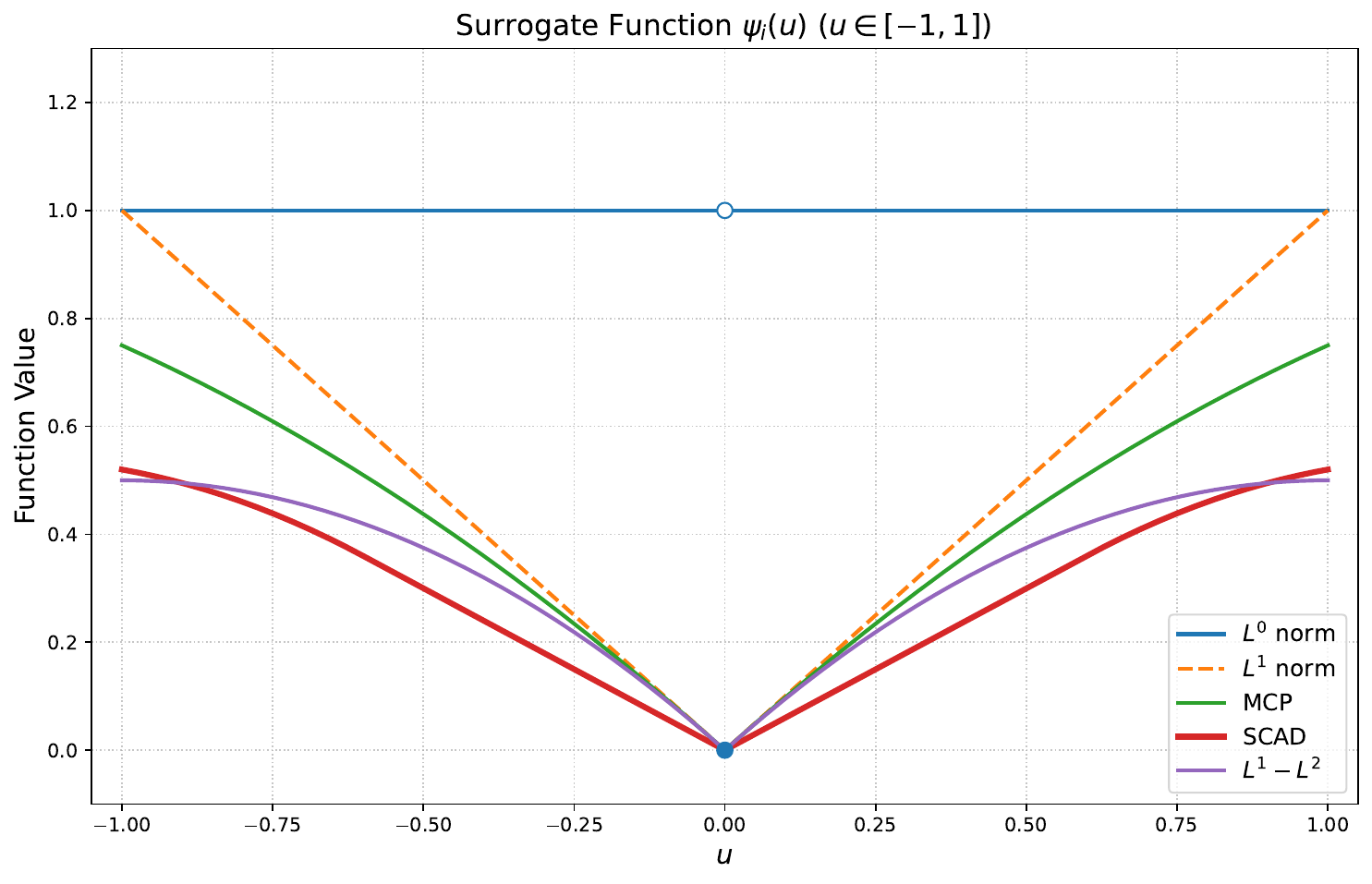}
	\caption{Examples of surrogate function $\phi_i(u)$.}
	\label{fig:surrogate functions}
\end{figure}
Fig.~\ref{fig:surrogate functions} shows these surrogate functions with the $L^0$ kernel in \eqref{MN:eq:L0 kernel} and the absolute function $|u|$ for the $L^1$ norm.
As shown in this figure, the surrogate functions are non-convex and sharp at $u=0$.
The non-convex functions in Fig.~\ref{fig:surrogate functions} are weakly convex functions, and have nice properties of e.g., convergence to global minimizers for regularized least squares \citep{LohWai15}.
Also, the non-convex functions have analytical proximal operators, with which effective optimization algorithms such as the proximal gradient descent can be constructed \citep{Gon+13}.

\subsection{Limitations and outlook}
The optimal control problems discussed in this section are formulated as finite-horizon problems.
The extension to the infinite-horizon setting remains an open research problem. 
A straightforward approach for constructing an infinite-horizon control is to employ a receding-horizon (model predictive control) strategy. 
Nevertheless, the control produced by receding-horizon implementation is not necessarily sparse, even when each predicted finite-horizon control solution is sparse.
To solve this, \cite{OisIwaNag26} proposed a receding-horizon method that realizes sparse control by adopting diminishing weights for discrete-time linear time-invariant systems.
In general, this problem remains open.

For nonlinear systems, the original study by \cite{Nagahara2016-MaximumHands-OffControl} also established an equivalence theorem.
However, its applicability relies on conditions that are generally less straightforward to verify than those of Theorem \ref{MN:thm:equivalence} for linear systems. Consequently, the derivation of simple sufficient conditions ensuring equivalence in the nonlinear setting constitutes an important direction for future research.

The concept of maximum hands-off control has been generalized and applied to a broad range of problem settings, including time-optimal control \citep{IkeNag19}, control security \citep{SukCha16}, multi-agent systems \citep{IkeNagKas19,IkeZelKas19}, continuous and smooth control \citep{NagChaChaVid20}, infinite-dimensional systems \citep{IkeKas19,IkeNag26}, optimal multiplexing \citep{KumSukCha19}, and time–space sparse control \citep{IkeNag21a,IkeNag21b}. These extensions provide natural directions for further developing the sparse control framework.

Finally, the practical implementation of maximum hands-off control is of substantial relevance, as evidenced by applications reported in the literature, including electrically tunable lenses \citep{Iwa+19}, spacecraft maneuvering \citep{Leo+19}, thermally activated building systems \citep{ShiNagSae21}, and multi-drones \citep{MotWatKwoNagMiy21}.
The significance of sparse control as a sustainable \emph{green} control strategy has been increasingly recognized, and its deployment in real-world systems constitutes a major research direction.
    %!TEX root = ../main_survey_sparsity_control.tex

\section{$s$-sparse actuator control}
\label{sec:s-sparse-control}

Having reviewed maximum hands-off control that exploits sparsity in a time-varying fashion over time, we now shift our focus to time-varying sparsity in actuators. In this setting, the objective is to restrict the number of active control inputs applied to the system at any given instant, \ie the control input $\u{t}$ is a sparse vector at any time $t$. Such constraints commonly arise in control systems where inputs must be limited due to cost, energy, or physical constraints, motivating the need to achieve the desired performance while using as few actuators as possible. 

\subsection{Motivation}
\label{sec:s-sparse-control-motivation}

Sparse actuator inputs are particularly relevant in wireless networked control systems, such as tele-operation for terrestrial discovery, industrial unit automation, and production plant monitoring~\citep{heemels2010networked,tatikonda2004control,liu2020networked,
	nagahara2011sparse}. In such systems, components communicate over bandwidth-limited networks. Because transmitting data is costly, sparse control inputs are advantageous since they admit compact signal representations~\citep{foucartMathematicalIntroductionCompressive2013}, reducing communication load while maintaining system performance.

Beyond communication constraints, sparsity in actuators naturally appears in physical infrastructure systems~\citep{anguluri2023localization,anguluri2022complex,Siami2021tac-actuatorScheduling}. For example, wide-area oscillation damping control in electrical power grids using high-voltage DC can be modeled as a sparse input. Only a small number of high-voltage DC converter stations serve as control inputs, meaning that only a few actuators actively influence the large-scale power grid dynamics. 

Also, sparsity arises in multi-agent and socio-technical systems such as social networks, where individuals influence each other’s opinions~\citep{Joseph2021sjco-erdosRenyi}. External agents, such as marketers or influencers, are constrained by limited budgets, so sparse-input models target interventions to only a subset of individuals at any given time. Likewise, in biological systems, interventions such as drug administration typically act on a limited number of targets~\citep{marucci2009turn,liu2013observability}. Sparse control inputs provide a natural framework for modeling these selective actions, helping to minimize side effects while still achieving the desired system response. 

Overall, sparse actuator control stems from the system's resource constraints or from the physics of its operation. The goal is to use a minimal number of control inputs without significantly degrading performance, making input sparsity a key consideration in many real-world control problems. Building on this motivation, this section introduces fundamental results on sparse actuator control that examine how sparsity constraints affect the controllability of linear dynamical systems, presenting controllability tests and characterizing the controllability index under actuator sparsity constraints.

\paragraph*{Outline:}
We structure the discussion in two parts.
After introducing the $s$-sparse control model in \cref{sec:s-sparse-control-setup},
we summarize fundamental results characterizing controllability and stabilizability in \cref{sec:s-sparse-control-analysis}.
Then, in \cref{sec:s-sparse-control-design},
we discuss computational approaches to design $s$-sparse inputs based on compressed sensing algorithms.

\subsection{Setup and $s$-sparsity constraint}
\label{sec:s-sparse-control-setup}
We consider the discrete-time state-space model~\eqref{eq:system-dt}.
Here, we impose the constraint that the inputs are $s$-sparse for a given sparsity level, $\lV \u{t} \rV_0\leq s$, for $t=1,2,\ldots$. Under this constraint, only up to $s$ components of the control input vector $\u{t}$ can be nonzero at each time step, and the active set of actuators may vary over time. This constraint imposes an additional structural limitation on the control action beyond the classical unconstrained setting, thereby affecting the system’s ability to steer the state.

\subsection{Foundations of $s$-sparse controllability}
\label{sec:s-sparse-control-analysis}

We now analyze how sparsity constraints affect the system's controllability.
A linear dynamical system is said to be $s$-sparse controllable if, for any initial state $\bx_{\mathrm{in}}\in\Real{n}$ and any final state $\bx_{\mathrm{f}}\in\Real{n}$, there exist a time index $K<\infty$ and $s$-sparse inputs $\{\u{t}\in\Real{m}:\lV\u{t}\rV_0\leq s \; \forall t\in[K]\}$ that steer the system from the state $\bx_{\mathrm{in}}$ to final state $\x{K+1} = \bx_{\mathrm{f}}$.  This property ensures the feasibility of designing sparse control inputs that can achieve the desired system performance with a prescribed degree of sparsity at all times. The time index $K$ is sometimes referred to as the controllability index of the system.

As so,
one key question of $s$-sparse control may be formulated as follows.
\begin{prob}[$s$-sparse controllability]
	\label{prob:s-sparse-ctrl}
	Determine which conditions must be imposed on $\bA$ and $\bB$ such that
	\begin{align}
		\begin{split}
			&\quad (\bA,\bB) \mbox{ is controllable in } K \mbox{steps}\\
			\textnormal{subject to } &\quad \norm[0]{\u{t}}\le s \ \forall t\in[K].
		\end{split}
	\end{align}
\end{prob}

The following result provides two equivalent controllability tests, extending the classical Kalman rank condition and the Popov–Belevitch–Hautus (PBH) test to the sparse controllability setting.
\begin{theorem}[\citealp{Joseph2021tac-sparseControllability}]\label{thm:sparsecontrollability}
	For the linear dynamical system $(\bA,\bB)$ in \eqref{eq:system-dt}, the following statements are equivalent: 
	\begin{enumerate}[label =(\alph*)]
		\item The system $(\bA,\bB)$ is $s$-sparse controllable.
		\item \label{st:Kalmantest} There exist an integer $K<\infty$ and sets $\{\cl{S}_t\subseteq [m],\; \lv\cl{S}_t\rv\leq s\}_{t=1}^{K}$  such that
		\begin{equation}\label{eq:Kalmantest}
			\rk{\begin{bmatrix}
					\submatCol{\bB}{\cl{S}_t} & 
					\bA\submatCol{\bB}{\cl{S}_{t-1}} & 
					\ldots & 
					\bA^{K-1}\submatCol{\bB}{\cl{S}_1}
			\end{bmatrix}}=n,
		\end{equation}
		where $\submatCol{\bB}{\cl{S}_t}\in\Real{n\times |\cl{S}_t|}$ is the submatrix formed by the columns of $\bB$ indexed by $\cl{S}_t$, for all $t\in[K]$.
		\item \label{st:PBHtest} Any eigenvalue $\lambda$ of $\bA$ satisfies $\rk{\begin{bmatrix}
				\bA-\lambda\bI & \bB
		\end{bmatrix}}=n$ and $s\geq \max\{1,n-\rk{\bA}\}$.
	\end{enumerate}
\end{theorem}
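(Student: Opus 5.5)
The plan is to prove the cycle (a)$\Leftrightarrow$(b) and (b)$\Leftrightarrow$(c).

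\textbf{(a)$\Leftrightarrow$(b).} Unrolling the dynamics~\eqref{eq:system-dt} gives
\[
\bx(K+1)=\bA^{K}\bx_{\mathrm{in}}+\sum_{t=1}^{K}\bA^{K-t}\bB_{\cl{S}_t}\bu_{\cl{S}_t}(t).
\]
Hence, for a fixed support schedule, the set of reachable differences $\bx_{\mathrm{f}}-\bA^K\bx_{\mathrm{in}}$ is exactly the column space of the matrix in~\eqref{eq:Kalmantest}. The direction (b)$\Rightarrow$(a) is then immediate.

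For (a)$\Rightarrow$(b), take $\bx_{\mathrm{in}}=0$. The set of states reachable at horizon $K$ is a finite union, over the finitely many schedules in $\Deltas^K$, of subspaces. If (a) holds, this union covers $\Real{n}$ for some $K$. A finite union of proper subspaces cannot equal $\Real{n}$, so one schedule must give a full-rank matrix. A small point needs care: in the definition of (a), $K$ may depend on the pair of states. Taking $\bx_{\mathrm{in}}=0$ and varying $\bx_{\mathrm{f}}$, the reachable sets are nested in $K$ up to shifting the schedule with zero inputs. Their union over all $K$ is therefore a countable union of subspaces, so some single subspace is all of $\Real{n}$ by a Baire or measure argument.

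\textbf{(b)$\Rightarrow$(c).} The full-rank matrix in~\eqref{eq:Kalmantest} has columns in $\col{\cl{C}_K(\bA,\bB)}$, so $(\bA,\bB)$ is controllable and the PBH rank condition holds. Moreover,
\[
\col{\bA}\supseteq\col{\begin{bmatrix}\bA\bB_{\cl{S}_{t-1}} & \cdots & \bA^{K-1}\bB_{\cl{S}_1}\end{bmatrix}}.
\]
Thus $n\le \rk{\bA}+\rk{\bB_{\cl{S}_t}}\le \rk{\bA}+s$, which gives $s\ge n-\rk{\bA}$. The bound $s\ge1$ is trivial for $n\ge1$.

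\textbf{(c)$\Rightarrow$(b).} This is the main obstacle, since a schedule of supports must be constructed. I would first handle the invertible part using a generic argument. By the PBH condition, $(\bA,\bB)$ is controllable. For any nonempty sequence of supports, the rank of the matrix in~\eqref{eq:Kalmantest} is nondecreasing in $K$ when new blocks are added. I would run a greedy procedure: at each step, append the $s$ columns (in fact a single column suffices when $s\ge1$) that increase the dimension of the reachable subspace $\bb{V}_K$, where
\[
\bb{V}_{K+1}=\bA\bb{V}_K+\col{\bB_{\cl{S}}}.
\]
I then need to show that the procedure stalls only when $\bb{V}_K=\Real{n}$.

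Suppose it stalls. Then for every column $\bl{b}_i$ of $\bB$, with $\bb{V}=\bb{V}_K$,
\[
\dim{\bA\bb{V}+\mathrm{span}(\bl{b}_i)}\le\dim{\bb{V}}.
\]
If $\bA$ is restricted to an invertible part, $\dim{\bA\bb{V}}=\dim{\bb{V}}$, so every $\bl{b}_i\in\bA\bb{V}$. This makes $\bA\bb{V}$ an $\bA$-invariant subspace containing $\col{\bB}$, and controllability then forces $\bA\bb{V}=\Real{n}$.

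To deal with singular $\bA$, I would use a decomposition $\bA\sim\mathrm{diag}(\bA_1,\bl{N})$ with $\bA_1$ invertible and $\bl{N}$ nilpotent. The nilpotent part loses $\dim{\ker{\bA}}=n-\rk{\bA}$ dimensions at each application of $\bA$. I would therefore choose the final block $\bB_{\cl{S}_K}$, of size $s\ge n-\rk{\bA}$, to complement $\col{\bA}$. Such a choice exists because controllability implies $\col{\bA}+\col{\bB}=\Real{n}$, from the PBH condition at $\lambda=0$. All the earlier steps are then used to fill $\col{\bA}$, via the same greedy argument applied to $\bA$ acting into its own range.
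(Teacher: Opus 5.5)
Your arguments for (a)$\Leftrightarrow$(b) and (b)$\Rightarrow$(c) are sound. The gap is in (c)$\Rightarrow$(b), and it affects both halves.

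First, the stall lemma for invertible $\bA$ is false. From $\col{\bB}\subseteq\bA\bb{V}$ you cannot conclude that $\bA\bb{V}$ is $\bA$-invariant, because $\bb{V}$ itself is not invariant. Take $n=3$, let $\bA$ swap $\be_1,\be_2$ and fix $\be_3$, and let $\bB=\begin{bmatrix}\be_1&\be_3\end{bmatrix}$ with $s=1$. The pair is controllable and $\bA$ is invertible. Consider a legitimate greedy run that breaks the tie at step two in favor of $\be_3$. It gives $\bb{V}_1=\mathrm{span}(\be_1)$ and $\bb{V}_2=\mathrm{span}(\be_2,\be_3)$. Then $\bA\bb{V}_2=\mathrm{span}(\be_1,\be_3)\supseteq\col{\bB}$, so no column raises the dimension, even though $\dim{\bb{V}_2}=2$, and $\mathrm{span}(\be_1,\be_3)$ is not $\bA$-invariant. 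The run only recovers through a non-improving step, reaching $\bb{V}_4=\Real{3}$. Proving that it always recovers needs a different argument. For example, $n$ consecutive stalls starting from $\bb{V}$ force $\bA^{-j}\col{\bB}\subseteq\bb{V}$ for $j=1,\dots,n$, and these span $\bA^{-n}\col{\cl{C}(\bA,\bB)}=\Real{n}$. It is simpler to drop greedy altogether and use each column $\bl{b}_i$ during $n$ consecutive steps. The resulting columns span $\sum_i\bA^{k_i}\mathrm{span}\{\bl{b}_i,\dots,\bA^{n-1}\bl{b}_i\}=\sum_i\mathrm{span}\{\bl{b}_i,\dots,\bA^{n-1}\bl{b}_i\}=\Real{n}$, because each Krylov subspace is $\bA$-invariant and $\bA$ is invertible.

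Second, the singular case is not actually resolved. Choosing the last block to complement $\col{\bA}$ only reduces the task to making the span $\bb{W}$ of the earlier blocks satisfy $\bA\bb{W}=\col{\bA}$, i.e.\ $\bb{W}+\ker{\bA}=\Real{n}$. As soon as a zero-eigenvalue Jordan chain has length at least two, $\col{\bA}\cap\ker{\bA}\neq\{0\}$. The reduced task is then again a sparse reachability problem for a singular map: the penultimate block must complement $\col{\bA}+\ker{\bA}$, and so on down the chains. The greedy argument cannot simply be ``applied to $\bA$ acting into its own range'', since that restriction is still singular and the columns of $\bB$ need not lie in $\col{\bA}$.

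The missing idea is to separate the two parts in time. Let $\bb{X}_1=\col{\bA^n}$ and $\bb{X}_0=\ker{\bA^n}$, and let $\bl{P}_0$ be the projection onto $\bb{X}_0$ along $\bb{X}_1$. Use your set $\cl{S}^\circ$, with $|\cl{S}^\circ|=n-\rk{\bA}\le s$ and $\col{\bA}+\col{\bB_{\cl{S}^\circ}}=\Real{n}$, at each of the last $n$ steps. Since $\bl{P}_0$ commutes with $\bA$, projecting that identity gives $\bA\bb{X}_0+\bl{P}_0\col{\bB_{\cl{S}^\circ}}=\bb{X}_0$. Hence the pair $(\bA|_{\bb{X}_0},\bl{P}_0\bB_{\cl{S}^\circ})$, whose only eigenvalue is $0$, is controllable. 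So the $\bl{P}_0$-projections of these $n$ blocks span $\bb{X}_0$. Before them, run the one-column-per-step schedule above for the pair obtained by restricting $\bA$ to $\bb{X}_1$ and projecting $\bB$ onto $\bb{X}_1$ along $\bb{X}_0$. This pair is controllable by PBH at the nonzero eigenvalues. These earlier columns are multiplied by $\bA^k$ with $k\ge n$, so their $\bb{X}_0$-components vanish while their $\bb{X}_1$-components still span $\bb{X}_1$. The matrix in~\eqref{eq:Kalmantest} is then block triangular in $(\bb{X}_1,\bb{X}_0)$ coordinates with full-rank diagonal blocks, which gives (b).
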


Statement~\ref{st:Kalmantest} can be viewed as a natural extension of the Kalman rank test for sparse controllability, while statement~\ref{st:PBHtest} is the corresponding extension of the PBH test. In particular, the matrix in \eqref{eq:Kalmantest} represents the effective controllability matrix obtained by restricting the input $\bu$ to have nonzero entries only on the index set $\cl{S}_t$. As a result, only the corresponding columns of $\bB$ contribute, while the remaining columns are inactive, leading directly to the stated condition. Building on this interpretation, an important implication emerges. Although sparse actuator control formulation allows the active actuator set to vary with the initial and final states, $s$-sparse controllability ensures the existence of a \emph{fixed} actuator \emph{schedule} (\ie a fixed set of active actuators at each point in time) with sparsity level $s$ that can control the system, independent of initial and final states. This simplifies the control design processes. Moreover, in line with classical results, this test shows that sparse controllability is invariant under similarity transformations of the linear dynamical system. Furthermore, if $s\geq \rk{\bB}$, we only need to use the linearly independent columns of $\bB$ to construct the effective controllability matrix, without affecting its rank. Consequently, when $s\geq \rk{\bB}$, standard controllability is equivalent to sparse controllability for any $s\ge 1$.

Despite these insights, statement~\ref{st:Kalmantest} is computationally difficult to verify in practice. This is due to the combinatorial growth in the number of possible support sets $\cl{S}_t$ and the numerical challenges to compute powers of $\bA$,
which may make numerical computation of the rank in~\eqref{eq:Kalmantest} unreliable if $K$ or $n$ are even slightly big.
This concern is addressed by the much simpler controllability test in statement~\ref{st:PBHtest}, which offers a more tractable and insightful characterization of sparse controllability. Notably, the first condition of the PBH test remains a necessary requirement, since a system that is not controllable in the classical sense cannot be controllable under restricted sparse inputs.
However, the alternative test establishes that any controllable system is sparsely controllable provided that a minimal sparsity level is satisfied---a seemingly surprising implication.
In particular, for a system with $\rk{\bA}\geq n-1$, controllability implies $s$-sparse controllability for any $s\ge1$.

Although the binary controllability property is only mildly affected by sparsity constraint,
the system's controllability index increases because the space of state trajectories induced by $s$-sparse control is more restrictive than traditional control. The $s$-sparse controllability index $K^*$ is bounded by~\citep{Joseph2021tac-sparseControllability}
\begin{equation}\label{eq:sparsecontrolindex}
	\frac{n}{\min\{\rk{\bB},s\}}\leq K^*\leq  n-\min\{\rk{\bB},s\}+1
\end{equation}
and in particular both the upper and lower bounds are monotonically decreasing with the sparsity level $s$.
When $s\geq \rk{\bB}$, the result reduces to the classical (non-sparse) controllability index, and the bounds meet if $\min\{\rk{\bB},s\}=1$.

\subsubsection*{Extensions to related system properties}

The $s$-sparse controllability question in \cref{prob:s-sparse-ctrl} has been extended in the literature to several related system properties, including stabilizability, nonnegative controllability, and output controllability, revealing both similarities and fundamental differences across these properties~\citep{Joseph2024now-sparseActuatorControl}. 

For stabilizability, it has been shown that sparse stabilizability is equivalent to classical stabilizability, unlike in the controllability setting~\citep{Sriram2023tac-sparseStabilizability}. This result can be intuitively understood from the PBH-type characterization of sparse controllability in Statement~\ref{st:PBHtest} of \cref{thm:sparsecontrollability}, where only the nullity of $\bA$ is relevant. In particular, sparse controllability is influenced by the presence of zero eigenvalues,
which tend to ``halt'' the dynamics,
whereas stabilizability depends on the unstable eigenvalues of $\bA$ (\ie those outside the unit circle). Consequently, imposing sparsity constraints does not affect stabilizability. 

For nonnegative controllability, a system is controllable under sparse non-negative inputs if and only if it is nonnegative controllable and the sparsity level satisfies $s\geq n-\rk{\bA}$~\citep{joseph2022controllability}.
Thus, $s$-sparsity imposes an additional minimal requirement beyond nonnegativity analogous to $s$-sparse controllability. 

For output controllability, similar characterizations have been developed~\citep{joseph2022output}. Consider the discrete-time system $(\bA,\bB,\bC)$ with output $\y{t}\in\Real{p}$.
It turns out that a system is output controllable under $s$-sparse input control if it is output controllable and 
\begin{equation}
	s\geq  \max_{0\leq t\leq n-1} \rk{\bC
		\bA^t\cl{C}(\bA,\bB)}- \rk{\bC
		\bA^{t+1}\cl{C}(\bA,\bB)}.
\end{equation}
This condition is sufficient, but a similar bound also appears as a necessary condition for $s$-sparse output controllability.
In fact,
a system is output controllable under sparse input control only if it is output controllable and 
\begin{equation}
	s\geq \max_{1\leq t\leq n}\frac{1}{t} \ls p- \rk{\bC
		\bA^{t}\cl{C}(\bA,\bB)}\rs.
\end{equation}
However, a complete necessary and sufficient characterization of sparse output controllability remains an open problem, partly due to the absence of a PBH-type test for output controllability. Notably, the necessary and sufficient conditions coincide in certain special cases, such as when $\bA$ is diagonalizable or when $\rk{\bA}\geq n-1$. 

Overall, these results highlight that while sparsity constraints significantly impact controllability, their effect on related system properties varies, often requiring additional structural or rank conditions.

\subsection{Design of $s$-sparse actuator control inputs}
\label{sec:s-sparse-control-design}

The discussion thus far has focused on the theoretical foundations of sparse controllability and the conditions under which sparse control inputs exist. Building on this foundation, we now turn our attention to algorithmic approaches for constructing such sparse inputs in practice. A natural approach drawn from the compressed sensing literature is to formulate the design problem as a sparse recovery problem, where we solve for the unknown sparse inputs that drive system~\eqref{eq:system-dt} from a given initial state $\x{0}=\bx_{\mathrm{in}}\in\Real{n}$ to a final state $\bx_{\mathrm{f}}\in\Real{n}$. We recall from the discussion on controllability index in \eqref{eq:sparsecontrolindex} that it suffices to consider at most $n$ control inputs.
Then, the corresponding control design problem can be expressed in terms of the stacked input vector 
\begin{equation}
	\bar{\bu} = \begin{bmatrix}
		\bu^{\top}(n-1) & \bu^{\top}(n-2) & \ldots & \bu^{\top}(0) 
	\end{bmatrix}^{\top}\in\Real{n\times nm}
\end{equation}
that steers the system to the desired state,
much like the idea of minimal reachability of \cref{prob:minimal-reach},
except that here a budget is imposed on the number of nonzero inputs,
and the input support is allowed to change over time.
Letting $\tilde{\bx}=\bx_{\mathrm{f}}-\bA^{n} \bx_{\mathrm{in}}$,
we may express this design goal as follows.

\begin{prob}[Design of $s$-sparse inputs]
	\label{prob:CSproblemdesign}
	Find an $s$-sparse input signal $\u{t}$ which drive system~\eqref{eq:system-dt} from an initial state to a target state:
	\begin{align}
		\label{eq:CSproblemdesign}
		\hspace{-5mm}\begin{split}
			\textnormal{find } \bar{\bu}\in\Real{n\times nm} & \quad \textnormal{ such that } \tilde{\bx} =\cl{C}(\bA,\bB)\bar{\bu} \\
			\textnormal{subject to} & \quad \norm[0]{\u{t}}\le s \;\forall t\in\{0,\dots,n-1\}.
		\end{split}
	\end{align}
\end{prob}
This formulation naturally aligns with the CS framework~\citep{foucartMathematicalIntroductionCompressive2013}, which studies the recovery of sparse vectors from a small number of linear measurements. By interpreting \eqref{eq:CSproblemdesign} as a sparse vector recovery problem, we can directly leverage tools from compressed sensing to design sparse inputs~\citep{Joseph2024now-sparseActuatorControl}. A prominent compressed sensing approach is the convex relaxation technique. The convex Basis Pursuit formulation for problem~\eqref{eq:CSproblemdesign} reads
\begin{align}\label{eq:s-sparse-control-l1-min}
	\begin{split}
		\min_{\bar{\bu}\in\bb{R}^{nm}} \quad&\lV\bar{\bu}\rV_1\\
		\textnormal{subject to} \quad&\tilde{\bx}=\cl{C}(\bA,\bB)\bar{\bu}.
	\end{split}
\end{align}
Note that this problem is structurally similar to the Basis Pursuit formulation~\eqref{eq:BP} to recover a sparse attack signal on measurements,
although the whole design variable $\bar{\bu}$ is required to be sparse in~\eqref{eq:s-sparse-control-l1-min},
differently from~\eqref{eq:BP} where only the attack component $\ba$ is sparse.
In addition to convex methods, greedy compressed sensing algorithms such as orthogonal matching pursuit and compressive sampling matching pursuit can be employed or adapted to iteratively construct $s$-sparse solutions by selecting columns of $\cl{C}(\bA,\bB)$ that best match the residual. More broadly, the compressed sensing literature includes a wide range of recovery techniques, including iterative hard thresholding~\citep{foucartMathematicalIntroductionCompressive2013}, sparse Bayesian learning~\citep{wipf2004sparse}, approximate message passing~\citep{donoho2009message}, deep learning~\citep{mousavi2015deep}, and generative models~\citep{bora2017compressed}. These methods are often accompanied by theoretical recovery guarantees that depend on structural properties of the reachability matrix $\cl{C}(\bA,\bB)$, such as coherence and restricted isometry conditions, as highlighted also in \cref{sec:sparse-attack-setup} for problems~\eqref{eq:under2}--\eqref{eq:BP}.

The standard sparse recovery formulation minimizes the total number of actuator activations over the entire control horizon, but does not constrain how these activations are distributed across time. Consequently, a sparse solution may concentrate many active actuators at a few time instants while leaving others inactive. This can be undesirable in applications where the number of simultaneously active actuators is limited by hardware, communication, or energy constraints. To explicitly impose such a per-time-step constraint, the problem can be formulated as
\begin{multline}
    \min_{\bar{\bu}}\;\|\bar{\bu}\|_0
\text{s.t.}\;
\tilde{\bx}=\cl{C}(\bA,\bB)\bar{\bu},\\
\text{and} \; \|\bu_k\|_0\leq s,\; \;\text{for}\;k=0,1,\ldots,n-1.
\end{multline}
Here, $s$ specifies the maximum number of actuators that can be active at any time step. This particular structured sparsity model is piecewise sparsity, in which the control vector is partitioned into groups corresponding to different time instants, with each group containing at most $s$ nonzero entries. The resulting control sequence is therefore sparse both globally and locally in time: while the objective minimizes the total number of nonzero control entries, the additional constraint ensures that no individual input $\bu_k$ contains more than $s$ active actuators. This formulation can be viewed as a structured sparse recovery problem, where the sparsity pattern is constrained across predefined groups corresponding to different time instants.

The standard greedy recovery algorithms can be modified to accommodate this structure. One example of a greedy algorithm is piecewise orthogonal matching pursuit~\citep{li2016piecewise}, which iteratively selects supports over actuator-time pairs while ensuring that no time step exceeds the prescribed sparsity level. Similarly, piecewise compressive sampling matching pursuit~\citep{zhong2018piecewise} identifies candidate supports separately for each time step and retains at most $s$ active actuators per step. These approaches preserve the basic support-identification principles of their conventional counterparts while explicitly enforcing the per-time-step sparsity constraint.

\subsection{Limitations and outlook}
While a fairly complete set of theoretical foundations of $s$-sparse control has been established following a parallel progression to standard non-sparse control systems, the practical design of $s$-sparse inputs is still a challenge. Several limitations remain to the direct application of compressed sensing techniques to control design. The first problem is that standard formulations promote sparsity across the entire stacked vector $\bar{\bu}$, without explicitly enforcing the desired sparse structure over time. The notion of piecewise sparsity can address per-time instant sparsity constraints. However, the area of piecewise sparse recovery remains relatively underexplored from both theoretical and algorithmic standpoints. 

Another important consideration is control energy. Compressed sensing formulations primarily focus on achieving a target sparsity and often neglect energy-related objectives, whereas control applications also require a balance between sparsity and energy efficiency. Developing unified methods that jointly optimize sparsity and control effort remains an open direction. 
Furthermore, from a computational perspective, solving a sparse recovery problem for each pair of initial and final states can be more expensive than classical least-squares approaches. 
On top of this, saturation of actuation channels is not considered by standard sparse recovery and greedy selection algorithms, motivating further research to efficiently choose control inputs that are both $s$-sparse, energy efficient, and do not violate prescribed saturation bounds.

One way to address the computational burden is to pre-select actuator activation patterns, thereby eliminating the combinatorial search over the patterns and reducing the problem to a structured control design task. This leads to the notion of state-independent actuator scheduling, in which a fixed set of active actuators is chosen a priori for each time step to guarantee controllability. The existence of such schedules is ensured for sparse controllable systems by statement~\ref{st:Kalmantest} of Theorem~\ref{thm:sparsecontrollability}. However, while such schedules guarantee feasibility, they may not be optimal for specific initial–final state pairs.

These considerations highlight both the strengths and limitations of compressed sensing–based approaches for sparse control design and motivate alternative formulations. In the next section, we explore actuator scheduling strategies in greater detail, along with their implications for performance, computational complexity, and practical implementation.

    %!TEX root = ../main_survey_sparsity_control.tex

\section{Sparse actuator and sensor scheduling}
\label{sec:actuator-scheduling}

\cref{sec:minimal-controllability,sec:minimal-structural-controllability,sec:s-sparse-control} ask which state variables to actuate in order to make a system controllable (or observable, thanks to duality).
We now turn to a quantitative refinement: given a system that is already controllable or observable, which actuators or sensors should be \emph{active at each moment in time} so that the resulting control energy or estimation error stays close to the value achieved when every resource is available?

\subsection{Motivation}
The shift from ``is it controllable?'' to ``how well can we control it with limited resources?'' changes the problem fundamentally.
Essentially,
this is the shift from a feasibility problem to an optimization problem under a budget constraint.
Most literature focuses on static selection,
\ie the design problem of choosing a fixed set of actuators or sensors to reduce a control-theoretic cost function,
with the underlying assumption that all selected actuators/sensors will be used at all times.
However,
in several cases, it makes sense to consider \emph{time-varying} allocation of actuation and/or sensing resources:
hard constraints may be present on the number of devices that can simultaneously transmit or receive, such as in networked systems with limited bandwidth or under computational constraints (see \cref{sec:s-sparse-control-motivation});
or continued actuation may cause mechanical stress,
favoring zero-order-hold or zero-control policies unless a significant input variation is required,
\eg yaw misalignment in wind farm control~\citep{QIAN2021119680}.
In addition,
time-varying selection can significantly enhance stability and performance for a given budget,
because it gives more flexibility to the design~\citep{7963729}.
In this case,
the design variable is no longer a single set of selected actuators/sensors but a schedule that dynamically triggers a different set at each point in time.

It is worth noting that actuator scheduling enjoys two versions of this problem:
one is the selection of a schedule $\cl{S}$ of actuators,
which defines the time-varying support of control inputs to be computed online via classical methods;
the other is direct computation of sparse inputs $\u{t}$ satisfying a budget constraint of the kind $\norm[1]{\u{t}}\le s_t$.
In this section,
we are focusing on problems of the first class,
which yield practically useful reusable schedules (rather than one-time solutions) and have a natural counterpart in sensor scheduling.

Since schedules live in combinatorial spaces whose size grows exponentially with the time horizon,
brute-force search is out of the question even for modest horizons, requiring efficient algorithmic solutions.
As a solution approach,
greedy selection is tempting because one-step information objectives can be submodular.
However, this intuition is fragile: multi-step scheduling generally loses submodularity, and the static Kalman-filtering selection problem is not only NP-hard but inapproximable in the worst case.
A different and far more optimistic picture emerges when one asks instead for an actuator schedule that optimizes control energy; recent tools from spectral graph theory and matrix sparsification show that schedules activating a constant number of actuators on average can approximate every standard Gramian metric simultaneously.

\paragraph*{Outline:} We begin with sensor scheduling,
presenting the classical tree-pruning approach to exact and near-optimal schedules in \cref{sec:tree-pruning} and then discussing complexity results for optimal Kalman filtering in \cref{sec:submod-limits}.
We then turn to actuator scheduling.
This has arguably received less attention,
possibly because scheduling decisions directly affect the closed-loop dynamics rather than only the estimation performance~\citep{Jiao2023lcss-actuatorScheduling}.
We discuss optimization of generic energy-related metrics in \cref{sec:ctrl-Gramian}
of optimal linear quadratic control in \cref{sec:ctrl-lq}.

% ============================================================
\subsection{Optimal sensor schedules via tree pruning}
\label{sec:tree-pruning}
% ============================================================

The most direct approach to scheduling is dynamic programming over the tree of all possible schedules.
For $M$ sensors and horizon $N$, this tree has $M^N$ leaves (assuming one sensor is active per step), but properties of the objective cost may 
enable aggressive pruning.
One such case is the Kalman filter: the Riccati map is \emph{monotone} in the Loewner order (a larger covariance today produces a larger covariance at all future times) and \emph{concave}.
\cite{vitus2012} exploited these properties to identify and discard \emph{algebraically redundant} nodes.
An $\epsilon$-relaxed variant trades optimality for speed, with the excess cost growing at most linearly in~$\epsilon$~\citep[Theorem~6]{vitus2012}.
This approach can dramatically reduce the search tree on finite-horizon problems, but the number of retained characteristic pairs can still grow prohibitively; the paper therefore introduces an $\epsilon$-redundant variant that further trades accuracy for tractability.

% ============================================================
\subsection{Kalman filter: fundamental limits}
\label{sec:submod-limits}
% ============================================================
A natural hope is that greedy selection inherits the favorable guarantees enjoyed by submodular maximization.
Over a \emph{single measurement update}, this hope is justified for a specific information objective.
\cite{shamaiah2010} showed that the static MAP objective is a monotone submodular function of the selected sensor set.
In the Kalman-filtering application, assuming independent sensor noises so that the measurement-noise covariance is diagonal, the one-step update can be put in the same form after prewhitening.
This normalized information-gain objective is monotone submodular, and greedy selection gives the usual $(1-\nicefrac{1}{\e})$ suboptimality gap for choosing the best $k$ sensors for that single update.
However,
this does not generalize either to other common metrics (\cref{sec:inapproximability}) or to schedules optimizing multi-step measurement updates (\cref{sec:failure-submodular}).
We next recap this negative picture.

\subsubsection{Inapproximability of static sensor selection}
\label{sec:inapproximability}
\cite{Tzoumas2016acc-sensorPlacementKalman} established submodularity of $-\log\det(\Sigma_t)$ w.r.t. the chosen sensor subset,
where $\Sigma_t$ is the posterior error covariance at time $t$,
under the assumption that each state variable is independently measured.
However,
\cite{zhang2017} established two complementary negative results.
First, they showed via counterexamples that $-\tr{\cdot}$, $-\log\det(\cdot)$, and the negative smallest eigenvalue $-\lambda_{\textnormal{min}}(\cdot)$, evaluated on either the steady-state prior covariance or the posterior covariance matrices, are neither submodular nor supermodular in general.
Hence,
they preclude any direct appeal to greedy guarantees from combinatorial optimization unless the more restrictive setup in~\citep{Tzoumas2016acc-sensorPlacementKalman} can be adopted.
Second, \cite{zhang2017} proved that the sensor selection problem that chooses a subset of sensors to minimize the estimation error variance $\tr{\Sigma}$ is NP-hard under unit sensor costs and, separately, under the assumption that $\bA$ is stable.
\cite{ye2018} substantially strengthened this: the problem remains NP-hard even when both $\bA$ is stable, and all sensors have unit cost, and moreover admits no constant-factor polynomial-time approximation.

\begin{theorem}[Informal; {\citealp[Theorem~2]{ye2018}}]\label{thm:kfss-inapprox}
	For every constant $K\ge 1$, there exist instances of the Kalman filter sensor selection problem (with stable $\bA$ and identical sensor costs) such that no polynomial-time algorithm achieves an approximation ratio better than~$K$, unless $\mathrm{P}=\NP$.
\end{theorem}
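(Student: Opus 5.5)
The plan is a gap-preserving reduction from a known NP-complete covering problem, \emph{exact cover by 3-sets} (X3C), whose ``yes/no'' gap is then amplified into a multiplicative gap exceeding any prescribed constant $K$. The instance is a Kalman filter with stable $\bA$, unit-cost sensors, and a budget $k$. Each candidate sensor is a row $\bC_i$, built from a triple of the X3C instance, and $\bA$ and the process noise are chosen diagonal or near-diagonal. The steady-state a-priori error covariance $\Sigma$ of the selected set is then the stabilizing solution of a discrete algebraic Riccati equation, and we want it to separate the two cases:
\begin{itemize}
\item if an exact cover exists, every state coordinate is measured by exactly one chosen sensor;
\item otherwise, some coordinate is measured by no chosen sensor.
\end{itemize}

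The key steps, in order, are as follows.

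First, I would build the matrices. I take $\bA=\mathrm{diag}(a_1,\dots,a_n)$ with $|a_j|<1$, process noise $\bl{W}=I$, and sensor rows $\bC_i=\mathbf{1}_{\cl{T}_i}^\top$ (the indicator of triple $\cl{T}_i$), each with small noise variance $\nu$.

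Second, I would prove the upper bound in the ``yes'' case. If the selected triples are disjoint and cover $[n]$, the filter decouples into $n$ scalar Riccati equations. Each has $\Sigma_{jj}\le 1+O(\nu)$, so $\tr{\Sigma}\le n(1+O(\nu))$.

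Third, I would prove the lower bound in the ``no'' case. Any $k=n/3$ chosen triples leave some coordinate $j$ uncovered. That state is then observed only through correlations, and I would force such correlations to be absent, or weak, by making the $a_j$ distinct. The lower bound should then come from a monotonicity argument on the Riccati map: dropping measurement information only increases $\Sigma$ in the Loewner order. Comparing with the unmeasured scalar mode, I get $\Sigma_{jj}\ge 1/(1-a_j^2)$.

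Fourth, I would choose one coordinate's eigenvalue $a_j$ close to $1$. This makes $1/(1-a_j^2)$ exceed $K\,n(1+O(\nu))$ while $\bA$ stays stable.

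The main obstacle is the third step. With sums of coordinates as measurements, an uncovered coordinate can still be partially inferred through the overlapping triples chosen in place of a disjoint cover, so the clean bound $\Sigma_{jj}\ge 1/(1-a_j^2)$ fails.

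There is also a subtler problem with the fourth step. It is not enough to make only one fixed $a_j$ close to $1$, because the adversary, i.e. the selection, decides which coordinate goes uncovered. So every coordinate would need a large unobserved variance, yet in the ``yes'' case each coordinate must have small variance once covered.

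To resolve both issues, I would first try to put all $a_j=a$ close to $1$ and make $\nu\to0$. A covered coordinate then gets variance $O(\nu)$-dominated, roughly $\le 1+\nu$, while an uncovered one keeps variance of order $(1-a^2)^{-1}$ minus the information leaked through the overlapping measurements. To bound that leakage, I would choose different per-sensor noise scalings, or replace triples by mutually orthogonal measurement directions embedded in a larger padded state. The aim is that an uncovered coordinate has zero projection onto every selected measurement row, which restores exact decoupling and the bound $(1-a^2)^{-1}$ grows without bound as $a\to1$, beating any constant $K$.

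If the padding cannot make the row spaces orthogonal, the fallback is to work with the information form $\Sigma^{-1}$ and directly bound the quadratic form $\be_j^\top\Sigma\be_j$ via Schur complements.
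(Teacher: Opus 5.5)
Your choice of X3C as the source problem, and your idea of amplifying the gap by pushing a stable eigenvalue toward $1$, both match the paper. The encoding, however, fails at your \emph{second} step, not the third. Under an exact cover the filter does not split into $n$ scalar Riccati equations. It splits into $n/3$ independent three-dimensional blocks, each seen only through the single scalar $x_a+x_b+x_c+v$. In every block the two directions orthogonal to $(1,1,1)$ are never measured, so ``covered'' does not mean ``well estimated.'' Take your final choice $\bA=a\bI$, $\bl{W}=\bI$. The Riccati equation then diagonalizes in an eigenbasis of $\bC_{\cl{S}}^\top\bC_{\cl{S}}$: each direction in $\ker{\bC_{\cl{S}}}$ has stationary variance exactly $1/(1-a^2)$, while the other directions stay bounded (and tend to $1$ as $\nu\to0$). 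Hence every selection of at most $n/3$ sensors, exact cover or not, satisfies $\frac{2n}{3}\cdot\frac{1}{1-a^2}\le\tr{\Sigma}\le\frac{n}{1-a^2}$. The yes/no ratio is therefore at most about $3/2$. Moreover, the leading term depends only on $\rk{\bC_{\cl{S}}}$, a matroid rank that greedy maximizes, so nothing of X3C survives.

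Your stated main obstacle, by contrast, is not real. With diagonal $\bA$ and $\bl{W}$ and independent noises, the coordinate processes are mutually independent. A coordinate outside every selected triple is therefore independent of all selected measurements, and $\Sigma_{jj}=1/(1-a_j^2)$ holds exactly. The orthogonal-row or padding repair thus targets a non-issue, and it would also erase the overlaps that carry the hardness. Finally, suppose you give a slow eigenvalue to only one coordinate, which would keep its covered-case error bounded. This fails for the reason you anticipated in step four: the selection covers that coordinate and leaves a fast one uncovered.

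The missing idea, which is what the paper's sketch describes, is to place the whole gap on one distinguished state and make every other state irrelevant to the cost whatever is selected. Take $\bA=\mathrm{diag}(\lambda,0,\dots,0)$ and $\bl{W}=\bI$, so the $3m$ element states are memoryless. The a-priori covariance is then $\mathrm{diag}(\lambda^2\Sigma^+_{11}+1,\bI)$ for every selection, so $\tr{\Sigma}=\tr{\bl{W}}+\lambda^2\Sigma^+_{11}$ and only the posterior error of $x_1$ matters. X3C is then encoded not in whether a coordinate is covered, but in whether the selected noiseless rows can \emph{cancel} all element states and isolate $x_1$.

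One concrete way to do this uses a sensor reading $h x_1+\mathbf{1}^\top\bl{z}$, one sensor reading $\mathbf{1}_{c_i}^\top\bl{z}$ per triple, and budget $m+1$. Since $\mathbf{1}$ lies in the span of at most $m$ triple indicators if and only if they form an exact cover, a yes instance gives $\Sigma^+_{11}=0$, i.e.\ the noise floor $\tr{\Sigma}=\tr{\bl{W}}$. In a no instance, the residual $\bl{r}$ of $\mathbf{1}$ off the selected span has $\norm[2]{\bl{r}}^2\ge1$, because some element is uncovered. This gives $\Sigma^+_{11}\ge\sigma/(h^2\sigma+1)$, so the steady-state $\sigma$ is at least the fixed point of $\sigma\mapsto\lambda^2\sigma/(h^2\sigma+1)+1$, which tends to $1/(1-\lambda^2)$ as $h\to0$. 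Choosing $1/(1-\lambda^2)>K\,\tr{\bl{W}}$ yields the gap; this is the ``$\lambda$ toward $1$ and scaling the measurement matrix'' step of the sketch. It also dissolves your step-four difficulty, since a single global property of the selection (full cancellation) decides the error rather than which coordinate is left out.
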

\begin{proof}[Proof sketch]
	The proof encodes exact 3-cover (X3C) instances into the sensor selection problem.
	The system matrix is $\bA=\mathrm{diag}(\lambda_{\textnormal{min}},0,\dots,0)$ with $|\lambda_{\textnormal{min}}|<1$, and the measurement matrix encodes the incidence structure of the 3-cover instance.
	When an exact cover exists, some sensor selection achieves $\tr{\Sigma}=\tr{W}$, \ie the noise floor; when it does not, every selection incurs estimation error on the first state that can be made arbitrarily large by tuning $\lambda_{\textnormal{min}}$ toward~$1$ and scaling the measurement matrix.
	The gap between the two cases exceeds any prescribed constant~$K$, so a $K$-approximation algorithm would solve X3C~\citep{ye2018}.
\end{proof}

% ============================================================

\subsubsection{Failure of submodularity in sensor scheduling}
\label{sec:failure-submodular}
Over \emph{multiple} time steps, the objective is no longer a set function of one selected subset but a sequence function of the entire schedule. 
\cite{jawaid2015} demonstrated through explicit counterexamples that the most commonly used covariance metrics are \emph{not} submodular functions of the sensor schedule. 
Notably, even the log-determinant information metric, when evaluated on the terminal covariance induced by the full schedule, joins this list in general, so the one-step submodularity result does not automatically extend to its multi-step analog.
\cite{jawaid2015} salvaged a positive result under restrictive assumptions: if additionally $\bA$ is nonsingular, there is no process noise, $\bA\Sigma_0\bA^\top\preceq\Sigma_0$, and $\bA^\top D_i\bA\preceq D_i$ for all measurement matrices~$D_i$, then the normalized log-determinant objective is sequence-submodular, and the greedy algorithm yields a schedule within a factor of roughly $0.47$ of optimal.
These conditions are even stricter than those used by \cite{Tzoumas2016acc-sensorPlacementKalman} for static selection:
they essentially require a stable, noiseless system whose measurement matrices interact favorably with the dynamics.

\subsection{Gramian-based actuator scheduling}
\label{sec:ctrl-Gramian}
% ============================================================

A natural counterpart to estimation error variance minimization in the Kalman filter is the optimization of energy-related metrics of the controllability Gramian in actuator scheduling~\citep{Baggio2022ar-energyAwareControllability}.

Although the inapproximability of \cref{thm:kfss-inapprox} showcases a rather pessimistic result even in static selection for the optimal Kalman filter,
a fundamentally different situation emerges when one asks for a time-varying actuator schedule to minimize a control energy-related metric.
Specifically,

\subsubsection{Gramian sparsification under average budget}
\cite{Siami2021tac-actuatorScheduling} showed that the question can be positively answered if the actuator budget constraint has to be satisfied just on average across time.
In this case,
the scheduling problem is given as follows.
\begin{prob}[Gramian-based average-budget actuator scheduling]\label{prob:scheduling-gramian-average}
	Given a set of available actuators $\cl{V}$, time horizon $T$, and average budget $d$,
	find an actuator schedule $\cl{S}=(\cl{S}_0,\dots,\cl{S}_{T-1})$ that minimizes a function of the controllability Gramian:
	\begin{align}\label{eq:scheduling-gramian-average}
		\hspace{-5mm}\begin{split}
			\min_{\cl{S}\subset\cl{V}^T} & \quad \rho(\bl{W}(\cl{S}))\\
			\textnormal{subject to} &\quad \sum_{t=0}^{T-1}|\cl{S}_t| \le dT\\
			&\quad \x{t+1} = \bA\x{t} + \bB_{\cl{S}_t}\u[\cl{S}_t]{t} \quad \forall t\ge0
		\end{split}
	\end{align}
	where $\bl{W}(\cl{S})=\cl{C}_{\cl{S}}\cl{C}_{\cl{S}}^\top$ is the controllability Gramian under schedule $\cl{S}$ and the controllability matrix $\cl{C}_{\cl{S}}$ under schedule $\cl{S}$ is given in~\eqref{eq:ctrl-schedule},
	and $\rho(\bl{W})$ is quantifies control energy.
\end{prob}

\paragraph{Systemic controllability metrics.}
The framework of \cite{Siami2021tac-actuatorScheduling} applies to any performance metric $\rho:\bb{S}_+^n\to\bb{R}$ that is \emph{systemic}: monotone in the Loewner order (\ie $\bl{A}\preceq\bl{B}\Rightarrow\rho(\bl{B})\le\rho(\bl{A})$) and homogeneous (\ie $\rho(\kappa\bl{A})=\kappa^{-1}\rho(\bl{A})$ for $\kappa>0$).
This class includes common metrics such as $\tr{\bl{W}^{-1}}$ (average control energy), the reciprocal of the smallest eigenvalue $\lambda_{\min}^{-1}(\bl{W})$ (worst-case energy), and $\log\det(\bl{W})$ (volume of ellipsoid reached with fixed, \eg unitary, energy).

\paragraph{The sparsification reduction.}
The controllability Gramian of the discrete-time system $\bx(t{+}1)=\bA\bx(t)+\bB\bu(t)$ at time~$t$ is
\begin{equation}
	\bl{W}_t = \cl{C}_t(\bA,\bB)\cl{C}_t^{\top}(\bA,\bB) = \sum_{i=0}^{t-1}\sum_{j=1}^m v_{ij}v_{ij}^\top,
\end{equation}
where $\bl{v}_{ij}=\bA^i \bl{b}_j$ and $\bl{b}_j$ is the $j$th column of $\bB$.
This is a sum of $mt$ rank-one matrices.
After whitening ($\bar{\bl{v}}_{ij}=\bl{W}^{-1/2}\bl{v}_{ij}$), these terms sum to the identity: $\sum_{i,j}\bar{\bl{v}}_{ij}\bar{\bl{v}}_{ij}^\top = \bl{I}$.
The actuator scheduling problem now reduces to a clean linear-algebraic question: find a sparse reweighting of these rank-one terms that approximates the identity in spectral norm.

This is precisely the structure handled by spectral sparsification algorithms originating in the graph theory literature.
In graph sparsification, one approximates a graph Laplacian (a sum of rank-one edge terms) by reweighting a sparse subset of edges; here, one approximates the (whitened) Gramian by reweighting a sparse subset of actuator-time pairs.
The connection is not merely analogical: the contribution of each actuator at each time step to the Gramian is a rank-one matrix, just as the contribution of each edge to the Laplacian is.

\paragraph{Deterministic algorithm.}
The \emph{dual set spectral sparsification} algorithm~\citep{boutsidis2014}, building on the barrier-method approach by~\cite{batson2012}, sequentially selects rank-one terms at each step using upper and lower potential functions that track eigenvalue proximity to prescribed barriers.
At each iteration, the algorithm picks an index based on the barrier potentials, assigns it a weight, and advances both barriers by fixed increments.
After $dt$ iterations (where $d$ is the target average number of active actuators per time step), the selected terms and their weights satisfy two-sided spectral bounds.
The same algorithm can be adapted to the actuator scheduling problem where one seeks to approximate $\bl{W}_t$ in the Lowener order.
\cite{Siami2021tac-actuatorScheduling} considered the scenario where one has to select at most $d$ actuators on average over a time horizon $T$,
or equivalently,
at most $dT$ actuators in total through the horizon.

\begin{theorem}[Informal; {\citealp[Theorem~1]{Siami2021tac-actuatorScheduling}}]\label{thm:det-schedule}
Given a controllable system, a time horizon $T\ge n$, and a target average of $d>1$ active actuators, there exists a deterministic algorithm running in $O(dm(Tn)^2)$ time that produces a weighted actuator schedule satisfying
\begin{equation}\label{eq:lowener-bound}
	(1-\epsilon)\,\bl{W}_T \;\preceq\; \bl{W}(\cl{S}) \;\preceq\; (1+\epsilon)\,\bl{W}_T,
\end{equation}
where $\epsilon = 2\big/\!\big(\sqrt{dT/n}+\sqrt{n/(dT)}\big)$.
\end{theorem}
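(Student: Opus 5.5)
The approach is to reduce the statement to the dual set spectral sparsification theorem of \cite{boutsidis2014} (the Batson--Spielman--Srivastava barrier method), applied to the rank-one decomposition of the Gramian.

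\textbf{Setup.} Write $\bl{W}_T=\sum_{i=0}^{T-1}\sum_{j=1}^m \bl{v}_{ij}\bl{v}_{ij}^\top$ with $\bl{v}_{ij}=\bA^i\bl{b}_j$. Controllability and $T\ge n$ give $\bl{W}_T\succ 0$, so we may whiten: $\bar{\bl{v}}_{ij}=\bl{W}_T^{-1/2}\bl{v}_{ij}$ satisfy $\sum_{i,j}\bar{\bl{v}}_{ij}\bar{\bl{v}}_{ij}^\top=\bI_n$. A weighted schedule is a nonnegative weight $s_{ij}$ per actuator--time pair, with $\cl{S}_{T-1-i}=\{j:s_{ij}>0\}$ and $\bl{W}(\cl{S})=\sum s_{ij}\bl{v}_{ij}\bl{v}_{ij}^\top$. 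The weights are realized by scaling the input channel, which is allowed since inputs are free. Because congruence by $\bl{W}_T^{1/2}$ preserves the Loewner order, \eqref{eq:lowener-bound} is equivalent to
\[
(1-\epsilon)\bI\preceq\sum s_{ij}\bar{\bl{v}}_{ij}\bar{\bl{v}}_{ij}^\top\preceq(1+\epsilon)\bI .
\]
The budget requirement is that at most $k\doteq dT$ of the $s_{ij}$ are nonzero.

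\textbf{Barrier argument.} We run $k$ iterations maintaining $A_\tau=\sum(\text{selected, weighted terms})$. We keep a lower barrier $\ell_\tau$ and an upper barrier $u_\tau$ with potentials $\Phi^{u}(A)=\tr{(u\bI-A)^{-1}}$ and $\Phi_{\ell}(A)=\tr{(A-\ell\bI)^{-1}}$. At each step both barriers shift by fixed amounts $\delta_L=1$ and $\delta_U=(1+\sqrt{n/k})/(1-\sqrt{n/k})$, started at $\ell_0=-\sqrt{nk}$ and $u_0=n\delta_U/(1-\sqrt{n/k})$; these are the parameter choices of \cite{batson2012,boutsidis2014}. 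The key lemma is an averaging step. Sherman--Morrison gives, for each candidate vector, an admissible weight interval: an upper threshold $U(\bar{\bl{v}})$ and a lower threshold $L(\bar{\bl{v}})$, both quadratic forms in $\bar{\bl{v}}$. Summing over all $mT$ vectors and using $\sum\bar{\bl{v}}\bar{\bl{v}}^\top=\bI$ shows $\sum L\ge\sum U$, so some index has $L(\bar{\bl{v}}_{ij})\ge U(\bar{\bl{v}}_{ij})$. Adding that index with weight $1/U$ keeps both potentials nonincreasing and the eigenvalues strictly inside $(\ell,u)$. Repeated selection of the same index simply accumulates weight, which only lowers the support count.

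\textbf{Conclusion and cost.} After $k$ steps the eigenvalues of $A_k$ lie in $[\ell_k,u_k]$. Rescaling by $(u_k+\ell_k)/2$ gives the symmetric bound with ratio $(1+\sqrt{n/k})^2/(1-\sqrt{n/k})^2$. Recentering, this is $\epsilon=2\sqrt{n/k}/(1+n/k)=2/(\sqrt{k/n}+\sqrt{n/k})$, which matches the stated $\epsilon$ with $k=dT$; the condition $d>1$, $T\ge n$ guarantees $k>n$. For the running time, each iteration evaluates both thresholds for all $mT$ vectors in dimension $n$. After an $O(n^3)$ inverse update per step this costs $O(mTn^2)$ per step, hence $O(dT\cdot mTn^2)=O(dm(Tn)^2)$ in total, plus the one-time whitening.

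\textbf{Main obstacle.} The delicate step is the averaging inequality $\sum L\ge\sum U$ with exactly these barrier increments. Here I would reproduce the BSS computation carefully, bounding $\Phi$ monotonicity through convexity of $x\mapsto(u-x)^{-1}$. I also need to confirm that nothing system-specific, for example repeated vectors $\bA^i\bl{b}_j$ or zero vectors, breaks the argument. It should not, since the argument only uses the identity decomposition.
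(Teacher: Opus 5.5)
Your route is the one the paper indicates. You write $\bl{W}_T$ as a sum of the $mT$ rank-one terms $\bl{v}_{ij}\bl{v}_{ij}^\top$, whiten them so they sum to $\bI$, and run the Batson--Spielman--Srivastava / dual-set barrier method for $k=dT$ steps. Your recentering algebra is correct: with $x\doteq\sqrt{n/k}$, it turns the ratio $(1+x)^2/(1-x)^2$ into $\epsilon=2x/(1+x^2)=2/(\sqrt{k/n}+\sqrt{n/k})$. The support count and the $O(dT\cdot mTn^2)$ running time are also right.

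However, your initial upper barrier is wrong, and this breaks exactly the step you flagged as the main obstacle. The averaging inequality $\sum U\le\sum L$ is certified only if $1/\delta_U+n/u_0\le 1/\delta_L-n/|\ell_0|$ holds at the start. Here $n/u_0=\Phi^{u_0}(0)$ and $n/|\ell_0|=\Phi_{\ell_0}(0)=x$, so the condition reads
\[
\frac{1-x}{1+x}+\frac{n}{u_0}\le 1-x
\quad\Longleftrightarrow\quad
u_0\ge\frac{n(1+x)}{x(1-x)}=\delta_U\sqrt{nk}.
\]
Your value $u_0=n\delta_U/(1-x)=n(1+x)/(1-x)^2$ satisfies this only when $x\ge 1/2$, i.e.\ $k\le 4n$. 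So in the regime where $\epsilon$ is small (for $k>4n$), the averaging argument no longer yields an admissible index. Even where your $u_0$ is admissible, it gives $u_k/\ell_k=(1+x^3)/(1-x)^3$, which is worse than the claimed $(1+x)^2/(1-x)^2$.

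The BSS choice is $u_0=\delta_U\sqrt{nk}$: the denominator should be $\sqrt{n/k}$, not $1-\sqrt{n/k}$. With this choice, $u_k=\delta_U(\sqrt{nk}+k)$ and $\ell_k=k-\sqrt{nk}$, so
\[
\frac{u_k}{\ell_k}=\delta_U\,\frac{\sqrt{k}+\sqrt{n}}{\sqrt{k}-\sqrt{n}}=\frac{(1+x)^2}{(1-x)^2}.
\]
The rest of your argument then goes through unchanged.
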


The approximation factor $\epsilon$ reveals a threefold tradeoff among the average number of active actuators~$d$, the time horizon~$T$, and the state dimension~$n$.
For fixed~$d$ and $T=n$, the bound gives $\epsilon=2/(\sqrt{d}+1/\sqrt{d})$, which decreases monotonically with~$d$.
Since $\epsilon<1$ whenever $dT>n$, the schedule is a meaningful approximation as soon as the total number of activations exceeds the state dimension---a necessary condition, since the Gramian would otherwise be rank-deficient.
The condition $dT\ge n$ cannot be avoided by \emph{any} algorithm: with $\bA=I$ and $\bB=I$, fewer than $n$ total activations cannot produce a full-rank Gramian.

The two-sided Loewner bound~\eqref{eq:lowener-bound} immediately implies, via the properties of any systemic metric~$\rho$, that
\begin{equation}
	\frac{|\rho(\bl{W}_T)-\rho(\bl{W}(\cl{S}))|}{\rho(\bl{W}_T)} \le O(\epsilon).
\end{equation}
Hence,
a single schedule simultaneously approximates every Gramian metric $\rho(\bl{W})$ in the systemic family.

\subsubsection{$s$-sparse actuator scheduling}

While \cref{prob:scheduling-gramian-average} considers an average budget constraint,
the number of actuators active at each time may not exceed a hard threshold in some cases,
such as in the $s$-sparse control setting depicted in \cref{sec:s-sparse-control}.

\begin{prob}[Gramian-based $s$-sparse actuator scheduling]\label{prob:scheduling-gramian-s-sparse}
	Given a set of available actuators $\cl{V}$, time horizon $T$, and budget $s$,
	find an $s$-sparse actuator schedule $\cl{S}=(\cl{S}_0,\dots,\cl{S}_{T-1})$ that minimizes a function of the controllability Gramian:
	\begin{align}\label{eq:scheduling-gramian-s-sparse}
		\hspace{-5mm}\begin{split}
			\min_{\cl{S}\subset\cl{V}^T} & \quad \rho(\bl{W}(\cl{S}))\\
			\textnormal{subject to} &\quad |\cl{S}_t| \le s \quad \forall t\ge0\\
			&\quad \x{t+1} = \bA\x{t} + \bB_{\cl{S}_t}\u[\cl{S}_t]{t} \quad \forall t\ge0.
		\end{split}
	\end{align}
\end{prob}

The good news is that $s$-sparse control theory ensures that,
under a technical condition on the rank of $\bA$,
a fixed $s$-sparse schedule of sufficient length which provides controllability exists; see \cref{sec:s-sparse-control-setup}.
Hence,
\cref{prob:scheduling-gramian-s-sparse} is feasible for any $s$-sparse controllable system.
However,
quantifying the performance of greedy selection is far from trivial because it has not been proved (or disproved) yet that common Gramian metrics $\rho$ are submodular or supermodular set functions of time-varying actuator schedules.

\paragraph{Feasibility guarantees of greedy selection.}
The naive greedy algorithm would solve~\eqref{eq:scheduling-gramian-s-sparse} by iteratively picking the column of the controllability matrix $\cl{C}(\bA,\bB)$ which minimizes the cost $\rho(\bl{W}(\cl{S}))$.
Although this approach may heuristically provide good performance~\citep{10883726},
a counterexample provided by \cite{Ballotta2024lcss-actuatorSchedulingGuarantee} shows that it may fail under the $s$-sparse feasibility constraint,
entering a dead end in which controllability is precluded for any subsequent choice.
This holds even if the cost is regularized as $\rho(\bl{W}(\cl{S})+\epsilon\bl{I})$ with a small weight $\epsilon>0$,
which is a standard technique to avoid computing the inverse of a singular matrix during early iterations of the selection algorithm when the system may not be controllable.
Such a barrier is due to structural limitations of greedy selection:
myopically decreasing the cost function may select a column of $\cl{C}(\bA,\bB)$, say, corresponding to time $t$, which exclude other columns at the same time $t$ which are necessary for $s$-sparse controllability.

\cite{Ballotta2024lcss-actuatorSchedulingGuarantee} leveraged a sufficient condition for controllability to address this structural limitations.
The key intuition is the following.
Let $K^*$ be the controllability index and $K\ge K^*$ a time horizon;
from the bounds in~\ref{eq:sparsecontrolindex},
we know that $K=n$ is a valid horizon.
Then,
the controllability condition $\rk{\cl{C}_K(\bA,\bB)}=n$ implies that,
for each time step $t=1,\dots,K$,
the actuator sets $\{\cl{S}_{K-1},\dots,\cl{S}_{t-1}\}$ must span the complement subspace to $\col{\bA^t}$ through the corresponding columns of $\cl{C}_K(\bA,\bB)$.
This observation can be reduced to a sufficient condition on each individual set $\cl{S}_t$ which characterizes a subset of feasible schedules.

\begin{theorem}[Informal, {\citealp{Ballotta2024lcss-actuatorSchedulingGuarantee}}]\label{thm:feasibility-scheduling-s-sparse}
	There exist feasible actuator schedules for \cref{prob:scheduling-gramian-s-sparse} such that,
	for each time $t=1,\dots,K-1$,
	there exists a subset $\cl{S}_t'\subseteq\cl{S}_t$ which is sufficient for controllability and can be selected independently of all other sets $\cl{S}_{\tau}$, $\tau\neq t$.
\end{theorem}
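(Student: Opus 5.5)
The plan is to build an explicit nested-subspace decomposition of $\Real{n}$ along the powers of $\bA$ and turn the rank condition of statement~\ref{st:Kalmantest} of \cref{thm:sparsecontrollability} into a per-time-step spanning requirement. Define $\bb{V}_t \doteq \col{\bA^t}$ for $t=0,\dots,K$, so that $\Real{n}=\bb{V}_0\supseteq\bb{V}_1\supseteq\cdots$. I will fix, once and for all, complementary subspaces $\bb{U}_t$ with $\bb{V}_{t-1}=\bb{U}_t\oplus\bb{V}_t$. The dimensions $r_t\doteq\dim{\bb{U}_t}=\rk{\bA^{t-1}}-\rk{\bA^t}$ are nonincreasing in $t$. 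Moreover $r_1=n-\rk{\bA}\le s$ by the PBH-type condition~\ref{st:PBHtest}, which holds for any $s$-sparse controllable system, so feasibility of \cref{prob:scheduling-gramian-s-sparse} is available.

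The core step is a sufficient condition on a single set. The column block of $\cl{C}_K$ associated with $\cl{S}_{K-t}$ is $\bA^{t-1}\bB_{\cl{S}_{K-t}}$. I will require that this block, projected onto $\bb{U}_t$ along $\bb{V}_t$, spans $\bb{U}_t$. Call a set $\cl{S}'_{K-t}$ with this property \emph{admissible}. Whether a set is admissible depends only on $\bA$, $\bB$ and $t$, and not on the other sets of the schedule, which gives the independence claimed in \cref{thm:feasibility-scheduling-s-sparse}.

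I then show that admissibility at every step implies full rank of $\cl{C}_K$. The argument is a triangular induction: the blocks for $t=1,\dots,K$ together span $\bb{U}_1\oplus\cdots\oplus\bb{U}_K\oplus(\text{a subspace of }\bb{V}_K)$. Peeling off $\bb{U}_1$ first and then recursing inside $\bb{V}_1$, each block handles its own complement modulo the deeper subspace. Since $\bb{V}_t$ stabilises at the range of the nonnilpotent part of $\bA$, I take $K$ large enough, for instance $K=n$ as permitted by~\eqref{eq:sparsecontrolindex}. The residual subspace $\bb{V}_K$ must then be covered by classical controllability on the invertible part. Here I would either enlarge the last sets or note that the invertible part is handled through the $r_t$ for large $t$, with the complement taken relative to $\bA$ restricted to its invertible part.

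Existence of admissible sets with $|\cl{S}'_{K-t}|\le s$ is where I expect the main obstacle. The count matters: I need $r_t\le s$ columns of $\bA^{t-1}\bB$ whose projections span $\bb{U}_t$. The required directions exist because $(\bA,\bB)$ is controllable, so the projection of $\col{\bA^{t-1}\bB}$ covers $\bb{U}_t$ modulo contributions from later powers. Selecting a basis then uses at most $r_t\le r_1\le s$ columns. The delicate part is the stable, invertible part of $\bA$, where $r_t=0$ eventually but $\bb{V}_K\neq\{0\}$. There I would first try splitting $\bA$ into nilpotent and invertible parts via its Jordan form, which is allowed by similarity invariance. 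The invertible block would be handled by a round-robin schedule with one column per step, as in the $s\ge1$ controllability result for $\rk{\bA}\ge n-1$. The remaining budget $s-r_t$ at each step is left free for the cost-driven (e.g. greedy) completion of $\cl{S}_t$, which is what the theorem asserts.
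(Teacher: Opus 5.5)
Your reduction is the same one the paper sketches: use the filtration $\col{\bA^t}$ and require the block $\bA^{t-1}\bB_{\cl{S}_{K-t}}$ to span a complement of $\col{\bA^t}$ inside $\col{\bA^{t-1}}$. Your triangular induction is correct.

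The step you expect to be the main obstacle is actually immediate. PBH at $\lambda=0$ gives $\col{\bA}+\col{\bB}=\Real{n}$, hence $\col{\bA^{t-1}\bB}+\col{\bA^{t}}=\bA^{t-1}\bigl(\col{\bB}+\col{\bA}\bigr)=\col{\bA^{t-1}}$. So a single set $\cl{S}^\star$ with $\rk{[\bA\ \ \bB_{\cl{S}^\star}]}=n$ and $|\cl{S}^\star|\le n-\rk{\bA}\le s$ is admissible at every step.

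The genuine gap is the part you leave open. The filtration stabilizes after $\nu$ steps, where $\nu$ is the index of the eigenvalue $0$, at $\col{\bA^\nu}$, the sum of the generalized eigenspaces of the nonzero eigenvalues. Admissibility therefore only yields $\col{\cl{C}_{\cl{S}}}+\col{\bA^\nu}=\Real{n}$, and for invertible $\bA$ it yields nothing. None of your candidate fixes closes this:
\begin{itemize}
\item Complements taken relative to the invertible restriction $\bA_I$ are trivial, since $\col{\bA_I^t}$ is the whole space.
\item Enlarging the last sets is not available in general: when $s=n-\rk{\bA}\ge1$, the set $\cl{S}_{K-1}$ is already saturated by the $t=1$ condition. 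Even where room exists, it would not by itself cover $\col{\bA^\nu}$.
\item A round-robin schedule can fail even when $\rk{\bA}=n$. Take $\bA=\mathrm{diag}(1,-1)$, $\bl{b}_1=(1,1)^\top$, $\bl{b}_2=\bA\bl{b}_1$. Alternating the two columns makes every column of $\cl{C}_{\cl{S}}$ the same vector for every horizon $K$, whereas using $\bl{b}_1$ at every step is controllable.
\end{itemize}

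The missing idea is a separation in time rather than in space. Work in coordinates where $\bA=\mathrm{diag}(\bA_N,\bA_I)$, with $\bA_N$ nilpotent of index $\nu$, $\bA_I$ invertible of size $n_I$, and $\bB$ split accordingly into $\bB_N,\bB_I$. The block $\bA^{K-1-\tau}\bB_{\cl{S}_\tau}$ contributed by time $\tau$ has zero $\bA_N$-component as soon as $K-1-\tau\ge\nu$. The construction is then:
\begin{itemize}
\item Impose admissibility only on the last $\nu$ sets.
\item Use the first $K-\nu$ sets to make their blocks span $\col{\bA^\nu}$. After multiplication by $\bA_I^{-\nu}$, this is exactly reachability of $(\bA_I,\bB_I)$ in $K-\nu$ steps under the schedule $(\cl{S}_0,\dots,\cl{S}_{K-1-\nu})$.
\end{itemize}
The pair $(\bA_I,\bB_I)$ is controllable (PBH at $\lambda\neq0$) and $\bA_I$ has full rank. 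By \cref{thm:sparsecontrollability} and~\eqref{eq:sparsecontrolindex} it is therefore $1$-sparse controllable within $n_I$ steps. Since $\nu\le n-n_I$, the horizon $K=n$ suffices. The two windows are disjoint, so they never compete for the budget. Any completion of these cores stays feasible, because adding columns cannot lower the rank.

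Note, however, that on the early window the cores are not independent of one another in the strict sense you claim. For $\bA=\bB=I_2$ and $s=1$ the filtration is vacuous, and different columns must be used at different times. The cores are fixed jointly but a priori, which is all the cost-driven completion needs.
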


Based on \cref{thm:feasibility-scheduling-s-sparse},
\cite{Ballotta2024lcss-actuatorSchedulingGuarantee} devised a modified greedy algorithm that searches across the restricted feasible set entailed by the sufficient condition described above.
The algorithm first selects subsets $\cl{S}_t'$ which ensure controllability,
starting from $t=1$ and iteratively fulfilling the spanning condition on $\col{\bA^{t+1}}$ for each time $t=1,\dots,K-1$.
Then,
it adds to the schedule until the budget is exhausted.
\cite{Ballotta2024lcss-actuatorSchedulingGuarantee} also proposed a Markov Chain Monte Carlo version that can enhance the greedy search by randomly replacing actuators according to a Markov Chain on feasible schedules.

\subsection{Optimal control-based actuator scheduling}
\label{sec:ctrl-lq}

Although functions of the Gramian matrix measure energy-related quantities,
they do not precisely quantify control effort for a specific input sequence or controlled trajectory.
Therefore,
other selection and scheduling approaches directly consider the cost function of the associated control design problem,
one prominent example being optimal linear quadratic regulator (LQR) or its stochastic version with Gaussian noise (LQG).
The reference problem becomes the following.
\begin{prob}[LQG-optimal actuator scheduling]\label{prob:scheduling-lq}
	Given a set of available actuators $\cl{V}$, time horizon $T$, and budget $\{s_t\}_{t=0}^{T-1}$,
	find an actuator schedule $\cl{S}=(\cl{S}_0,\dots,\cl{S}_{T-1})$ that minimizes the LQG control cost:
	\begin{subequations}\label{eq:scheduling-lq}
		\begin{align}
		\hspace{-8mm}\begin{split}
			\min_{\cl{S}\subset\cl{V}^T} & \quad J_{\textnormal{LQG}}(\cl{S})\\
			\textnormal{subject to} &\quad |\cl{S}_t| \le s_t \quad \forall t\in\{0,\dots,T-1\}\\
			&\quad \x{t+1} = \bA\x{t} + \bB\u{t} + \bl{w}_t \quad \forall t\ge0
		\end{split}
	\end{align}
	where
	\begin{gather}
		\begin{split}
			\hspace{-10mm}J_{\textnormal{LQG}}(\cl{S})=\min_{\u{t}} \bb{E}\left[\sum_{t=1}^{T-1}\left(\norm[\bl{Q}_{t}]{\x{t}}^2 + \sum_{j\in\cl{S}_t}\norm[\bl{R}_{t}]{\u[j]{t}}^2 \right)\right.\\
			+ \norm[\bl{Q}_T]{\x{T}}^2  \Bigg],
		\end{split}
	\end{gather}
	\end{subequations}
	expectation is taken w.r.t. the initial condition $\x{0}\sim\cl{N}(\bar{\bl{x}}_0,\bl{\Sigma}_0)$ and noise $\bl{w}_t\sim\mathcal{N}(0,\bl{\Sigma})$,
	and $\u[j]{t}$ denotes the $j$th control input at time $t$.
\end{prob}
By invoking the separation principle,
problem~\eqref{eq:scheduling-lq} can be cast to optimization of the schedule $\cl{S}$ only,
with optimal inputs $\u{t}$ automatically determined by the feedback gains computed from Riccati recursions with the selected actuators.
This fact implies that $J_{\textnormal{LQG}}(\cl{S})$ can be directly computed by solving the Riccati equation for a given $\cl{S}$.

The cost function $J_{\textnormal{LQG}}$ is not a supermodular function of the schedule $\cl{S}$, in general.
\citep{Chamon2019cdc-actuatorScheduling} prove that,
under perfect state transitions ($\bl{w}\equiv0$),
it is weakly supermodular or $\alpha$-supermodular,
where $\alpha\in(0,1)$ quantifies the distance from supermodularity and $\alpha=1$ means that the function is supermodular.
By leveraging this property, they establish a suboptimality gap for greedy selection which iteratively adds the actuator yielding the largest reduction of $J_{\textnormal{LQG}}(\cl{S})$.
The gap is worse than the standard gap associated with supermodular functions:
in particular,
they prove 
\begin{equation}\label{eq:alpha-submodular-gap}
	J_{\textnormal{LQG}}(\cl{S}_{\textnormal{greedy}})\le\min\left(\dfrac{\alpha}{2},\dfrac{\alpha}{1+\alpha}\right)J_{\textnormal{LQG}}(\cl{S}^*)
\end{equation}
where $\cl{S}_{\textnormal{greedy}}$ is obtained from greedy selection ans $\cl{S}^*$ is an optimal schedule.
\cite{Chamon2019cdc-actuatorScheduling} further established a lower bound on $\alpha$ as a function of the Riccati recursion matrix and initial covariance $\Sigma_0$, making the suboptimality gap~\eqref{eq:alpha-submodular-gap} concrete for \cref{prob:scheduling-lq}.

\cite{Jiao2023lcss-actuatorScheduling} considered the more general version of \cref{prob:scheduling-lq} with nontrivial process noise and possibly uneven costs $c_t(i)$ for the $i$th actuator at time $t$.
By focusing first on unitary budgets $s_t\equiv1$,
namely, once an actuator can be used at each time,
they proposed a convex relaxation of~\eqref{eq:scheduling-lq} based on linear matrix inequalities whereby the relaxed schedule scales each $i$th actuator by the real variable $\theta_t^i\in[0,1]$ at time $t$.
Notably,
they proved that the relaxation is consistent:
an integer solution $\theta_t^i\in\{0,1\}$ to the relaxed problem would exactly recover an optimal solution of~\eqref{eq:scheduling-lq}.
Of course,
one obtains a real solution $\theta_t^i\in(0,1)$ to the convex relaxation in general.
Using that as an initial guess,
\cite{Jiao2023lcss-actuatorScheduling} designed an algorithm to recover an integer solution,
\ie a feasible schedule,
by tracking the Riccati matrices and corresponding cost of the relaxed problem.
This approach enjoys a provable suboptimality gap obtained from dynamic programming arguments~\citep{Jiao2023lcss-actuatorScheduling}.

A recent approach by~\cite{Ye2024tac-onlineActuatorSelection} extends the scheduling problem to a data-driven setting under an unknown system model.
They considered episodic (offline) and non-episodic (online) settings where the designer respectively interacts for $K$ episodes of $T$ time steps and just once (\ie continuously) with the system.
The non-episodic setting is closer to actuator scheduling in spirit since the selected actuators generally change at each time step,
whereas the selected actuators are fixed for each episode in the episodic setting.
In both scenarios,
the control design leverages past system states (trajectories), selected actuators, and LQG cost to select the actuator subset to be used at the next time step (episode).
In the episodic settings, \cite{Ye2024tac-onlineActuatorSelection} combines the multi-armed bandit (MAB) algorithm \emph{Exp3.S} with classic certainty equivalence:
first, a number of episodes is used to estimate $\bA$ and $\bB$;
then,
an adapted instance of \emph{Exp3.S} selects $H$ actuators for each episode based on the current LQG cost and a dynamically updated hyperparameter that affects the probability of selecting each $H$-cardinality actuator set.
The MAB-based approach allows for carrying out a regret-type analysis, which shows that the regret is sublinear in the number of episodes, provided that this is large compared to the time steps in each episode;
this result justifies the selection approach if sufficiently many distinct interactions with the system are possible,
\eg in simulation.
The more challenging non-episodic setting is handled as follows:
given a time horizon $T$,
virtual episodes are constructed by splitting the $T$-step trajectory into $N=\lfloor\nicefrac{T}{T'}\rfloor$ sub-trajectories of $T'$ steps;
an actuator set $\cl{S}^k$ of size $H$ is selected for each virtual episode $k$.
Hence,
the proposed approaches actually position as a middle way between the classic episodic setting and fully time-varying actuator scheduling,
by using the same subset of actuators through $T'$ time steps.
From an algorithmic perspective,
this is justified by the need to learn how a specific set of actuators performs based only on state trajectories.
Formally,
regret analysis gets more challenging since the virtual episodes are mutually dependent.
\cite{Ye2024tac-onlineActuatorSelection} used that the influence of the initial condition $\x{(k-1)T'}$ on the LQG cost of the $k$th virtual episode decreases exponentially with time;
hence,
$J_\textnormal{LQG}(\cl{S}_{0:t})$ tends to be independent from actuators selected in virtual episodes terminated before time $t$,
which allows the authors to adapt analysis for the episodic setting after carefully quantifying the previously described dependence between cost and initial condition for each virtual episode.
They established sublinear regret under technical assumptions on the duration $T'$ of each virtual episode,
which must be sufficiently long compared to the number of $H$-cardinality actuator sets,
and the difference between optimal actuator sets across consecutive virtual episodes.
They further provided an extension to handle large-scale systems.
To circumvent the computational bottleneck of the \emph{Exp3.S} selection subroutine,
whose computational complexity increases with $\binom{|\cl{V|}}{H}$,
$H$ instances of \emph{Exp3.S} are executed in parallel,
each of which selects just one actuator;
this parallelization reduces the complexity factor to $\binom{|\cl{V|}}{1}=|\cl{V}|$ for each \emph{Exp3.S} instance.
On the other hand,
the regret analysis is modified by considering a weaker version of regret, known as $c$-regret,
which quantifies the distance from the optimum scaled by factor $c\in(0,1)$.

\subsection{Limitations and outlook}

The research community has put great effort into static actuator selection and time-varying sensor scheduling.
However,
the analysis and design of optimal actuator schedules is less investigated.
Although both common control-theoretic objectives, such as Gramian metrics and LQG cost, 
have been considered,
a few works establish formal guarantees for time-varying schedules addressing \cref{prob:scheduling-gramian-average,prob:scheduling-gramian-s-sparse,prob:scheduling-lq}.
In both cases,
point-wise and average budget constraints deserve deeper investigation,
and we note that the latter seems not to have been addressed in linear quadratic control optimization,
of which \cref{prob:scheduling-lq} represents the point-wise budget counterpart.
On the other hand,
greedy selection algorithms lack formal suboptimality guarantees;
only the LQG cost in the case of noiseless state transition has been established as weakly supermodular \citep{Jiao2023lcss-actuatorScheduling},
but a more comprehensive investigation under more general conditions and of other cost functions is in order.
Finally,
the design of algorithms for sparse actuator scheduling is still in its infancy,
urging future research to explore other approaches that can potentially improve performance,
bear formal feasibility or suboptimality guarantees,
and overcome possible computational bottlenecks due to,
\eg checking submatrix ranks \citep{Ballotta2024lcss-actuatorSchedulingGuarantee} or large design spaces \citep{Ye2024tac-onlineActuatorSelection}.
    %!TEX root = ../main_survey_sparsity_control.tex

\section{Sparse convex online optimization}
\newcommand{\regd}{\mathrm{Reg}^d_K}
\newcommand{\xmin}[2][]{{\bx^{\star}}_{#1}(#2)}
\label{sec:sparse-online-optimization}
The previous sections considered time-varying allocation of sensing and actuation resources under sparsity constraints. This section considers a different form of time variation, where the optimization problem itself changes over time, and the resulting minimizer is sparse. We focus on online methods that update the solution as new cost functions are revealed, without relying on a model of their evolution. We review dynamic-regret guarantees for centralized and distributed sparse online optimization, and discuss directions for incorporating predictive models and faster sparse optimization methods.

\subsection{Motivation}
Online convex optimization \citep{haz22} addresses the problem of tracking the minimizers of a sequence of convex cost functions.
This problem may be interpreted as a game,
wherein a player seeks to minimize a cost function that is revealed at each time instant by an adversary.
Assuming that the player has no time to solve a single optimization problem at each time instant and that consecutive problems are connected to some extent,
a reasonable solution approach is that the player's action must be a low-complex step of an iterative algorithm.
This approach is online, meaning that the player updates their solution at each time instant,
using information about the current cost function to move from the previous solution.
Clearly, the recursive approach underpins that each cost function smoothly changes with respect to the previous one, even if its dynamics may remain unknown.
In other words, the estimate of the minimizer of the previous cost function is a good starting point to estimate the minimizer of the current cost function.

It is noteworthy that in the literature, online optimization and time-varying optimization are often used as synonyms, though the former sometimes refers to the adversarial setting.
In time-varying optimization, we call \emph{structured} those methods that exploit a learned model to predict the evolution of the minima \citep{sim20}, while \emph{unstructured} methods use only the optimization problems that are revealed at each time, settling in an adversarial viewpoint.

If the minimizers of the cost functions enjoy sparsity properties, specific techniques arising from CS theory \citep{don06,foucartMathematicalIntroductionCompressive2013} have been studied. 
Sparse online optimization originated from signal processing, where in the last twenty years, there has been a shift in interest from static optimization to time-varying signals with sparse representation.
This setting is relevant to several image processing applications, such as dynamic MRI and high-speed video capture \citep{vas16}.
More recently, sparse time-varying models have emerged across a range of diverse applications, from linear/non-linear, time-varying system identification \citep{fox21,voi24} to neurological connectivity patterns \citep{bia18} and compression methods for attention mechanisms in deep learning \citep{kar25}.

In particular, there is a vast literature on {\it{dynamic CS}}, in which the goal is to solve a CS problem at each time instant.
Formally,
for each time $t\in\mathbb{N}$, a measurement is given by
\begin{equation}
	\y{t}=\bC(t) \x{t}
\end{equation}
where $\x{t}$ is a time-varying sparse signal and $\bC(t)$ is a possibly time-varying sensing matrix.

Most works on dynamic CS assume prior information on the system evolution,
falling into structured optimization methods.
In particular, the sparse sequence $\x{t}$ often exhibits a high degree of correlation from one time step to the next,
quantified in terms of slow time-variation of the signal support and smooth evolution of the non-zero coefficients.
This behavior can be described by Markov models \citep{zin13,cha16} or by bounding the energy of the signal and of its derivative as done in \citep{bal15}.
We refer the reader to \cite{vas16} for a detailed review.
Although adaptive filtering and tracking solutions that rely on slow signal value change can be used for sparse sequences as well, other structural properties,
\eg slow support change, can be exploited to design more performing optimization algorithms and derive stronger theoretical guarantees.

On the other hand, unstructured approaches can be used to develop and analyze online strategies under unpredictable time-varying sparse signals.
For example, this is the case of cyber-physical systems with adversarial attacks.
As discussed in \cref{sec:sparse-attacks-cps}, proximal gradient-based methods are effective in the case of time-invariant attacks,
but they may perform poorly in more general scenarios.
Instead, the model considered in this section envisages time-varying, unpredictable attacks.
In this case, we study the performance of an online algorithm with respect to the variability of a corresponding time-varying model.
\cite{Fosson18tac-OnlineOptimizationDynamicEnvironments, fox21} studied proximal gradient descent and the Douglas-Rachford method as unstructured methods for sparse online optimization. Specifically, they did not rely on prior or learned information on the evolution of the sparse model, and they analyzed the algorithms' performance in terms of dynamic regret.

\paragraph*{Outline:}
The sparse convex online optimization setup is presented in \cref{sec:sparse-online-optimization-setup}.
Then, we summarize and compare proximal-based algorithms in \cref{sec:sparse-online-optimization-regret},
focusing on suboptimality in terms of dynamic regret.
In \cref{sec:sparse-online-optimization-distributed},
we broaden the scope presenting results on distributed extensions of online optimization algorithms.

\subsection{Setup}
\label{sec:sparse-online-optimization-setup}

We consider sparse online optimization problems defined as follows.

\begin{prob}[Sparse strongly convex online optimization]\label{SOCO}
	Given a time horizon $K$,
	consider a sequence of functions $\{f_t\}_{t=1}^K$ modeled as
	\begin{equation}
		f_t(\bl{x})\doteq h_t(\bl{x})+\lambda\|\bl{x}\|_1
	\end{equation}
	where each cost function $h_t$ is quadratic and strongly convex,
	\begin{equation}\label{hquad}
		h_t(\bl{x})\doteq \frac{1}{2}\bl{x}^{\top}\bl{Q}_t\bl{x}+\bl{\phi}_t^{\top}x,
	\end{equation}
	$\bl{Q}_t\in\Real{n\times n}$ being symmetric positive definite and $\bl{\phi}_t\in\Real{n}$.
	Assuming that information about each function $f_t$ is revealed at time $t$,
	track the minimizers
	\begin{equation}\label{newquad}
	\xmin{t}=\argmin\limits_{\bl{x}\in\Real{n}} f_t(\bl{x}).
	\end{equation}
\end{prob}

Strong convexity is often exploited in time-varying optimization \citep{sim16TSP,sim19,bas23} because the related gradient-based operators are contractive,
hence provide stability and convergence guarantees.

The strong convexity assumption may seem in contrast with standard CS Lasso models, 
\eg the sparse attack estimation problem~\eqref{eq:LASSO},
where the least-squares penalization is non-strongly convex: the cost function is
\begin{equation}\label{eq:lasso}
	f_t(\bl{x})= \frac{1}{2}\left\|\y{t}-\bC(t) \bl{x}\right\|_2^2+\lambda\left\|\bl{x}\right\|_1
\end{equation}
where $\bC(t)$ has more columns than rows.

However, strong convexity can be restored by introducing an $\ell_2$-regularization.
A variation of Lasso, known as Elastic-net \citep{zou05}, enjoys this property by the addition of a Tikhonov $\ell_2$-norm regularizer:
\begin{equation}\label{eq:elasticnet}
	f_t(\bl{x})= \frac{1}{2}\left\|\y{t}-\bC(t) \bl{x}\right\|_2^2+\lambda\left\|\bl{x}\right\|_1+\frac{\mu}{2}\left\|\bl{x}\right\|_2^2
\end{equation}
with $\mu>0$.
Then,
in the Elastic-net problem,
the function $h_t$ in~\eqref{hquad} is parametrized by
\begin{equation}
	\bl{Q}_t=\bC(t)^{\top}\bC(t)+\mu I_n, \qquad \bl{\phi}=-\bC(t)^{\top}\y{t}
\end{equation}
where $\bl{Q}_t$ is symmetric positive definite.

From a practical viewpoint, Tikhonov regularization promotes a grouping effect of correlated variables and is preferred in those sparse applications where it is undesired to discard correlated variables.
In other words, if two columns of $\bC(t)$ are strongly correlated (sensing redundancy), Lasso may discard one of them to enhance sparsity, while Elastic-net preserves both. 

%Let us define
%\begin{equation}\label{k_min}
%	\xmin{t}=\argmin_{\bl{x}\in\Real{n}} f_t(\bl{x})
%\end{equation}
%which  represents the variable to track. 
Note that,
by the strong convexity assumption,
each function $f_t(\bl{x})$ admits a unique minimum point $\xmin{t}$.
In principle,  \cref{SOCO} can be solved at each time $t$ by using off-the-shelf convex optimization solvers.
However, we aim at tackling the problem online, that is, $\xmin{t}$ should be estimated between time $t$, when $\y{t}$ and $\bC(t)$ are revealed, and time $t+1$, when the next data acquisition is performed. Therefore, running a convex optimization algorithm is not feasible if the time scale is fast and the dimension $n$ is large. We thus aim at developing fast, suboptimal strategies to track the minimizers with satisfactory accuracy. The first step to pursue this goal is to choose a suitable performance metric.

In game theory and online learning, an effective performance metric is the dynamic regret $\regd$ defined by %\citep{zin03,mok16}):
\begin{equation}
	\regd (\xmin{1},\dots,\xmin{t})\doteq \sum_{t=1}^{K}\left(f_{t}(\x{t})-f_{t}(\xmin{t})\right)
\end{equation}
where 
%\begin{equation}\label{def:zt}
%\xmin{t}\doteq \argmin_{x\in\Real{n}}f_{t}(x)
%\end{equation}
%and 
$\x{t}$ is the action played by the online algorithm through $[t-1,t)$, thus before that $f_t$ is revealed. Intuitively, an online algorithm is successful if $\regd$ is sublinear, because this implies that, on average, it performs as well as the clairvoyant opponent that plays the optimal action $\xmin{t}$ \citep{clairvoyant}.
The possibility of achieving a sublinear $\regd$ depends on the evolution and regularity of the $f_t$'s. 
Indeed,
if each function $f_t$ can be arbitrarily different than the preceding one $f_{t-1}$, there is no hope to shrink the gap $f_{t}(\x{t})-f_{t}(\xmin{t})$ because the online algorithm cannot effectively leverage the past action to optimize the current one.
A quantity that well captures the system's evolution is the \emph{path length}, defined as the cumulative distance between consecutive minimum points.
In the present setting, a standard definition of path length is
\begin{equation}
	P_K\doteq\sum\limits_{t=2}^K\left\|\xmin{t}-\xmin{t-1}\right\|_2.
\end{equation}

The dynamic regret is the most popular performance metric for the unstructured setting, where no prior information on the dynamics of the time-varying model is available. As discussed by \cite{mok16}, if the dynamic regret of strongly convex online  problems is controlled by the path length, some interesting considerations arise for specific rates of variability: for example, if $\left\|\xmin{t}-\xmin{t-1}\right\|_2$ decreases as $t^{-\alpha}$ with $\alpha\in(0,1]$, the dynamic regret is sublinear and $\x{t}$ converges to $\xmin{t}$.

The main results reported in this section concern unstructured strategies and their dynamic regret.

As for structured methods, it is worth mentioning the prediction-correction algorithm proposed by \cite{bas23}, which addresses \cref{SOCO} under the assumption that $\bl{Q}_t$ is constant, that is, only the linear term of the quadratic cost is time-varying. This may be the case in dynamic CS.
At each step,
the prediction-correction algorithm predicts the next cost function through extrapolation techniques and corrects the estimate via operator theoretical solvers. This approach provably converges to a neighborhood of the optimal trajectory.

%As to online algorithms, the following requirements are fundamental: (a) if, for each $t$, the distance $\left\|\xmin{t}-\xmin{k-1}\right\|_2$ between successive minima is bounded, then the estimation error is bounded; (b) if $\left\|\xmin{t}-\xmin{k-1}\right\|_2$ tends to zero, i.e., the system tends to converge, also the estimation error should tend to zero. Requirements (a) and (b) are well captured by the dynamic regret, as illustrated, e.g., in \citep{zin03,mok16}. 

%\subsubsection{Dynamic regret vs static regret}
%The dynamic regret captures the performance of an online algorithm that aims to track a time-varying sparse signal, by estimating the minima of a sequence of optimization problem.

%In online convex optimization, another performance metric is the static regret (\citep{haz22}) which measures the quality of an online algorithm that aims to estimate a fixed vector, by streaming data. This is the case studied, for example, in Section .....

\subsection{Dynamic regret of proximal-based algorithms}
\label{sec:sparse-online-optimization-regret}

The papers by \cite{zin03,mok16} analyzed the use of online gradient descent methods for smooth online problems (no $\ell_1$-norm regularization) and bounded their dynamic regrets for convex and strongly convex smooth problems, respectively.

The works by \cite{Fosson18tac-OnlineOptimizationDynamicEnvironments, fox21} extended the results in \cite{mok16} to sparse online optimization problems by including the presence of a non-smooth sparsity-promoting regularizer.
Specifically, \cite{Fosson18tac-OnlineOptimizationDynamicEnvironments} analyzed the dynamic regret of online proximal gradient descent (O-PGD),
defined in \cref{sec:sparse-attacks-cps},
for the Elastic-net model~\eqref{eq:elasticnet}.

The O-PGD iteration for Elastic-net can be obtained by linearizing the term 
\begin{equation}
	\frac{1}{2}\left\|\y{t}-\bC(t) \bl{x}\right\|_2^2+\frac{\mu}{2}\left\|\bl{x}\right\|_2^2
\end{equation}
around the current estimate $\x{t}$ and by adding a proximal regularization $\frac{\eta}{2}\|\bl{x}-\x{t}\|_2^2$, as discussed in \cref{sec:sparse-attacks-cps}. An alternative derivation takes advantage of the definition of a suitable surrogate functional, as illustrated by \citep{Fosson18tac-OnlineOptimizationDynamicEnvironments}.
Both derivations yield the O-PGD iteration
\begin{equation}\label{OPGDelastic}
	\x{t+1}=\soft_{\frac{\lambda\tau}{\mu\tau+1}}\left[\frac{\x{t}+\tau \bC(t)^{\top}(\y{t}-\bC(t) \x{t})}{1+\mu\tau}\right]
\end{equation}
where $\tau=\nicefrac{1}{\eta}$.

The following result quantifies the dynamic regret of O-PGD.
\begin{theorem}[{\citealp[Theorem~1]{Fosson18tac-OnlineOptimizationDynamicEnvironments}}]
	\label{theo:regd18}
	Let $\x{t}$ be a sparse time-varying signal and $\y{t}=\bC(t) \x{t}$. Assume that $\|\bC(t)\|_2$ and $\|\x{t}\|_2$ are bounded through a possibly infinite time horizon $t=1,\dots,K$. Let $\tau<\|\bC(t)\|_2^{-2}$ and $\xmin{t}$ be the minimizer of the Elastic-net \eqref{eq:elasticnet}.
	Then, for O-PGD \eqref{OPGDelastic},
	\begin{equation}
		\sum\limits_{t=2}^K\left\|\x{t}-\xmin{t}\right\|_2\leq c_1+c_2P_K
	\end{equation}
	and
	\begin{equation}\label{eq:regd}
		\regd \leq \alpha_0+\alpha_1 P_K + \alpha_2 \sum\limits_{k=2}^K \left\|\xmin{t}-\xmin{t-1}\right\|_2^2
	\end{equation}
	where $c_1$, $c_2$,$\alpha_0$, $\alpha_1$, $\alpha_2$ are positive constant values.
\end{theorem}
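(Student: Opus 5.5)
The plan is to treat the O-PGD iteration \eqref{OPGDelastic} as a fixed-point map of the forward--backward (proximal gradient) operator associated with $f_t$. We show this map is a contraction, and then obtain both bounds from a telescoping recursion driven by the drift of the minimizers.

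Define $T_t(\bl{x})=\soft_{\frac{\lambda\tau}{\mu\tau+1}}\bigl[(\bl{x}+\tau\bC(t)^\top(\y{t}-\bC(t)\bl{x}))/(1+\mu\tau)\bigr]$, so that $\x{t+1}=T_t(\x{t})$.

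First we record two facts about $T_t$.

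1. By optimality of $\xmin{t}$ for the Elastic-net \eqref{eq:elasticnet}, $\xmin{t}$ is a fixed point of $T_t$. This follows from the subdifferential characterization $0\in \bC(t)^\top(\bC(t)\bl{x}-\y{t})+\mu\bl{x}+\lambda\partial\|\bl{x}\|_1$, together with the fact that soft-thresholding is the proximal operator of the scaled $\ell_1$-norm.

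2. $T_t$ is Lipschitz with constant $\rho<1$. Soft-thresholding is nonexpansive. The affine part has linear component $(I-\tau\bC(t)^\top\bC(t))/(1+\mu\tau)$. Since $\tau<\|\bC(t)\|_2^{-2}$, the matrix $I-\tau\bC(t)^\top\bC(t)$ has spectrum in $[0,1]$, so its norm is at most $1$. Hence $\rho=1/(1+\mu\tau)<1$ uniformly in $t$. This uniformity is the point where the strong convexity supplied by the Tikhonov term $\mu>0$ is essential.

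Next comes the tracking bound. Write $e_t=\|\x{t}-\xmin{t}\|_2$. Then
\begin{equation*}
e_{t+1}\le\|T_t(\x{t})-T_t(\xmin{t})\|_2+\|\xmin{t}-\xmin{t+1}\|_2\le\rho\, e_t+\|\xmin{t+1}-\xmin{t}\|_2 .
\end{equation*}
Summing over $t=2,\dots,K$ gives $\sum_{t\ge 2} e_t\le \rho\sum e_t+e_2+P_K$, so $\sum_{t=2}^K e_t\le (e_2+P_K)/(1-\rho)$. This is the first claim with $c_1=e_2/(1-\rho)$ and $c_2=1/(1-\rho)$. Here $e_2$ is bounded because $\|\x{t}\|_2$ and $\|\bC(t)\|_2$ are bounded, and $\xmin{t}$ is bounded as well: testing the Elastic-net cost at $\bl{x}=0$ shows $\|\xmin{t}\|_2^2\lesssim\|\y{t}\|_2^2/\mu$.

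For the regret bound, we relate $f_t(\x{t})-f_t(\xmin{t})$ to $e_t$. Since the iterates and minimizers lie in a bounded set $\cl{B}$, $f_t$ is Lipschitz there with a uniform constant $L$. The smooth part has gradient $\bC(t)^\top(\bC(t)\bl{x}-\y{t})+\mu\bl{x}$, which is bounded on $\cl{B}$, and the $\ell_1$-term is $\sqrt{n}$-Lipschitz. Hence $f_t(\x{t})-f_t(\xmin{t})\le L e_t$, and the first part yields $\regd\le L(c_1+c_2P_K)$ plus a boundary term.

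To obtain the stated form with a squared-drift term, we can instead use the descent-lemma style inequality for proximal gradient. For $\tau$ below the inverse smoothness constant,
\begin{equation*}
f_t(T_t\bl{x})-f_t(\xmin{t})\le \tfrac{1}{2\tau}\bigl(\|\bl{x}-\xmin{t}\|^2-\|T_t\bl{x}-\xmin{t}\|^2\bigr).
\end{equation*}
Apply this at $\bl{x}=\x{t}$ and shift the index, so that the left side involves $f_{t}(\x{t})$. Then expand $\|\x{t}-\xmin{t}\|^2=\|\x{t}-\xmin{t-1}+\xmin{t-1}-\xmin{t}\|^2$. The cross term is bounded by $2e'_{t}\|\xmin{t}-\xmin{t-1}\|$, which contributes to $\alpha_1P_K$ because $e'_t$ is uniformly bounded. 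The square produces the $\alpha_2\sum\|\xmin{t}-\xmin{t-1}\|_2^2$ term, and the telescoping remainder gives $\alpha_0$.

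I expect the main obstacle to be this index bookkeeping. The action $\x{t}$ is chosen before $f_t$ is revealed, so the descent inequality available for $f_{t-1}$ must be transferred to $f_t$. I would handle the transfer with the Lipschitz bound on $f_t-f_{t-1}$ over $\cl{B}$, charged against the drift $\|\xmin{t}-\xmin{t-1}\|$. If that charge is not clean, the simpler Lipschitz argument above still gives the bound with $\alpha_2=0$, which implies \eqref{eq:regd}.
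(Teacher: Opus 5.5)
Your proof is correct. Its core is the mechanism the paper itself singles out: the Tikhonov term makes the O-PGD map $T_t$ a contraction with factor $\rho=1/(1+\mu\tau)$. Soft-thresholding and $I-\tau\bC(t)^\top\bC(t)$ are nonexpansive, and dividing by $1+\mu\tau$ contracts. The Elastic-net minimizer $x_t^\star$ is the fixed point of $T_t$, and summing $e_{t+1}\le\rho e_t+d_{t+1}$, with $d_t=\|x_t^\star-x_{t-1}^\star\|_2$, gives the tracking bound.

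One point should be made explicit. The hypothesis on $\|\x{t}\|_2$ concerns the true signal, because the survey overloads $\x{t}$. It therefore bounds $\y{t}$, and hence $\|x_t^\star\|_2\le M$, but it says nothing about the iterates. Their boundedness, which your Lipschitz step needs, follows from the same recursion: $e_t\le\rho^{t-1}e_1+2M/(1-\rho)$.

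You depart from the stated result in the regret step. The squared drift term points to a second-order argument. Let $L=\sup_t\|\bC(t)\|_2^2+\mu$ be the smoothness constant of $h_t$; optimality gives $\|\nabla h_t(x_t^\star)\|_2\le\lambda\sqrt{n}$. Then $f_t(x_t)-f_t(x_t^\star)\le 2\lambda\sqrt{n}\,e_t+\tfrac{L}{2}e_t^2$. Squaring the recursion, for instance as $e_{t+1}^2\le(1+\epsilon)\rho^2e_t^2+(1+\epsilon^{-1})d_{t+1}^2$ with $(1+\epsilon)\rho^2<1$, produces exactly $\alpha_0+\alpha_1P_K+\alpha_2\sum_t d_t^2$.

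That route uses only uniform smoothness and the subgradient bound at the minimizer, so it needs neither the signal nor the iterates to be bounded. Your first-order route instead spends the boundedness hypotheses on a uniform Lipschitz constant of $f_t$ over a ball, and it returns the bound with $\alpha_2=0$. Under those hypotheses the two are equivalent up to constants, since $d_t\le 2M$ gives $\sum_t d_t^2\le 2MP_K$.

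Finally, drop the descent-lemma detour. That inequality controls $f_t(x_{t+1})$ rather than $f_t(x_t)$. The proposed transfer also fails, because $f_t-f_{t-1}$ is not controlled by the drift of the minimizers. Take $\y{t}\equiv 0$ with varying $\bC(t)$: every minimizer is $0$, yet $f_t\neq f_{t-1}$.
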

Different from the results by \cite{mok16}, \cref{theo:regd18} states that the dynamic regret also depends on a quadratic path length-like term. However, this does not change the take-home message: if $\left\|\xmin{t}-\xmin{t-1}\right\|_2$ decreases linearly, then $\regd$ is sublinear.

We remark that a key point for the assessment of $\regd$ is the contractivity of the operator in \eqref{OPGDelastic}, which arises from the strong convexity of the cost function.
Indeed, the soft-thresholding operator $\soft$ and the linear term $\x{t}+\tau \bC(t)^{\top}(\y{t}-\bC(t) \x{t})$ in \eqref{OPGDelastic}
are non-expansive (for a suitable choice of $\tau$),
and dividing by $1+\mu\tau>1$ yields a contraction. 

\cite{dix19} extended the analysis of dynamic regret for O-PGD to generic composite functions $h_t(\bl{x})+g_t(\bl{x})$ where $h_t(\bl{x})$ is differentiable, strongly convex and Lipschitz continuous and $g_t(\bl{x})$ is non-smooth and convex, such as the $\ell_1$-norm. Their analysis includes the case of inexact gradient computation and yields a dynamic regret bound 
\begin{equation}
	\regd =\mathcal{O}\left(1+ P_K\right).
\end{equation}

\cite{fox21} further extended the analysis to quadratic problems of the kind~\eqref{newquad} and analyzed the online Douglas-Rachford splitting algorithm (O-DR), whose iteration is
\begin{align}
	\begin{split}
		\z{t+1}&=\z{t}+2\left(\soft_{\lambda}\left[2\x{t}-\z{t}\right]-\x{t}\right) \\
		\x{t+1}&=(\bl{Q}_t+\bl{I})^{-1}\left(\z{t+1}-\bl{\phi}_t\right).
	\end{split}
\end{align}

The study by \cite{fox21} shows that the regret bound~\eqref{eq:regd} in \cref{theo:regd18} remains valid for O-DR, too, up to additional technicalities stemming from the presence of the auxiliary variable $\z{t}$.
%
%contraction mapping property directly implies Q-linear convergence, meaning the distance to the fixed point shrinks by a constant factor at each step
%
On the other hand, numerical results 
on sparse system identification problems with either switching or smoothly time-varying parameters show that O-DR is faster and more responsive to time-variations than O-PGD.

%Our goal is to solve the following optimization problem:
%\begin{equation}\label{eq:theproblem}
%\text{For any } t=1,\dots,T,~~~ \min_{x\in\R^n} f_t(x). 
%\end{equation}
%Since each $f_t$ is strongly convex, in principle a convex optimization algorithm  can be performed at each time step to get the desired minimum. Nevertheless, in the practice such algorithms are too slow to provide a solution in real time, in particular in case of large  $n$.

\subsection{Distributed online algorithms}
\label{sec:sparse-online-optimization-distributed}

The development of algorithms for sparse online optimization has been extended to distributed settings, where a network of nodes collaborates to estimate a global sparse time-varying signal by using private cost functions and leveraging local communications. 
This problem can be addressed by using consensus-based distributed optimization strategies.

\cite{rav15} proposed a distributed iterative soft-thresholding algorithm (DISTA), which decentralizes the proximal gradient descent for sparse optimization. In particular, DISTA converges to the minimum of a distributed Lasso problem.
\cite{fox21} proposed an online version of DISTA, named O-DISTA, and analyzed its dynamic regret.

We recap DISTA first.
Consider an undirected graph $\mathcal{G}=(\mathcal{V, E})$ with $|\mathcal{V}|=N$ nodes.
We assume $(i,i)\in \mathcal E$ for all $i\in\mathcal V$.
The graph is called $d$-regular if each node is connected to $d-1$ nodes different from itself,
namely $|\mathcal{N}_v|=d$ where we recall that $\cl{N}_v$ denotes the neighborhood of node $v$.

We denote by $\bl{x}_1,\dots,\bl{x}_N$ the local variables at each node.
At each time $t$, we formulate the distributed problem
\begin{gather}\label{eq:dec}
	\begin{split}
		\hspace{-4mm}
		\min\limits_{\bl{x}_1,\dots,\bl{x}_N\in\Real{n}}
		\sum\limits_{v\in\mathcal{V}}\left[\frac{1}{2}\bl{x}_v^{\top} \bl{Q}_{v,t}\bl{x}_v+\bl{\phi}_{v,t}^{\top}\bl{x}_v+\lambda\|\bl{x}_v\|_1\right]
		\\
		+c(\bl{x}_1,\dots,\bl{x}_N)
	\end{split}
\end{gather}
where $\bl{Q}_{v,t}$ and $\bl{\phi}_{v,t}$ are known only to node $v$ and the regularization term $c(\bl{x}_1,\dots,\bl{x}_N)$, to be designed, promotes consensus among the states of all the nodes.

In \cite{rav15,fox21}, the graph is assumed to be $d$-regular and the consensus-promoting regularizer is chosen as
\begin{equation}\label{eq:strangeC}
	c(\bl{x}_1,\dots,\bl{x}_N)= \frac{1}{2d\tau}\sum\limits_{v\in\mathcal{V}} \sum\limits_{w\in\mathcal{N}_v}\|\overline{\bl{x}}_w-{\bl{x}}_v\|_2^2
\end{equation}
where the term
\begin{equation}
	\overline{\bl{x}}_w=\frac{1}{d}\sum\limits_{w'\in\mathcal{N}_w}\bl{x}_{w'}
\end{equation}
denotes the mean state across the neighborhood of node $w$ and $\tau>0$.

Although rather technical, the definition of the  regularizer~\eqref{eq:strangeC}, together with the regularity assumption, guarantees the convergence of DISTA; see \cite{rav15}.
On the other hand, the regularizer \eqref{eq:strangeC} does promote consensus because it is null at $\bl{x}_1=\dots=\bl{x}_N$ and positive otherwise.

Inspired by DISTA, O-DISTA iteration consists of the following steps: for any $v\in\mathcal{V}$,
\begin{align}
	\hspace{-8mm}\begin{split}
		\bl{z}_{v}(t)&=\overline{\bl{x}}_{v}(t)\\
		\overline{\bl{z}}_{v}(t)&=\frac{1}{d}\sum\limits_{w\in\mathcal{N}_v} \bl{z}_{w}(t)\\
		\bl{x}_{v}(t+1)&=\soft_{\frac{\lambda\tau}{2}}\left[\frac{(\bl{I} -\tau \bl{Q}_{v,t}) \bl{x}_v(t) + \overline{\bl{z}}_{v}(t) -\tau\bl{\phi}_{v,t}}{2} \right]
	\end{split}
\end{align}
where we note that, in contrast to classic distributed gradient algorithms, the local mean is computed twice.

Let all local variables be horizontally stacked in the matrix
\begin{equation}
	\bl{X}=\begin{bmatrix}
	\bl{x}_1& \bl{x}_2& \dots \bl{x}_N
\end{bmatrix}\in\Real{n\times N}
\end{equation}
and let $\bl{X}^{\star}(t)$ collect the minimizers of problem~\eqref{eq:dec}.
The following result extends the dynamic regret analysis of O-PGD to the distributed setting.
\begin{theorem}[{\citealp[Theorem~2]{fox21}}]
	The dynamic regret of O-DISTA is bounded as
	\begin{equation}
		\begin{array}{rl}
			\regd \leq &\alpha_0+\alpha_1 \sum\limits_{t=2}^K\|\bl{X}^{\star}(t)-\bl{X}^{\star}(t-1)\|_F\\
			&+ \alpha_2 \sum\limits_{t=2}^K \|\bl{X}^{\star}(t)-\bl{X}^{\star}(t-1)\|_F^2
		\end{array}
	\end{equation}
	where $\|\cdot\|_F$ denotes the Frobenius norm and  $\alpha_i>0$ for $i\in\{0,1,2\}$.    
\end{theorem}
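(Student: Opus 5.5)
The plan is to mirror the centralized argument behind \cref{theo:regd18}: first show that the O-DISTA map from $\bl{X}(t)$ to $\bl{X}(t+1)$ is a contraction in Frobenius norm whose fixed point is $\bl{X}^{\star}(t)$, the minimizer of~\eqref{eq:dec} at time $t$. Then turn contractivity into a tracking bound, and finally turn the tracking bound into a regret bound.

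\emph{Step 1: fixed point and contraction.} Write the iteration in stacked form. Let $\bl{P}=\frac{1}{d}\bl{M}$, where $\bl{M}$ is the adjacency matrix of the $d$-regular graph including self-loops. Then $\overline{\bl{Z}}=\bl{X}\bl{P}^2$. Column $v$ of the update is
\[
\soft_{\lambda\tau/2}\!\left[\tfrac{1}{2}\bigl((\bI-\tau\bl{Q}_{v,t})\bl{x}_v+[\bl{X}\bl{P}^2]_v-\tau\bl{\phi}_{v,t}\bigr)\right].
\]
Because of the choice of regularizer~\eqref{eq:strangeC}, this is exactly a proximal-gradient step on~\eqref{eq:dec}. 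The smooth part has gradient $\bl{Q}_{v,t}\bl{x}_v+\bl{\phi}_{v,t}+\frac{1}{\tau}(\bl{x}_v-[\bl{X}\bl{P}^2]_v)$ up to constants, as in \cite{rav15}. Hence the minimizer $\bl{X}^{\star}(t)$ is a fixed point.

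For the contraction, soft-thresholding is nonexpansive. $\bl{P}$ is symmetric and doubly stochastic, so $\|\bl{X}\bl{P}^2\|_F\le\|\bl{X}\|_F$. Take $\tau$ small enough that $\|\bI-\tau\bl{Q}_{v,t}\|_2\le 1-\tau\mu_{\min}$, where $\mu_{\min}>0$ is the uniform strong-convexity constant. Then the affine map has Lipschitz constant at most
\[
\gamma=\tfrac{1}{2}\bigl((1-\tau\mu_{\min})+1\bigr)<1
\]
in Frobenius norm, uniformly in $t$. Here I assume $\bl{Q}_{v,t}$ has eigenvalues uniformly bounded above and below.

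\emph{Step 2: tracking.} Let $T_t$ denote the time-$t$ map. Then
\[
\|\bl{X}(t+1)-\bl{X}^{\star}(t+1)\|_F\le\gamma\|\bl{X}(t)-\bl{X}^{\star}(t)\|_F+\|\bl{X}^{\star}(t+1)-\bl{X}^{\star}(t)\|_F.
\]
Summing and using $\sum_t\gamma^t<\infty$ gives
\[
\sum_{t}\|\bl{X}(t)-\bl{X}^{\star}(t)\|_F\le c_1+c_2\sum_{t}\|\bl{X}^{\star}(t)-\bl{X}^{\star}(t-1)\|_F.
\]
Squaring the recursion and using $(a+b)^2\le(1+\epsilon)a^2+(1+1/\epsilon)b^2$ with $(1+\epsilon)\gamma^2<1$ gives the analogous bound for squared errors, with the squared path-length term on the right.

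\emph{Step 3: regret.} Split the global objective $F_t$ (sum of local costs plus $c$) into its smooth quadratic part $H_t$ and $\lambda\sum_v\|\bl{x}_v\|_1$. Since the iterates and minimizers are bounded, the $\ell_1$ part is Lipschitz, contributing $\le L\|\bl{X}(t)-\bl{X}^{\star}(t)\|_F$. The quadratic part satisfies
\[
H_t(\bl{X})-H_t(\bl{X}^{\star})\le\langle\nabla H_t(\bl{X}^{\star}),\bl{X}-\bl{X}^{\star}\rangle+\tfrac{L_H}{2}\|\bl{X}-\bl{X}^{\star}\|_F^2,
\]
with $\|\nabla H_t(\bl{X}^{\star})\|$ bounded. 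So each regret term is at most $a\|e_t\|_F+b\|e_t\|_F^2$, where $e_t=\bl{X}(t)-\bl{X}^{\star}(t)$. Step 2 then yields the claimed bound with $\alpha_0,\alpha_1,\alpha_2$.

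The main obstacle is Step 1: verifying that O-DISTA with the doubly averaged $\overline{\bl{z}}$ is genuinely the prox-gradient operator of~\eqref{eq:dec}, and that the stepsize gives a uniform contraction despite the $1/\tau$ weight in $c$. I would first try to recover this directly from the DISTA surrogate-functional construction of \cite{rav15}. If that fails, I would instead bound the linear part $\bl{X}\mapsto\frac12[(\bI-\tau\bl{Q}_{v,t})\bl{x}_v]_v+\frac12\bl{X}\bl{P}^2$ via block operator norms.
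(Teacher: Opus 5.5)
Your proposal is correct and follows essentially the route behind the cited result, which the survey itself frames as an extension of the contraction-based O-PGD analysis. The steps match: O-DISTA is a proximal-gradient step on~\eqref{eq:dec}, strong convexity makes it a uniform Frobenius-norm contraction toward $\bl{X}^{\star}(t)$, this gives linear and squared tracking bounds in $\|\bl{X}^{\star}(t)-\bl{X}^{\star}(t-1)\|_F$, and a linear-plus-quadratic bound on each regret term produces exactly the two path-length sums.

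The obstacle you flag is not a real one. Since $c(\bl{X})=\frac{1}{2\tau}\bigl(\|\bl{X}\|_F^2-\|\bl{X}\bl{P}\|_F^2\bigr)$, a step of size $\tau/2$ reproduces the O-DISTA update exactly, with linear part $\frac12[(\bI-\tau\bl{Q}_{v,t})\bl{x}_v]_v+\frac12\bl{X}\bl{P}^2$, so the $1/\tau$ weight is absorbed. In Step 3, the bound on $\|\nabla H_t(\bl{X}^{\star}(t))\|_F$ follows from first-order optimality, since its entries lie in $[-\lambda,\lambda]$. You also do not need bounded iterates, because the $\ell_1$ term is globally Lipschitz.
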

In conclusion, also in the distributed setting, $\regd$ can be bounded based on the variability of the minimum points that one aims to track,
similarly to the path length-dependent regret bound of the centralized problem established in~\cref{theo:regd18} and other works.

The recent work by \cite{hou25} used a different consensus-promoting regularizer based on the $\ell_1$-norm, as proposed by \cite{zha19} for offline problems:
\begin{equation}\label{eq:otherC}
	c(\bl{x}_1,\dots,\bl{x}_N)= \sum\limits_{v\in\mathcal{V}} \sum\limits_{w\in\mathcal{N}_v}\|\bl{x}_w-{\bl{x}}_v\|_1.
\end{equation}
The corresponding distributed subgradient method requires sharing only the signs of the nodes' states in the network, which is substantially cheaper in terms of communication load.
The analysis of its dynamic regret shows its linear dependence on the path length under compactness constraints on the state space.

In the realm of distributed methods for online convex optimization with nonsmooth terms, thereby including sparsity-promoting regularizers, \cite{bas21} analyzed a distributed proximal gradient method with 
\begin{equation}
	c(\bl{x}_1,\dots,\bl{x}_N)=\frac{1}{2}\bl{x}^\top (\bl{I}-\bl{W})\bl{x},
\end{equation}
which is a classic choice in distributed optimization wherein $\bl{W}$ is a doubly stochastic matrix.
The proposed algorithm is proven to converge R-linearly to a neighborhood of the optimal trajectory.

%$\mathcal E$ enjoys the property: $(i,j)\in \mathcal E$ implies $(j,i)\in \mathcal E$. $(i,i)\in \mathcal E$ for all $i\in\mathcal V$. 

%A graph is said to be $d$-regular if each node is connected to $d-1$ nodes different from itself. $\mathcal{N}_v$ denotes the neighborhood of $v$: $w\in\mathcal{N}_v$ when $(w,v)\in \EE$.
%\begin{assumption}\label{ass:dreg}
%$\G=(\mathcal{V}, \mathcal{E})$ is connected and %$d$-regular. Its topology is time-invariant.
%\end{assumption}

%\citep{fox21} Online distributed proximal gradient descent (O-DISTA)

%\citep{hou25} Similar to O-DISTA but with $\ell_1$-consensus

\subsection{Limitations and outlook}
Unstructured algorithms for sparse convex online optimization have been developed and analyzed, in terms of dynamic regret, in particular for quadratic cost functions.
Although some structured approaches have been proposed for that problem, the amount of literature on structured methods for smooth online convex optimization is much larger. 
As an example, internal model-based approaches from control,
which exploit a dynamical model of the time-varying terms,
was proposed in \cite{bas24}.
These approaches address smooth optimization problem, while the extension to sparse online optimization is still missing.

Another promising research direction is the application of novel, faster iterative algorithms for sparse optimization in the time-varying framework.
On the one hand, non-convex regularizers have been shown to provide faster proximal gradient algorithms w.r.t. $\ell_1$-regularized problems~\citep{fox23}, which encourages their extension to the online context where time-responsiveness is paramount.
However, their convergence analysis is more challenging due to non-convexity.
On the other hand, a recent line of research in sparse optimization  uses the regularization parameter $\lambda$ as a control input in proximal gradient descent methods, leading to unbiased and fast algorithms; see, \eg~\cite{fox25}. The key idea is to drive the quadratic cost function to zero by applying a suitable control law through $\lambda$.
This approach reduces the sparsity overshoot inherent in proximal gradient methods, namely, the transient period through which the estimate does not yet exhibit the desired sparsity pattern. Overall, this control-theoretic approach enables a more responsive estimation of the support of a sparse signal, which may be particularly advantageous in time-varying settings.

    \section{Conclusion}
\label{sec:conclusion}

This survey has brought together representative results on sparse optimization problems arising in control design and estimation. These problems share common structural features: only a limited number of control actions, actuators, measurements, or optimization variables are active (nonzero). Our review shows that the corresponding literature has largely developedòpèl.à in separate streams, with relatively few connections between them. Bringing these results together reveals several common challenges and opportunities for cross-fertilization.

A recurring theme is the gap between structural characterizations and computationally tractable methods. In several settings, necessary and sufficient conditions for sparsity, controllability, or invertibility are available, but they do not translate directly into efficient, guaranteed algorithms. This is particularly evident in minimal actuator selection, strong structural controllability, and sparse input recovery, where exact structural conditions can be computationally difficult to exploit at scale. Developing scalable approximation methods with rigorous feasibility, recovery, or suboptimality guarantees, therefore, remains an important direction.

A second common theme is the need to move beyond simplified static models. Much of the existing theory considers linear models, fixed network structures, noiseless measurements, known observation models, or finite-horizon formulations. In practical systems, however, dynamics, network topology, measurements, and disturbances may change over time, while computation and actuation must often be performed online. Extending sparse control design and estimation methods to time-varying, uncertain, nonlinear, and adversarial settings is therefore a common challenge across the different topics surveyed.

The survey also highlights that sparsity is rarely the only design objective. Control energy, robustness, estimation accuracy, actuation constraints, and computational cost must often be considered simultaneously. For example, sparse control may reduce actuation but increase control effort, while changes to network structure may improve connectivity but affect structural controllability. Similarly, sparse recovery methods may provide desirable support properties while introducing computational or robustness limitations. This motivates formulations and algorithms that explicitly characterize the trade-offs between sparsity and other system-level objectives rather than treating sparsity in isolation.

Another promising direction is to strengthen the connections between the different strands of the literature. Concepts such as compressed sensing, actuator scheduling, observability, and online optimization provide different perspectives on related structural questions. A more unified treatment could allow theoretical tools developed in one area to inform problems in another, for example, by extending classical sparse recovery theory to structured sparse recovery for input design and reconstruction, or incorporating control-theoretic ideas into online sparse optimization.

Finally, the increasing scale and complexity of cyber-physical, networked, and biological systems call for methods that are simultaneously sparse, scalable, adaptive, and robust. Learning-based approaches, online algorithms, structural reductions, and approximation methods provide promising avenues in this direction, but their theoretical foundations remain comparatively underdeveloped. Establishing rigorous connections between these computational approaches and the structural guarantees developed in classical control and optimization could be particularly valuable.

Overall, the literature reviewed in this survey demonstrates substantial progress in understanding sparsity in control and estimation, while also revealing a fragmented landscape with several common unresolved challenges. Bringing these related problems and methodologies into a common perspective will help identify connections across research communities in control, signal processing, and optimization. It also motivates further development of sparse methods with stronger theoretical guarantees, broader applicability, and greater relevance to practical systems.

%	\section*{Acknowledgment}
%	The work by A. Author is supported in part by...
	
%	\section*{Data availability}
%	No data was used for the research described in the article.
	
	\bibliographystyle{cas-model2-names}
	\bibliography{refs/ballotta}
  
\end{document}